\documentclass[a4paper]{article}
\usepackage[bottom=1in, left = 1in , right = 1in, top = 1in]{geometry}
\usepackage[utf8]{inputenc}
\usepackage{mathrsfs,mathtools}
\usepackage{amsmath,amssymb,amsthm,amsfonts,bbm}
\usepackage{amstext}
\usepackage{amsopn}
\usepackage{mathabx}
\usepackage[T1]{fontenc} 
\usepackage{manfnt}
\usepackage{array}
\usepackage{blindtext}
\usepackage{tikz}
\usepackage{easyReview}
\usepackage{float}

\usepackage{enumerate}
\usepackage{subcaption}

\newtheorem{theorem}{Theorem}
\newtheorem{proposition}{Proposition}
\newtheorem{assumption}{Assumption}
\newtheorem{corollary}{Corollary}
\newtheorem{lemma}[theorem]{Lemma}
\newtheorem{remark}{Remark}

\usepackage[maxbibnames=99]{biblatex}
\addbibresource{bibliography.bib}

\title{Sticky coupling as a control variate for sensitivity analysis}
\author{S. Darshan$^{1,2}$, A. Eberle$^3$ and G. Stoltz$^{1,2}$ \\
	\small $^1$: CERMICS, Ecole des Ponts, IP Paris, Marne-la-Vall\'ee, France \\
	\small $^2$: MATHERIALS team-project, Inria Paris, Paris, France\\
	\small $^3$: Institute for Applied Mathematics, University of Bonn, Bonn, Germany \\
}

\begin{document}

\maketitle
\begin{abstract}
	We present and analyze a control variate strategy based on couplings to reduce the variance of finite difference estimators of sensitivity coefficients, called transport coefficients in the physics literature. We study the bias and variance of a sticky-coupling and a synchronous-coupling based estimator as the finite difference parameter \(\eta\) goes to zero.  For diffusions with elliptic additive noise, we show that when the drift is contractive outside a compact the bias of a sticky-coupling based estimator is bounded as \(\eta \to 0\) and its variance  behaves like \(\eta^{-1}\), compared to the standard estimator whose bias and variance behave like \(\eta^{-1}\) and \(\eta^{-2}\), respectively. Under the stronger assumption that the drift is contractive everywhere, we additionally show that the bias and variance of the synchronous-coupling based estimator are both bounded as \(\eta \to 0\). Our hypotheses include overdamped Langevin dynamics with many physically relevant non-convex potentials. We illustrate our theoretical results with numerical examples, including overdamped Langevin dynamics with a highly non-convex Lennard-Jones potential to demonstrate both failure of synchronous coupling and the effectiveness of sticky coupling in the not globally contractive setting.
\end{abstract}

\section{Introduction}
Statistical physics provides a means of deducing the macroscopic properties of a system from a microscopic description of its dynamics. Its numerical realization molecular dynamics, i.e the simulation of the dynamics of molecular and atomistic systems, provides scientists a "numerical microscopic" to conduct computer experiments allowing them to test physical theories and to make precise quantitative measurements of simulated systems. It has flourished in the last 70 years---see \cite{BattimelliCiccottiGreco} for a historical perspective. An important problem in molecular dynamics is the computation of transport coefficients. These coefficients relate an external forcing on a system to the average response of some observable. The theory of statistical physics gives two ways of quantifying these coefficients either by integrating the equilibrium correlations via the Green--Kubo formula or in the limit of small perturbations to the equilibrium system. We focus on this second perspective, the so-called "non-equilibrium molecular dynamics" method \cite{Ciccotti, StatMech, Tuckerman}. 

At a microscopic level, the external forcing is modeled by adding a perturbation of size \(\eta \in \mathbb{R}\) to the reference dynamics. Of particular interest is the case when the perturbation is not given by the gradient of some potential function. One expects that the response of the system \(\mathbb{E}_\eta[R]\) for some physically relevant observable \(R\) to be roughly proportional to the size of the perturbation, namely
\[\mathbb{E}_\eta[R] - \mathbb{E}_0[R] \approx \alpha_R \eta,\]
 when \(|\eta|\) is small---the so-called linear response regime.
The proportionality coefficient \(\alpha_R\) is called the transport coefficient, see Section~\ref{sec:lin_resp} for the formal definition. It is seldom possible to analytically compute this coefficient necessitating the consideration of finite difference approximations. Furthermore, when the perturbation is not of gradient form one does not have an explicit expression for the (unnormalized) density of the invariant measure. Thus the difference of the steady state averages has to be replaced with the time averages of ergodic processes:
\begin{equation}\label{eq:intro_erg_avg}
	\alpha_R \approx \frac{1}{\eta} \left(\frac{1}{t}\int_0^t R\left(X_s^\eta\right)ds - \frac{1}{t}\int_0^t R\left(Y_s^0\right)ds\right),
\end{equation}
where \(\left(X_t^\eta\right)_{t \geq 0}\) is a stochastic process following the reference dynamics perturbed by an external forcing of magnitude \(\eta\) and \(\left(Y_t^0\right)_{t\geq 0}\) is a process following the unperturbed reference dynamics.
As we will see in Section~\ref{sec:lin_resp}, this approximation suffers from a large noise to signal ratio as dividing by small \(\eta\) greatly increases the variance but taking \(\eta\) small is necessary to remain in the linear response regime. Long computational times are therefore necessary to guarantee that such estimators converge. For a more in-depth discussion of the difficulties around computing transport coefficients we refer the interested reader to \cite{Gabriel_rev}.

The large variance and the long computational times necessary to compensate highlight the need for variance reduction strategies. A general discussion of variance reduction strategies for Monte Carlo methods may be found in standard references and textbooks such as \cite{Caflisch, Liu, KroeseRubinstein}. Among these strategies are control variate methods which involve subtracting off a mean-zero random variable from the summand of the Monte Carlo estimator with the hope that the difference has lower variance than the original summand.

To build a suitable control variate one may use the same simulation trajectory and subtract off \(\Phi\left(X_t^\eta\right)\) from time averages such as the ones appearing in \eqref{eq:intro_erg_avg} where \(\Phi\) is some well-chosen function with expectation zero. The zero variance principle \cite{AssarafCaffarel} suggests that the optimal choice of \(\Phi\) is the solution of a Poisson equation. However, solving this equation is intractable in practice. A strategy of using the solution of an approximate tractable Poisson equation was proposed and analyzed in \cite{Roussel}. Their strategy requires model specific tricks to construct a good approximate equation that is solvable in high dimensions. 

Alternatively, one may construct a trajectory of another stochastic process and use the observable evaluated at this process as a control variate. We propose to constructing such a process using an intelligently coupled version of the reference dynamics. Couplings and coupling methods have a long history of applications in probability theory, see for example the books \cite{Lindvall, Thorisson}. They have been particularly useful in proving non-asymptotic rates convergence of solutions to a stochastic differential equation to its invariant probability measure, see for instance \cite{LindvallRogers, HairerMattinglyScheutzow,Eberle}. In recent years, they have as well proved useful in the development Monte Carlo methods including generating unbiased samples without rejection \cite{GlynnRhee, Jacob, Chada}, testing for convergence \cite{BiswasJacobVanetti, LiWang, DobsonLiZhai}, and numerically exploring the landscape of high-dimensional potential function \cite{LiTaoWang}. 

Two works most directly related to the current work also propose coupling based control variates \cite{PintoNeal, GoodmanLin}. In the first work \cite{PintoNeal}, a Markov chain intended to sample from a Bayesian posterior distribution is coupled to a second chain whose invariant measure is a Gaussian approximation of the posterior distribution. In our framework, this corresponds to coupling two "equilibrium" processes with different potentials since the invariant measures are known up to normalizing constants. This allows the authors to make use of the unnormalized density of the target distribution, which will not work in our setting. In the second work \cite{GoodmanLin}, the authors suggest a coupling based control variate strategy for Markov jump processes under the assumption that a good coupling between the perturbed and reference process exists. They then illustrate their method on the simulation of two lattice models of heat transport. The target process is the non-equilibrium process driven at the boundaries and it is coupled to a jump process sampling from the local thermal equilibrium. The two processes are coupled by forcing them to make the same jump whenever possible---their coupling is effectively a synchronous coupling of the two processes. Numerical results suggest a dramatic reduction in variance. However no rigorous quantification of the variance reduction is proven nor are any hypotheses given under which their coupling is guaranteed to work. In their conclusion, the authors suggest that such coupling based control variates could be useful for the computation of sensitivity coefficients.
The current article validates their intuition and applies it to solutions of stochastic differential equations with additive noise and their discretizations.

\paragraph{Outline.} The article is organized as follows. In Section~\ref{sec:lin_resp}, we recall the functional framework in which we will work and rigorously define the linear response and transport coefficients. In Section~\ref{sec:coupling}, we present the general idea of coupling based control variates and a synchronous-coupling based estimator. We prove a central limit theorem for the synchronous-coupling based estimator and bounds on its bias and variance under the global contractivity hypothesis. In Section~\ref{sec:sticky_coupling}, for technical reasons we work in discrete time. We start the section by recalling facts about the discrete-time dynamics and linear response in discrete time. We then present the discrete-time sticky coupling and a central limit theorem for the sticky-coupling based estimator along with some quantitative bounds on its bias and variance. We then present and prove certain properties of the discrete-time sticky-coupled dynamics. Finally, we use these properties to prove the announced results for the sticky-coupling based estimator. In Section~\ref{sec:numerics}, we provide some numerical illustrations. We defer various ancillary results to the appendices. In Appendix~\ref{sec:Kopec_ext}, we prove Proposition~\ref{prop:exist_poisson_eq} on regularity of solutions to the Poisson equation. In Appendix~\ref{sec:sync_contractive}, we prove the ergodicity of the synchronously-coupled dynamics. In Appendix~\ref{sec:discrete_lin_resp_proofs}, we prove two technical lemmas on discrete-time solutions to the Poisson equation and the linear response in discrete time. In Appendix~\ref{sec:equivalence_of_disc_MR_coupling}, we show that two definitions of the meeting probability for sticky coupling are equivalent. 
\section{Linear Response and Transport Coefficients}\label{sec:lin_resp}

In this section, we present the stochastic dynamics we will work with in this article and the assumptions that will hold throughout. We as well introduce the functional framework in which we will be working. We then present a rigorous definition of linear response and transport coefficient in this framework. Next we present the standard NEMD estimator and finish the section with a proposition qualifying the bias and asymptotic variance of this estimator.

\subsection{Definition of the Dynamics and Linear Response}
We consider the following family of SDEs with values in \(\mathbb{R}^d\) and additive noise:
\begin{equation}\label{eq:sde_model}
	dX_t^\eta = \left(b\left(X_t^\eta\right) + \eta F\left(X_t^\eta\right)\right)dt + \sqrt{\frac{2}{\beta}}dW_t,
\end{equation}
where \(b,F : \mathbb{R}^d \to \mathbb{R}^d\) are smooth functions and \(\left(W_t\right)_{t\geq 0}\) is a standard \(d\)-dimensional Brownian motion. Suppose that this dynamics admits a unique invariant probability measure, denoted by \(\nu_\eta\). Then for a given observable \(R \in L^1\left(\nu_\eta\right)\) for all \(\eta \in \mathbb{R}\), we define the transport coefficient by 
\begin{equation}\label{eq:transport_coef}
	\alpha_R = \lim_{\eta \to 0} \frac{1}{\eta}\left(\int_{\mathbb{R}^d} R \, d\nu_\eta - \int_{\mathbb{R}^d}R \, d\nu_0 \right),
\end{equation}
provided this limit is well defined.

\begin{assumption}\label{ass:drift}
The function \(F\) is Lipschitz with Lipschitz coefficient \(L_F\) and uniformly bounded:
\begin{equation}\label{eq:F_bounded}
	\sup_{x \in \mathbb{R}^d} \left|F(x)\right| < +\infty.	
\end{equation}
The drift \(b\) is Lipschitz with Lipschitz coefficient \(L_b\) and contractive at infinity, i.e. there exist constants \(m > 0\) and \(M \geq 0\) such that 
\begin{equation}\label{eq:contractive_at_inf}
	\forall \left|x - y\right| \geq M, \qquad \left\langle x-y, b(x) - b(y)\right\rangle \leq -m \left|x - y\right|^2.
\end{equation} 
\end{assumption}

The archetypal dynamics we will be considering is overdamped Langevin dynamics with a potential energy function~\(U\) that coincides with a \(m\)-strongly convex function outside a ball centered around the origin and perturbed by a non-gradient bounded forcing. This corresponds to \eqref{eq:sde_model} with \(b = -\nabla U\).
\begin{remark}
It should be possible to weaken the Lipschitz assumption on \(b\) and \(F\) to only local Lipschitz by using~\eqref{eq:contractive_at_inf} to make Lyapunov type arguments to show that it is exponentially rare for the processes to be far from to origin. Then assuming that the derivatives of \(b\) and \(F\) grow at most polynomial (which we will assume in the sequel) should ensure that all the arguments that follow still work. However this would greatly encumber the exposition so we maintain the Lipschitz assumption for clarity's sake.
\end{remark}

For a measurable function \(V:\mathbb{R}^d \to \left[1, \infty \right)\), we define the \(V\)-norm \(\|\cdot\|_V\) for functions \(f:\mathbb{R}^d \to \mathbb{R}\) and finite measures \(\mu\) on \(\mathbb{R}^d\) by
\begin{equation}\label{eq:v_norm}
	\begin{aligned}
		\left\|f\right\|_V &= \sup_{x \in \mathbb{R}^d} \frac{\left|f(x)\right|}{V(x)},\\
		\left\|\mu\right\|_V &= \frac{1}{2}\sup_{\left\|f \right\|_V \leq 1} \int_{\mathbb{R}^d}f d\mu.
	\end{aligned}
\end{equation}
The factor \(\frac{1}{2}\) in the definition of \(\|\mu\|_V\) is motivated by the fact that when \(V \equiv 1\), these norms correspond to the supremum norm for functions and the total variation norm for measures, which we denote respectively by \(\left\|\cdot\right\|_\infty\) and \(\left\|\cdot\right\|_{\mathrm{TV}}\). We also write \(d_{\mathrm{TV}}\) for the distance induced by \(\left\|\cdot\right\|_{\mathrm{TV}}\). 
For \(\eta \in \mathbb{R}\), we define the following projection operator on \(L^1\left(\nu_\eta\right)\):
\[\Pi_\eta \varphi = \varphi - \int_{\mathbb{R}^d} \varphi \, d\nu_\eta. \]
We denote the space of measurable functions with finite \(V\)-norm by \[B_{V}^{\infty} := \left\{\varphi \text{ measurable } \left| \left\|\varphi\right\|_{V} < \infty \right. \right\}.\] 
We will use in particular the \(\mathcal{K}_n\)--norms \(\|\cdot\|_{\mathcal{K}_n}\) defined with respect to the functions \(\mathcal{K}_n := 1+|x|^n\) for~\(n \in \mathbb{N}\), and write \(B_n^{\infty}\) for the corresponding spaces of functions with finite \(\mathcal{K}_n\)-norms. For a Banach space~\(E\), we denote by \(\mathcal{B}\left(E\right)\) the space of bounded linear operators on \(E\).

We denote the transition semi-group of (\ref{eq:sde_model}) by \(\left(P_t^\eta\right)_{t\geq 0}\) and its generator by \(\mathcal{L}_\eta = \mathcal{L}_0 + \eta\widetilde{\mathcal{L}}\), with
\begin{equation}
	\mathcal{L}_0 = b \cdot \nabla + \frac{1}{\beta}\Delta, \qquad \widetilde{\mathcal{L}} = F \cdot \nabla.
\end{equation}

The contractivity at infinity \eqref{eq:contractive_at_inf} implies that, for any \(n\geq 2\), the function \(\mathcal{K}_n\left(x\right) = 1 + \left|x\right|^{n}\) is a Lyapunov function for the dynamics~(\ref{eq:sde_model}). Standard results show that \eqref{eq:sde_model} admits a unique global-in-time strong solution for any \(\eta \in \mathbb{R}\) (see for example \cite{IkedaWatanabe}) and has a unique invariant probability measure \(\nu_\eta\) with a smooth positive density with respect to the Lebesgue measure \cite{Rey-Bellet}. Moreover, the following estimates hold:
\begin{align}
	\forall \eta_\star \in \left(0, \infty\right), \quad \forall n \geq 1, &\qquad \sup_{\left|\eta\right| \leq \eta_\star} \nu_{\eta}\left(\mathcal{K}_n\right) < \infty, \label{eq:moment_bounds} \\
	\forall \eta_\star \in \left(0, \infty\right), \quad \forall n \geq 1, &\qquad \sup_{\left|\eta\right|\leq \eta_\star}\sup_{t\geq 0} \sup_{x \in \mathbb{R}^d} \left|\frac{\left(P_t^\eta \mathcal{K}_n \right)(x)}{\mathcal{K}_n(x)}\right| \leq S_{n, \eta_\star} < \infty, \label{eq:semigroup_estimates}\\ 
	\forall \eta_\star \in \left(0, \infty\right), \quad \forall n \geq 1, &\qquad \sup_{\left|\eta\right| \leq \eta_\star} \left\|\mathcal{L}^{-1}_\eta\right\|_{\mathcal{B}\left(\Pi_\eta B_n^\infty\right)} < \infty. \label{eq:inverse_bounds}
\end{align}
The above estimates are obtained from a minorization condition and Lyapunov conditions of the form: for any \(n \geq 2\) and \(\eta_{\star} > 0\), there exist \(a_{n, \eta_{\star}} > 0\) and \(b_{n, \eta_{\star}} \in \mathbb{R}\) such that \(\mathcal{L}_\eta \mathcal{K}_n \leq - a_{n, \eta_{\star}} \mathcal{K}_n + b_{n, \eta_{\star}}\) uniformly in \(\eta \in \left[-\eta_\star, \eta_\star\right]\); see \cite{Spacek}. These Lyapunov conditions and minorization condition also imply that the dynamics is geometrically ergodic with respect to the \(\mathcal{K}_n\)-norm uniformly in \(\eta\) for any \(n \in \mathbb{N}\); that is to say for any \(\eta_\star > 0\) and \(n \in \mathbb{N}\), there exist constants \(C_{n, \eta_{\star}}, \lambda_{n, \eta_{\star}} > 0\) such that 
\begin{equation}\label{eq:geo_ergodicity}
	\forall \eta \in \left[-\eta_\star, \eta_\star\right], \qquad \forall x \in \mathbb{R}^d, \qquad \left\|\delta_x P_t^\eta - \nu_\eta\right\|_{\mathcal{K}_n} \leq C_{n, \eta_{\star}} \mathcal{K}_n(x)\mathrm{e}^{-\lambda_{n, \eta_{\star}} t}.
\end{equation}
The proof of this fact follows along the same lines as the proof of \cite[Theorem 4.4]{Mattingly}, for example. 

We introduce the space \(\mathscr{S}\) of smooth function which grow at most polynomially and whose derivatives also grow at most polynomially. Denoting by \(\partial^{k} = \partial_{x_1}^{k_1}\cdots \partial_{x_d}^{k_d}\) for \(k = \left(k_1, \dots, k_d\right) \in \mathbb{N}^d\),
\begin{equation}
	\mathscr{S} = \left\{\varphi \in C^{\infty}\left(\mathbb{R}^d\right) \, \left|\, \forall k \in \mathbb{N}^d, \exists n \in \mathbb{N}, \partial^k\varphi \in B_n^\infty \right. \right\}.
\end{equation}
We also consider the subspace \(\mathscr{S}_\eta = \Pi_\eta \mathscr{S}\) of functions in \(\mathscr{S}\) with average 0 with respect to \(\nu_\eta\). We make the following assumption on the drift and \(\nu_0\) (below we write \(\nu_0\) for both the measure and its density with respect to the Lebesgue measure).
\begin{assumption}\label{ass:ref_measure}
	The functions \(b, F\) belong to \(\mathscr{S}\) in the sense that each of their components belongs to~\(\mathscr{S}\). The density of the invariant measure of the dynamics when \(\eta = 0\) is such that \(\log\left(\nu_0\right) \in \mathscr{S}\). 
\end{assumption}
A consequence of this assumption is that \(\widetilde{\mathcal{L}}\) stabilizes \(\mathscr{S}\), i.e. \(\widetilde{\mathcal{L}}\mathscr{S} \subset \mathscr{S}\), and that \(\widetilde{\mathcal{L}}^*\mathbf{1}\in \mathscr{S}_0\), where here and in the remainder of this work, adjoints are taken with respect to \(L^2\left(\nu_0\right)\). This can be be checked using the explicit formulae for \(\widetilde{\mathcal{L}}\) and \(\widetilde{\mathcal{L}}^* = -\widetilde{\mathcal{L}} - \mathrm{div}\left(F\right) - F \cdot \nabla \left(\log \nu_0\right)\).
Furthermore, as a consequence of Assumption~\ref{ass:drift}, we have the following proposition whose proof is postponed to Appendix~\ref{sec:Kopec_ext}.
\begin{proposition}\label{prop:exist_poisson_eq}
	Suppose that Assumptions~\ref{ass:drift} and \ref{ass:ref_measure} hold. Then for any \(\varphi \in \mathscr{S}\), the Poisson equation \(-\mathcal{L}_\eta\widetilde{\varphi}_\eta = \Pi_\eta \varphi\) has a unique solution in \(\mathscr{S}_\eta\).
\end{proposition}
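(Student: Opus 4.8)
\emph{Proof strategy.} The plan is to construct the solution first at the level of the weighted spaces \(B_n^\infty\) using the uniform resolvent bound \eqref{eq:inverse_bounds}, then to upgrade its regularity to membership in \(\mathscr{S}\) by interior elliptic estimates, and finally to dispatch uniqueness by an ergodicity argument. For existence, note that since \(\varphi \in \mathscr{S}\), taking \(k = 0\) in the definition of \(\mathscr{S}\) shows \(\varphi \in B_n^\infty\) for some \(n \in \mathbb{N}\), and \(\nu_\eta(|\varphi|) \leq \|\varphi\|_{\mathcal{K}_n}\nu_\eta(\mathcal{K}_n) < \infty\) by \eqref{eq:moment_bounds}, so \(\Pi_\eta \varphi \in \Pi_\eta B_n^\infty\). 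By \eqref{eq:inverse_bounds} the operator \(\mathcal{L}_\eta\) is boundedly invertible on \(\Pi_\eta B_n^\infty\), hence
\[
	\widetilde{\varphi}_\eta := -\mathcal{L}_\eta^{-1}\Pi_\eta \varphi
\]
is a well-defined element of \(\Pi_\eta B_n^\infty\), has zero average with respect to \(\nu_\eta\), and solves \(-\mathcal{L}_\eta \widetilde{\varphi}_\eta = \Pi_\eta \varphi\); equivalently, \(\widetilde{\varphi}_\eta = \int_0^{\infty} P_t^\eta \Pi_\eta \varphi \, dt\), the integral converging in \(\mathcal{K}_n\)-norm thanks to the geometric ergodicity \eqref{eq:geo_ergodicity}. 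There remains to show that \(\widetilde{\varphi}_\eta\) is smooth with all partial derivatives of at most polynomial growth, which will give \(\widetilde{\varphi}_\eta \in \mathscr{S}\) and therefore \(\widetilde{\varphi}_\eta \in \mathscr{S}_\eta\).

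For the regularity I would exploit that, the noise being additive, \(\mathcal{L}_\eta = (b + \eta F)\cdot\nabla + \frac{1}{\beta}\Delta\) is uniformly elliptic with a \emph{constant} second-order part and smooth coefficients, and that the right-hand side \(\Pi_\eta \varphi\) is smooth; interior Schauder estimates together with a bootstrap on the order of differentiation then give \(\widetilde{\varphi}_\eta \in C^\infty(\mathbb{R}^d)\). To control the growth at infinity I would localize these estimates to unit balls \(B(x,1)\): for every \(j \in \mathbb{N}\),
\[
	\left\| \widetilde{\varphi}_\eta \right\|_{C^{j+2}(B(x,1/2))} \leq C(x) \left( \left\| \widetilde{\varphi}_\eta \right\|_{C^0(B(x,1))} + \left\| \Pi_\eta \varphi \right\|_{C^{j}(B(x,1))} \right),
\]
where, by Assumption~\ref{ass:ref_measure}, the local Hölder norms of \(b + \eta F\) and of \(\Pi_\eta \varphi\) grow at most polynomially in \(|x|\), and hence so do the constants \(C(x)\); combining this with the a priori bound \(\|\widetilde{\varphi}_\eta\|_{\mathcal{K}_n} < \infty\) yields \(\partial^k \widetilde{\varphi}_\eta \in B_{n_k}^\infty\) for a suitable \(n_k\), for every multi-index \(k\). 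An equivalent route is to differentiate the Poisson equation directly, using \(-\mathcal{L}_\eta(\partial_i \widetilde{\varphi}_\eta) = \partial_i \Pi_\eta \varphi + \partial_i(b + \eta F)\cdot\nabla \widetilde{\varphi}_\eta\), and to induct on the order of the derivative once a first gradient estimate is available; or to argue probabilistically via \(\widetilde{\varphi}_\eta = \int_0^\infty P_t^\eta \Pi_\eta \varphi\,dt\) and derivative estimates on \(P_t^\eta\) coming from the contractivity at infinity \eqref{eq:contractive_at_inf}. This is the step that requires genuine work and that extends Kopec's regularity results to the present setting with polynomially growing derivatives of the coefficients; the delicate point is precisely that the constants in the elliptic (or semigroup) estimates deteriorate only polynomially in \(|x|\), which is carried out in Appendix~\ref{sec:Kopec_ext}.

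For uniqueness, suppose \(u_1, u_2 \in \mathscr{S}_\eta\) both solve \(-\mathcal{L}_\eta u = \Pi_\eta \varphi\). Then \(h := u_1 - u_2 \in \mathscr{S}\) satisfies \(\mathcal{L}_\eta h = 0\) and \(\nu_\eta(h) = 0\). Since \(h\) and its derivatives grow at most polynomially and the \(\mathcal{K}_n\) are Lyapunov functions for \eqref{eq:sde_model}, Dynkin's formula together with the moment and semigroup bounds \eqref{eq:moment_bounds}--\eqref{eq:semigroup_estimates} gives \(P_t^\eta h = h\) for every \(t \geq 0\); letting \(t \to \infty\) and using \eqref{eq:geo_ergodicity} gives \(h(x) = \lim_{t\to\infty}(P_t^\eta h)(x) = \nu_\eta(h) = 0\) for all \(x\), hence \(u_1 = u_2\). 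The main obstacle is the regularity step; existence and uniqueness are comparatively routine given the estimates \eqref{eq:moment_bounds}--\eqref{eq:geo_ergodicity}.
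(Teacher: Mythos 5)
Your proposal is correct in outline, but it reaches the conclusion by a genuinely different route than the paper. You split the problem into (i) abstract existence of a mean-zero solution in \(\Pi_\eta B_n^\infty\) via the uniform resolvent bound \eqref{eq:inverse_bounds} (equivalently, the time integral of \(P_t^\eta \Pi_\eta\varphi\) with convergence from \eqref{eq:geo_ergodicity}), (ii) an analytic regularity upgrade by interior Schauder estimates on unit balls with constants growing at most polynomially in \(|x|\), bootstrapped in the order of differentiation, and (iii) a clean ergodicity argument for uniqueness. The paper instead works entirely at the level of the semigroup: Appendix~\ref{sec:Kopec_ext} proves pointwise bounds \(\left|\partial^k P_t^\eta\varphi(x)\right| \leq C\,\|\varphi\|_{m,\ell_m}\,\mathcal{K}_s(x)\,\mathrm{e}^{-\lambda_m t}\) by combining estimates on the derivatives of the stochastic flow (the variation processes \(\delta_x^h\), \(\xi_x^h\), controlled through the one-sided bound \eqref{eq:jac_drift_spectral_bound}) with Bismut--Elworthy--Li formulae, which give short-time derivative estimates involving only the \(\mathcal{K}_{\ell_0}\)-norm of the initial condition; geometric ergodicity \eqref{eq:geo_ergodicity} then converts these into exponentially decaying bounds for \(t\geq 1\), and the solution \(\widetilde{\varphi}_\eta = \int_0^\infty P_t^\eta\Pi_\eta\varphi\,dt\) lands in \(\mathscr{S}_\eta\) in one stroke. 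The paper's route buys the quantitative decay estimate \eqref{eq:bd_kbe} (and uniformity in \(\eta\) is within reach), which is what the extension of Kopec's results is really about; your elliptic route is more elementary and avoids the BEL machinery, but the decisive quantitative claim --- that the local Schauder constants, which depend on the \(C^{j,\alpha}\)-norms of \(b+\eta F\) on \(B(x,1)\), deteriorate only polynomially in \(|x|\) --- is asserted rather than proved, and your deferral of this point to Appendix~\ref{sec:Kopec_ext} is misplaced, since the appendix carries out the probabilistic argument, not the elliptic one. If you pursue your route you must supply that estimate (it is true, but needs an explicit tracking of how the Schauder constant depends on the drift norm) before the bootstrap closes. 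Your uniqueness argument via Dynkin's formula and \eqref{eq:geo_ergodicity} is correct and is in fact more explicit than what the paper writes down.
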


These assumptions ensure that our system has a well defined linear response in the sense that the following lemma adapted from \cite{Spacek} holds.
\begin{lemma}\label{lm:lin_resp}
	Suppose that Assumptions~\ref{ass:drift} and \ref{ass:ref_measure} hold. The linear response of the invariant measure is given by
	\begin{equation}\label{eq:lin_resp_factor}
		\mathfrak{f} = -(\mathcal{L}_0^*)^{-1}\widetilde{\mathcal{L}}^* \mathbf{1},
	\end{equation}
	with \(\mathfrak{f}\in \Pi_0 B_n^\infty\) for \(n \in \mathbb{N}\) large enough.
	More precisely, fix \(\eta_\star > 0\). Then there exists \(p \in \mathbb{N}\) such that, for any \(n \in  \mathbb{N}\) there exists a constant \(M_{n, \eta_\star} > 0\) such that, for any \(\varphi \in \mathscr{S}\) satisfying \(\partial^\alpha \varphi \in B_n^\infty\) for any multi-index \(|\alpha| = \alpha_1 + \dots + \alpha_d \leq p\)
	\begin{equation}\label{eq:lin_resp}
		\int_{\mathbb{R}^d} \varphi \, d\nu_\eta = \int_{\mathbb{R}^d} \varphi\left(1+ \eta \mathfrak{f} \right) d\nu_0 + \eta^2 \mathscr{R}_{\eta,\varphi},
	\end{equation}
	with \(|\mathscr{R}_{\eta,\varphi}| \leq M_{n, \eta_\star}\sum_{|\alpha| \leq p}\left\|\partial^\alpha \varphi\right\|_{\mathcal{K}_n}\) for all \(\eta \in [-\eta_\star,\eta_\star]\).
\end{lemma}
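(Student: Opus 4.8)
The plan is to run the perturbative argument behind the result of \cite{Spacek}, invoking the Poisson equation of Proposition~\ref{prop:exist_poisson_eq} twice; passing through the Poisson equation directly, rather than differentiating \(\eta \mapsto \nu_\eta\), produces the explicit dependence of the remainder on \(\varphi\). Write \(\nu_\eta(g) = \int_{\mathbb{R}^d} g \, d\nu_\eta\), fix \(\varphi \in \mathscr{S}\), and let \(\widetilde{\varphi}_0 \in \mathscr{S}_0\) be the solution of \(-\mathcal{L}_0 \widetilde{\varphi}_0 = \Pi_0 \varphi\) given by Proposition~\ref{prop:exist_poisson_eq} with \(\eta = 0\). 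Integrating this equation against \(\nu_\eta\) and using the stationarity identity \(\nu_\eta(\mathcal{L}_\eta g) = 0\) for \(g \in \mathscr{S}\) (justified below) together with \(\mathcal{L}_0 = \mathcal{L}_\eta - \eta \widetilde{\mathcal{L}}\) yields the exact relation
\begin{equation}\label{eq:prop_plan_key}
	\nu_\eta(\varphi) - \nu_0(\varphi) = \eta\, \nu_\eta(\widetilde{\mathcal{L}} \widetilde{\varphi}_0).
\end{equation}
Because \(\widetilde{\mathcal{L}}\) stabilizes \(\mathscr{S}\), the function \(\psi := \widetilde{\mathcal{L}} \widetilde{\varphi}_0\) again lies in \(\mathscr{S}\), so \eqref{eq:prop_plan_key} may be applied a second time with \(\psi\) in place of \(\varphi\).

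I would then isolate the first-order term by writing \(\nu_\eta(\psi) = \nu_0(\psi) + (\nu_\eta - \nu_0)(\psi)\) in \eqref{eq:prop_plan_key}. Since \(\widetilde{\mathcal{L}}^* \mathbf{1} \in \mathscr{S}_0\) (a consequence of Assumption~\ref{ass:ref_measure}), in particular \(\nu_0(\widetilde{\mathcal{L}}^* \mathbf{1}) = 0\), the function \(\mathfrak{f} = -(\mathcal{L}_0^*)^{-1} \widetilde{\mathcal{L}}^* \mathbf{1}\) is the unique element of \(\mathscr{S}_0\) solving \(-\mathcal{L}_0^* \mathfrak{f} = \widetilde{\mathcal{L}}^* \mathbf{1}\) — which is Proposition~\ref{prop:exist_poisson_eq} at \(\eta = 0\) when the reference dynamics is reversible (so that \(\mathcal{L}_0^* = \mathcal{L}_0\)), and in general follows from the same regularity theory applied to the time-reversed generator \(\mathcal{L}_0^*\), as in \cite{Spacek}. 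In particular \(\mathfrak{f} \in \mathscr{S}_0\), hence \(\mathfrak{f} \in \Pi_0 B_n^\infty\) for \(n\) large enough. Using the definition of \(\mathfrak{f}\), integration by parts in \(L^2(\nu_0)\) (licit since \(\widetilde{\varphi}_0, \mathfrak{f} \in \mathscr{S}\)), the equation for \(\widetilde{\varphi}_0\) and \(\nu_0(\mathfrak{f}) = 0\),
\begin{equation}\label{eq:prop_plan_leading}
\begin{aligned}
	\nu_0(\widetilde{\mathcal{L}} \widetilde{\varphi}_0)
	&= \langle \widetilde{\mathcal{L}} \widetilde{\varphi}_0, \mathbf{1} \rangle = \langle \widetilde{\varphi}_0, \widetilde{\mathcal{L}}^* \mathbf{1} \rangle = -\langle \widetilde{\varphi}_0, \mathcal{L}_0^* \mathfrak{f} \rangle \\
	&= \langle -\mathcal{L}_0 \widetilde{\varphi}_0, \mathfrak{f} \rangle = \langle \Pi_0 \varphi, \mathfrak{f} \rangle = \nu_0(\varphi \mathfrak{f}),
\end{aligned}
\end{equation}
with \(\langle \cdot, \cdot \rangle\) the \(L^2(\nu_0)\) inner product. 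Inserting \eqref{eq:prop_plan_leading} into \eqref{eq:prop_plan_key} gives \(\nu_\eta(\varphi) = \nu_0(\varphi(1 + \eta \mathfrak{f})) + \eta\, (\nu_\eta - \nu_0)(\psi)\), so that \(\mathscr{R}_{\eta,\varphi} = \eta^{-1} (\nu_\eta - \nu_0)(\psi)\).

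It remains to bound \(\mathscr{R}_{\eta,\varphi}\). Applying \eqref{eq:prop_plan_key} to \(\psi \in \mathscr{S}\), with \(\widetilde{\psi}_0 \in \mathscr{S}_0\) solving \(-\mathcal{L}_0 \widetilde{\psi}_0 = \Pi_0 \psi\), yields \(\mathscr{R}_{\eta,\varphi} = \nu_\eta(\widetilde{\mathcal{L}} \widetilde{\psi}_0)\). Since \(\widetilde{\mathcal{L}} \widetilde{\psi}_0 \in \mathscr{S} \subset B_m^\infty\) for some \(m\), the uniform moment bounds \eqref{eq:moment_bounds} give \(|\mathscr{R}_{\eta,\varphi}| \leq C_{\eta_\star} \|\widetilde{\mathcal{L}} \widetilde{\psi}_0\|_{\mathcal{K}_m}\) with \(C_{\eta_\star} = \sup_{|\eta| \leq \eta_\star} \nu_\eta(\mathcal{K}_m)\), uniformly over \(\eta \in [-\eta_\star, \eta_\star]\), which already shows that \(\mathscr{R}_{\eta,\varphi}\) is bounded as \(\eta \to 0\). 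For the quantitative estimate I would chain the weighted-norm bounds established in the proof of Proposition~\ref{prop:exist_poisson_eq} (Appendix~\ref{sec:Kopec_ext}): each resolution of the Poisson equation controls \(\widetilde{\varphi}_0\) and its derivatives by those of \(\Pi_0 \varphi\) up to a fixed derivative loss, and each application of \(\widetilde{\mathcal{L}} = F \cdot \nabla\) with \(F \in \mathscr{S}\) costs one derivative and a fixed polynomial weight. Running the chain Poisson, then \(\widetilde{\mathcal{L}}\), then Poisson, then \(\widetilde{\mathcal{L}}\), thus bounds \(\|\widetilde{\mathcal{L}} \widetilde{\psi}_0\|_{\mathcal{K}_m}\) by \(C \sum_{|\alpha| \leq p} \|\partial^\alpha \varphi\|_{\mathcal{K}_{n_0}}\), with \(p\) a fixed integer — twice the derivative loss in the Poisson estimate plus two for the two applications of \(\widetilde{\mathcal{L}}\), independent of \(n\) — and \(n_0\) depending only on the polynomial growth of \(b, F, \log \nu_0\); enlarging \(n_0\) and absorbing constants into \(M_{n, \eta_\star}\) yields the stated bound for an arbitrary prescribed \(n\).

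Two points need care, and I expect the second to be the real obstacle. First, the stationarity identity \(\nu_\eta(\mathcal{L}_\eta g) = 0\) and the \(L^2(\nu_0)\)-adjoint manipulations have to be justified for the polynomially growing functions \(g \in \{\widetilde{\varphi}_0, \widetilde{\psi}_0, \mathfrak{f}\}\); this is a routine approximation argument, obtained by differentiating \(t \mapsto \nu_\eta(P_t^\eta g)\) at \(t = 0\) and passing to the limit with the help of the semigroup estimates \eqref{eq:semigroup_estimates} (and of the analogous estimates for the time-reversed semigroup). Second, one needs the well-posedness in \(\mathscr{S}_0\) of \(\mathfrak{f} = -(\mathcal{L}_0^*)^{-1} \widetilde{\mathcal{L}}^* \mathbf{1}\) for a reference dynamics which need not be reversible — an adjoint analogue of Proposition~\ref{prop:exist_poisson_eq} — which requires checking that the time-reversed generator \(\mathcal{L}_0^*\), whose coefficients again belong to \(\mathscr{S}\) and which still admits \(\nu_0\) as invariant measure, enjoys a suitable Lyapunov and minorization structure. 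For the reversible reference dynamics relevant to our applications (overdamped Langevin) this difficulty disappears; in general it is part of the statement imported from \cite{Spacek}.
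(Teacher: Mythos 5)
Your argument is correct, and it reaches \eqref{eq:lin_resp} by a genuinely different organization of the same perturbative idea. The paper constructs the second-order approximate inverse \(Q_\eta = \Pi_0\mathcal{L}_0^{-1}\Pi_0 - \eta\,\Pi_0\mathcal{L}_0^{-1}\Pi_0\widetilde{\mathcal{L}}\Pi_0\mathcal{L}_0^{-1}\Pi_0\), inserts \(\phi = Q_\eta\varphi\) both into the identity \(\int_{\mathbb{R}^d}\mathcal{L}_\eta\phi\,(1+\eta\mathfrak{f})\,d\nu_0 = \eta^2\int_{\mathbb{R}^d}(\widetilde{\mathcal{L}}\phi)\mathfrak{f}\,d\nu_0\) and into the stationarity relation \(\int_{\mathbb{R}^d}\mathcal{L}_\eta Q_\eta\varphi\,d\nu_\eta = 0\), and reads off \eqref{eq:lin_resp} with a remainder consisting of several terms, one of which still involves \(\mathfrak{f}\). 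You instead integrate the unperturbed Poisson equation against \(\nu_\eta\) to obtain the exact telescoping identity \(\nu_\eta(\varphi)-\nu_0(\varphi)=\eta\,\nu_\eta(\widetilde{\mathcal{L}}\widetilde{\varphi}_0)\), identify the first-order coefficient as \(\nu_0(\varphi\mathfrak{f})\) by the duality computation \eqref{eq:prop_plan_leading}, and iterate the identity once, which yields the single-term remainder \(\mathscr{R}_{\eta,\varphi}=\nu_\eta\bigl(\widetilde{\mathcal{L}}(-\mathcal{L}_0)^{-1}\Pi_0\widetilde{\mathcal{L}}(-\mathcal{L}_0)^{-1}\Pi_0\varphi\bigr)\), bounded directly by \eqref{eq:moment_bounds}. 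Both routes use exactly the same ingredients (Proposition~\ref{prop:exist_poisson_eq}, stabilization of \(\mathscr{S}\) by \(\widetilde{\mathcal{L}}\), stationarity of \(\nu_\eta\) tested on polynomially growing functions, the \(L^2(\nu_0)\) adjoint relation defining \(\mathfrak{f}\), and the uniform moment bounds), so the difference is organizational rather than conceptual; what your version buys is a cleaner remainder that does not involve \(\mathfrak{f}\) at all, while the paper's \(Q_\eta\)-based computation avoids isolating the identity \eqref{eq:prop_plan_key} but pays with a longer remainder. The two caveats you flag — justifying \(\nu_\eta(\mathcal{L}_\eta g)=0\) and the adjoint manipulations for unbounded \(g\), and the well-posedness of \(\mathfrak{f}=-(\mathcal{L}_0^*)^{-1}\widetilde{\mathcal{L}}^*\mathbf{1}\) when the reference dynamics is not reversible — are treated just as tersely in the paper (a one-line appeal to Assumption~\ref{ass:ref_measure} and \eqref{eq:inverse_bounds}), so acknowledging them does not constitute a gap relative to the paper's own standard.

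One small correction in your final quantitative step: a bound by \(\sum_{|\alpha|\leq p}\|\partial^\alpha\varphi\|_{\mathcal{K}_{n_0}}\) for a fixed \(n_0\) does not imply the stated bound for a prescribed \(n>n_0\), since \(\|\cdot\|_{\mathcal{K}_n}\leq\|\cdot\|_{\mathcal{K}_{n_0}}\) and \(\|\partial^\alpha\varphi\|_{\mathcal{K}_{n_0}}\) may be infinite when the derivatives of \(\varphi\) only lie in \(B_n^\infty\). The correct reading (which your parenthetical gestures at) is to run the whole chain — Poisson estimate, \(\widetilde{\mathcal{L}}\), Poisson estimate, \(\widetilde{\mathcal{L}}\), moment bound — with data weight \(n\); the estimates of Appendix~\ref{sec:Kopec_ext} and \eqref{eq:moment_bounds} hold for every weight, and it is the constant \(M_{n,\eta_\star}\), not the weight appearing in the norm of \(\partial^\alpha\varphi\), that absorbs the \(n\)-dependence, with \(p\) fixed (two derivatives) independently of \(n\).
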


\begin{proof}
	For \(n \in \mathbb{N}\) large enough, the function~$\mathfrak{f}$ is a well defined element of~\(\Pi_0B_n^\infty\) in view of Assumption~\ref{ass:ref_measure} and \eqref{eq:inverse_bounds}.
	Using the expression for \(\mathfrak{f}\), we have, for \(\phi \in \mathscr{S}\),
	\[\begin{aligned}
		\int_{\mathbb{R}^d} \mathcal{L}_\eta \phi \, (1+\eta\mathfrak{f}) \, d\nu_0 &= \int_{\mathbb{R}^d} \left(\mathcal{L}_0 + \eta \widetilde{\mathcal{L}}\right)\phi \, d\nu_0 + \eta \int_{\mathbb{R}^d} \left[\left(\mathcal{L}_0 + \eta\widetilde{\mathcal{L}}\right)\phi \right] \mathfrak{f}\, d\nu_0\\
		&= \int_{\mathbb{R}^d} \mathcal{L}_0 \phi \, d\nu_0 + \eta \int_{\mathbb{R}^d} \widetilde{\mathcal{L}} \phi \, d\nu_0 + \eta \int_{\mathbb{R}^d} \mathcal{L}_0 \phi \left(-\mathcal{L}_0^*\right)^{-1}\widetilde{\mathcal{L}}^*\mathbf{1} \, d\nu_0 + \eta^2\int_{\mathbb{R}^d} \left(\widetilde{\mathcal{L}} \phi\right) \mathfrak{f}\, d\nu_0\\
		&= \int_{\mathbb{R}^d} \mathcal{L}_0 \phi \, d\nu_0 + \eta \int_{\mathbb{R}^d} \widetilde{\mathcal{L}} \phi \, d\nu_0 - \eta \int_{\mathbb{R}^d} \widetilde{\mathcal{L}} \phi \, d\nu_0 + \eta^2\int_{\mathbb{R}^d} \left(\widetilde{\mathcal{L}} \phi\right) \mathfrak{f}\, d\nu_0\\
		&= \eta^2 \int_{\mathbb{R}^d} \left(\widetilde{\mathcal{L}} \phi\right) \mathfrak{f}\, d\nu_0,
	\end{aligned} \]
	where we use for the last equality the fact that \(\mathcal{L}_0\) is the generator of dynamics with invariant probability measure \(\nu_0\).
	We next replace~\(\phi\) by \(Q_\eta \varphi\), where \(Q_\eta\) is some operator on~\(\mathscr{S}\) and \(\varphi \in \mathscr{S}\). This leads to
	\begin{equation}\label{eq:integral_equal_zero}
		\int_{\mathbb{R}^d} \mathcal{L}_\eta Q_\eta \varphi \, d\nu_\eta = 0 = \int_{\mathcal{R}^d} \mathcal{L}_\eta Q_\eta \varphi \, (1+\eta\mathfrak{f}) \, d\nu_0 - \eta^2 \int_{\mathbb{R}^d} \left(\widetilde{\mathcal{L}} Q_\eta \varphi\right)\mathfrak{f} \, d\nu_0.
	\end{equation}
	The choice \(Q_\eta = \Pi_0 \mathcal{L}_0^{-1} \Pi_0 - \eta \Pi_0 \mathcal{L}_0^{-1} \Pi_0 \widetilde{\mathcal{L}} \Pi_0 \mathcal{L}_0^{-1} \Pi_0\) ensures that 
	\[\begin{aligned}
		\mathcal{L}_\eta Q_\eta &= \left(\mathcal{L}_0 + \eta \widetilde{\mathcal{L}}\right) \left(\Pi_0 \mathcal{L}_0^{-1}\Pi_0 - \eta \Pi_0\mathcal{L}_0^{-1}\Pi_0\widetilde{\mathcal{L}}\Pi_0 \mathcal{L}_0^{-1}\Pi_0\right)\\
		&= \Pi_0 + \eta \left(\widetilde{\mathcal{L}}\Pi_0\mathcal{L}_0^{-1}\Pi_0 - \Pi_0\widetilde{\mathcal{L}}\Pi_0\mathcal{L}_0^{-1}\Pi_0\right) - \eta^2\left(\widetilde{\mathcal{L}}\Pi_0\mathcal{L}_0^{-1}\Pi_0\right)^2,
	\end{aligned}\] 
	since \(\mathcal{L}_0\Pi_0\mathcal{L}_0^{-1}\Pi_0 = \mathcal{L}_0\mathcal{L}_0^{-1}\Pi_0 = \Pi_0\) as \(\mathcal{L}_0^{-1}\) is an operator on \(\mathscr{S}_0\). Observe that, for any \(\psi \in \mathscr{S}\), the function \(\left(\widetilde{\mathcal{L}}\Pi_0\mathcal{L}_0^{-1}\Pi_0 - \Pi_0\widetilde{\mathcal{L}}\Pi_0\mathcal{L}_0^{-1}\Pi_0\right)\psi\) is constant. Plugging this expression for \(\mathcal{L}_\eta\mathcal{Q}_\eta\) to the left hand side of~\eqref{eq:integral_equal_zero} gives
	\[\int_{\mathbb{R}^d} \mathcal{L}_\eta \mathcal{Q}_\eta \varphi \, d\nu_\eta = \int_{\mathbb{R}^d}\varphi \, d\nu_\eta - \int_{\mathbb{R}^d} \varphi \, d\nu_0 + \eta \left(\widetilde{\mathcal{L}}\Pi_0\mathcal{L}_0^{-1}\Pi_0 - \Pi_0\widetilde{\mathcal{L}}\Pi_0\mathcal{L}_0^{-1}\Pi_0\right)\varphi - \eta^2\int_{\mathbb{R}^d}\left(\widetilde{\mathcal{L}}\Pi_0\mathcal{L}_0^{-1}\Pi_0\right)^2\varphi \, d\nu_\eta. \]
	Similarly, plugging in to the right hand side of \eqref{eq:integral_equal_zero} gives
	\[\begin{aligned}
		&\int_{\mathcal{R}^d} \mathcal{L}_\eta Q_\eta \varphi \, (1+\eta\mathfrak{f}) \, d\nu_0 - \eta^2 \int_{\mathbb{R}^d} \left(\widetilde{\mathcal{L}} Q_\eta \varphi\right)\mathfrak{f} \, d\nu_0\\ 
		&\quad = \int_{\mathbb{R}^d} \varphi\left(1 + \eta \mathfrak{f}\right) \, d\nu_0 - \int_{\mathbb{R}^d} \varphi \, d\nu_0 + \eta \left(\widetilde{\mathcal{L}}\Pi_0\mathcal{L}_0^{-1}\Pi_0 - \Pi_0\widetilde{\mathcal{L}}\Pi_0\mathcal{L}_0^{-1}\Pi_0\right)\varphi\\
		&\qquad-\eta^2 \int_{\mathbb{R}^d}\left[\left(\widetilde{\mathcal{L}}\Pi_0\mathcal{L}_0^{-1}\Pi_0\right)^2\varphi \left(1 + \eta \mathfrak{f}\right) + \left(\widetilde{\mathcal{L}}\mathcal{Q}_\eta\varphi\right)\mathfrak{f}\right]\, d\nu_0,
	\end{aligned} \]
	where we use the fact that \(\mathfrak{f}\) has average zero with respect to \(\nu_0\).
	Consequently, rearranging the equality~\eqref{eq:integral_equal_zero} gives
	\[
	\int_{\mathbb{R}^d} \varphi \, d\nu_\eta = \int_{\mathbb{R}^d} \varphi \, (1+\eta\mathfrak{f}) \, d\nu_0 + \eta^2 \mathscr{R}_{\eta,\varphi}, 
	\]
	with the remainder term
	\[
	\mathscr{R}_{\eta,\varphi} = \int_{\mathbb{R}^d} \left(\widetilde{\mathcal{L}} \Pi_0 \mathcal{L}_0^{-1} \Pi_0\right)^2 \varphi \, d\nu_\eta - \int_{\mathbb{R}^d} \left( \left[\left(\widetilde{\mathcal{L}} \Pi_0 \mathcal{L}_0^{-1} \Pi_0\right)^2\varphi\right] (1+\eta\mathfrak{f})+\left(\widetilde{\mathcal{L}} Q_\eta \varphi\right)\mathfrak{f} \right) d\nu_0.
	\]
	The result then follows from the fact that \(\mathscr{R}_{\eta,\varphi}\) involves only derivatives up to some finite order of \(\varphi\), so that, in view of \eqref{eq:moment_bounds} (implied by Assumption~\ref{ass:drift}) and Proposition~\ref{prop:exist_poisson_eq}, this term is uniformly bounded for~\(\eta \in \left[-\eta_\star,\eta_\star\right]\).
\end{proof}
\noindent Lemma~\ref{lm:lin_resp} implies that \eqref{eq:transport_coef} is well-defined when \(R\in \mathscr{S}\) and Assumptions~\ref{ass:drift} and \ref{ass:ref_measure} hold. Furthermore the lemma gives the following expression for \(\alpha_R\):
\[\alpha_R = \int_{\mathbb{R}^d} R \mathfrak{f} \, d\nu_0.\]

\subsection{Estimator of Linear Response}
For a response function \(R \in \mathscr{S}_0\), a standard estimator of \(\alpha_R\) in \eqref{eq:transport_coef} from non-equilibrium molecular dynamics is
\begin{equation}
	\label{eq:naive_estimator}
	\widehat{\Phi}_{\eta,t} = \frac{1}{\eta t}\int_0^t R(X_s^\eta) \, ds,
\end{equation}
which converges almost surely to
\begin{equation}
	\label{eq:alpha_R_eta}
	\alpha_{R,\eta} = \frac1\eta \int_{\mathbb{R}^d} R \, d\nu_\eta = \alpha_R + \mathrm{O}(\eta),
\end{equation}
where the last equality comes from Lemma~\ref{lm:lin_resp}. However, the estimator~\(\widehat{\Phi}_{\eta,t}\) suffers both from a large asymptotic variance, of order~\(\sigma_{\mathrm{ref}, R}^2/\eta^{2}\), with 
\begin{equation}\label{eq:ref_variance}
	\sigma_{\mathrm{ref}, R}^2 = 2 \int_{\mathbb{R}^d} \left(-\mathcal{L}_0^{-1}R\right)R d\nu_0,
\end{equation}
the asymptotic variance for time averages \(\frac{1}{t}\int_0^t R\left(X_s^0\right)ds\) of~\(R\) computed with the reference dynamics, i.e the limit of the variance \(\frac{1}{\sqrt{t}}\int_0^t R\left(X_s^0\right)ds\) as \(t\) goes to infinity. It also has a large finite time sampling bias, of order~\(1/(\eta t)\). This is made precise in the following result adapted from \cite{Spacek}, and proved in Section 2.3

\begin{proposition}
	\label{prop:std_estimator}
	Fix \(\eta_\star > 0\) and \(R\in\mathscr{S}_0\). Suppose that Assumptions~\ref{ass:drift} and \ref{ass:ref_measure} hold, and that \(X_0^\eta \sim \nu_{\rm init}\) for some initial probability measure~\(\nu_{\rm init}\) such that \(\nu_{\rm init}(\mathcal{K}_n) < +\infty\) for any \(n \geq 1\). Then the estimator~\(\widehat{\Phi}_{\eta,t}\) converges almost surely to \(\alpha_{R,\eta}\) as \(t \to +\infty\), and the following central limit theorem holds:
	\[
	\sqrt{t} \left(\widehat{\Phi}_{\eta,t} - \alpha_{R,\eta}\right) \xrightarrow[t \to +\infty]{\mathrm{law}} \mathcal{N}\left(0,\sigma_{\mathrm{std}, R,\eta}^2\right).
	\]
	Moreover, there exists \(C,K \in \mathbb{R}_+\) (which depend on~\(\eta_\star\) and~\(R\)) such that
	\begin{equation}
		\label{eq:std_variance_bounds}
		\forall \eta \in [-\eta_\star, \eta_\star], \qquad \left|\sigma_{\mathrm{std}, R,\eta}^2 - \frac{\sigma_{\mathrm{ref}, R}^2}{\eta^2}\right| \leq \frac{C}{\eta},
	\end{equation}
	and
	\begin{equation}
		\label{eq:std_bias_bounds}
		\forall \eta \in [-\eta_\star, \eta_\star], \quad \forall t > 0, \qquad \left| \mathbb{E}\left(\widehat{\Phi}_{\eta,t}\right) - \alpha_{R,\eta} \right| \leq \frac{K}{\eta t}.
	\end{equation} 
\end{proposition}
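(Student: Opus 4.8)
The plan is to derive everything from the Poisson equation \(-\mathcal{L}_\eta \widetilde{R}_\eta = \Pi_\eta R\). Since \(R \in \mathscr{S}_0 \subset \mathscr{S}\), Proposition~\ref{prop:exist_poisson_eq} provides a unique solution \(\widetilde{R}_\eta \in \mathscr{S}_\eta\); in particular \(\widetilde{R}_\eta\) and \(\nabla\widetilde{R}_\eta\) belong to some \(B_n^\infty\). Applying Itô's formula to \(s \mapsto \widetilde{R}_\eta(X_s^\eta)\) and using \(\mathcal{L}_\eta\widetilde{R}_\eta = -\Pi_\eta R\) yields the decomposition
\[ \frac{1}{t}\int_0^t R(X_s^\eta)\,ds - \nu_\eta(R) = \frac{1}{t}\Big(\widetilde{R}_\eta(X_0^\eta) - \widetilde{R}_\eta(X_t^\eta)\Big) + \sqrt{\frac{2}{\beta}}\,\frac{1}{t}\int_0^t \nabla\widetilde{R}_\eta(X_s^\eta)\cdot dW_s, \]
so that \(\widehat{\Phi}_{\eta,t} - \alpha_{R,\eta}\) equals \(\eta^{-1}\) times the right-hand side. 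Almost sure convergence then follows: the boundary term tends to \(0\) a.s.\ because \(\sup_{t\ge 0}\mathbb{E}[\mathcal{K}_n(X_t^\eta)] < \infty\) by \eqref{eq:semigroup_estimates} together with \(\nu_{\rm init}(\mathcal{K}_n) < \infty\), while the martingale term divided by \(t\) tends to \(0\) a.s.\ by the strong law for martingales, its bracket \(\int_0^t |\nabla\widetilde{R}_\eta(X_s^\eta)|^2\,ds\) growing linearly in \(t\) by the ergodic theorem (a consequence of the uniform geometric ergodicity \eqref{eq:geo_ergodicity} and the moment bounds \eqref{eq:moment_bounds}).

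For the central limit theorem, multiply the decomposition by \(\sqrt{t}/\eta\). The boundary contribution \(\eta^{-1}t^{-1/2}(\widetilde{R}_\eta(X_0^\eta) - \widetilde{R}_\eta(X_t^\eta))\) converges to \(0\) in \(L^2\), hence in probability, since \(\sup_{t\ge 0}\mathbb{E}[\widetilde{R}_\eta(X_t^\eta)^2] < \infty\). For the continuous martingale \(M_t = \int_0^t \nabla\widetilde{R}_\eta(X_s^\eta)\cdot dW_s\), the ergodic theorem gives \(t^{-1}\langle M\rangle_t \to \nu_\eta(|\nabla\widetilde{R}_\eta|^2)\) a.s., so by the Dambis--Dubins--Schwarz time-change representation \(t^{-1/2}M_t\) converges in law to \(\mathcal{N}(0,\nu_\eta(|\nabla\widetilde{R}_\eta|^2))\); Slutsky's lemma then gives the announced CLT, with
\[ \sigma_{\mathrm{std},R,\eta}^2 = \frac{1}{\eta^2}\,\frac{2}{\beta}\,\nu_\eta\big(|\nabla\widetilde{R}_\eta|^2\big) = \frac{1}{\eta^2}\,2\,\nu_\eta\big((-\mathcal{L}_\eta^{-1}\Pi_\eta R)\,R\big), \]
where the last equality uses the identity \(\int \varphi(-\mathcal{L}_\eta\varphi)\,d\nu_\eta = \frac{1}{\beta}\int |\nabla\varphi|^2\,d\nu_\eta\) (valid because \(\nu_\eta\) is invariant and the noise coefficient is constant) together with \(\nu_\eta(\widetilde{R}_\eta) = 0\).

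For the variance bound, set \(g(\eta) := 2\,\nu_\eta((-\mathcal{L}_\eta^{-1}\Pi_\eta R)R)\), so that \(\sigma_{\mathrm{std},R,\eta}^2 = \eta^{-2}g(\eta)\) and \(g(0) = \sigma_{\mathrm{ref},R}^2\) since \(\Pi_0 R = R\); the claim \eqref{eq:std_variance_bounds} is then equivalent to \(|g(\eta) - g(0)| \le C|\eta|\). I would compare \(\widetilde{R}_\eta\) with \(\widetilde{R}_0 := -\mathcal{L}_0^{-1}R\): because \(\Pi_\eta R - R = -\nu_\eta(R) = -\eta\alpha_{R,\eta}\) is a constant of size \(\mathrm{O}(\eta)\) by \eqref{eq:alpha_R_eta}, and \(-\mathcal{L}_\eta\widetilde{R}_0 = R - \eta\widetilde{\mathcal{L}}\widetilde{R}_0\) with \(\widetilde{\mathcal{L}}\widetilde{R}_0 \in \mathscr{S}\), the difference \(\widetilde{R}_\eta - \widetilde{R}_0\) (corrected by an \(\mathrm{O}(\eta)\) constant so as to have \(\nu_\eta\)-mean zero) solves a Poisson equation for \(\mathcal{L}_\eta\) whose right-hand side has \(\mathcal{K}_n\)-norm \(\mathrm{O}(\eta)\); the uniform resolvent bound \eqref{eq:inverse_bounds} then gives \(\|\widetilde{R}_\eta - \widetilde{R}_0\|_{\mathcal{K}_n} = \mathrm{O}(\eta)\) for a suitable \(n\). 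Writing \(g(\eta) - g(0) = 2\,\nu_\eta((\widetilde{R}_\eta - \widetilde{R}_0)R) + 2\,(\nu_\eta - \nu_0)(\widetilde{R}_0 R)\), the first term is \(\mathrm{O}(\eta)\) by this estimate and the uniform moments \eqref{eq:moment_bounds}, and the second is \(\mathrm{O}(\eta)\) by Lemma~\ref{lm:lin_resp} applied to \(\widetilde{R}_0 R \in \mathscr{S}\). Finally, taking expectations in the decomposition removes the martingale term, so \(\mathbb{E}(\widehat{\Phi}_{\eta,t}) - \alpha_{R,\eta} = (\eta t)^{-1}\,\mathbb{E}[\widetilde{R}_\eta(X_0^\eta) - \widetilde{R}_\eta(X_t^\eta)]\), and \eqref{eq:std_bias_bounds} follows from \(\sup_{t\ge 0}|\mathbb{E}[\widetilde{R}_\eta(X_t^\eta)]| \le \|\widetilde{R}_\eta\|_{\mathcal{K}_n}\sup_{t\ge 0}\mathbb{E}[\mathcal{K}_n(X_t^\eta)] < \infty\); alternatively the bias can be bounded directly by \((\eta t)^{-1}\int_0^\infty |\nu_{\rm init}(P_s^\eta R) - \nu_\eta(R)|\,ds\) using \eqref{eq:geo_ergodicity}.

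The step I expect to be the main obstacle is the uniform-in-\(\eta\) Lipschitz estimate \(\|\widetilde{R}_\eta - \widetilde{R}_0\|_{\mathcal{K}_n} = \mathrm{O}(\eta)\), that is, the control of how the Poisson solution depends on the perturbation parameter: one must track the loss of polynomial weight incurred by applying \(\widetilde{\mathcal{L}} = F\cdot\nabla\) (a derivative multiplied by the bounded but non-decaying field \(F\)), fix the scale of \(\mathcal{K}_n\)-norms accordingly, and combine this with the quantitative regularity of the Poisson solution from Proposition~\ref{prop:exist_poisson_eq} and the uniform bound \eqref{eq:inverse_bounds}. Everything else is classical once the Poisson equation and its regularity are in hand.
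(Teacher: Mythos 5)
Your proposal is correct and its backbone coincides with the paper's: the same It\^o/Poisson-equation decomposition, the same variance formula \(\sigma_{\mathrm{std},R,\eta}^2 = \frac{2}{\eta^2}\int R\,\widetilde{R}_\eta\,d\nu_\eta\), the same bias identity obtained by taking expectations, and the same mechanism for \eqref{eq:std_variance_bounds}, namely an \(\mathrm{O}(\eta)\) comparison of \(\widetilde{R}_\eta\) with \(\widetilde{R}_0\) combined with Lemma~\ref{lm:lin_resp}. Where you differ is in the probabilistic infrastructure: the paper simply invokes \cite{Kliemann} for almost sure convergence of time averages and Bhattacharya's CLT \cite{Bhattacharya} for the central limit theorem, whereas you hand-roll both via the martingale decomposition, the strong law for martingales, and a Dambis--Dubins--Schwarz/Slutsky argument; this is more self-contained but requires you to tidy up two points that the citations dispose of for free -- the a.s.\ vanishing of \(t^{-1}\widetilde{R}_\eta(X_t^\eta)\) does not follow from \(\sup_t\mathbb{E}[\mathcal{K}_n(X_t^\eta)]<\infty\) alone (you need a Borel--Cantelli argument along integer times, or the ergodic LLN applied to \(\mathcal{K}_n\)), and the a.s.\ convergence \(t^{-1}\langle M\rangle_t \to \nu_\eta(|\nabla\widetilde{R}_\eta|^2)\) itself rests on an ergodic theorem for additive functionals, which is exactly what \cite{Kliemann} supplies. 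Also note that the step you flag as the ``main obstacle'' -- the uniform-in-\(\eta\) bound \(\|\widetilde{R}_\eta-\widetilde{R}_0\|_{\mathcal{K}_n}=\mathrm{O}(\eta)\) -- is precisely the paper's Lemma~\ref{lm:poisson_sol_approx}, proved from the identity \(\widetilde{R}_\eta-\widetilde{R}_0=\eta[(-\mathcal{L}_\eta)^{-1}\Pi_\eta\widetilde{\mathcal{L}}(-\mathcal{L}_0)^{-1}R+\eta^{-1}\nu_\eta((-\mathcal{L}_0)^{-1}R)]\) together with \eqref{eq:inverse_bounds} and Lemma~\ref{lm:lin_resp}; your sketch of it (Poisson equation for the difference with an \(\mathrm{O}(\eta)\) right-hand side, plus an \(\mathrm{O}(\eta)\) centering constant) is essentially a re-derivation of that lemma and is sound, so there is no genuine gap, only routine details to write out.
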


This result shows that simulation times of order~\(t \sim \eta^{-2}\) should be considered in order for the variance of the standard estimator~\eqref{eq:naive_estimator} to be of order~1, and for its bias to be of order~\(\eta\), \emph{i.e.} of the same order of magnitude as the bias \(\alpha_R - \alpha_{R,\eta}\) arising from choosing \(\eta > 0\). The estimators introduced in Sections~\ref{sec:coupling}~and~\ref{sec:sticky_coupling} use couplings of the perturbed dynamics to the reference dynamics to achieve better scaling of the bias and variance with \(\eta\) compared to \eqref{eq:std_bias_bounds} and \eqref{eq:std_variance_bounds}.

\subsection{Proof of Proposition~\ref{prop:std_estimator}}

We adapt the proof of Proposition~\ref{prop:std_estimator} from \cite[Proposition 1]{Spacek}. We start with the following lemma adapted from  \cite{Spacek}. We give a simplified statement and proof as we do not need the same level of generality as in that work.

\begin{lemma}\label{lm:poisson_sol_approx}
	Suppose that Assumptions \ref{ass:drift}~and~\ref{ass:ref_measure} hold true. Fix \(\eta_\star > 0\) and \(\varphi \in \mathscr{S}_0\). Consider for any~\(\eta \in \mathbb{R}\) the unique solution~\(\phi_\eta \in \mathscr{S}_\eta\) of the Poisson equation \(-\mathcal{L}_\eta \phi_\eta = \Pi_\eta \varphi\). There exist \(n \geq 1\), \(C \geq 0 \), and \(\widecheck{\phi}_{\eta} \in \mathscr{S}\) such that
	\begin{equation}\label{eq:poisson_sol_diff}
		\phi_\eta - \phi_0 = \eta \widecheck{\phi}_\eta,
	\end{equation}
	with
	\begin{equation}\label{eq:bound_diff_term}
		\forall \eta \in \left[-\eta_\star, \eta_\star\right], \qquad \left\|\widecheck{\phi}_\eta \right\|_{\mathcal{K}_n} \leq C. 
	\end{equation}
\end{lemma}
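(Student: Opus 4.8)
The plan is to subtract the two Poisson equations, rearrange the resulting identity so that the perturbation operator $\widetilde{\mathcal{L}}$ acts only on the $\eta$-independent function $\phi_0$, and then invert $\mathcal{L}_\eta$ using the uniform bound \eqref{eq:inverse_bounds}. Since $\varphi\in\mathscr{S}_0$ we have $-\mathcal{L}_0\phi_0 = \varphi$, hence $-\mathcal{L}_\eta\phi_0 = -\mathcal{L}_0\phi_0-\eta\widetilde{\mathcal{L}}\phi_0 = \varphi - \eta\widetilde{\mathcal{L}}\phi_0$; subtracting this from $-\mathcal{L}_\eta\phi_\eta = \Pi_\eta\varphi = \varphi - \nu_\eta(\varphi)$ gives
\[ -\mathcal{L}_\eta(\phi_\eta - \phi_0) = \eta\,\widetilde{\mathcal{L}}\phi_0 - \nu_\eta(\varphi). \]
Applying Lemma~\ref{lm:lin_resp} to the fixed function $\varphi\in\mathscr{S}_0$ gives $\nu_\eta(\varphi) = \eta\,d_\eta$ with $d_\eta := \nu_0(\varphi\mathfrak{f}) + \eta\,\mathscr{R}_{\eta,\varphi}$ and $\sup_{|\eta|\le\eta_\star}|d_\eta| < \infty$, so the right-hand side equals $\eta(\widetilde{\mathcal{L}}\phi_0 - d_\eta)$. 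Since $\widetilde{\mathcal{L}}$ stabilizes $\mathscr{S}$, the function $\widetilde{\mathcal{L}}\phi_0 - d_\eta$ lies in $B_n^\infty$ for some fixed $n$ not depending on $\eta$, and its $\mathcal{K}_n$-norm is bounded uniformly in $\eta\in[-\eta_\star,\eta_\star]$.

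Next I would take care of the centering. The function $\phi_\eta - \phi_0$ is not $\nu_\eta$-centered; rather $\nu_\eta(\phi_\eta - \phi_0) = -(\nu_\eta - \nu_0)(\phi_0) = -\eta\,c_\eta$, where Lemma~\ref{lm:lin_resp} applied to the fixed function $\phi_0\in\mathscr{S}$ gives $c_\eta := \nu_0(\phi_0\mathfrak{f}) + \eta\,\mathscr{R}_{\eta,\phi_0}$ with $\sup_{|\eta|\le\eta_\star}|c_\eta|<\infty$. Similarly $\widetilde{\mathcal{L}}\phi_0 - d_\eta$ is $\nu_\eta$-centered: this follows from the invariance identity $\nu_\eta(\mathcal{L}_\eta\phi_0)=0$, which combined with $-\mathcal{L}_\eta\phi_0 = \varphi - \eta\widetilde{\mathcal{L}}\phi_0$ yields $\eta\,\nu_\eta(\widetilde{\mathcal{L}}\phi_0) = \nu_\eta(\varphi) = \eta\,d_\eta$. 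Hence $\phi_\eta - \phi_0 + \eta\,c_\eta$ belongs to $\mathscr{S}_\eta$ and solves $-\mathcal{L}_\eta(\phi_\eta - \phi_0 + \eta\,c_\eta) = \eta(\widetilde{\mathcal{L}}\phi_0 - d_\eta)$ with $\nu_\eta$-centered right-hand side. By the uniqueness statement of Proposition~\ref{prop:exist_poisson_eq},
\[ \phi_\eta - \phi_0 = \eta\,\widecheck{\phi}_\eta, \qquad \widecheck{\phi}_\eta := \mathcal{L}_\eta^{-1}\bigl(d_\eta - \widetilde{\mathcal{L}}\phi_0\bigr) - c_\eta \in \mathscr{S}, \]
the membership in $\mathscr{S}$ holding because $\mathcal{L}_\eta^{-1}$ maps $\mathscr{S}_\eta$ into $\mathscr{S}_\eta\subset\mathscr{S}$. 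Finally $\|\widecheck{\phi}_\eta\|_{\mathcal{K}_n} \le \|\mathcal{L}_\eta^{-1}\|_{\mathcal{B}(\Pi_\eta B_n^\infty)}\,\|d_\eta - \widetilde{\mathcal{L}}\phi_0\|_{\mathcal{K}_n} + |c_\eta|$, which is bounded uniformly in $\eta\in[-\eta_\star,\eta_\star]$ by \eqref{eq:inverse_bounds} together with the uniform bounds on $|c_\eta|$, $|d_\eta|$ and on the fixed function $\widetilde{\mathcal{L}}\phi_0$.

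The main obstacle — and the reason for writing the subtracted equation so that $\widetilde{\mathcal{L}}$ hits the fixed function $\phi_0$ rather than $\phi_\eta$ — is that \eqref{eq:inverse_bounds} and Proposition~\ref{prop:exist_poisson_eq} provide uniform-in-$\eta$ control of $\phi_\eta$ itself, but not a priori of its derivatives, so an attempt to write $\phi_\eta-\phi_0$ in terms of $\widetilde{\mathcal{L}}\phi_\eta$ would require an extra (not yet established) uniform regularity estimate. Moving $\widetilde{\mathcal{L}}$ onto $\phi_0$ circumvents this entirely. The remaining work is purely bookkeeping: neither $\phi_\eta - \phi_0$ nor $\widetilde{\mathcal{L}}\phi_0$ is exactly $\nu_\eta$-centered, and correcting for this by the $\mathrm{O}(\eta)$ constants $c_\eta$ and $d_\eta$ is where Lemma~\ref{lm:lin_resp} (applied to the fixed functions $\varphi$ and $\phi_0$) and the invariance identity $\nu_\eta(\mathcal{L}_\eta\phi_0) = 0$ enter.
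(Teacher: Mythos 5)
Your proof is correct and takes essentially the same route as the paper's: both arrive at the explicit representation \(\widecheck{\phi}_\eta = \left(-\mathcal{L}_\eta\right)^{-1}\Pi_\eta \widetilde{\mathcal{L}}\phi_0\) plus a constant of size \(\frac{1}{\eta}\nu_\eta\left(\phi_0\right)\), control the constant via Lemma~\ref{lm:lin_resp}, and bound the resolvent term uniformly in \(\eta\) through \eqref{eq:inverse_bounds}. The only difference is presentational: you subtract the two Poisson equations and invoke the uniqueness statement of Proposition~\ref{prop:exist_poisson_eq}, whereas the paper performs the equivalent resolvent manipulation directly (and, for what it is worth, your sign on the innocuous constant term is the consistent one).
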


\begin{proof}
	The following computations show that \(\widecheck{\phi}_\eta = \left(-\mathcal{L}_\eta\right)^{-1}\Pi_\eta \widetilde{\mathcal{L}}\left(-\mathcal{L}_0\right)^{-1} \varphi + \frac{1}{\eta}\int_{\mathbb{R}^d} \left(-\mathcal{L}_0\right)^{-1}\varphi \, d\nu_\eta\):
	\begin{equation*}
		\begin{aligned}
			\phi_\eta - \phi_0 &= \left(-\mathcal{L}_\eta\right)^{-1}\Pi_\eta \varphi - \left(-\mathcal{L}_0\right)^{-1} \varphi\\
			&= \left(-\mathcal{L}_\eta\right)^{-1}\Pi_\eta \varphi + \left(-\mathcal{L}_\eta \right)^{-1}\Pi_\eta \left(\mathcal{L}_0 + \eta \widetilde{\mathcal{L}}\right)\left(-\mathcal{L}_0\right)^{-1} \varphi + \left(\mathrm{Id} - \Pi_\eta\right) \left(-\mathcal{L}_0\right)^{-1}\varphi \\
			&=\left(-\mathcal{L}_\eta\right)^{-1}\Pi_\eta \left(\varphi + \left(\mathcal{L}_0 + \eta\widetilde{\mathcal{L}}\right)\left(-\mathcal{L}_0\right)^{-1} \varphi\right) + \int_{\mathbb{R}^d} \left(-\mathcal{L}_0\right)^{-1}\varphi \, d\nu_\eta\\
			&= \eta \left[\left(-\mathcal{L}_\eta\right)^{-1}\Pi_\eta \widetilde{\mathcal{L}}\left(-\mathcal{L}_0\right)^{-1} \varphi + \frac{1}{\eta}\int_{\mathbb{R}^d} \left(-\mathcal{L}_0\right)^{-1}\varphi \, d\nu_\eta \right].
		\end{aligned}
	\end{equation*}
	Since \(\varphi \in \mathscr{S}_0 \subset \mathscr{S}\) and the operators \(\widetilde{\mathcal{L}}\), \(\mathcal{L}_0^{-1}\Pi_0\), and \(\mathcal{L}_\eta^{-1}\Pi_\eta\) stabilize \(\mathscr{S}\) (using Proposition~\ref{prop:exist_poisson_eq} for the result for the latter two operators), it holds that \(\widecheck{\phi}_\eta \in \mathscr{S}\). Using Lemma~\ref{lm:lin_resp}, we bound the constant term on the right hand side as
	\[\left|\frac{1}{\eta}\int_{\mathbb{R}^d} \left(-\mathcal{L}_0\right)^{-1}\varphi \, d\nu_\eta \right| \leq \left|\frac{1}{\eta}\int_{\mathbb{R}^d} \left(-\mathcal{L}_0\right)^{-1}\varphi \, d\nu_0 \right|  + \left|\int_{\mathbb{R}^d} \left[\left(-\mathcal{L}_0\right)^{-1}\varphi\right] \mathfrak{f} \, d\nu_0 \right| + \eta_{\star}\left|\mathscr{R}_{\eta, \left(-\mathcal{L}_0\right)^{-1}\varphi}\right|, \]
	where \(\mathscr{R}_{\eta, \cdot}\) is the remainder term in \eqref{eq:lin_resp}. The first term vanishes since \(\left(-\mathcal{L}_0\right)^{-1}\varphi \in \mathscr{S}_0\). Furthermore, for \(n \in \mathbb{N}\) large enough such that \(\partial^\alpha \mathcal{L}_0^{-1}\varphi \in B_n^\infty\) for all \(|\alpha| \leq p\), the remainder term is bound 
	\[\left|\mathscr{R}_{\eta, \left(-\mathcal{L}_0\right)^{-1}\varphi}\right| \leq M_{\eta, \eta_{\star}}\sum_{|\alpha | \leq p} \left\|\partial^\alpha \varphi\right\|_{\mathcal{K}_n},\] 
	uniformly in \(\eta \in \left[-\eta_\star, \eta_\star\right]\). 
	
	Furthermore, up to taking \(n \in \mathbb{N}\) larger, we have that \(\widetilde{\mathcal{L}}\left(-\mathcal{L}_0\right)^{-1}\varphi \in B_n^{\infty}\). Thus the bound \eqref{eq:inverse_bounds} implies
	\[\left\|\left(-\mathcal{L}_\eta\right)^{-1}\Pi_\eta \widetilde{\mathcal{L}}\left(-\mathcal{L}_0\right)^{-1} \varphi\right\|_{\mathcal{K}_n} \leq  \left\|\mathcal{L}_\eta^{-1}\right\|_{\mathcal{B}\left(\Pi_\eta B_n^\infty\right)} \left\|\Pi_\eta \left( \widetilde{\mathcal{L}}\left(-\mathcal{L}_0\right)^{-1} \phi \right)\right\|_{\mathcal{K}_n}.\]
	We can bound the right had side using the fact that for any~\(u \in {B}_n^\infty\),
	\begin{equation}\label{eq:bdd_norm_projection}
		\left\|\Pi_\eta u\right\|_{\mathcal{K}_n} \leq \left\|u\right\|_{\mathcal{K}_n} + \int_{\mathbb{R}^d} \left|u\right|\, d\nu_\eta \leq \left\|u\right\|_{\mathcal{K}_n} + \left\|u\right\|_{\mathcal{K}_n}\nu_\eta\left(\mathcal{K}_n\right) = \left(1 + \nu_\eta\left(\mathcal{K}_n\right)\right)\left\|u\right\|_{\mathcal{K}_n},
	\end{equation}
	which is uniformly bounded for \(\eta \in \left[-\eta_\star, \eta_\star\right]\) by \eqref{eq:moment_bounds}. Putting this all together, it is clear that 
	\[\left\|\widecheck{\phi}_\eta\right\|_{\mathcal{K}_n} \leq C,\]
	with \(C\) independent of \(\eta\). 
\end{proof}

We can now provide the proof of Proposition~\ref{prop:std_estimator}.
\begin{proof}[Proof of Proposition~\ref{prop:std_estimator}]
	The ellipticity of the noise and the assumptions on the drift allow us to apply the results of \cite{Kliemann}, which imply that, for any \(x \in \mathbb{R}^d\), the process \(\left(X_t^\eta\right)_{t\geq 0}\) started at \(X_0^\eta = x\) satisfies almost surely
	\[\frac{1}{\eta t}\int_{0}^t R\left(X_s^\eta\right)ds \xrightarrow[t \to +\infty]{} \frac{1}{\eta}\int_{\mathbb{R}^d}R\, d\nu_\eta.\]
	Since the almost sure convergence holds for any deterministic initial condition, it holds for any initial probability measure \(\nu\) such that \(\nu\left(|R|\right) < + \infty\), which is the case by our assumptions on \(\nu_{\rm init}\).
	
	A central limit theorem holds for \(\widehat{\Phi}_{\eta, t}\) by the results of~\cite{Bhattacharya} since the Poisson equation  $-\mathcal{L}_\eta \widetilde{R}_\eta = \Pi_\eta R$ has a solution in~$\mathscr{S}_\eta \subset L^2(\nu_\eta)$. We write the asymptotic variance as
	\[
	\sigma_{\mathrm{std}, R,\eta}^2 = \frac{2}{\eta^2} \int_{\mathbb{R}^d} R \widetilde{R}_\eta \, d\nu_\eta.
	\]
	In view of Lemma~\ref{lm:poisson_sol_approx}, there is an \(\widecheck{R}_\eta \in \mathscr{S}\) such that
 	\begin{equation}\label{eq:r_rhat_nu_eta}
		\int_{\mathbb{R}^d} R \widetilde{R}_\eta \, d\nu_\eta = \int_{\mathbb{R}^d} R \widetilde{R}_0 \, d\nu_\eta + \eta \int_{\mathbb{R}^d} R \widecheck{R}_\eta \, d\nu_\eta.
	\end{equation}
	We next use Lemma~\ref{lm:lin_resp} for the two integrals on the right hand side to get
	\begin{equation}\label{eq:r_rhat_nu_eta2}
		\int_{\mathbb{R}^d} R \widetilde{R}_\eta \, d\nu_\eta = \int_{\mathbb{R}^d} R \widetilde{R}_0 \, d\nu_0 + \eta\int_{\mathbb{R}^d} R \left(\widetilde{R}_0 \mathfrak{f} + \widecheck{R}_\eta\right)\, d\nu_0 + \eta^2\mathcal{R}_\eta,
	\end{equation}
	where \(\mathfrak{f}\) is the function introduced in Lemma~\ref{lm:lin_resp} and the remainder \(\mathcal{R}_\eta\) is given by
	\[\mathcal{R}_\eta = \mathscr{R}_{\eta, R\widetilde{R}_0} + \int_{\mathbb{R}^d} R \widecheck{R}_\eta\mathfrak{f} \, d\nu_0 + \eta\mathscr{R}_{\eta, R \widecheck{R}_\eta},\]
	with \(\mathscr{R}_{\eta, \cdot}\) the remainder term from Lemma~\ref{lm:lin_resp}. By Lemma~\ref{lm:poisson_sol_approx}, \(\left\|\widecheck{R}_\eta\right\|_{\mathcal{K}_n}\) is bounded for some \(n \in \mathbb{N}\) uniformly in \(\eta\in \left[-\eta_\star, \eta_\star\right]\); likewise for the remainder terms \(\mathscr{R}_{\eta, R\widetilde{R}_0}\) and \(\mathscr{R}_{\eta, R \widecheck{R}_\eta}\) by Lemma~\ref{lm:lin_resp}. Thus the second integral and remainder term on the right hand side \eqref{eq:r_rhat_nu_eta2} are uniformly bounded in \(\eta\in \left[-\eta_\star, \eta_\star\right]\). Using the expression of  \(\sigma_{\mathrm{ref}, R}^2\) \eqref{eq:ref_variance}, we have the following equality for the asymptotic variance,
	\[\sigma_{\mathrm{std}, R,\eta}^2 = \frac{\sigma_{\mathrm{ref}, R}^2}{\eta^2} + \frac{1}{\eta}\int_{\mathbb{R}^d} R \left(\widetilde{R}_0 \mathfrak{f} + \widecheck{R}_\eta\right)\, d\nu_0 + \mathcal{R}_\eta,\]
	proving~\eqref{eq:std_variance_bounds}.
	
	We finally prove~\eqref{eq:std_bias_bounds}. Applying It\^o's formula to \(\widetilde{R}_\eta\left(X_t^\eta\right)\), we obtain
	\[
	\widehat{\Phi}_{\eta,t} - \frac1\eta \int_{\mathbb{R}^d} R \, d\nu_\eta = \frac{\widetilde{R}_\eta(X_0^\eta) - \widetilde{R}_\eta(X_t^\eta)}{\eta t} + \frac{1}{\eta t} \int_0^t \nabla\widetilde{R}_\eta(X_s^\eta)^{\top } dW_s,
	\]
	where the Itô integral is a true martingale by the fact that \(\widetilde{R}_\eta \in \mathscr{S}\) by Proposition~\ref{prop:exist_poisson_eq} and the moment bounds implied by \eqref{eq:semigroup_estimates} together with the hypotheses on the initial measure. Therefore,
	\[
	\mathbb{E}\left(\widehat{\Phi}_{\eta,t}\right) - \alpha_{R,\eta} = \frac{1}{\eta t} \mathbb{E}\left[\widetilde{R}_\eta(X_0^\eta) - \widetilde{R}_\eta(X_t^\eta)\right],
	\]
	which leads to~\eqref{eq:std_bias_bounds} with
	\[
	K = \left(1 + S_{m, \eta_{\star}}\right) \left\|\widetilde{R}_\eta\right\|_{B^\infty_m} \nu_{\rm init}\left(\mathcal{K}_m\right) < +\infty,
	\]
	where \(m \in \mathbb{N}\) is such that \(\widetilde{R}_\eta \in B^\infty_m\) and where we used~\eqref{eq:semigroup_estimates} to control \(\nu_{\rm init}\left(P_t^\eta \mathcal{K}_m\right)\) by \(S_{m, \eta_{\star}}\nu_{\rm init}\left(\mathcal{K}_m\right)\), the latter quantity being finite by the moment conditions on~\(\nu_{\rm init}\). 
\end{proof}
\section{Control Variates Based on Synchronous Couplings}\label{sec:coupling}
To construct an estimator of \(\alpha_R\) with lower variance than that of the standard estimator, we use a control variate approach, relying on the knowledge that \(R \in \mathscr{S}_0\) vanishes under the stationary probability measure of the reference dynamics. For this approach to be efficient, we want the trajectories of~\(\left(X_t^\eta\right)_{t\geq 0}\) and \(\left(X_t^0\right)_{t\geq 0}\) to remain as close as possible, even for long times. This motivates considering two dynamics~\(\left(X_t^\eta\right)_{t\geq 0}\)~and~\(\left(Y_t^0\right)_{t\geq 0}\):
\begin{equation}\label{eq:coupled_dynamics}
	\begin{aligned}
		dX_t^\eta &= \left(b\left(X_t^\eta\right) + \eta F\left(X_t^\eta\right)\right)dt + \sqrt{\frac{2}{\beta}}dW_t,\\	
		dY_t^0 &= b\left(Y_t^0\right)dt + \sqrt{\frac{2}{\beta}} d\widetilde{W}_t,
	\end{aligned}
\end{equation}
with coupled driving noises \(W\) and \(\widetilde{W}\); and constructing the following estimator 
\begin{equation}\label{eq:coupled_estimator}
	\widehat{\Psi}_{\eta, t} = \frac{1}{\eta t}\int_0^t\left[R\left(X_s^\eta\right) - R\left(Y_s^0\right)\right]ds.
\end{equation}
In this section we present synchronous coupling which works exceedingly well when the dynamics is strong contractive everywhere, i.e. \(M = 0\) in \eqref{eq:contractive_at_inf}. The dynamics are coupled by choosing the same Brownian motion to drive \(X^\eta\) and \(Y^0\), i.e. set \(W = \widetilde{W}\) in \eqref{eq:coupled_dynamics}. When we have strong contractivity everywhere, the synchronous coupling based estimator, denoted by \(\widehat{\Psi}_{\eta, t}^{\mathrm{sync}}\), greatly outperforms the standard estimator in a sense made precise by Theorem~\ref{thm:sync_bias_var} below. However contractivity is essential. Sticky coupling, presented in Section~\ref{sec:sticky_coupling}, does not require as strong assumptions on the contractivity of the dynamics at the price of a worse scaling of the variance with \(\eta\) compared to synchronous coupling. 

\paragraph{Notation.}
For a continuous Markov process with values in \(\mathbb{R}^d\) and a probability measure \(\mu\) on \(\mathbb{R}^d\), we denote by \(\mathbb{P}_\mu\) the law on \(C\left(\mathbb{R}_+, \mathbb{R}^d\right)\) of the process with initial condition \(X_0 \sim \mu\) and by \(\mathbb{E}_{\mu}\) the expectation with respect to this measure. When there is no risk of ambiguity or it is not relevant we may suppress the dependence on the initial measure and simply write \(\mathbb{E}\). Furthermore, we may write~\(\mathbb{E}_\eta\)~and~\(\mathbb{E}_x\) for expectations with respect to the laws with \(\nu_\eta\) and \(\delta_x\) respectively as initial probability measures. We use an analogous notation for the coupled process on \(\mathbb{R}^d\times \mathbb{R}^d\).

In what follows, we use \(C\) as a generic constant that may change from line to line. This will not be the case for constants with sub- or superscripts or using other letters. We use \(\oplus\) to denote the direct sum of vector spaces. Furthermore, for two functions \(f, g : \mathbb{R}^d \to \mathbb{R}\), we denote by \(f \oplus g\) the function on \(\mathbb{R}^d \times \mathbb{R}^d\) defined by \(\left(f \oplus g\right)\left(x, y\right)= f(x) + g(y)\); and for two operators \(\mathcal{A}, \mathcal{B}\) acting on some subset of functions on~\(\mathbb{R}^d\), we denote by \(\mathcal{A} \oplus \mathcal{B}\) the operator on some suitable subset of functions on \(\mathbb{R}^d \times \mathbb{R}^d\) defined by 
\[\left(\mathcal{A} \oplus \mathcal{B}\right)\varphi\left(x,y\right) = \mathcal{A}_x \varphi(x, y) + \mathcal{B}_y \varphi(x,y)\]
where \(\mathcal{A}_x\) is \(\mathcal{A}\) acting on functions of the first component \(x\) (\(y\) being considered as a parameter in this setting) and \(\mathcal{B}_y\) is \(\mathcal{B}\) acting on functions of the second component \(y\).

\subsection{Quantitative Results}
Like the standard estimator \eqref{eq:naive_estimator}, the synchronous coupling based estimator \(\widehat{\Psi}_{\eta, t}^\mathrm{sync}\) satisfies the following central limit theorem.
\begin{proposition}\label{prop:sync_clt}
	Fix \(\eta \in \mathbb{R}\) and suppose that \eqref{eq:contractive_at_inf} holds with \(M = 0\). Then, the synchronously coupled dynamics admits a unique ergodic invariant probability measure \(\mu_{\mathrm{sync},\eta}\). Furthermore, for any \(R\in \mathscr{S}_0\), the estimator~\(\widehat{\Psi}_{\eta, t}^{\mathrm{sync}}\) converges almost surely to \(\alpha_{R, \eta}\) as \(t \to \infty\), and the following central limit theorem holds:
	\begin{equation}\label{eq:sync_clt}
		\sqrt{t}\left(\widehat{\Psi}_{\eta, t}^{\mathrm{sync}} - \alpha_{R, \eta}\right) \xrightarrow[t \to \infty]{\mathrm{law}} \mathcal{N}\left(0, \sigma^2_{\mathrm{sync}, R, \eta}\right),
	\end{equation}
	with asymptotic variance \(\sigma^2_{\mathrm{sync}, R, \eta} \in \left(0, \infty\right)\). 
\end{proposition}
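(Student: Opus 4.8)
The plan is to establish the three assertions of the proposition in turn: (i) existence and uniqueness of an ergodic invariant measure $\mu_{\mathrm{sync},\eta}$ for the synchronously coupled system on $\mathbb{R}^d\times\mathbb{R}^d$; (ii) almost sure convergence of $\widehat{\Psi}_{\eta,t}^{\mathrm{sync}}$ to $\alpha_{R,\eta}$; and (iii) the central limit theorem with a strictly positive, finite asymptotic variance. For step (i), the synchronously coupled dynamics $(X_t^\eta, Y_t^0)$ solves an SDE on $\mathbb{R}^{2d}$ driven by a single $d$-dimensional Brownian motion, with drift $\mathbf{b}(x,y) = (b(x)+\eta F(x), b(y))$ and degenerate diffusion (the noise is the same in both components). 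The key point is that, although the noise is hypoelliptic-at-best on $\mathbb{R}^{2d}$, the difference process $Z_t = X_t^\eta - Y_t^0$ satisfies a deterministic ODE-like equation $dZ_t = (b(X_t^\eta) - b(Y_t^0) + \eta F(X_t^\eta))\,dt$ with no noise, and under global contractivity ($M=0$) one gets $\frac{d}{dt}|Z_t|^2 \leq -2m|Z_t|^2 + 2\eta \|F\|_\infty |Z_t|$, so $|Z_t|$ is eventually trapped in a ball of radius $O(\eta/m)$. I would make this rigorous and combine it with the geometric ergodicity of the marginal $X^\eta$ (from \eqref{eq:geo_ergodicity}) to deduce that the pair has a unique invariant measure; this is exactly what is proved in Appendix~\ref{sec:sync_contractive}, which I would invoke. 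Concretely, uniqueness follows because any two copies of the coupled system started from different points can themselves be synchronously coupled, and contractivity forces the $Y$-components (hence, via ergodicity of $X^\eta$ and the trapping of $Z$, the whole system) to contract; ergodicity then follows from irreducibility of the $X^\eta$-marginal together with the fact that $Y$ is a deterministic-in-the-noise function of the driving path and the initial condition.

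For step (ii), once $\mu_{\mathrm{sync},\eta}$ is the unique ergodic invariant measure, Birkhoff's ergodic theorem gives, for $\mu_{\mathrm{sync},\eta}$-a.e. initial condition and hence (by absolute continuity arguments as in the proof of Proposition~\ref{prop:std_estimator}, using the results of \cite{Kliemann}) for every deterministic initial condition,
\[
\frac{1}{t}\int_0^t \bigl[R(X_s^\eta) - R(Y_s^0)\bigr]\,ds \xrightarrow[t\to\infty]{} \int_{\mathbb{R}^d\times\mathbb{R}^d}\bigl(R\oplus(-R)\bigr)\,d\mu_{\mathrm{sync},\eta} = \int_{\mathbb{R}^d}R\,d\nu_\eta - \int_{\mathbb{R}^d}R\,d\nu_0,
\]
where the last equality uses that the two marginals of $\mu_{\mathrm{sync},\eta}$ are $\nu_\eta$ and $\nu_0$ respectively (the first because $X^\eta$ solves \eqref{eq:sde_model}, the second because $Y^0$ solves the reference SDE, and each marginal dynamics has a unique invariant measure). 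Dividing by $\eta$ gives convergence to $\alpha_{R,\eta}$. The integrability needed for Birkhoff is immediate since $R\in\mathscr{S}_0$ and $\mu_{\mathrm{sync},\eta}$ inherits polynomial moment bounds from \eqref{eq:moment_bounds} applied to both marginals.

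For step (iii), the central limit theorem for additive functionals of the coupled Markov process follows once we know that the Poisson equation $-\mathcal{L}_{\mathrm{sync},\eta}\Theta = (R\oplus(-R)) - \left(\int R\,d\nu_\eta - \int R\,d\nu_0\right)$ has a solution with suitable growth, where $\mathcal{L}_{\mathrm{sync},\eta}$ is the generator of the coupled dynamics; then one applies the martingale CLT for Markov processes (as in \cite{Bhattacharya}) exactly as in the proof of Proposition~\ref{prop:std_estimator}. The asymptotic variance is then $\sigma^2_{\mathrm{sync},R,\eta} = \frac{2}{\eta^2}\int (R\oplus(-R))\,\Theta\, d\mu_{\mathrm{sync},\eta}$, which is finite because $\Theta$ has at most polynomial growth and $\mu_{\mathrm{sync},\eta}$ has all polynomial moments. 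Positivity of the variance is the one genuinely nontrivial point: it fails only if $R\oplus(-R)$ is a.e. constant along the dynamics, i.e. if $R(X_s^\eta) - R(Y_s^0)$ is a.s.\ constant in $s$ under the stationary coupled measure; since $R$ is non-constant (it has mean zero under $\nu_0$ but is not identically zero) and the $X^\eta$-marginal is a genuinely stochastic, elliptic, ergodic diffusion, the process $R(X_s^\eta)$ is not a.s.\ constant, and one checks that no exact cancellation with $R(Y_s^0)$ can occur for all times — this can be argued from the ellipticity of the $W$-noise in the $X$-component together with the fact that $Y$ feels the same noise but with a different drift, so the two cannot track each other exactly. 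I expect establishing strict positivity (and, to a lesser extent, carefully justifying the regularity of the solution $\Theta$ to the coupled Poisson equation, since the coupled generator is degenerate on $\mathbb{R}^{2d}$) to be the main obstacle; the regularity issue is handled by noting that $\Theta$ can be built from $\widetilde{R}_\eta(x)$ and the reference Poisson solution evaluated along the coupled flow, or more cleanly by working with the difference variable and exploiting that the $Z$-component is slaved to the $X$-component.
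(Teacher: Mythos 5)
Your proposal follows essentially the same route as the paper: existence and uniqueness of \(\mu_{\mathrm{sync},\eta}\) are delegated to the coupling construction of Appendix~\ref{sec:sync_contractive}, almost sure convergence comes from ergodicity, and the CLT is obtained from Bhattacharya's theorem once a solution of the coupled Poisson equation with polynomial growth is exhibited. A few points where the paper's argument is cleaner or where your sketch would not survive as written. First, your heuristic for uniqueness --- synchronously coupling two copies of the coupled system --- does not work for an arbitrary fixed \(\eta\): the perturbed drift \(b+\eta F\) is only contractive when \(|\eta| L_F < m\), so the two \(x\)-components need not coalesce under synchronous coupling; the appendix instead uses a reflection coupling of the \(x\)-components up to their meeting time and only afterwards exploits the contractivity of \(b\) for the \(y\)-components. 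Since you invoke the appendix anyway, this is not fatal. Second, for the almost sure convergence you propose Birkhoff for the coupled process plus an ``absolute continuity'' upgrade to all deterministic initial conditions; but the coupled process has degenerate (rank-\(d\)) noise on \(\mathbb{R}^{2d}\), so it is not elliptic and the Kliemann-type arguments used in the proof of Proposition~\ref{prop:std_estimator} do not transfer directly to the pair. The paper sidesteps this entirely by applying Kliemann's law of large numbers to each elliptic marginal separately and subtracting the two almost surely convergent averages. Third, your worry about regularity of the coupled Poisson solution dissolves: the explicit solution is \(u(x,y)=\widetilde{R}_\eta(x)-\widetilde{R}_0(y)\), the difference of the marginal Poisson solutions, because the cross term \(\frac{2}{\beta}\sum_{i=1}^d \partial_{x_i}\partial_{y_i}\) of the coupled generator annihilates functions of this separated form; this also yields the variance formula directly, with finiteness from the moment bounds. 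Finally, strict positivity of \(\sigma^2_{\mathrm{sync},R,\eta}\) is asserted but not actually proved in the paper's argument either, so flagging it is fair, though your ellipticity sketch is not itself a proof.
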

During the proof of this result, which can be read in Section~\ref{sec:sync_proofs}, we show the existence of a unique invariant ergodic probability measure~\(\mu_{\mathrm{sync},\eta}\) for the coupled process. 
We also derive in the proof an explicit expression for the asymptotic variance \(\sigma_{\mathrm{sync}, R, \eta}^2\). As the following theorem makes clear, this variance  and the finite-time integration bias are uniformly bounded as \(\eta\) goes to \(0\).

\begin{theorem}\label{thm:sync_bias_var}
	Fix \(\eta_\star > 0\) and \(R \in \mathscr{S}_0\). Suppose that Assumptions~\ref{ass:drift}~and~\ref{ass:ref_measure} hold and that the contractivity condition \eqref{eq:contractive_at_inf} is satisfied with \(M = 0\). Assume that \(\left(\mu_{\mathrm{init}, \eta}\right)_{\eta \in \left[-\eta_\star, \eta_\star\right]}\) is a family of initial probability measures on \(\mathbb{R}^d \times \mathbb{R}^d\) such that \(\left(X_0^\eta, Y_0^0\right) \sim \mu_{\mathrm{init}, \eta}\) for each \(\eta \in \left[-\eta_\star, \eta_\star\right]\), with  \(\mu_{\mathrm{init}, \eta}\left(\mathcal{K}_n \oplus \mathcal{K}_n\right) \leq C_{n, \eta_{\star}}\) uniformly in \(\eta \in \left[-\eta_\star, \eta_\star\right]\) for any \(n\in \mathbb{N}\),  and that there exist~\(p > 1\) and \(C_p \in \mathbb{R}_+\) such that 
	\begin{equation}\label{eq:sync_init_assump}
		\forall \eta \in \left[-\eta_\star, \eta_\star\right], \qquad \int_{\mathbb{R}^d \times \mathbb{R}^d} \left|x - y\right|^p \mu_{\mathrm{init}, \eta}\left(dx \, dy\right) \leq C_p \eta^p.
	\end{equation}
	Then there exist \(K_1, K_2  > 0\) (depending on \(\eta_\star, R\); \(K_2\) depending on \(\mu_{\mathrm{init}}\) as well) such that the asymptotic variance and bias respectively satisfy 
	\begin{equation}\label{eq:sync_var_bound}
		\forall \eta \in \left[-\eta_\star, \eta_\star\right], \qquad \left|\sigma_{\mathrm{sync}, R, \eta}^2\right| \leq K_1,
	\end{equation} 
	and
	\begin{equation}\label{eq:sync_bias_bound}
		\left|\mathbb{E}\left[\widehat{\Psi}_{\eta, t}^{\mathrm{sync}}\right] - \alpha_{R,\eta}\right| \leq \frac{K_2}{t}.
	\end{equation}
\end{theorem}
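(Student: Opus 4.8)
The plan is to exploit the defining feature of synchronous coupling: with $W=\widetilde W$ the Brownian increments cancel in the difference, so $Z_t:=X_t^\eta-Y_t^0$ solves the \emph{random ordinary} differential equation $\dot Z_t = b(X_t^\eta)-b(Y_t^0)+\eta F(X_t^\eta)$. Since $M=0$ in \eqref{eq:contractive_at_inf}, this gives $\tfrac{d}{dt}|Z_t|^2\le -2m|Z_t|^2+2\eta\|F\|_\infty|Z_t|$ wherever $Z_t\neq 0$, with $\|F\|_\infty:=\sup_x|F(x)|<\infty$ by \eqref{eq:F_bounded}, hence the pathwise Gr\"onwall bound
\[
|Z_t|\le \mathrm{e}^{-mt}|Z_0|+\frac{\eta\|F\|_\infty}{m}\bigl(1-\mathrm{e}^{-mt}\bigr).
\]
Two consequences will be used throughout. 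First, together with \eqref{eq:sync_init_assump} and the elementary inequality $(a+b)^p\le 2^{p-1}(a^p+b^p)$, this yields $\mathbb{E}\bigl[|X_t^\eta-Y_t^0|^p\bigr]\le C\eta^p$ uniformly in $t\ge 0$ and $\eta\in[-\eta_\star,\eta_\star]$. Second, letting $t\to\infty$ from an arbitrary deterministic start $(x_0,y_0)$, the ergodicity provided by Proposition~\ref{prop:sync_clt} (proved in Appendix~\ref{sec:sync_contractive}) together with the portmanteau lemma applied to $(x,y)\mapsto |x-y|^q\wedge K$ and then $K\to\infty$ gives $\int |x-y|^q\,\mu_{\mathrm{sync},\eta}(dx\,dy)\le (\eta\|F\|_\infty/m)^q$ for every $q\ge1$, uniformly in $\eta$. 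I expect everything else to amount to combining these with the moment estimates \eqref{eq:moment_bounds}--\eqref{eq:semigroup_estimates}, the Poisson-equation regularity of Lemma~\ref{lm:lin_resp}, Proposition~\ref{prop:exist_poisson_eq} and Lemma~\ref{lm:poisson_sol_approx}, and the observation that the two marginals of $\mu_{\mathrm{sync},\eta}$ are $\nu_\eta$ and $\nu_0$.

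For the bias \eqref{eq:sync_bias_bound}, let $\widetilde R_\eta\in\mathscr{S}_\eta$ and $\widetilde R_0\in\mathscr{S}_0$ solve $-\mathcal{L}_\eta\widetilde R_\eta=\Pi_\eta R$ and $-\mathcal{L}_0\widetilde R_0=R$ (using $R\in\mathscr{S}_0$). Applying It\^o's formula to $\widetilde R_\eta(X_t^\eta)$ and to $\widetilde R_0(Y_t^0)$ and subtracting, exactly as in the proof of Proposition~\ref{prop:std_estimator} (the It\^o terms being true martingales since $\widetilde R_\eta,\widetilde R_0\in\mathscr{S}$ and all moments of $X_t^\eta,Y_t^0$ are controlled), gives
\[
\eta t\bigl(\widehat{\Psi}_{\eta,t}^{\mathrm{sync}}-\alpha_{R,\eta}\bigr)=\bigl[\widetilde R_\eta(X_0^\eta)-\widetilde R_0(Y_0^0)\bigr]-\bigl[\widetilde R_\eta(X_t^\eta)-\widetilde R_0(Y_t^0)\bigr]+\text{martingale},
\]
so that $\mathbb{E}[\widehat{\Psi}_{\eta,t}^{\mathrm{sync}}]-\alpha_{R,\eta}=(\eta t)^{-1}\mathbb{E}\{[\widetilde R_\eta(X_0^\eta)-\widetilde R_0(Y_0^0)]-[\widetilde R_\eta(X_t^\eta)-\widetilde R_0(Y_t^0)]\}$. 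For $s\in\{0,t\}$ I would write $\widetilde R_\eta(X_s^\eta)-\widetilde R_0(Y_s^0)=\eta\,\widecheck R_\eta(X_s^\eta)+\bigl(\widetilde R_0(X_s^\eta)-\widetilde R_0(Y_s^0)\bigr)$ via $\widetilde R_\eta-\widetilde R_0=\eta\widecheck R_\eta$ from Lemma~\ref{lm:poisson_sol_approx}; the first term is $\mathrm{O}(\eta)$ in expectation because $\|\widecheck R_\eta\|_{\mathcal{K}_n}\le C$ and $\mathbb{E}[\mathcal{K}_n(X_s^\eta)]$ is bounded by \eqref{eq:semigroup_estimates} and the hypotheses on $\mu_{\mathrm{init},\eta}$, while for the second I would use the mean value inequality $|\widetilde R_0(x)-\widetilde R_0(y)|\le C(\mathcal{K}_{n'}(x)+\mathcal{K}_{n'}(y))|x-y|$ (valid since $\widetilde R_0\in\mathscr{S}$), then H\"older with exponents $p$ and $p/(p-1)$ and the bound $\mathbb{E}[|X_s^\eta-Y_s^0|^p]\le C\eta^p$ from the first paragraph, the remaining moment factor being uniform by \eqref{eq:moment_bounds}--\eqref{eq:semigroup_estimates}. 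Dividing by $\eta t$ gives $|\mathbb{E}[\widehat{\Psi}_{\eta,t}^{\mathrm{sync}}]-\alpha_{R,\eta}|\le K_2/t$.

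For the asymptotic variance \eqref{eq:sync_var_bound}, I would start from the expression for $\sigma^2_{\mathrm{sync},R,\eta}$ derived in the proof of Proposition~\ref{prop:sync_clt}: writing $\Phi_\eta:=\eta^{-1}(\widetilde R_\eta\oplus(-\widetilde R_0))$ and denoting by $\mathcal{L}_{\mathrm{sync},\eta}$ the generator of the synchronously coupled dynamics, one checks that $-\mathcal{L}_{\mathrm{sync},\eta}\Phi_\eta(x,y)=\eta^{-1}(R(x)-R(y))-\alpha_{R,\eta}$ because the cross second-order term of $\mathcal{L}_{\mathrm{sync},\eta}$ annihilates functions of the separated form $u\oplus v$, whence the martingale-CLT identity gives $\sigma^2_{\mathrm{sync},R,\eta}=\tfrac{2}{\beta\eta^2}\int|\nabla\widetilde R_\eta(x)-\nabla\widetilde R_0(y)|^2\,\mu_{\mathrm{sync},\eta}(dx\,dy)$. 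Then I would split $\nabla\widetilde R_\eta(x)-\nabla\widetilde R_0(y)=\eta\,\nabla\widecheck R_\eta(x)+\bigl(\nabla\widetilde R_0(x)-\nabla\widetilde R_0(y)\bigr)$: the first piece contributes at most $C\eta^2\int\mathcal{K}_{2n}(x)\,\mu_{\mathrm{sync},\eta}(dx\,dy)=C\eta^2\nu_\eta(\mathcal{K}_{2n})\le C\eta^2$, and the second, by the mean value inequality for $\nabla\widetilde R_0\in\mathscr{S}$ and the stationary estimate $\int|x-y|^q\,\mu_{\mathrm{sync},\eta}\le(\eta\|F\|_\infty/m)^q$ together with the marginal moment bounds, also contributes at most $C\eta^2$. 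Dividing by $\eta^2$ gives $\sigma^2_{\mathrm{sync},R,\eta}\le K_1$.

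The one genuinely technical point is that both estimates need uniform-in-$\eta$ control of a \emph{first derivative} of $\widecheck R_\eta$ in a $\mathcal{K}_n$-norm, whereas Lemma~\ref{lm:poisson_sol_approx} as stated bounds only $\widecheck R_\eta$ itself. This is obtained by rerunning the proof of Lemma~\ref{lm:poisson_sol_approx} with the derivative-graded regularity estimates for the Poisson equation established in Appendix~\ref{sec:Kopec_ext} (the quantitative form of Proposition~\ref{prop:exist_poisson_eq}), which provide $\|\partial^\alpha\widetilde\varphi_\eta\|_{\mathcal{K}_n}\le C\sum_{|\beta|\le p}\|\partial^\beta\varphi\|_{\mathcal{K}_n}$ uniformly in $\eta\in[-\eta_\star,\eta_\star]$; I expect carrying this bookkeeping through, rather than any conceptual difficulty, to be the main obstacle.
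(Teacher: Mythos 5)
Your bias argument is essentially the paper's own proof: It\^o's formula applied to \(\widetilde{R}_\eta(X_t^\eta)-\widetilde{R}_0(Y_t^0)\), the splitting \(\widetilde{R}_\eta-\widetilde{R}_0=\eta\widecheck{R}_\eta\) from Lemma~\ref{lm:poisson_sol_approx}, the pseudo-Lipschitz bound of Lemma~\ref{lm:pseudo_lip} for \(\widetilde{R}_0\), H\"older with exponents \(p,\,p/(p-1)\), and the Gr\"onwall/coupling-distance estimate of Lemma~\ref{lm:sync_coupling_dist} together with \eqref{eq:sync_init_assump}; nothing to add there. For the variance you take a genuinely different route. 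The paper stays with the representation \(\sigma^2_{\mathrm{sync},R,\eta}=\frac{2}{\eta^2}\int\bigl(\widetilde{R}_\eta(x)-\widetilde{R}_0(y)\bigr)\bigl(\Pi_\eta R(x)-\Pi_0 R(y)\bigr)\,\mu_{\mathrm{sync},\eta}(dx\,dy)\), applies Cauchy--Schwarz, and bounds each factor by \(C\eta^2\) using only the \(\mathcal{K}_n\)-norm bound on \(\widecheck{R}_\eta\), the pseudo-Lipschitz property of \(\widetilde{R}_0\) and \(R\) (both in \(\mathscr{S}\)), Lemma~\ref{lm:lin_resp} for the constant \((\Pi_\eta-\Pi_0)R\), the stationary distance estimate \eqref{eq:stationary_sync_dist} and the marginal moment bounds \eqref{eq:moment_bounds}; no derivatives of the Poisson solutions are ever needed. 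You instead pass to the carr\'e-du-champ form \(\sigma^2_{\mathrm{sync},R,\eta}=\frac{2}{\beta\eta^2}\int|\nabla\widetilde{R}_\eta(x)-\nabla\widetilde{R}_0(y)|^2\,d\mu_{\mathrm{sync},\eta}\), which is indeed equivalent (the degenerate diffusion matrix of the coupled process gives \(\Gamma(u)=\frac{2}{\beta}|\nabla_x u+\nabla_y u|^2\), and \(\int\Gamma(u)\,d\mu=2\int u(-\mathcal{A}_\eta u)\,d\mu\) follows from \(\int\mathcal{A}_\eta(u^2)\,d\mu=0\) by stationarity, a step you should state and justify via the polynomial growth of \(u\), \(\nabla u\) and the moment bounds, since the coupled process is neither elliptic nor reversible). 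The price, which you correctly identify, is that your decomposition \(\nabla\widetilde{R}_\eta-\nabla\widetilde{R}_0=\eta\nabla\widecheck{R}_\eta+(\nabla\widetilde{R}_0(x)-\nabla\widetilde{R}_0(y))\) requires a uniform-in-\(\eta\) \(\mathcal{K}_n\)-bound on \emph{first derivatives} of \(\widecheck{R}_\eta\), which Lemma~\ref{lm:poisson_sol_approx} does not state; this strengthening is obtainable by combining the explicit formula \(\widecheck{R}_\eta=(-\mathcal{L}_\eta)^{-1}\Pi_\eta\widetilde{\mathcal{L}}(-\mathcal{L}_0)^{-1}R+\mathrm{const}\) with the derivative estimates of Appendix~\ref{sec:Kopec_ext} made uniform in \(\eta\in[-\eta_\star,\eta_\star]\) (the appendix remarks this uniformity is available since \(\lambda_{|\eta|}\le L_b+\eta_\star L_F\) and \eqref{eq:geo_ergodicity} is uniform), so your plan is sound, but it costs an extra lemma that the paper's Cauchy--Schwarz argument avoids entirely. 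In short: same proof for the bias, a correct but less economical (gradient-based) proof for the variance, contingent on the derivative extension you flag.
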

In practice the assumption \eqref{eq:sync_init_assump} on the initial distributions of the coupled process is easy to satisfy as a natural initialization is \(\nu_0\left(dx\right)\delta_x\left(dy\right)\), i.e. start the two components of the coupled process from the same configuration sampled according to the reference stationary measure.

Unlike the bias and variance of the standard estimator \eqref{eq:std_bias_bounds} and \eqref{eq:std_variance_bounds}, the bias and variance of the synchronously coupled estimator are bounded uniformly in \(\eta\). This makes it an attractive option when the drift is strongly contractive everywhere. This assumption is essential, without it Lemma~\ref{lm:sync_coupling_dist} below fails and we can no longer control the distance between the two coupled trajectories. Many problems of interest involve locally non-convex potentials. In non-convex regions, the trajectories can separate and particularly in high dimensions have trouble coming back together. Thus without contractivity everywhere we cannot solely rely on the deterministic part of the dynamics to bring the trajectories together, and need to resort to a more educated coupling (see Section~\ref{sec:sticky_coupling}).
\subsection{Proofs of Proposition~\ref{prop:sync_clt} and Theorem~\ref{thm:sync_bias_var}}\label{sec:sync_proofs}
We start with two lemmas. The first result quantifies the evolution of the distance between the two components of the coupled process.
\begin{lemma}\label{lm:sync_coupling_dist}
	Let \(\left(X_t^\eta, Y_t^0\right)_{t\geq 0}\) be a solution to the synchronously coupled dynamics (\ref{eq:coupled_dynamics}) and \(m > 0\) such that (\ref{eq:contractive_at_inf}) is satisfied for all \(x, y \in \mathbb{R}^d\). Then 
	\begin{equation}\label{eq:sync_coupling_dist}
		\left|X_t^\eta - Y_t^0\right| \leq \left(\left|X_0^\eta - Y_0^0\right| - \frac{\eta \left\|F\right\|_\infty}{m}\right)\mathrm{e}^{-m t} + \frac{\eta \left\|F\right\|_\infty}{m}.
	\end{equation}
\end{lemma}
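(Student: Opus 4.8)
The plan is to exploit the fact that synchronous coupling removes the randomness from the difference process, so that the bound reduces to a deterministic Grönwall argument. First I would introduce $Z_t = X_t^\eta - Y_t^0$ and note that, since $W = \widetilde W$ in \eqref{eq:coupled_dynamics}, the Brownian increments cancel and $Z$ solves, pathwise, the ordinary differential equation
\[
\frac{dZ_t}{dt} = b(X_t^\eta) - b(Y_t^0) + \eta F(X_t^\eta).
\]
The right-hand side is continuous in $t$ because the paths $X^\eta, Y^0$ are continuous and $b, F$ are continuous, so $t \mapsto Z_t$ is $C^1$ (in particular $t \mapsto |Z_t|$ is locally Lipschitz).

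Next I would differentiate the squared norm,
\[
\frac{d}{dt}\left|Z_t\right|^2 = 2\left\langle Z_t,\, b(X_t^\eta) - b(Y_t^0)\right\rangle + 2\eta\left\langle Z_t,\, F(X_t^\eta)\right\rangle,
\]
and bound the two terms: the contractivity condition \eqref{eq:contractive_at_inf} with $M = 0$ (which holds for all $x,y$ under the hypothesis of the lemma) controls the first term by $-2m|Z_t|^2$, while Cauchy--Schwarz together with the boundedness \eqref{eq:F_bounded} of $F$ controls the second by $2\eta\|F\|_\infty |Z_t|$. This gives
\[
\frac{d}{dt}\left|Z_t\right|^2 \le -2m\left|Z_t\right|^2 + 2\eta\left\|F\right\|_\infty\left|Z_t\right|.
\]
Since $t \mapsto |Z_t|$ is locally Lipschitz it is differentiable almost everywhere, and at a.e. $t$ one has $\frac{d}{dt}|Z_t| \le -m|Z_t| + \eta\|F\|_\infty$: at points where $|Z_t|>0$ this follows by dividing the previous display by $2|Z_t|$, and at a differentiability point with $|Z_t|=0$ the derivative of the nonnegative map $|Z_\cdot|$ vanishes there while the right-hand side is nonnegative.

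Finally I would set $w(t) = e^{mt}|Z_t|$, which is absolutely continuous with $\frac{d}{dt}w(t) = e^{mt}\bigl(m|Z_t| + \tfrac{d}{dt}|Z_t|\bigr) \le \eta\|F\|_\infty e^{mt}$ for a.e. $t$; integrating over $[0,t]$ yields $e^{mt}|Z_t| \le |Z_0| + \frac{\eta\|F\|_\infty}{m}\bigl(e^{mt}-1\bigr)$, and dividing by $e^{mt}$ and rearranging gives exactly \eqref{eq:sync_coupling_dist}. The only genuine subtlety is the non-differentiability of the Euclidean norm at the origin; this is a standard and minor point, handled as above or, alternatively, by replacing $|Z_t|$ with $\sqrt{|Z_t|^2 + \varepsilon}$ throughout and letting $\varepsilon \downarrow 0$. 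Everything else is routine, so I do not expect any real obstacle.
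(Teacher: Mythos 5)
Your proposal is correct and follows essentially the same route as the paper: exploit the cancellation of the common Brownian noise, derive the differential inequality $\frac{d}{dt}|Z_t|^2 \le -2m|Z_t|^2 + 2\eta\|F\|_\infty|Z_t|$ from \eqref{eq:contractive_at_inf} and Cauchy--Schwarz, pass to the inequality for $|Z_t|$, and integrate. The only difference is cosmetic: the paper phrases the first step via It\^o's formula and an ODE comparison with $\psi$, while you argue pathwise and treat the non-differentiability of the norm at zero more carefully, which if anything tightens the paper's argument.
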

\begin{proof}
	Using Itô's formula, we get
	\begin{align*}
		d\left(\left|X_t^\eta - Y_0^0\right|^2\right) &= 2 \left<X_t^\eta - Y_t^0, dX_t^\eta - dY_t^0\right> \\
		&= 2\left< X_t^\eta - Y_t^0, b\left(X_t^\eta\right) - b\left(Y_t^0\right)\right>dt + 2\eta \left<X_t^\eta - Y_t^0, F\left(X_t^\eta\right)\right>dt\\
		&\leq  \left(-2m\left|X_t^\eta - Y_t^0\right|^2 + 2\eta\left|X_t^\eta - Y_t^0\right| \left|F\left(X_t^\eta\right)\right|\right)dt\\
		& \leq \left(-2 m\left|X_t^\eta - Y_t^0\right|^2 + 2\eta\left|X_t^\eta - Y_t^0\right|\left\|F\right\|_{\infty}\right)dt,
	\end{align*}
	where the first inequality is due to (\ref{eq:contractive_at_inf}) and a Cauchy--Schwarz inequality. We therefore have that \(\left|X_t^\eta - Y_t^0\right|^2 \leq \psi\left(t\right)^2\) where \(\psi^2\) satisfies
	\begin{equation*}
		\left(\psi^2\right)' = - 2m \psi^2 + 2\eta \left\|F\right\|_\infty\psi, \qquad \psi(0)^2 = \left|X_0^\eta - Y_0^0\right|^2,
	\end{equation*}
	and consequently \(\psi\) satisfies
	\begin{equation*}
		\psi' = -m\psi + \eta \left\|F\right\|_\infty, \qquad \psi(0) = \left|X_0^\eta - Y_0^0\right|.
	\end{equation*}
	Solving this equation, we then obtain
	\begin{equation}
		\left|X_t^\eta - Y_t^0\right| \leq \left(\left|X_0^\eta - Y_0^0\right| - \frac{\eta \left\|F\right\|_{\infty}}{m}\right)\mathrm{e}^{-m t} + \frac{\eta \left\|F\right\|_{\infty}}{m},
	\end{equation}
	giving the desired bound.
\end{proof}
The second lemma quantifies the local Lipschitzness of functions in \(\mathscr{S}\). 
\begin{lemma}\label{lm:pseudo_lip}
	Let \(\varphi \in \mathscr{S}\). Then,  there exist \(C > 0\) and \(n \in \mathbb{N}\) such that
	\begin{equation}\label{eq:pseudo_lip_cond}
		\forall x,y \in \mathbb{R}^d \qquad \left|\varphi(x) - \varphi(y)\right| \leq C \left(1 + |x|^n + |y|^n\right) \left|x - y\right|.
	\end{equation}
\end{lemma}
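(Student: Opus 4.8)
The plan is to apply the fundamental theorem of calculus along the straight segment joining $x$ and $y$ and then use the polynomial growth of the first derivatives guaranteed by membership in $\mathscr{S}$. Concretely, for $x, y \in \mathbb{R}^d$ I would write
\[
\varphi(x) - \varphi(y) = \int_0^1 \nabla\varphi\bigl(y + t(x-y)\bigr) \cdot (x - y)\, dt,
\]
so that, by Cauchy--Schwarz inside the integral,
\[
\left|\varphi(x) - \varphi(y)\right| \leq |x - y| \int_0^1 \left|\nabla\varphi\bigl(y + t(x-y)\bigr)\right| dt.
\]

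Next I would use the definition of $\mathscr{S}$: since $\varphi \in \mathscr{S}$, each partial derivative $\partial_{x_i}\varphi$ lies in $B_n^\infty$ for some $n_i \in \mathbb{N}$; taking $n = \max_i n_i$, there is a constant $C_0$ with $\left|\nabla\varphi(z)\right| \leq C_0\bigl(1 + |z|^n\bigr)$ for all $z \in \mathbb{R}^d$. For $z = y + t(x-y)$ with $t \in [0,1]$ one has $|z| \leq (1-t)|y| + t|x| \leq |x| + |y|$, and by convexity of $r \mapsto r^n$ on $[0,\infty)$ (for $n \geq 1$), $|z|^n \leq (|x|+|y|)^n \leq 2^{n-1}\bigl(|x|^n + |y|^n\bigr)$. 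Hence $\left|\nabla\varphi(z)\right| \leq C_0\bigl(1 + 2^{n-1}(|x|^n + |y|^n)\bigr) \leq C\bigl(1 + |x|^n + |y|^n\bigr)$ for a suitable $C$, uniformly in $t \in [0,1]$.

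Plugging this bound into the integral (which only contributes a factor $1$ since the bound is $t$-independent) yields
\[
\left|\varphi(x) - \varphi(y)\right| \leq C\bigl(1 + |x|^n + |y|^n\bigr)|x - y|,
\]
which is exactly \eqref{eq:pseudo_lip_cond}. There is no serious obstacle here; the only points requiring a word of care are that the integrand is well-defined and integrable (immediate from smoothness and the polynomial bound) and the elementary inequality $(|x|+|y|)^n \leq 2^{n-1}(|x|^n+|y|^n)$, so the whole argument is a short deterministic computation.
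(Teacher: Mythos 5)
Your proposal is correct and follows essentially the same route as the paper: the paper invokes the Mean Value Theorem to bound $\left|\varphi(x)-\varphi(y)\right|$ by $\sup_{t\in[0,1]}\left|\nabla\varphi\right|$ along the segment times $|x-y|$, and then uses the polynomial growth of $\nabla\varphi$ from $\varphi\in\mathscr{S}$, which is the same estimate you obtain via the fundamental theorem of calculus and Cauchy--Schwarz. The two arguments differ only in using the supremum versus the integral form of the gradient bound, which is immaterial.
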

\begin{proof}
	As \(\varphi \in \mathscr{S}\), there exists \(n \in \mathbb{N}\) such that \(\partial_{x_i} \varphi \in B_n^\infty\) for all \(i = 1, \dots, d\). By the Mean Value Theorem, it holds
	\begin{equation}
		\begin{aligned}
			\left|\varphi(x) - \varphi(y)\right| &\leq \sup_{t \in [0,1]} \left|\nabla \varphi\left(x + t \left(x-y\right)\right)\right| |x - y| \\
			&\leq C \sup_{t \in [0,1]} \mathcal{K}_n\left(x + t \left(x-y\right)\right) |x - y|\\
			&\leq C \left(1 + |x|^n + |y|^n\right) |x-y|,
		\end{aligned}
	\end{equation}
	which is the claimed result.
\end{proof}

\begin{proof}[Proof of Propostion \ref{prop:sync_clt}]
	As in Proposition~\ref{prop:std_estimator}, the results of \cite{Kliemann} allow us to conclude that almost surely
	\[\frac{1}{\eta t}\int_0^t R\left(X_s^\eta\right)ds \xrightarrow[t\to \infty]{} \frac{\nu_\eta\left(R\right)}{\eta}, \quad \text{and} \quad \frac{1}{\eta t} \int_0^t R\left(Y_s^0\right)ds \xrightarrow[t\to \infty]{} \frac{\nu_0\left(R\right)}{\eta} = 0.\]
	Thus as the difference of two processes that converge almost surely, 
	\[\widehat{\Psi}_{\eta, t}^{\mathrm{sync}} \xrightarrow[t \to \infty]{} \frac{\nu_\eta\left(R\right)}{\eta} = \alpha_{R, \eta}.\]

	To show that the synchronously coupled dynamics admit a unique ergodic invariant probability measure, we construct a coupling of two solutions to \eqref{eq:coupled_dynamics} with \(W = \widetilde{W}\), denoted by \(\left(Z^\eta_t\right)_{t\geq 0} = \left(X^\eta_t, Y_t^0\right)_{t\geq 0}\) and \(\left(\widecheck{Z}^\eta_t\right)_{t\geq 0} = \left(\widecheck{X}^\eta_t, \widecheck{Y}_t^\eta\right)_{t\geq 0}\), for which there exists \(\gamma > 0\) such that 
	\[\mathbb{E}\left|Z_t^\eta - \widecheck{Z}_t^\eta\right| \leq C\mathrm{e}^{-\gamma t},\]
	where the constant \(C\) depends only on the marginals of the initial conditions \(\left(Z_0^\eta, \widecheck{Z}_0^\eta\right)\). Denoting by \(\left(T_t^\eta\right)_{t \geq 0}\) the semi-group of the synchronously coupled process, this then implies that, for \(\mu\) and \(\widetilde{\mu}\) belonging to some class of probability measures that will be specified in Appendix~\ref{sec:sync_contractive},
	\[\mathcal{W}_1\left(\mu T_t^\eta, \widetilde{\mu} T_t^\eta\right) \leq C\left(\mu, \widetilde{\mu}\right) \mathrm{e}^{-\gamma t}, \]
	where \(\mathcal{W}_1\) is the 1-Wasserstein distance. The existence and uniqueness of an ergodic invariant probability measure then follows as a corollary. We defer this construction and the proof of the existence and uniqueness of an ergodic invariant probability measure to Appendix~\ref{sec:sync_contractive}.
	
	Admitting for the moment that it exists, we denote by \(\mu_{\mathrm{sync},\eta}\) the invariant probability measure of the synchronously coupled dynamics, which is a coupling of \(\nu_\eta\) and \(\nu_0\) since the marginal processes admit respectively \(\nu_\eta\) and \(\nu_0\) as invariant probability measures. Applying Lemma~\ref{lm:sync_coupling_dist} and using ergodicity to take the limit as \(t\to \infty\) in \(\mathbb{E}\left[\left|X_t^\eta - Y_t^0\right|^\alpha\right]\) for any \(\alpha \in \mathbb{R}_+\), we obtain
	\begin{equation}\label{eq:stationary_sync_dist}
		\int_{\mathbb{R}^d\times \mathbb{R}^d}\left|x - y\right|^\alpha \mu_{\mathrm{sync},\eta}\left(dx\,dy\right) \leq \left(\frac{\left\|F\right\|_\infty}{m}\right)^\alpha \eta^\alpha.
	\end{equation}
	In fact, taking the limit as \(t \to \infty\) in \eqref{eq:sync_coupling_dist} implies that the measure \(\mu_{\mathrm{sync}, \eta}\) is supported on a tube in \(\mathbb{R}^d\times \mathbb{R}^d\) centered along the diagonal of diameter \(\frac{\|F\|_\infty}{m}\eta\), i.e.
	\[\mathrm{supp}\left(\mu_{\mathrm{sync}, \eta}\right) \subset \left\{ (x, y) \in \mathbb{R}^d\times \mathbb{R}^d \left| \, |x - y| \leq \frac{\|F\|_\infty}{m}\eta \right. \right\}.\]
	
	The generator of the coupled process is given by
	\begin{equation}
		\mathcal{A}_\eta = \left(b(x) + \eta F(x)\right)\cdot \nabla_x + \frac{1}{\beta} \Delta_x + b(y)\cdot \nabla_y + \frac{1}{\beta} \Delta_y + \frac{2}{\beta}\sum_{i=1}^d \partial_{x_i}\partial_{y_i}
		=\mathcal{L}_\eta \oplus \mathcal{L}_0 + \frac{2}{\beta}\sum_{i=1}^d \partial_{x_i}\partial_{y_i}.
	\end{equation}
	For a given function \(R \in \mathscr{S}_0\), we can therefore construct an explicit solution to the Poisson equation corresponding to the coupled process, namely
	\begin{equation}\label{eq:sync_coupled_poisson_eq}
		-\mathcal{A}_\eta u = \left(\Pi_\eta R\right) \oplus \left(-\Pi_0 R\right).
	\end{equation}
	To this end, we denote by \(\widetilde{R}_\eta\) and \(\widetilde{R}_0\) the solutions to the Poisson equations corresponding the marginal processes, namely \(-\mathcal{L}_\eta \widetilde{R}_\eta = \Pi_\eta R\) and \(-\mathcal{L}_0 \widetilde{R}_0 = \Pi_0 R\). Then \(u(x,y) = \widetilde{R}_\eta(x) - \widetilde{R}_0(y)\) is a solution to the Poisson equation \eqref{eq:sync_coupled_poisson_eq}, as can easily be checked by computing \(\mathcal{A}_\eta u\). The corresponding Poisson equation has a solution in \(\mathscr{S}\oplus \mathscr{S}\), which is a subset of \( L^2\left(\mu_{\eta, \text{sync}}\right)\) since the moment bounds \eqref{eq:moment_bounds} imply that \(\mathscr{S} \subset L^2\left(\nu_\eta\right)\) for any \(\eta \in \left[-\eta_\star, \eta_\star\right]\). Therefore, by Bhattacharya's Central Limit Theorem \cite{Bhattacharya}, \(\widehat{\Psi}_{\eta, t}^{\mathrm{sync}}\) satisfies (\ref{eq:sync_clt}) with asymptotic variance
	\begin{equation}
		\sigma_{\mathrm{sync}, R, \eta}^2 = \frac{2}{\eta^2}\int_{\mathbb{R}^d \times \mathbb{R}^d} \left(\widetilde{R}_\eta(x) - \widetilde{R}_0(y)\right)\left(\Pi_\eta R(x)- \Pi_0 R(y)\right)\mu_{\eta, \text{sync}}\left(dx\,dy\right),
	\end{equation}
	concluding the proof of the CLT for the synchronous coupling based estimator.
\end{proof}

\begin{proof}[Proof of Theorem \ref{thm:sync_bias_var}]
	By Itô's formula we have 
	\begin{equation}\label{eq:sync_bias_var_eq1}
		\begin{aligned}
			\widehat{\Psi}_{\eta, t}^{\mathrm{sync}} - \alpha_{R, \eta} = &\frac{1}{\eta t} \left[\widetilde{R}_\eta\left(X_0^\eta\right) - \widetilde{R}_0\left(Y_0^0\right) - \widetilde{R}_\eta\left(X_t^\eta\right) + \widetilde{R}_0\left(Y_t^0\right)\right]\\
			&\quad+ \frac{1}{\eta t} \left[\int_0^t \nabla \widetilde{R}_\eta \left(X_s^\eta\right) \cdot dW_s - \int_0^t \nabla \widetilde{R}_0\left(Y_s^0\right)\cdot dW_s\right],
		\end{aligned}
	\end{equation}
	where the stochastic integrals are true martingales as \(\nabla \widetilde{R}_\eta\) and \(\nabla \widetilde{R}_0\) have at most polynomial growth by Proposition~\ref{prop:exist_poisson_eq} and the moments of \(X_t^\eta\) and \(Y_t^0\) are uniformly bounded as implied by \eqref{eq:semigroup_estimates} and the hypothesis on initial probability measures. Therefore,
	\begin{equation}\label{eq:sync_bias_decomp}
		\begin{aligned}
			\left|\mathbb{E}\left[\widehat{\Psi}_{\eta, t}^{\mathrm{sync}} - \alpha_{R, \eta}\right]\right|  &\leq \frac{1}{t}\left(\mathbb{E}\left|\frac{\widetilde{R}_{\eta}\left(X_0^\eta\right) - \widetilde{R}_0\left(Y_0^0\right)}{\eta}\right| + \mathbb{E}\left|\frac{\widetilde{R}_\eta\left(X_t^\eta\right) - \widetilde{R}_0\left(Y_t^0\right)}{\eta}\right|\right)\\
			&= \frac{1}{t}\left(\int_{\mathbb{R}^d\times\mathbb{R}^d}\frac{\left|\widetilde{R}_\eta(x) - \widetilde{R}_0(y)\right|}{\eta}\mu_{\text{init}, \eta}\left(dx\,dy\right) + \mathbb{E}\left|\frac{\widetilde{R}_\eta\left(X_t^\eta\right) - \widetilde{R}_0\left(Y_t^0\right)}{\eta}\right|\right).
		\end{aligned}
	\end{equation}
	We bound the first term on the right hand side as 
	\begin{equation}\label{eq:sync_bias_var_eq3}
		\begin{aligned}
			\int_{\mathbb{R}^d\times\mathbb{R}^d}\frac{\left|\widetilde{R}_\eta(x) - \widetilde{R}_0(y)\right|}{\eta}\mu_{\text{init}, \eta}\left(dx\,dy\right) &\leq \int_{\mathbb{R}^d\times \mathbb{R}^d}\frac{\left|\widetilde{R}_\eta(x) - \widetilde{R}_0(x)\right|}{\eta}\mu_{\text{init}, \eta}\left(dx\,dy\right)\\
			&+ \int_{\mathbb{R}^d\times \mathbb{R}^d}\frac{\left|\widetilde{R}_0(x) - \widetilde{R}_0(y)\right|}{\eta}\mu_{\text{init}, \eta}\left(dx\,dy\right).
		\end{aligned}
	\end{equation}
	Applying Lemma~\ref{lm:poisson_sol_approx} gives, for a sufficiently large \(n \in \mathbb{N}\),
	\begin{equation}\label{eq:sync_bias_var_eq4}
		\int_{\mathbb{R}^d\times \mathbb{R}^d}\frac{\left|\widetilde{R}_\eta(x) - \widetilde{R}_0(x)\right|}{\eta}\mu_{\mathrm{init}, \eta}\left(dx\,dy\right)\leq C \mu_{\mathrm{init}, \eta}\left(\mathcal{K}_n \oplus \mathbf{0}\right),
	\end{equation}
	where the term on the right hand side is in uniformly bounded in \(\eta\) by the hypothesis on \(\mu_{\mathrm{init}, \eta}\). The second term in \eqref{eq:sync_bias_var_eq3} is bounded as follows:
	\begin{equation}\label{eq:sync_bias_var_eq5}
		\begin{aligned}
			&\int_{\mathbb{R}^d\times \mathbb{R}^d}\frac{\left|\widetilde{R}_0(x) - \widetilde{R}_0(y)\right|}{\eta}\mu_{\text{init}, \eta}\left(dx\,dy\right) 
			\leq \frac{C}{\eta} \int_{\mathbb{R}^d \times \mathbb{R}^d} \left(1 + \left|x\right|^n + \left|y\right|^n\right)\left|x - y\right|\mu_{\text{init}, \eta}\left(dx\,dy\right)\\
			&\leq \frac{C}{\eta}\left(\int_{\mathbb{R}^d \times \mathbb{R}^d} \left(1 + \left|x\right|^n + \left|y\right|^n\right)^{\frac{p}{p-1}}\mu_{\text{init}, \eta}\left(dx\,dy\right)\right)^{\frac{p-1}{p}}
			\left(\int_{\mathbb{R}^d \times \mathbb{R}^d}\left|x - y\right|^p\mu_{\text{init}, \eta}\left(dx\,dy\right)\right)^{1/p} \leq\frac{C}{\eta}\eta,
		\end{aligned}
	\end{equation}
	where the first inequality follows from \eqref{eq:pseudo_lip_cond} in Lemma~\ref{lm:pseudo_lip}, the second from Hölder's inequality, and the third from the hypotheses on the initial distribution. 
	
	For the second term on the right-hand side of (\ref{eq:sync_bias_decomp}), we similarly write 
	\begin{equation}\label{eq:sync_bias_var_eq6}
		\mathbb{E}\left|\frac{\widetilde{R}_\eta\left(X_t^\eta\right) - \widetilde{R}_0\left(Y_t^0\right)}{\eta}\right| \leq \mathbb{E}\left|\frac{\widetilde{R}_\eta\left(X_t^\eta\right) - \widetilde{R}_0\left(X_t^\eta\right)}{\eta}\right| + \mathbb{E}\left|\frac{\widetilde{R}_0\left(X_t^\eta\right) - \widetilde{R}_0\left(Y_t^0\right)}{\eta}\right|.
	\end{equation}
	The first term can be bounded using Lemma~\ref{lm:poisson_sol_approx} with \(n\in \mathbb{N}\) large enough:
	\begin{equation}\label{eq:sync_bias_var_eq7}
		\mathbb{E}\left|\frac{\widetilde{R}_\eta\left(X_t^\eta\right) - \widetilde{R}_0\left(X_t^\eta\right)}{\eta}\right| \leq C \mathbb{E}\left[\mathcal{K}_n\left(X_t^\eta\right)\right] \leq C \mu_{\mathrm{init}, \eta}\left(\mathcal{K}_n \oplus \mathbf{0}\right),
	\end{equation}
	where the second inequality is due to \eqref{eq:semigroup_estimates}. The right hand side is finite due to the hypotheses on the initial distribution. The second term in \eqref{eq:sync_bias_var_eq6} is bounded using \eqref{eq:pseudo_lip_cond} in Lemma~\ref{lm:pseudo_lip}:
	\begin{equation}\label{eq:sync_bias_var_eq9}
		\begin{aligned}
			\mathbb{E}\left|\frac{\widetilde{R}_0\left(X_t^\eta\right) - \widetilde{R}_0\left(Y_t^0\right)}{\eta}\right| 
			&\leq \frac{C}{\eta} \mathbb{E}\left[\left(1 + |X_t^\eta|^n + |Y_t^0|^n \right) \left|X_t^\eta - Y_t^0\right|\right]\\
			&\leq \frac{C}{\eta} \mathbb{E}\left[\left(1 + |X_t^\eta|^n + |Y_t^0|^n \right)^{\frac{p}{p-1}}\right]^{\frac{p-1}{p}} \mathbb{E}\left[\left|X_t^\eta - Y_t^0\right|^p\right]^{1/p}\\
			&\leq \frac{C}{\eta} \mathbb{E}\left[\left|\left(\left|X_0^\eta - Y_0^0\right| - \frac{\eta \left\|F\right\|_{\infty}}{m}\right)\mathrm{e}^{-m t} + \frac{\eta \left\|F\right\|_{\infty}}{m}\right|^p\right]^{1/p}\\
			&\leq \frac{C}{\eta} \left[\mathrm{e}^{-m t}\mathbb{E}\left[\left|X_0^\eta - Y_0^0\right|^p\right]^{1/p} + \left(1 - \mathrm{e}^{-m t}\right)\frac{\eta \left\|F\right\|_{\infty}}{m}\right]\\
			&\leq \frac{C}{\eta}\left[C_p^{1/p}\mathrm{e}^{-m t}\eta + \left(1 - \mathrm{e}^{-m t}\right)\frac{\eta \left\|F\right\|_{\infty}}{m}\right] \leq C,
		\end{aligned}
	\end{equation}
	where the third inequality follows by bounding the first factor in the second line with the moment bounds on the marginal process implied by \eqref{eq:semigroup_estimates} and the hypotheses on the initial distribution, and the second factor with \eqref{eq:sync_coupling_dist} from Lemma~\ref{lm:sync_coupling_dist}. The fifth follows from \eqref{eq:sync_init_assump}. Altogether these bounds imply (\ref{eq:sync_bias_bound}).
	
	For the asymptotic variance, it holds by the proof of Proposition~\ref{prop:sync_clt} and a Cauchy--Schwarz inequality that 
	\begin{equation}\label{eq:sync_bias_var_eq10}
		\begin{aligned}
			&\sigma_{\mathrm{sync}, R, \eta}^2 = \frac{2}{\eta^2}\int_{\mathbb{R}^d \times \mathbb{R}^d} \left(\widetilde{R}_\eta(x) - \widetilde{R}_0(y)\right)\left(\Pi_\eta R(x)- \Pi_0 R(y)\right)\mu_{\eta, \text{sync}}\left(dx\,dy\right) \\
			&\leq \frac{2}{\eta^2}\left(\int_{\mathbb{R}^d \times \mathbb{R}^d} \left(\widetilde{R}_\eta(x) - \widetilde{R}_0(y)\right)^2\mu_{\eta, \text{sync}}\left(dx\,dy\right)\right)^{1/2}\left(\int_{\mathbb{R}^d \times \mathbb{R}^d} \left(\Pi_\eta R(x)- \Pi_0 R(y)\right)^2\mu_{\eta, \text{sync}}\left(dx\,dy\right)\right)^{1/2}.
		\end{aligned}
	\end{equation}
	We can bound the first integral on the right hand side as 
	\begin{equation}\label{eq:sync_bias_var_eq11}
		\begin{aligned}
			&\int_{\mathbb{R}^d \times \mathbb{R}^d} \left(\widetilde{R}_\eta(x) - \widetilde{R}_0(y) \right)^2\mu_{\eta, \text{sync}}\left(dx\,dy\right)\\
			&\quad\leq 2\int_{\mathbb{R}^d \times \mathbb{R}^d} \left(\widetilde{R}_\eta(x) - \widetilde{R}_0(x) \right)^2\mu_{\eta, \text{sync}}\left(dx\,dy\right)
			+ 2\int_{\mathbb{R}^d \times \mathbb{R}^d} \left(\widetilde{R}_0(x) - \widetilde{R}_0(y) \right)^2\mu_{\eta, \text{sync}}\left(dx\,dy\right).
		\end{aligned}
	\end{equation}
	We can control the first term on the right hand side by \(C\eta^2\) using Lemma~\ref{lm:poisson_sol_approx} and the moment bounds on the marginals \eqref{eq:moment_bounds}. For the second term we use the estimate \eqref{eq:pseudo_lip_cond} in Lemma~\ref{lm:pseudo_lip} on \(\widetilde{R}_0\) and Hölder's inequality with an arbitrary \(q > 1\) to get
	\begin{equation}\label{eq:sync_bias_var_eq12}
		\begin{aligned}
			&\int_{\mathbb{R}^d \times \mathbb{R}^d} \left(\widetilde{R}_0(x) - \widetilde{R}_0(y) \right)^2\mu_{\eta, \mathrm{sync}}\left(dx\,dy\right) \leq \int_{\mathbb{R}^d \times \mathbb{R}^d} \left(1 + \left|x\right|^n + \left|y\right|^n\right)^2\left|x-y\right|^2 \mu_{\eta, \mathrm{sync}}\left(dx\,dy\right)\\
			&\leq \left(\int_{\mathbb{R}^d \times \mathbb{R}^d} \left(1 + |x|^n + |y|^n\right)^{2q}\mu_{\eta, \mathrm{sync}}\left(dx\,dy\right)\right)^{1/q} \left(\int_{\mathbb{R}^d \times \mathbb{R}^d} \left|x-y\right|^{\frac{2q}{q-1}}\mu_{\eta, \mathrm{sync}}\left(dx\,dy\right)\right)^{\frac{q-1}{q}}.
		\end{aligned}
	\end{equation}
	The first factor is bounded by the estimates on the moments of the marginals \eqref{eq:moment_bounds} and the second factor by \eqref{eq:stationary_sync_dist}. Together this allows us to bound the first integral in \eqref{eq:sync_bias_var_eq10} by \(C\eta^2\). For the second integral in~\eqref{eq:sync_bias_var_eq10} we write 
	\begin{equation}\label{eq:sync_bias_var_eq13}
		\begin{aligned}
			&\int_{\mathbb{R}^d \times \mathbb{R}^d} \left(\Pi_\eta R(x) - \Pi_0 R(y) \right)^2 \mu_{\eta, \text{sync}}\left(dx\,dy\right) \\
			&\leq 2\int_{\mathbb{R}^d \times \mathbb{R}^d} \left(\left(\Pi_\eta - \Pi_0\right)R(x)\right)^2 \mu_{\eta, \text{sync}}\left(dx\,dy\right)
			+2 \int_{\mathbb{R}^d \times \mathbb{R}^d} \left(\Pi_0 R(x)  - \Pi_0 R(y)\right)^2\mu_{\eta, \text{sync}}\left(dx\,dy\right).
		\end{aligned}
	\end{equation}
	The first term is bounded by \(C\eta^2\) by Lemma \ref{lm:lin_resp}. For the second term, again applying \eqref{eq:pseudo_lip_cond} in Lemma~\ref{lm:pseudo_lip}, Hölder's inequality for an arbitrary \(q > 1\), and the fact that \(\Pi_0 R = R\) since \(R \in \mathscr{S}_0\), gives
	\begin{equation}\label{eq:sync_bias_var_eq14}
		\begin{aligned}
			&\int_{\mathbb{R}^d \times \mathbb{R}^d} \left(\Pi_0R(x) - \Pi_0R(y)\right)^2 \mu_{\eta, \text{sync}}\left(dx\,dy\right) = \int_{\mathbb{R}^d \times \mathbb{R}^d} \left(R(x) - R(y)\right)^2 \mu_{\eta, \text{sync}}\left(dx\,dy\right)\\
			& \leq \left(\int_{\mathbb{R}^d \times \mathbb{R}^d} \left(1 + |x|^n + |y|^n\right)^{2q}\mu_{\eta, \text{sync}}\left(dx\,dy\right)\right)^{1/q}
			\left(\int_{\mathbb{R}^d \times \mathbb{R}^d} \left|x-y\right|^{\frac{2q}{q-1}}\mu_{\eta, \text{sync}}\left(dx\,dy\right)\right)^{\frac{q-1}{q}}.
		\end{aligned}
	\end{equation}
	Exactly as above the first factor is bounded using the moment estimates \eqref{eq:moment_bounds} since \(\mu_{\mathrm{sync},\eta}\) is a coupling of~\(\nu_\eta\) and \(\nu_0\), so that \(\mu_{\mathrm{sync},\eta}\left(\mathcal{K}_m \oplus \mathcal{K}_m\right) = \nu_\eta\left(\mathcal{K}_m\right) + \nu_0\left(\mathcal{K}_m\right)\). The second factor is bounded using \eqref{eq:stationary_sync_dist}. Therefore the second term in \eqref{eq:sync_bias_var_eq13} is bounded by~\(C\eta^2\). The second integral in \eqref{eq:sync_bias_var_eq10} is then bounded by \(C\eta^2\). 
	Putting these bounds together finally gives (\ref{eq:sync_var_bound}).
\end{proof}

\section{Sticky Coupling}\label{sec:sticky_coupling}
\begin{figure}[h]
	\includegraphics[width = 0.45\linewidth]{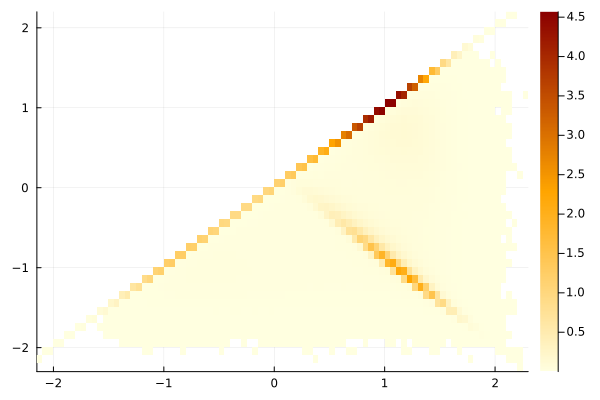}
	\includegraphics[width = 0.45\linewidth]{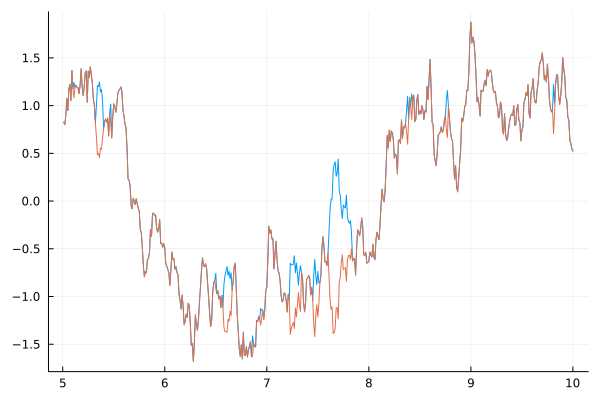}
	\caption{\small Sticky coupling of a one-dimensional particle in a double well potential perturbed by a constant force to the right, i.e. \(b(x) = -4x\left(x^2-1\right)\) and \(\eta F(x) = 1\). Left: histogram of the occupation density of the coupled process; Right: segment of trajectory of the coupled process with the perturbed marginal \(X^\eta\) plotted in blue and the reference marginal \(Y^0\) plotted in orange.}\label{fig:sticky_coupling}
\end{figure}

To overcome the shortcomings of synchronous coupling in the setting without strong contractivity everywhere, we turn to sticky coupling \cite{EberleZimmer} to build our coupled estimator. Sticky coupling couples the driving noise so that the processes are brought together by a reflection coupling, see \cite{LindvallRogers, Eberle}, and are "sticky" when they meet---in the sense that the set \(\left\{t \geq 0 \left| X_t = Y_t \right. \right\}\) has positive Lebesgue measure with probability 1. As suggested by Figure~\ref{fig:sticky_coupling}, we expect the invariant measure of the sticky coupled process to put positive mass on the diagonal \(\left\{\left. (x,y) \in \mathbb{R}^d \times \mathbb{R}^d \right| x = y\right\}\). 

Unfortunately, the continuous-time sticky coupled process is highly degenerate. The noise coefficients are discontinuous and the zero set of the one-dimensional process \(\left(r_t\right)_{t\geq 0}\) that bounds the difference between \(X^\eta\) and \(Y^0\) is a random fat Cantor set \cite{Howitt}. We believe that the sticky-coupled process should have an invariant probability measure with respect to which it is ergodic, but the degeneracies of the dynamics present technical obstacles to proving this. 

To sidestep these difficulties, we work with the discrete-time sticky coupled process.   
In practice, it is anyway this discrete-time sticky-coupled process that is simulated. In subsection 4.1, we present the discretized version of the dynamics and linear response. Next in subsection 4.2, we present discrete-time sticky coupling and an estimator based on the discretized sticky-coupled process. We then state in subsection 4.3 a central limit theorem for our estimator and some bounds on its bias and variance that are uniform in the time step and \(\eta\). We next present in subsection 4.4 some properties of sticky coupling useful for the proofs. We conclude in subsection 4.5 with a proof of the main result.

\subsection{Discrete-Time Dynamics and Linear Response}\label{subsec:disc_dynamics}
We discretize \eqref{eq:sde_model} via an Euler--Maruyama scheme with time step \(\Delta t > 0\).  Let \(\left(G_k\right)_{k\geq 1}\) be an independent and identically distributed (i.i.d.) sequence of standard Gaussian random variables in \(\mathbb{R}^d\). The discretized process is a Markov chain satisfying for, \(k \in \mathbb{N}^*\),
\begin{equation}\label{eq:X_markov_chain_update}
	X_{k+1}^{\eta, \Delta t} = X_k^{\eta, \Delta t} + \left(b\left(X_k^{\eta, \Delta t}\right) + \eta F\left(X_k^{\eta, \Delta t}\right)\right)\Delta t + \sqrt{\frac{2\Delta t}{\beta}}G_{k+1}.
\end{equation}
We denote the transition kernel of the Markov chain \eqref{eq:X_markov_chain_update} by \(P^{\eta, \Delta t}\) and its invariant probability measure by \(\nu_{\eta, \Delta t}\). We below sketch a proof of the existence of a unique ergodic invariant probability measure. For \(\eta \in \mathbb{R}\) and \(\Delta t > 0 \) we denote by \(\Pi_{\nu_{\eta, \Delta t}}\) the projection operator onto the space of function with mean zero with respect to \(\nu_{\eta, \Delta t}\), i.e for \(\phi \in L^1\left(\nu_{\eta, \Delta t}\right)\),
\[\Pi_{\nu_{\eta, \Delta t}} \phi = \phi - \int_{\mathbb{R}^d}\phi \, d\nu_{\eta, \Delta t}.\]

\paragraph{Linear Response} As a consequence of Lemma~\ref{lm:disc_lin_resp} below, we have a discrete time analogue of \eqref{eq:transport_coef}:
\begin{equation}\label{eq:disc_transport_coef}
	\alpha_{R, \Delta t} = \lim_{\eta \to 0} \frac{1}{\eta}\left(\int_{\mathbb{R}^d} R \, d\nu_{\eta, \Delta t} - \int_{\mathbb{R}^d} R \, d\nu_{0, \Delta t}\right),
\end{equation}
at least for \(\Delta t\) small enough such that the hypotheses of the lemma hold true.
The finite difference and discrete time approximation of this limit is then given by
\begin{equation}\label{eq:disc_alpha_eta}
	\alpha_{R, \eta, \Delta t} = \frac{1}{\eta}\left(\int_{\mathbb{R}^d} R \, d\nu_{\eta, \Delta t} - \int_{\mathbb{R}^d} R \, d\nu_{0, \Delta t}\right).
\end{equation}
As a consequence of Lemma~\ref{lm:disc_lin_resp} below, there exists \(\eta_\star, \Delta t^\star > 0\) and a positive constant \(C_{\eta_\star, \Delta t^\star}\) such that
\[\forall \eta \in \left[-\eta_\star, \eta_\star\right], \quad \forall \Delta t \in \left(0, \Delta t^\star\right), \qquad \left|\alpha_{R, \eta, \Delta t} - \alpha_R\right| \leq C_{\eta_\star, \Delta t^\star, k}\left(\Delta t + \eta \right).\]
Indeed, by \eqref{eq:disc_lin_resp},
\[
	\begin{aligned}
		\int_{\mathbb{R}^d} R\, d\nu_{\eta, \Delta t} &= \int_{\mathbb{R}^d} R \left(1 + \Delta t \mathfrak{f}_1 + \eta \mathfrak{f}_2 + \eta\Delta t\mathfrak{f}_3\right)d\nu_0 + \eta^2 \mathcal{A}_{1, \Delta t, \eta}R + \Delta t^2 \left(\mathcal{A}_{2, \Delta t} + \eta\mathcal{A}_{3, \Delta t, \eta}\right)R.
	\end{aligned}
\]
Thus, we can write \(\alpha_{R, \eta, \Delta t}\) as 
\[\begin{aligned}
	\alpha_{R, \eta, \Delta t} &= \frac{1}{\eta}\left(\int_{\mathbb{R}^d} R\, d\nu_{\eta, \Delta t} - \int_{\mathbb{R}^d} R\, d\nu_{0, \Delta t}\right)\\ 
	&= \int_{\mathbb{R}^d} R \mathfrak{f}_2 \, d\nu_0 + \Delta t\int_{\mathbb{R}^d} R \mathfrak{f}_3 \, d\nu_0 + \eta \mathcal{A}_{1, \Delta t, \eta}R + \Delta t^2 \mathcal{A}_{3, \Delta t, \eta}R.
\end{aligned}\]
It will be seen in Lemma~\ref{lm:disc_lin_resp} that \(\mathfrak{f}_2\) has the same expression as the linear response factor in the continuous time case \eqref{eq:lin_resp_factor} thus the first integral on the right hand side is equal to the transport coefficient \(\alpha_R\). The remaining terms are bounded by the conclusion of the lemma.

\paragraph{Ergodicity} We next sketch a uniform ergodicity result for the discretized marginal process. Fix $\eta_\star > 0$ and set \(\widetilde{M} = \max\left\{M, 4 \eta_\star \left\|F\right\|_\infty / m\right\}\). For \(\left|x - \widetilde{x}\right| \leq \widetilde{M}\), the fact \(b\) and \(F\) are Lipschitz implies that, for \(\eta \in \left[-\eta_\star, \eta_\star\right]\),
\begin{equation}
	\left|x + \Delta t \left(b\left(x\right) + \eta F\left(x\right)\right) - \widetilde{x} - \Delta t \left(b\left(\widetilde{x}\right) + \eta F\left(\widetilde{x}\right)\right)\right| \leq \left(1 + \Delta t\left(L_b + \eta_\star L_F\right)\right) \left|x - \widetilde{x}\right|. 
\end{equation}
Furthermore for \(\left|x - \widetilde{x}\right|\geq \widetilde{M}\), we have using Assumption~\ref{ass:drift} that 
\begin{equation}
	\begin{aligned}
		&\left|x + \Delta t \left(b\left(x\right) + \eta F\left(x\right)\right) - \widetilde{x} - \Delta t \left(b\left(\widetilde{x}\right) + \eta F\left(\widetilde{x}\right)\right)\right|^2\\
		& = \left|x - \widetilde{x}\right|^2 + 2\Delta t\left\langle x - \widetilde{x}, b\left(x\right) - b\left(\widetilde{x}\right) + \eta F\left(x\right) - \eta F\left(\widetilde{x}\right)\right\rangle + \Delta t^2\left|b\left(x\right) + \eta F\left(x\right) - b\left(\widetilde{x}\right) - \eta F\left(\widetilde{x}\right)\right|^2\\
		&\leq \left|x - \widetilde{x}\right|^2 - 2\Delta t m\left|x - \widetilde{x}\right|^2 + 4\Delta t\eta \left\|F\right\|_{\infty} \left|x - \widetilde{x}\right| + \Delta t^2 \left(L_b + \eta_\star L_F\right)^2\left|x - \widetilde{x}\right|^2\\
		&\leq \left|x - \widetilde{x}\right|^2 - 2\Delta t m\left|x - \widetilde{x}\right|^2 + \frac{4\Delta t\eta \left\|F\right\|_{\infty}}{\widetilde{M}} \left|x - \widetilde{x}\right|^2 + \Delta t^2 \left(L_b + \eta_\star L_F\right)^2\left|x - \widetilde{x}\right|^2\\
		&\leq \left(1 - \Delta t m + \Delta t^2 \left(L_b + \eta_\star L_F\right)^2\right)\left|x - \widetilde{x}\right|^2,
	\end{aligned}
\end{equation}
where we used the contractivity of \(b\) (see \eqref{eq:contractive_at_inf}), the boundedness of \(F\) (see \eqref{eq:F_bounded}), and the Lipschitzness of \(b\) and \(F\) for the first inequality, and our particular definition of \(\widetilde{M}\) for the last one.  
If \(\Delta t \in \left(0, \frac{m}{2\left(L_b + \eta_\star L_F\right)^2}\right)\), then the factor in front \(\left|x - \widetilde{x}\right|^2\) in the last line can be bounded by \(1 - \frac{m\Delta t}{2}\) which in turn can be bounded by \(\left(1- \frac{m\Delta t}{4}\right)^2\). These calculations suggest defining for \(\Delta t \in \left(0, \frac{m}{2\left(L_b + \eta_\star L_F\right)^2}\right)\) a continuous piecewise affine function \(\tau_{\eta_\star, \Delta t}\) by
\begin{equation}\label{eq:discrete_contraction}
	\tau_{\eta_\star, \Delta t}\left(r\right) = \left\{\begin{aligned}
		&\left[1 + \left(L_b + \eta_\star L_F\right)\Delta t\right]r && \text{ if } r \in [0, \widetilde{M}],\\
		&\left[1 + \left(L_b + \eta_\star L_F\right)\Delta t\right]\widetilde{M} + \left(1 - \frac{\Delta t m}{4}\right)\left(r -\widetilde{M}\right) && \text{ if } r \geq \widetilde{M},
	\end{aligned}\right.
\end{equation}
to control the discretized drift:
\begin{equation}\label{eq:disc_drift_control}
	\forall (x,\widetilde{x})\in \mathbb{R}^d \times \mathbb{R}^d, \qquad \left| \left(x - \widetilde{x}\right) + \Delta t \left(b(x) - b(\widetilde{x}) + \eta \left(F(x) - F(\widetilde{x})\right)\right) \right| \leq \tau_{\eta_\star, \Delta t}\left(\left|x - \widetilde{x}\right|\right).
\end{equation}
Let \(\widetilde{V}_c(x) := \mathrm{e}^{c|x|^2}\) be a Lyapunov function for the discretized process.
Following the proof of \cite[Proposition 1]{Durmus_etal}, the control \eqref{eq:disc_drift_control} on the discretized drift implies that \(P^{\eta, \Delta t}\) is uniformly geometrically ergodic in both \(\Delta t\) and \(\eta\). More precisely, fix \(\eta_\star\) and \(\Delta t^\star \in \min\left\{\frac{1}{m}, \frac{m}{2\left(L_b + \eta_\star L_F\right)^2}\right\}\). 
Then there exist constants \(\gamma \in (0, 1)\) and \(A, C, c > 0\) such that
\begin{equation}\label{eq:marginal_lyapunov_ineq}
	\forall \Delta t \in \left(0, \Delta t^*\right), \quad \forall \eta \in \left[-\eta_\star, \eta_\star\right], \quad \forall x \in \mathbb{R}^d, \qquad P^{\eta, \Delta t}\widetilde{V}_c(x) \leq \gamma^{\Delta t}\widetilde{V}_c\left(x\right) + \Delta t A
\end{equation}
and
\[\forall \Delta t \in \left(0, \Delta t^*\right), \quad \forall \eta \in \left[-\eta_\star, \eta_\star\right], \quad \forall x \in \mathbb{R}^d, \qquad \left\|\delta_x \left(P^{\eta, \Delta t}\right)^n - \nu_{\eta, \Delta t}\right\|_{\widetilde{V}_c} \leq C \gamma^{n\Delta t}\widetilde{V}_c(x).\]
Furthermore we have discrete time analogues of \eqref{eq:moment_bounds}, \eqref{eq:semigroup_estimates}, and \eqref{eq:inverse_bounds}, namely
\begin{align}
	\forall \eta_\star \in \left(0, \infty\right),  &\qquad \sup_{\Delta t \in \left(0, \Delta t^\star\right)}\sup_{\left|\eta\right| \leq \eta_\star} \nu_{\eta, \Delta t}(\widetilde{V}_c) < \infty, \label{eq:disc_moment_bounds} \\
	\forall \eta_\star \in \left(0, \infty\right),  &\qquad \sup_{\Delta t \in \left(0, \Delta t^\star\right)}\sup_{\left|\eta\right|\leq \eta_\star}\sup_{n \in \mathbb{N}} \sup_{x \in \mathbb{R}^d} \left|\frac{\left(\left(P^{\eta, \Delta t}\right)^n \widetilde{V}_c \right)(x)}{\widetilde{V}_c(x)}\right| < \infty, \label{eq:disc_semigroup_estimates}\\ 
	\forall \eta_\star \in \left(0, \infty\right), &\qquad \sup_{\Delta t \in \left(0, \Delta t^\star\right)}\sup_{\left|\eta\right| \leq \eta_\star} \left\|\left(\frac{P^{\eta, \Delta t} - \mathrm{Id}}{\Delta t}\right)^{-1}\right\|_{\mathcal{B}\left(\Pi_{\eta, \Delta t} B_{\widetilde{V}_c}^\infty\right)} < \infty \label{eq:disc_inverse_bounds}.
\end{align}
\paragraph{Perturbation results} We conclude this subsection with two lemmas that are the discrete time analogues of Lemmas~\ref{lm:lin_resp}~and~\ref{lm:poisson_sol_approx}. They are adapted from \cite[Theorem 3.4]{Leimkuhler} and its proof. There is a slight error in the proof in the original article. We give a corrected proof for our setting in Appendix~\ref{sec:discrete_lin_resp_proofs}.
\begin{lemma}\label{lm:disc_lin_resp}
	Let \(\eta_\star > 0\) and \(\Delta t^* > 0\). There exist function \(\mathfrak{f}_1, \mathfrak{f}_2, \mathfrak{f}_3 \in \Pi_0B_n^\infty\) for \(n \in \mathbb{N}\) large enough, with \(\mathfrak{f}_2 = -\left(\mathcal{L}_0^*\right)^{-1}\widetilde{\mathcal{L}}^*\mathbf{1}\), such that for any \(\varphi \in \mathscr{S}\),
	\begin{equation}\label{eq:disc_lin_resp}
		\begin{aligned}
			\int_{\mathbb{R}^d}\varphi \, d\nu_{\eta, \Delta t} &= \int_{\mathbb{R}^d} \varphi \left(1 + \Delta t \mathfrak{f}_1 + \eta \mathfrak{f}_2 + \eta \Delta t\mathfrak{f}_3\right)d\nu_0 + \eta^2 \mathcal{A}_{1, \Delta t, \eta}\varphi + \Delta t^2 \left(\mathcal{A}_{2, \Delta t} + \eta\mathcal{A}_{3, \Delta t, \eta}\right)\varphi
		\end{aligned}
	\end{equation}
	where \(\mathcal{A}_{1, \Delta t}\), \(\mathcal{A}_{2, \Delta t, \eta}\), and \(\mathcal{A}_{3, \Delta t, \eta}\) are linear functionals from \(\mathscr{S}\) to \(\mathbb{R}\) for which there exists \(p \in \mathbb{N}\) such that for any \(n \in \mathbb{N}\) there exist constant \(C_{n, \eta_\star} > 0\) for which, for any \(\varphi \in \mathscr{S}\) satisfying \(\partial^\alpha \varphi \in B_n^\infty\) for any multi-index \(|\alpha| \leq p\), 
	\[\begin{aligned}
		&\left|\mathcal{A}_{1, \Delta t}\varphi \right| \leq C_{n, \eta_\star} \sum_{\left|\alpha\right| \leq p} \left\|\partial^\alpha \varphi\right\|_{\mathcal{K}_n},\\
		&\left|\mathcal{A}_{2, \Delta t}\varphi \right| \leq C_{n, \eta_\star} \sum_{\left|\alpha\right| \leq p} \left\|\partial^\alpha \varphi\right\|_{\mathcal{K}_n},\\ 
		&\left|\mathcal{A}_{3, \Delta t, \eta}\varphi \right| \leq C_{n, \eta_\star} \sum_{\left|\alpha\right| \leq p} \left\|\partial^\alpha \varphi\right\|_{\mathcal{K}_n},
		\end{aligned} \]
		uniformly in \(\Delta t \in \left(0, \Delta t^*\right)\) and \(\eta \in \left[-\eta_\star, \eta_\star\right]\). 
\end{lemma}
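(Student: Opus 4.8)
The plan is to follow the proof of Lemma~\ref{lm:lin_resp} step by step in discrete time, the only genuinely new input being a weak Taylor expansion of the Euler--Maruyama kernel~\(P^{\eta,\Delta t}\) in powers of~\(\Delta t\), with coefficients that are themselves polynomials in~\(\eta\). Throughout write \(b_\eta := b + \eta F\) and \(L_{\eta,\Delta t} := \left(P^{\eta,\Delta t} - \mathrm{Id}\right)/\Delta t\), so that the defining relation of the invariant measure reads \(\nu_{\eta,\Delta t}\!\left(L_{\eta,\Delta t}\psi\right) = 0\) for all \(\psi \in \mathscr{S}\) (the class~\(\mathscr{S}\) is stable under~\(P^{\eta,\Delta t}\) since \(b_\eta\) grows at most linearly). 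First I would establish, by Taylor expanding \(\varphi\!\left(x + \Delta t\, b_\eta(x) + \sqrt{2\Delta t/\beta}\,G\right)\) to order six, taking the Gaussian expectation, and using that the odd moments of~\(G\) vanish, that for \(\varphi \in \mathscr{S}\),
\[
	P^{\eta,\Delta t}\varphi = \varphi + \Delta t\,\mathcal{L}_\eta\varphi + \Delta t^2\,\mathcal{N}_\eta\varphi + \Delta t^3\,\mathcal{R}_{\eta,\Delta t}\varphi,
\]
where \(\mathcal{N}_\eta = \mathcal{N}_0 + \eta\mathcal{N}_1 + \eta^2\mathcal{N}_2\) is a differential operator, of degree at most two in~\(b_\eta\), with no zeroth order term and coefficients built from \(b,F\) and constants (hence stabilising~\(\mathscr{S}\)), while \(\left|\mathcal{R}_{\eta,\Delta t}\varphi(x)\right| \leq C\left(1+|x|\right)^{n}\sum_{|\alpha|\leq 6}\left\|\partial^\alpha\varphi\right\|_{\mathcal{K}_{n}}\) for some~\(n\), uniformly in \(\Delta t \in (0,\Delta t^*)\) and \(\eta \in [-\eta_\star,\eta_\star]\); the uniformity in~\(\Delta t\) comes from the sixth moment of the increment being \(\mathrm{O}(\Delta t^3)\) with a polynomial-in-\(x\) constant. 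Thus \(L_{\eta,\Delta t} = \mathcal{L}_0 + \eta\widetilde{\mathcal{L}} + \Delta t\,\mathcal{N}_0 + \eta\Delta t\,\mathcal{N}_1 + \eta^2\Delta t\,\mathcal{N}_2 + \Delta t^2\,\mathcal{R}_{\eta,\Delta t}\) on~\(\mathscr{S}\).

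Next I would define the correction functions through the ``Poisson-type'' equations obtained by requiring \(\int_{\mathbb{R}^d} L_{\eta,\Delta t}\psi\,\bigl(1 + \Delta t\mathfrak{f}_1 + \eta\mathfrak{f}_2 + \eta\Delta t\mathfrak{f}_3\bigr)\,d\nu_0\) to have no contribution at orders~\(\eta\), \(\Delta t\), \(\eta\Delta t\). Since the order-\((0,0)\) contribution is \(\int\mathcal{L}_0\psi\,d\nu_0 = 0\) by invariance of~\(\nu_0\), taking \(L^2(\nu_0)\)-adjoints and cancelling the three mixed orders gives \(\mathcal{L}_0^*\mathfrak{f}_2 = -\widetilde{\mathcal{L}}^*\mathbf{1}\), \(\mathcal{L}_0^*\mathfrak{f}_1 = -\mathcal{N}_0^*\mathbf{1}\) and \(\mathcal{L}_0^*\mathfrak{f}_3 = -\bigl(\mathcal{N}_1^*\mathbf{1} + \widetilde{\mathcal{L}}^*\mathfrak{f}_1 + \mathcal{N}_0^*\mathfrak{f}_2\bigr)\). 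The right-hand sides lie in~\(\mathscr{S}_0\): \(\widetilde{\mathcal{L}}, \mathcal{N}_0, \mathcal{N}_1\) carry no zeroth order term (so \(\widetilde{\mathcal{L}}\mathbf{1} = \mathcal{N}_0\mathbf{1} = \mathcal{N}_1\mathbf{1} = 0\), whence all three right-hand sides have zero \(\nu_0\)-mean) and \(\log\nu_0 \in \mathscr{S}\) by Assumption~\ref{ass:ref_measure}; hence \(\mathfrak{f}_1,\mathfrak{f}_2,\mathfrak{f}_3 \in \Pi_0 B_n^\infty\) for \(n\) large by invertibility of~\(\mathcal{L}_0^*\) on mean-zero functions (the analogue of~\eqref{eq:inverse_bounds} for the reversed dynamics, as already used in Lemma~\ref{lm:lin_resp}), and in particular \(\mathfrak{f}_2 = -\left(\mathcal{L}_0^*\right)^{-1}\widetilde{\mathcal{L}}^*\mathbf{1}\) coincides with~\(\mathfrak{f}\) from Lemma~\ref{lm:lin_resp}. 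The outcome is the identity: for all \(\psi \in \mathscr{S}\),
\[
	\int_{\mathbb{R}^d} L_{\eta,\Delta t}\psi\,\bigl(1 + \Delta t\mathfrak{f}_1 + \eta\mathfrak{f}_2 + \eta\Delta t\mathfrak{f}_3\bigr)\,d\nu_0 = \eta^2\,\Theta_{1,\eta,\Delta t}(\psi) + \Delta t^2\,\Theta_{2,\eta,\Delta t}(\psi),
\]
with \(\Theta_1,\Theta_2\) linear functionals bounded by \(C\sum_{|\alpha|\leq 6}\|\partial^\alpha\psi\|_{\mathcal{K}_n}\) uniformly in \(\Delta t\) and~\(\eta\) (the \(\eta^2\Delta t^k\) contributions absorbed into~\(\Theta_1\), the \(\Delta t^3\) ones into~\(\Theta_2\)).

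Then, as in Lemma~\ref{lm:lin_resp}, I would build an approximate inverse of~\(L_{\eta,\Delta t}\). Writing \(\mathcal{G} := \Pi_0\mathcal{L}_0^{-1}\Pi_0\), which stabilises~\(\mathscr{S}\) (Proposition~\ref{prop:exist_poisson_eq} at \(\eta = 0\)) and satisfies \(\mathcal{L}_0\mathcal{G} = \Pi_0\), I would take
\[
	Q_{\eta,\Delta t} = \mathcal{G} - \eta\,\mathcal{G}\widetilde{\mathcal{L}}\mathcal{G} - \Delta t\,\mathcal{G}\mathcal{N}_0\mathcal{G} + \eta\Delta t\,\mathcal{G}\bigl(\mathcal{N}_0\mathcal{G}\widetilde{\mathcal{L}}\mathcal{G} + \widetilde{\mathcal{L}}\mathcal{G}\mathcal{N}_0\mathcal{G} - \mathcal{N}_1\mathcal{G}\bigr),
\]
so that a direct computation using the expansion of~\(L_{\eta,\Delta t}\) and \(\mathcal{L}_0\mathcal{G} = \Pi_0\) gives \(L_{\eta,\Delta t}Q_{\eta,\Delta t} = \Pi_0 + \mathcal{C}_{\eta,\Delta t} + \eta^2 E_{1,\eta,\Delta t} + \Delta t^2 E_{2,\eta,\Delta t}\), where \(\mathcal{C}_{\eta,\Delta t}\) is a sum of \(\bigl(\mathrm{Id} - \Pi_0\bigr)\)-type, hence constant-valued, linear functionals of orders~\(\eta\), \(\Delta t\), \(\eta\Delta t\) --- the extra \(\eta\Delta t\) term in~\(Q_{\eta,\Delta t}\) being exactly what turns the order-\(\eta\Delta t\) part into such a constant --- and \(E_1, E_2\) map~\(\mathscr{S}\) into polynomially growing functions with \(\mathcal{K}_n\)-seminorm controlled by finitely many derivative-seminorms of the argument, uniformly in \(\Delta t\) and~\(\eta\). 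Substituting \(\psi = Q_{\eta,\Delta t}\varphi \in \mathscr{S}\) into both \(\nu_{\eta,\Delta t}\!\left(L_{\eta,\Delta t}Q_{\eta,\Delta t}\varphi\right) = 0\) and the identity above, expanding each side with \(L_{\eta,\Delta t}Q_{\eta,\Delta t} = \Pi_0 + \mathcal{C}_{\eta,\Delta t} + \eta^2 E_1 + \Delta t^2 E_2\), and using \(\nu_0(\mathfrak{f}_j) = 0\) so that the constant \(\mathcal{C}_{\eta,\Delta t}\varphi\) contributes identically in both, subtraction cancels \(\mathcal{C}_{\eta,\Delta t}\varphi\) and yields
\[
	\int_{\mathbb{R}^d}\varphi\,d\nu_{\eta,\Delta t} = \int_{\mathbb{R}^d}\varphi\bigl(1 + \Delta t\mathfrak{f}_1 + \eta\mathfrak{f}_2 + \eta\Delta t\mathfrak{f}_3\bigr)d\nu_0 + \eta^2\mathcal{A}_{1,\Delta t,\eta}\varphi + \Delta t^2\bigl(\mathcal{A}_{2,\Delta t} + \eta\mathcal{A}_{3,\Delta t,\eta}\bigr)\varphi,
\]
with \(\mathcal{A}_{1,\Delta t,\eta}\varphi\) collecting all \(\eta^2\)-proportional remainders (the \(\nu_{\eta,\Delta t}\)- and \(\nu_0\)-integrals of \(E_1\varphi\), and \(\Theta_1(Q_{\eta,\Delta t}\varphi)\), including \(\eta^2\Delta t^k\) pieces), \(\Delta t^2\mathcal{A}_{2,\Delta t}\varphi\) the \(\eta\)-independent \(\Delta t^2\)-proportional ones, and \(\eta\Delta t^2\mathcal{A}_{3,\Delta t,\eta}\varphi\) the remaining mixed ones. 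The stated bounds then follow from the uniform moment estimates~\eqref{eq:disc_moment_bounds} and~\eqref{eq:moment_bounds}, the quantitative regularity of~\(\mathcal{G}\) coming out of the proof of Proposition~\ref{prop:exist_poisson_eq} (which controls the derivative-seminorms of~\(Q_{\eta,\Delta t}\varphi\) by those of~\(\varphi\)), and the uniform bound on~\(\mathcal{R}_{\eta,\Delta t}\).

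The main obstacle is the bookkeeping: one must carry out the expansion jointly in the two small parameters, correctly isolate \emph{every} term of the four ``main'' orders \(1, \eta, \Delta t, \eta\Delta t\) --- in particular not overlooking the mixed coefficient~\(\mathfrak{f}_3\) nor the necessity of the \(\eta\Delta t\) term in~\(Q_{\eta,\Delta t}\), which is presumably where the argument of \cite{Leimkuhler} needs the correction alluded to above --- and, most delicately, prove that the weak-Taylor remainder~\(\mathcal{R}_{\eta,\Delta t}\) and the operators \(E_{1,\eta,\Delta t}, E_{2,\eta,\Delta t}, \Theta_{1,\eta,\Delta t}, \Theta_{2,\eta,\Delta t}\) are all bounded by finitely many derivative-seminorms of their argument \emph{uniformly as \(\Delta t \to 0\) and over \(\eta \in [-\eta_\star,\eta_\star]\)}, the uniformity in~\(\Delta t\) resting on the \(\mathrm{O}(\Delta t^3)\) scaling of the sixth moment of the Euler increment together with the at-most-linear growth of~\(b\) and the boundedness of~\(F\). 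Everything else is a routine transcription of the proof of Lemma~\ref{lm:lin_resp}.
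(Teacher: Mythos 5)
Your overall architecture is the same as the paper's: a weak Taylor expansion of the Euler--Maruyama kernel jointly in \(\Delta t\) and \(\eta\), correction functions \(\mathfrak{f}_1,\mathfrak{f}_2,\mathfrak{f}_3\) defined through adjoint Poisson equations cancelling the orders \(\Delta t\), \(\eta\), \(\eta\Delta t\), an approximate inverse \(Q_{\eta,\Delta t}\) of the discrete generator built from \(\mathcal{L}_0^{-1}\Pi_0\), and a substitution of \(Q_{\eta,\Delta t}\varphi\) into the invariance identity followed by subtraction. Up to that point your sketch is sound.

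The genuine gap is in the final sorting of remainders, and it is precisely the point where the original argument of \cite{Leimkuhler} is flawed --- not, as you guess, the inclusion of the \(\eta\Delta t\) term in \(Q_{\eta,\Delta t}\). After substituting \(Q_{\eta,\Delta t}\varphi\) into \(\nu_{\eta,\Delta t}\left(L_{\eta,\Delta t}Q_{\eta,\Delta t}\varphi\right)=0\), the \(\Delta t^2\)-order remainder is an integral of a well-behaved operator applied to \(\varphi\) \emph{against the perturbed measure} \(\nu_{\eta,\Delta t}\) (in the paper's notation, the term \(\Delta t^2\int \widetilde{\mathcal{D}}_1\varphi\,d\nu_{\eta,\Delta t}\)). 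This term depends on \(\eta\) through the measure, so it cannot simply be declared ``\(\eta\)-independent'' and assigned to \(\mathcal{A}_{2,\Delta t}\); yet the \(\eta\)-independence of \(\mathcal{A}_{2,\Delta t}\) is exactly what the lemma needs (it is what makes the \(\Delta t^2\) bias cancel when one forms \(\eta^{-1}\bigl(\nu_{\eta,\Delta t}(R)-\nu_{0,\Delta t}(R)\bigr)\); otherwise a \(\Delta t^2/\eta\) term survives). To place it in the stated form one must write it as \(\Delta t^2\int \widetilde{\mathcal{D}}_1\varphi\,d\nu_{0,\Delta t}\) plus \(\Delta t^2\) times the difference of the two integrals, and then prove that this difference is of order \(\eta\) \emph{uniformly in} \(\Delta t\in(0,\Delta t^\star)\). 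Your proposal supplies no tool for this: neither the uniform moment bounds nor the well-behavedness of the expansion operators control \(\nu_{\eta,\Delta t}-\nu_{0,\Delta t}\) at rate \(\eta\) uniformly in the time step. The paper closes this gap with the sticky-coupling machinery: by Lemma~\ref{lm:W_n_bound} the difference of integrals is bounded by \(\bigl\|\widetilde{\mathcal{D}}_1\varphi\bigr\|_{\widetilde{V}_c}\int \mathbf{1}_{\{x\neq y\}}V_c\,d\mu_{\eta,\Delta t}\), and the stationary off-diagonal mass of the sticky-coupled invariant measure is \(\mathrm{O}(\eta)\) uniformly in \(\Delta t\) by \eqref{eq:stationary_discrete_weightedtv_bound} in Proposition~\ref{prop:discrete_Wn_norm_bound}, together with \eqref{eq:disc_moment_bounds}. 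Without this (or some substitute, e.g.\ a uniform-in-\(\Delta t\) Lipschitz estimate of \(\eta\mapsto\nu_{\eta,\Delta t}\) in a weighted norm derived from \eqref{eq:disc_inverse_bounds}), your decomposition into \(\mathcal{A}_{2,\Delta t}\) and \(\eta\,\mathcal{A}_{3,\Delta t,\eta}\) is not justified, and the lemma as stated is not proved.
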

The final lemma of this subsection gives a bound on the difference between the solutions of discrete time Poisson equation corresponding to the perturbed and reference processes. It shows that this difference is of the order of the perturbation, up to an error term related to the time step discretization and which can be made as small as wanted. This error term related to the time step discretization appears for technical reasons in the proof.
\begin{lemma}\label{lm:disc_poisson_sol_approx}
	Fix \(\eta_\star > 0\), \(\Delta t^\star > 0\), and \(R \in \mathscr{S}\). For any \(\eta \in \left[-\eta_\star, \eta_\star\right]\), consider the solution \(\widehat{R}_{\eta, \Delta t}\) in \(\Pi_{\eta, \Delta t}B_{\widetilde{V}_c}^\infty\) where \(c > 0\) is such that \eqref{eq:marginal_lyapunov_ineq} holds to the discrete the Poisson equation
	\[\left(\frac{\mathrm{Id} - P^{\eta, \Delta t}}{\Delta t}\right)\widehat{R}_{\eta, \Delta t} = \Pi_{\nu_{\eta, \Delta t}}R.\] 
	For any \(n \in \mathbb{N}\), there exists a constant \(C_n > 0\) such that 
	\begin{equation}
		\forall \eta \in \left[-\eta_\star, \eta_\star\right], \quad \forall \Delta t \in \left(0, \Delta t^*\right), \qquad \left\| \widehat{R}_{\eta, \Delta t} - \widehat{R}_{0, \Delta t}\right\|_{\widetilde{V}_c} \leq C_n \left(\eta + \Delta t^{2n}\right).
	\end{equation} 
\end{lemma}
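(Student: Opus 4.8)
The plan is to mirror the proof of Lemma~\ref{lm:poisson_sol_approx} with the continuous generators replaced by the discrete ones $\mathcal{G}_{\eta,\Delta t} := (\mathrm{Id} - P^{\eta,\Delta t})/\Delta t$; the new difficulty is that the discrete Poisson solution $\widehat R_{0,\Delta t}$ is not known to lie in $\mathscr{S}$, and this is handled by replacing it with a smooth approximant coming from a backward-error (Talay--Tubaro) expansion in $\Delta t$, which is exactly where the $\Delta t^{2n}$ error term and the adaptation of \cite[Theorem 3.4]{Leimkuhler} enter. First, recalling from \eqref{eq:disc_inverse_bounds} that $\mathcal{G}_{\eta,\Delta t}^{-1}$ is bounded on $\Pi_{\nu_{\eta,\Delta t}}B_{\widetilde V_c}^\infty$ uniformly in $\eta\in[-\eta_\star,\eta_\star]$ and $\Delta t\in(0,\Delta t^\star)$, with $\mathcal{G}_{\eta,\Delta t}^{-1}\mathcal{G}_{\eta,\Delta t}\psi = \Pi_{\nu_{\eta,\Delta t}}\psi$, I would apply $\mathcal{G}_{\eta,\Delta t}$ to $\widehat R_{\eta,\Delta t}-\widehat R_{0,\Delta t}$ and use $\mathcal{G}_{\eta,\Delta t}-\mathcal{G}_{0,\Delta t} = (P^{0,\Delta t}-P^{\eta,\Delta t})/\Delta t$ together with the two Poisson equations to get
\[
\widehat R_{\eta,\Delta t} - \widehat R_{0,\Delta t} = \mathcal{G}_{\eta,\Delta t}^{-1}\!\left[\bigl(\nu_{0,\Delta t}(R) - \nu_{\eta,\Delta t}(R)\bigr) - \frac{P^{0,\Delta t}-P^{\eta,\Delta t}}{\Delta t}\,\widehat R_{0,\Delta t}\right] - \nu_{\eta,\Delta t}\bigl(\widehat R_{0,\Delta t}\bigr),
\]
in analogy with the computation opening the proof of Lemma~\ref{lm:poisson_sol_approx}; the constant $\nu_{0,\Delta t}(R)-\nu_{\eta,\Delta t}(R)$ is $\mathrm{O}(\eta)$ uniformly in $\Delta t$ by Lemma~\ref{lm:disc_lin_resp}.

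The heart of the argument is the term $\tfrac1{\Delta t}(P^{0,\Delta t}-P^{\eta,\Delta t})\widehat R_{0,\Delta t}$. Differentiating the one-step kernel in the drift-shift parameter gives the exact identity $(P^{\eta,\Delta t}-P^{0,\Delta t})\psi = \eta\Delta t\int_0^1 F\cdot(Q^{\Delta t}_{s\eta}\nabla\psi)\,ds$, where $Q^{\Delta t}_\theta$ is the Euler--Maruyama one-step kernel with drift $b+\theta F$ and $F$ is evaluated at the starting point, so this term equals $-\eta\int_0^1 F\cdot(Q^{\Delta t}_{s\eta}\nabla\widehat R_{0,\Delta t})\,ds$. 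It would be $\mathrm{O}(\eta)$ in $\widetilde V_c$-norm if $\nabla\widehat R_{0,\Delta t}$ had polynomial growth uniformly in $\Delta t$; lacking this, I would construct for each $n$ a function $\overline{R}^{(n)}_{0,\Delta t} = \Theta^{(n)}_{\Delta t} - \nu_{0,\Delta t}(\Theta^{(n)}_{\Delta t})$ with $\Theta^{(n)}_{\Delta t} = \sum_{j=0}^{J}\Delta t^j\psi_j$ and the $\psi_j\in\mathscr{S}_0$ fixed (independent of $\Delta t$), such that $\|\widehat R_{0,\Delta t}-\overline{R}^{(n)}_{0,\Delta t}\|_{\widetilde V_c}\le C_n\Delta t^{2n}$. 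The $\psi_j$ are obtained from the expansion $P^{0,\Delta t} = \mathrm{Id} + \sum_{\ell\ge1}\Delta t^\ell A_\ell$ ($A_1 = \mathcal{L}_0$, the $A_\ell$ differential operators stabilizing $\mathscr{S}$, and only integer powers of $\Delta t$ since odd Gaussian moments vanish) by solving the hierarchy $-\mathcal{L}_0\psi_j = \Pi_0\bigl(\sum_{\ell=2}^{j+1}A_\ell\psi_{j+1-\ell}\bigr)$ with $\psi_0 = \widetilde R_0$, each step being solvable in $\mathscr{S}_0$ by Proposition~\ref{prop:exist_poisson_eq}. With $J$ and the truncation order large enough one checks $\mathcal{G}_{0,\Delta t}\overline{R}^{(n)}_{0,\Delta t} - \Pi_{\nu_{0,\Delta t}}R = \mathrm{O}(\Delta t^{2n})$ in $\widetilde V_c$-norm — the constants produced by the solvability conditions automatically reconstruct the $\Delta t$-expansion of $\nu_{0,\Delta t}(R)$ because $\mathcal{G}_{0,\Delta t}\psi$ always has $\nu_{0,\Delta t}$-mean zero, and the centering is harmless since constants lie in $\ker\mathcal{G}_{0,\Delta t}$ — and then $\|\widehat R_{0,\Delta t}-\overline{R}^{(n)}_{0,\Delta t}\|_{\widetilde V_c}\le C_n\Delta t^{2n}$ follows from \eqref{eq:disc_inverse_bounds}. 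Crucially $\nabla\overline{R}^{(n)}_{0,\Delta t} = \sum_{j=0}^J\Delta t^j\nabla\psi_j$ has polynomial growth uniformly in $\Delta t\le\Delta t^\star$.

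To conclude I would split $\widehat R_{0,\Delta t} = \overline{R}^{(n)}_{0,\Delta t} + (\widehat R_{0,\Delta t}-\overline{R}^{(n)}_{0,\Delta t})$ both in the delicate term and in $\nu_{\eta,\Delta t}(\widehat R_{0,\Delta t})$. For the smooth part, $\tfrac1{\Delta t}(P^{0,\Delta t}-P^{\eta,\Delta t})\overline{R}^{(n)}_{0,\Delta t} = -\eta\int_0^1 F\cdot(Q^{\Delta t}_{s\eta}\nabla\overline{R}^{(n)}_{0,\Delta t})\,ds$ is $\mathrm{O}(\eta)$ in $\widetilde V_c$-norm (using $\|F\|_\infty<\infty$, the uniform polynomial growth of $\nabla\overline{R}^{(n)}_{0,\Delta t}$, and a Lyapunov bound of the type \eqref{eq:marginal_lyapunov_ineq} for $Q^{\Delta t}_\theta$, possibly after slightly enlarging $c$), while $\nu_{\eta,\Delta t}(\overline{R}^{(n)}_{0,\Delta t}) = \mathrm{O}(\eta)$ by Lemma~\ref{lm:disc_lin_resp} applied to $\overline{R}^{(n)}_{0,\Delta t}\in\mathscr{S}$ (whose seminorms are uniform in $\Delta t$) together with $\nu_{0,\Delta t}(\overline{R}^{(n)}_{0,\Delta t}) = 0$. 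For the remainder part, a crude bound $\|(P^{0,\Delta t}-P^{\eta,\Delta t})\psi\|_{\widetilde V_c}\le C\eta\sqrt{\Delta t}\,\|\psi\|_{\widetilde V_c}$, obtained by writing the kernel difference as the integral over the drift shift of the gradient of a Gaussian density, gives $\tfrac1{\Delta t}(P^{0,\Delta t}-P^{\eta,\Delta t})(\widehat R_{0,\Delta t}-\overline{R}^{(n)}_{0,\Delta t}) = \mathrm{O}(\eta\Delta t^{2n-1/2})$ in $\widetilde V_c$-norm and $\nu_{\eta,\Delta t}(\widehat R_{0,\Delta t}-\overline{R}^{(n)}_{0,\Delta t}) = \mathrm{O}(\Delta t^{2n})$ via \eqref{eq:disc_moment_bounds}. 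Feeding these into the identity of the first step and using the uniform bound on $\mathcal{G}_{\eta,\Delta t}^{-1}$ gives $\|\widehat R_{\eta,\Delta t}-\widehat R_{0,\Delta t}\|_{\widetilde V_c}\le C_n(\eta+\Delta t^{2n})$; the leftover $\Delta t^{2n-1/2}$ contribution is absorbed by running the whole argument one order further, which is legitimate since $n$ is arbitrary.

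The main obstacle is the second step: one must verify that the operators $A_\ell$ and the correctors $\psi_j$ stay in $\mathscr{S}$, that after truncation the one-step error of the expansion is genuinely of arbitrarily high order in $\Delta t$ \emph{in the exponential Lyapunov norm} $\widetilde V_c$ — which is where the polynomial-growth hypotheses on $b$, $F$ and $\log\nu_0$ and the geometric ergodicity \eqref{eq:marginal_lyapunov_ineq} are used, and where the ``slight error'' in the proof of \cite[Theorem 3.4]{Leimkuhler} has to be repaired — and that centering against $\nu_{0,\Delta t}$ causes no loss. The remaining bookkeeping, namely keeping track of which contributions are $\mathrm{O}(\eta)$ versus of high order in $\Delta t$, and the mild enlargements of $c$ forced by the Gaussian convolutions, is routine.
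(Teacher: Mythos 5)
Your proposal is correct in outline, but it takes a genuinely different route from the paper's proof. The paper never manipulates the (a priori non-smooth) discrete Poisson solution $\widehat R_{0,\Delta t}$ itself: it builds an approximate inverse $\mathcal Q_{\eta,\Delta t}$ of $(\mathrm{Id}-P^{\eta,\Delta t})/\Delta t$ from the operator expansion of the Euler--Maruyama kernel, writes $\widehat R_{\eta,\Delta t}-\widehat R_{0,\Delta t}=(\widehat R_{\eta,\Delta t}-\mathcal Q_{\eta,\Delta t}R)-(\widehat R_{0,\Delta t}-\mathcal Q_{0,\Delta t}R)+\eta\widetilde{\mathcal Q}_{\eta,\Delta t}R$, uses \eqref{eq:approx_inverse_equality} to see that the order-$\Delta t^{2}$ defect is again a difference of discrete Poisson solutions with the new smooth source $\widetilde{\mathcal D}_1R\in\mathscr S$, and iterates this identity $n$ times, the $\Delta t^{2n}$ being the $n$-fold iterate, everything closed by \eqref{eq:disc_inverse_bounds}. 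You instead mimic the continuous-time Lemma~\ref{lm:poisson_sol_approx}: a resolvent identity for $\mathcal G_{\eta,\Delta t}=(\mathrm{Id}-P^{\eta,\Delta t})/\Delta t$ together with the exact drift-shift formula for $(P^{\eta,\Delta t}-P^{0,\Delta t})$, repairing the lack of regularity of $\widehat R_{0,\Delta t}$ by a Talay--Tubaro expansion $\sum_j\Delta t^j\psi_j$ with $\psi_j\in\mathscr S_0$. That expansion is sound (your hierarchy, the solvability of each level via Proposition~\ref{prop:exist_poisson_eq}, and the remark that the constants are fixed automatically because $\mathcal G_{0,\Delta t}\phi$ always has $\nu_{0,\Delta t}$-mean zero are all correct), but it is precisely where the real work sits: establishing arbitrary-order one-step consistency of the truncated expansion in the exponential norm $\widetilde V_c$ is comparable in length to the paper's construction of $\mathcal Q_{\eta,\Delta t}$ and its well-behaved defect operators, so your route relocates rather than saves effort, and note the incidental benefit of the paper's scheme that the defects always act on explicit elements of $\mathscr S$. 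Two small repairs to your argument: the inverse $\mathcal G_{\eta,\Delta t}^{-1}$ exists only on $\Pi_{\nu_{\eta,\Delta t}}B^\infty_{\widetilde V_c}$, so the projection must be inserted in your opening identity (the constant $\nu_{0,\Delta t}(R)-\nu_{\eta,\Delta t}(R)$ is then annihilated, and the extra term $-\nu_{\eta,\Delta t}(\widehat R_{0,\Delta t})$ you kept is indeed the correct compensation); and the refined bound $\|(P^{0,\Delta t}-P^{\eta,\Delta t})\psi\|_{\widetilde V_c}\le C\eta\sqrt{\Delta t}\,\|\psi\|_{\widetilde V_c}$, which threatens to force an enlargement of $c$ and hence change the norm appearing in the statement, is unnecessary: the trivial bound $\|(P^{0,\Delta t}-P^{\eta,\Delta t})\psi\|_{\widetilde V_c}\le C\|\psi\|_{\widetilde V_c}$, available directly from \eqref{eq:marginal_lyapunov_ineq}, already makes the remainder term of order $\Delta t^{2n-1}$, which suffices since $n$ is arbitrary; similarly, the kernels $Q^{\Delta t}_{s\eta}$ in your drift-shift formula are just $P^{s\eta,\Delta t}$, so their moment bounds come for free from \eqref{eq:disc_semigroup_estimates} without touching $c$.
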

\subsection{Discrete-Time Sticky Coupling}\label{subsec:sticky_coupling}
Following \cite{Durmus_etal}, we construct a discrete-time sticky coupling of the Euler--Maruyama discretizations of the two SDEs in (\ref{eq:coupled_dynamics})---see Figure~\ref{fig:sticky_coupling_schema} for a schematic illustration of the coupling. Let \(\left(U_k\right)_{k\geq 1}\) be a sequence of i.i.d. uniform \(\left[0,1\right]\) random variables independent from \(\left(G_k\right)_{k \geq 1}\). The first component of the couple \(\left(X_k^{\eta,\Delta t}, Y_k^{0, \Delta t}\right)\) evolves according to a standard Euler--Maruyama discretization,
\[X_{k+1}^{\eta, \Delta t} = X_k^{\eta, \Delta t} + \Delta t\left[b\left(X_k^{\eta, \Delta t}\right) + \eta F\left(X_k^{\eta, \Delta t}\right)\right] + \sqrt{\frac{2\Delta t}{\beta}}G_{k+1}.\]
For the second component we do one of two things at each step: with probability \(p_{\Delta t, \beta}\left(X_k^{\eta, \Delta t}, Y_k^{0, \Delta t}, G_{k+1}\right)\) the trajectories are forced together, with
\begin{gather}\label{eq:meeting_prob}
	p_{\Delta t, \beta}\left(x,y,g\right) = \min\left\{1, \frac{\varphi_d\left(\sqrt{\displaystyle \frac{\beta}{2\Delta t}}\mathbf{E}\left(x,y\right) + g  \right)}{\varphi_d\left(g\right)}\right\},
\end{gather}
where \(\varphi_d\) is the density of a \(d\)-dimensional standard Gaussian distribution and
\[\mathbf{E}\left(x,y\right) = y-x + \Delta t \left[b(y) - b(x) - \eta F(x)\right].\]
If they are not forced together, then \(Y_k^{0, \Delta t}\) is evolved with the same Gaussian noise as \(X_k^{\eta, \Delta t}\) but reflected over the hyperplane separating the two trajectories. More precisely, denote by \(\mathbf{e}(x,y)\) the normalization of the difference vector \(\mathbf{E}(x,y)\)
\[\mathbf{e}\left(x,y\right) = \left\{
\begin{aligned}
	&\frac{\mathbf{E}\left(x, y\right)}{\left|\mathbf{E}\left(x,y\right)\right|} \;\; && \text{ if } \mathbf{E}\left(x,y\right) \neq 0,\\
	& 0 && \text{ otherwise.}
\end{aligned}
\right. \]
To lighten the notation, we often use \(\mathbf{e}_k := \mathbf{e}\left(X_k^{\eta, \Delta t}, Y_k^{0, \Delta t}\right)\). Thus, \(\left[\mathrm{Id} - 2\mathbf{e}_k\mathbf{e}_k^T\right]G_{k+1}\) is the reflection of the Gaussian vector \(G_{k+1}\) used to drive the first component. 

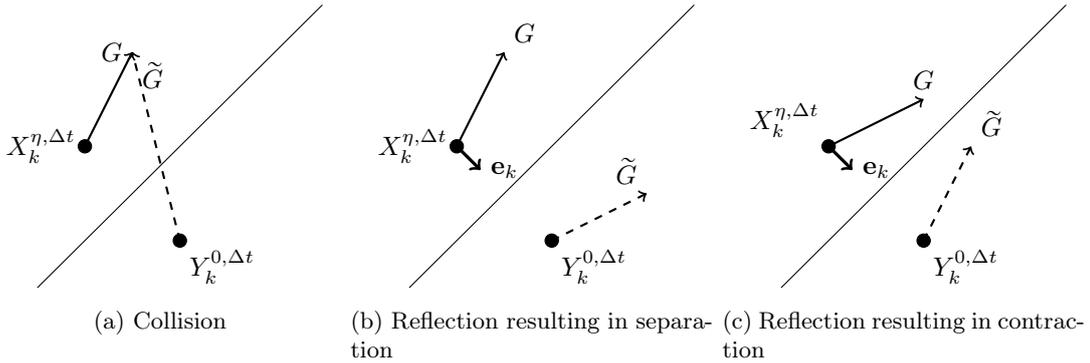
\begin{figure}[h]
	\centering
	\begin{subfigure}[t]{0.30\textwidth}
		\centering
		\begin{tikzpicture}[scale=1.25]
			\draw[black, thick, ->] (0.5, 1.5) -- (1, 2.5) node[anchor =  east] {\(G\)};
			\filldraw[black] (0.5, 1.5) circle (2pt) node[anchor = east] {\(X_{k}^{\eta, \Delta t}\)};
			\draw[black, thick, dashed, ->] (1.5, 0.5) -- (1, 2.5) node[anchor =   north west] {\(\widetilde{G}\)};
			\filldraw[black] (1.5, 0.5) circle (2pt) node[anchor = north west] {\(Y_k^{0, \Delta t}\)};
			\draw[black, thin] (0, 0) -- (3,3);
		\end{tikzpicture}
		\caption{Collision}\label{collision}
	\end{subfigure}
	\begin{subfigure}[t]{0.30\textwidth}
		\centering
		\begin{tikzpicture}[scale=1.25]
			\draw[black, thick, ->] (0.5, 1.5) -- (1, 2.5) node[anchor = south west] {\(G\)};
			\filldraw[black] (0.5, 1.5) circle (2pt) node[anchor = east] {\(X_k^{\eta, \Delta t}\)};
			\draw[black, very thick, ->](0.5, 1.5) -- (0.75, 1.25) node[anchor = west] {\(\mathbf{e}_k\)};
			\draw[black, thick, dashed, ->] (1.5, 0.5) -- (2.5, 1) node[anchor = south east] {\(\widetilde{G}\)};
			\filldraw[black] (1.5, 0.5) circle (2pt) node[anchor = north west] {\(Y_k^{0, \Delta t}\)};
			\draw[black, thin] (0, 0) -- (3,3);
		\end{tikzpicture}
		\caption{Reflection resulting in separation}\label{separation}
	\end{subfigure}
	\begin{subfigure}[t]{0.30\textwidth}
		\centering
		\begin{tikzpicture}[scale=1.25]
			\draw[black, thick, ->] (0.5, 1.5) -- (1.5, 2) node[anchor = south] {\(G\)};
			\filldraw[black] (0.5, 1.5) circle (2pt) node[anchor = south east] {\(X_k^{\eta, \Delta t}\)};
			\draw[black, very thick, ->](0.5, 1.5) -- (0.75, 1.25) node[anchor = west] {\(\mathbf{e}_k\)};
			\draw[black, thick, dashed, ->] (1.5, 0.5) -- (2, 1.5) node[anchor = south west] {\(\widetilde{G}\)};
			\filldraw[black] (1.5, 0.5) circle (2pt) node[anchor = north west] {\(Y_{k}^{0, \Delta t}\)};
			\draw[black, thin] (0, 0) -- (3,3);
		\end{tikzpicture}
		\caption{Reflection resulting in contraction}\label{contraction}
	\end{subfigure}
	\caption{The possible behavior of the coupled trajectories in one-step}\label{fig:sticky_coupling_schema}
\end{figure}
The definition of the meeting probability \eqref{eq:meeting_prob} we use (taken from \cite[Chapter 2]{Jacob_notes}) is equivalent to the one given in \cite[Equation~(8)]{Durmus_etal}, see Appendix~\ref{sec:equivalence_of_disc_MR_coupling}. A computation similar to the one in \cite[Section 4.1]{Durmus_Moulines} shows that this construction is indeed a coupling of the discretizations of the two SDEs in \eqref{eq:coupled_dynamics}. 
Furthermore, this coupling is maximal in the sense that it maximizes the probability of the two trajectories meeting in one step. A computation indeed shows that
\[\begin{aligned}
	\int_{\mathbb{R}^d} p_{\Delta t, \beta}\left(x, y, z\right) \varphi_d(z)dz &= \int_{\mathbb{R}^d} \min\left\{\varphi_d\left(\sqrt{\frac{\beta}{2\Delta t}} \mathbf{E}(x,y) + z\right), \varphi_d(z)\right\} dz\\
	&=  1 - d_{\mathrm{TV}}\left(P^{\eta, \Delta t}(x, \cdot), P^{0, \Delta t}(y, \cdot)\right).
\end{aligned}\]
As a consequence, this construction saturates the coupling inequality (see \cite[Section 2.2]{Jacob_notes})
\[\mathbb{P}\left(X_{k+1}^{\eta, \Delta t} = Y_{k+1}^{0, \Delta}\left| X_k^{\eta, \Delta t} = x, Y_k^{0, \Delta t} = y\right.\right) \leq 1 - d_{\mathrm{TV}}\left(P^{\eta, \Delta t}(x, \cdot), P^{0, \Delta t}(y, \cdot)\right).\] 
If \(X_{k+1}^{\eta, \Delta t}\) and \(Y_{k+1}^{0, \Delta t}\) are not forced together, then \(Y^{0, \Delta t}\) evolves with a noise that is reflected across the hyperplane dividing the two trajectories. Inspired by \cite{Eberle}, we expect this reflection coupling to induce a contraction in average. 

Putting this altogether, the update rule for the discrete-time sticky coupled process is given for \(k \in \mathbb{N}\) by
\begin{equation}\label{eq:disc_time_sticky}
	\begin{aligned}
		X_{k+1}^{\eta, \Delta t} &= X_k^{\eta, \Delta t} + \Delta t\left[b\left(X_k^{\eta, \Delta t}\right) + \eta F\left(X_k^{\eta, \Delta t}\right)\right] + \sqrt{\frac{2\Delta t}{\beta}} G_{k+1},\\
		Y_{k+1}^{0, \Delta t} &= 
		\left\{
		\begin{aligned}
			&X_{k+1}^{\eta, \Delta t}, && \text{ if } U_{k+1} \leq p_{\Delta t, \beta}\left(X_k^{\eta, \Delta t}, Y_k^{0, \Delta t}, G_{k+1}\right),\\
			&Y_k^{0, \Delta t} + \Delta t b\left(Y_k^{0, \Delta t}\right) + \sqrt{\frac{2\Delta t}{\beta}}\left[\mathrm{Id} - 2\mathbf{e}_k\mathbf{e}_k^T\right]G_{k+1} && \text{ otherwise}.
		\end{aligned}
		\right. 
	\end{aligned}
\end{equation}
We denote by \(T^{\eta, \Delta t}\) the transition kernel of the coupled process.
\subsection{Quantitative Results}
For discrete sticky coupling, the discrete-time estimator of \(\alpha_R\) is
\begin{equation}
	\widehat{\Psi}_{\eta, N}^{\Delta t, \mathrm{sticky}} = \frac{1}{\eta N}\sum_{k = 0}^{N-1}\left[R\left(X_k^{\eta, \Delta t}\right) - R\left(Y_k^{0, \Delta t}\right)\right].
\end{equation}
To state the results of this section, we introduce a Lyapunov function for the coupled process: 
\begin{equation}\label{eq:coupled_lyapunov_func}
	V_c(x,y) = \mathrm{e}^{c|x|^2} + \mathrm{e}^{c|y|^2} =: \widetilde{V}_c(x) + \widetilde{V}_c(y),
\end{equation}
with \(c > 0\) having the same value as in \eqref{eq:marginal_lyapunov_ineq}. The Lyapunov function \(V_c\) can be seen as a sum of Lyapunov functions for the marginals.
Like the previously introduced estimators, the sticky coupling based estimator satisfies a central limit theorem.

\begin{proposition}\label{prop:sticky_clt}
	Let \(\eta \in \mathbb{R}\) and \(R \in \mathscr{S}\). Assume that \(b\) and \(F\) satisfy Assumption~\ref{ass:drift} and that \(\left(X_0^{\eta, \Delta t}, Y_0^{0, \Delta t}\right) \sim \mu_{\mathrm{init}}\) for some probability measure \(\mu_{\mathrm{init}}\) such that \(\mu_{\mathrm{init}}\left(V_c\right) < + \infty\). Then there exists \(\Delta t^\star\) such that, for any \(\Delta t \in \left(0, \Delta t^\star\right)\), the estimator \(\widehat{\Psi}_{\eta, N}^{\Delta t, \mathrm{sticky}}\) converges almost surely as \(N \to \infty\) to 
	\[\alpha_{R, \eta, \Delta t} = \frac{\nu_{\eta, \Delta t}\left(R\right) - \nu_{0, \Delta t}\left(R\right)}{\eta},\] 
	and the following central limit theorem holds:
	\begin{equation}
		\sqrt{N\Delta t}\left(\widehat{\Psi}_{\eta, N}^{\Delta t, \mathrm{sticky}} - \alpha_{R, \eta, \Delta t}\right) \xrightarrow[N \to \infty]{\mathrm{law}} \mathcal{N}\left(0, \sigma^2_{\mathrm{sticky}, R, \eta, \Delta t}\right),
	\end{equation}
	with asymptotic variance \(\sigma^2_{\mathrm{sticky}, R, \eta, \Delta t} \in \left(0, \infty\right)\).
\end{proposition}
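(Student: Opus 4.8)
The plan is to recognize $\widehat\Psi_{\eta,N}^{\Delta t,\mathrm{sticky}}$ as the empirical average $\frac{1}{\eta N}\sum_{k=0}^{N-1} g\bigl(Z_k\bigr)$ of the function $g(x,y) = R(x) - R(y)$ along the sticky-coupled Markov chain $\bigl(Z_k\bigr)_{k\ge 0} = \bigl(X_k^{\eta,\Delta t},Y_k^{0,\Delta t}\bigr)_{k\ge 0}$ with transition kernel $T^{\eta,\Delta t}$ defined by \eqref{eq:disc_time_sticky}, and then to apply the law of large numbers and a central limit theorem for additive functionals of geometrically ergodic Markov chains. This requires: (a) a Lyapunov drift condition for $T^{\eta,\Delta t}$ in terms of $V_c$; (b) unique ergodicity of the coupled chain together with a quantitative mixing estimate; (c) identification of the limit; and (d) the comparison $g^2 \le C V_c$, which is immediate since $R \in \mathscr S$ grows at most polynomially while $\widetilde V_c(x) = \mathrm e^{c|x|^2}$ grows exponentially.

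For (a), the decisive observation is that the construction \eqref{eq:disc_time_sticky} is a genuine coupling of the kernels $P^{\eta,\Delta t}(x,\cdot)$ and $P^{0,\Delta t}(y,\cdot)$: the first marginal is the perturbed Euler--Maruyama chain, while the second marginal has law $P^{0,\Delta t}(Y_k,\cdot)$ whether $Y_{k+1}$ is forced onto $X_{k+1}$ or updated with the reflected Gaussian $[\mathrm{Id}-2\mathbf e_k\mathbf e_k^{\top}]G_{k+1}$ (still a standard Gaussian). Each marginal therefore inherits the drift bound \eqref{eq:marginal_lyapunov_ineq}, and since $V_c(x,y) = \widetilde V_c(x) + \widetilde V_c(y)$, summing gives $T^{\eta,\Delta t}V_c \le \gamma^{\Delta t}V_c + 2A\,\Delta t$ for every $\Delta t \in (0,\Delta t^\star)$, where $\Delta t^\star$ is the threshold from \eqref{eq:marginal_lyapunov_ineq} at the fixed value of $\eta$. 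In particular any invariant probability measure $\pi$ of $T^{\eta,\Delta t}$ satisfies $\pi(V_c) < \infty$, so $g \in L^1(\pi) \cap L^2(\pi)$ by (d).

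The main obstacle is (b). The coupled kernel is genuinely degenerate: $T^{\eta,\Delta t}(z,\cdot)$ always charges the diagonal $\{x=y\}$, and on its complement the reflected update makes $Y_{k+1}$ a deterministic function of $X_{k+1}$, so the one-step law is supported on a $d$-dimensional subset of $\mathbb R^d \times \mathbb R^d$; consequently the chain is not $\psi$-irreducible with respect to Lebesgue measure and the classical small-set machinery cannot be invoked off the shelf. I would instead establish geometric ergodicity in a Wasserstein-type sense, following \cite{Durmus_etal,Eberle}: the contractivity at infinity \eqref{eq:contractive_at_inf} and the discretized-drift control \eqref{eq:disc_drift_control}, combined with the reflection/stickiness structure, provide a distance-like function $d$ on $\mathbb R^d\times\mathbb R^d$ and a rate $1 - c_0\Delta t$ such that two copies of the coupled chain, themselves appropriately coupled, contract in $\mathbb E[d]$ on sublevel sets of $V_c$. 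Together with the drift from (a) and the weak Harris theorem of \cite{HairerMattinglyScheutzow}, this yields a unique invariant probability measure $\mu_{\mathrm{sticky},\eta,\Delta t}$ and geometric convergence of $\mu\,(T^{\eta,\Delta t})^n$ to it in the weighted distance $\sqrt{d(\cdot,\cdot)}\,\sqrt{1+V_c}$. By uniqueness its marginals are $\nu_{\eta,\Delta t}$ and $\nu_{0,\Delta t}$, which settles (c): $\mu_{\mathrm{sticky},\eta,\Delta t}(g) = \nu_{\eta,\Delta t}(R) - \nu_{0,\Delta t}(R) = \eta\,\alpha_{R,\eta,\Delta t}$.

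Granting (b), the conclusion is routine. Positive Harris recurrence (a by-product of (a) and unique ergodicity) together with $g \in L^1(\mu_{\mathrm{sticky},\eta,\Delta t})$ gives $\frac1N\sum_{k=0}^{N-1}g(Z_k) \to \eta\,\alpha_{R,\eta,\Delta t}$ almost surely for any initial law with $\mu_{\mathrm{init}}(V_c) < \infty$, hence the claimed almost sure convergence of $\widehat\Psi_{\eta,N}^{\Delta t,\mathrm{sticky}}$. Since $R$ is locally Lipschitz with polynomial weight (Lemma~\ref{lm:pseudo_lip}), $g$ is Lipschitz for $d$, so the geometric mixing makes $G_{\eta,\Delta t} := \sum_{n\ge 0}\bigl((T^{\eta,\Delta t})^n g - \eta\,\alpha_{R,\eta,\Delta t}\bigr)$ converge and solve the Poisson equation $(\mathrm{Id}-T^{\eta,\Delta t})G_{\eta,\Delta t} = g - \eta\,\alpha_{R,\eta,\Delta t}$, with $G_{\eta,\Delta t}^2 \lesssim V_c$. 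The martingale-approximation central limit theorem for Markov chains then gives $\frac{1}{\sqrt N}\sum_{k=0}^{N-1}\bigl(g(Z_k) - \eta\,\alpha_{R,\eta,\Delta t}\bigr) \xrightarrow[N\to\infty]{\mathrm{law}} \mathcal N(0,\widetilde\sigma^2)$ with $\widetilde\sigma^2 = \mu_{\mathrm{sticky},\eta,\Delta t}\bigl(2G_{\eta,\Delta t}\,\bar g - \bar g^2\bigr) < \infty$ and $\bar g = g - \eta\,\alpha_{R,\eta,\Delta t}$; dividing by $\eta$ and multiplying by $\sqrt{\Delta t}$ yields the announced central limit theorem with $\sigma^2_{\mathrm{sticky},R,\eta,\Delta t} = \Delta t\,\widetilde\sigma^2/\eta^2$. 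Finiteness is clear, and strict positivity holds because $\bar g$ is not an $L^2$-coboundary of $T^{\eta,\Delta t}$ (implicitly, $R$ is non-constant), exactly as asserted for the synchronous estimator in Proposition~\ref{prop:sync_clt}.
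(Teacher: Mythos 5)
The crux of your argument is point (b), and that is exactly where the proposal has a genuine gap. You correctly observe that off the diagonal the reflected update makes \(Y_{k+1}\) a deterministic function of \(X_{k+1}\), but you draw the wrong conclusion from it: failure of irreducibility with respect to Lebesgue measure on \(\mathbb{R}^d\times\mathbb{R}^d\) does not rule out the classical Harris machinery, since a minorization measure need not be Lebesgue. The stickiness itself supplies the minorization: from any \((x,y)\) in a ball \(\overline B_K\), the chain is forced onto the diagonal in one step with probability bounded below, and the law of the landing point \(X_{k+1}^{\eta,\Delta t}=Y_{k+1}^{0,\Delta t}\) has a density bounded below on compacts; hence \(T^{\eta,\Delta t}\left((x,y),\cdot\right)\geq \rho_{K,\Delta t}V_d(K)\,\xi_K(\cdot)\) uniformly on \(\overline B_K\), where \(\xi_K\) is the uniform measure on the diagonal segment \(\mathcal C_K\). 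Combined with the drift condition you derived in (a), Harris's theorem (in the form of \cite[Theorem 1.2]{HairerMattingly}) gives directly a unique invariant measure and geometric ergodicity in the \(V_c\)-weighted norm; this is Proposition~\ref{prop:discrete_sticky_invariant_measure}. Your substitute plan---a weak Harris/Wasserstein argument in the spirit of \cite{HairerMattinglyScheutzow}---is only sketched and the hard step is not carried out: you would have to construct a coupling of \emph{two copies of the sticky-coupled chain} and prove contraction of a distance-like function on sublevel sets of \(V_c\), together with \(d\)-smallness of those sets. Nothing in \cite{Durmus_etal} provides this (their bounding chain \eqref{eq:bounding_chain} controls the distance between the two components of a single coupled chain, not between two copies of the coupled chain), and the discontinuous dependence of the kernel on the event \(\{x=y\}\) through \eqref{eq:meeting_prob} makes such a construction delicate. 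As written, the central ergodicity claim---and with it the existence of the invariant measure, the identification of the limit, and the Poisson-equation/CLT step---rests on an unproven assertion.

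The remainder of your outline is sound and essentially parallels the paper: the marginal drift bound \eqref{eq:marginal_lyapunov_ineq} sums to a drift for \(V_c\); uniqueness of the invariant measure identifies its marginals as \(\nu_{\eta,\Delta t}\) and \(\nu_{0,\Delta t}\), giving the limit \(\alpha_{R,\eta,\Delta t}\); and once geometric ergodicity in the \(V_c\)-norm is available, the Poisson equation for \(g(x,y)=R(x)-R(y)\) is solved by a Neumann series in \(\Pi_{\mu_{\eta,\Delta t}}B^\infty_{V_c}\subset L^2(\mu_{\eta,\Delta t})\) (after replacing \(c\) by \(c/2\)), and the LLN and CLT follow from standard Markov-chain results as in \cite{Douc_book}, with the same normalization \(\sigma^2_{\mathrm{sticky},R,\eta,\Delta t}=\Delta t\,\widetilde\sigma^2/\eta^2\) that you state. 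Note also that your closing claim of strict positivity of the variance (``\(\bar g\) is not an \(L^2\)-coboundary'') is asserted rather than proved; if you keep it, it needs an argument or should be dropped.
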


The sources of error of the estimator \(\widehat{\Psi}_{\eta, N}^{\Delta t, \mathrm{sticky}}\), in roughly decreasing order of severity, are the variance, finite-time integration bias, and time-discretization bias. The following theorem shows that the finite-time integration and discretization biases are uniformly bounded as the perturbation strength \(\eta\) goes to zero. Furthermore the asymptotic variance \(\sigma^2_{\mathrm{sticky}, R, \eta, \Delta t}\) is essentially of order \(1/\eta\). 

\begin{theorem}\label{thm:sticky_bias_var}
	Fix \(\eta_\star > 0\) and suppose that Assumptions~\ref{ass:drift} and \ref{ass:ref_measure} hold. Let \(\left(\mu_{\mathrm{init}, \eta}\right)_{\eta \in \left[-\eta_\star, \eta_\star\right]}\) be a family of initial probability measures on \(\mathbb{R}^d \times \mathbb{R}^d\) such that \(\left(X_0^{\eta, \Delta t}, Y_0^{0, \Delta t}\right) \sim \mu_{\mathrm{init}, \eta}\) for any \(\eta \in \left[-\eta_\star, \eta_\star\right]\), with \(\mu_{\mathrm{init}, \eta}\left(V_c\right) \leq C\) uniformly in \(\eta \in \left[-\eta_\star, \eta_\star\right]\), and
	\begin{equation}\label{eq:intial_hypo_W_n_bound}
		\int_{\mathbb{R}^d \times \mathbb{R}^d} V_c(x,y)\mathbf{1}_{\left\{x\neq y\right\}} \mu_{\mathrm{init}, \eta}\left(dx \, dy\right) \leq C \eta.
	\end{equation}
	Then there exist \(\Delta t^\star > 0\) and  \(K > 0\) (depending on \(\eta_\star\), \(R\), and \(\mu_{\mathrm{init}, \eta}\)) such that, for any \(\Delta t \in \left(0, \Delta t^\star\right)\),
		\begin{equation}\label{eq:sticky_bias}
		\left|\mathbb{E}\left[\widehat{\Psi}_{\eta, N}^{\Delta t}\right] - \alpha_{R,\eta}\right| \leq K\left(\frac{1}{\Delta t N} + \Delta t\right).
	\end{equation}
	Moreover, for any \(n \in \mathbb{N}\), there exists \(K_n > 0\) (depending on \(\eta_\star\), \(R\), and \(\Delta t^\star\)) such that, for any \(\Delta t \in \left(0, \Delta t^\star\right)\),
	\begin{equation}\label{eq:sticky_var}
		\forall \eta \in \left[-\eta_\star, \eta_\star\right], \qquad \sigma^2_{\mathrm{sticky}, R, \eta, \Delta t} \leq \frac{K_n}{\eta}\left(1 + \frac{\Delta t^{4n}}{\eta}\right).
	\end{equation}
\end{theorem}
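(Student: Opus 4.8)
The plan is to follow the architecture of the proof of Theorem~\ref{thm:sync_bias_var}, but since Lemma~\ref{lm:sync_coupling_dist} (hence the pointwise bound $|X-Y|=\mathrm{O}(\eta)$) has no analogue without global contractivity, I would replace it by two features specific to sticky coupling: first, whenever the two components of the coupled chain coincide, the difference of the associated discrete Poisson solutions is $\mathrm{O}(\eta+\Delta t^{2n})$ by Lemma~\ref{lm:disc_poisson_sol_approx}; second --- and this is the replacement for $\int|x-y|^p\,d\mu_{\mathrm{sync},\eta}\le(C\eta)^p$ --- the off-diagonal mass $\mathbb{E}[V_c(X_k^{\eta,\Delta t},Y_k^{0,\Delta t})\mathbf{1}_{\{X_k^{\eta,\Delta t}\neq Y_k^{0,\Delta t}\}}]$ should remain of order $\eta$ for all $k$, and $\int V_c\,\mathbf{1}_{\{x\neq y\}}\,d\pi_{\mathrm{sticky},\eta,\Delta t}$ should be $\mathrm{O}(\eta)$ under the invariant measure $\pi_{\mathrm{sticky},\eta,\Delta t}$ of the coupled chain (the existence and uniqueness of which is part of Proposition~\ref{prop:sticky_clt}); this is where hypothesis~\eqref{eq:intial_hypo_W_n_bound} enters. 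The backbone is a martingale/Poisson decomposition: even though $T^{\eta,\Delta t}$ does not factorize, each marginal of the sticky-coupled chain is Markov with the correct kernel ($T^{\eta,\Delta t}$ applied to a function of $x$ only returns $P^{\eta,\Delta t}$ applied to it, and likewise on $y$ with $P^{0,\Delta t}$), so with $\widehat R_{\eta,\Delta t},\widehat R_{0,\Delta t}$ the marginal Poisson solutions of Lemma~\ref{lm:disc_poisson_sol_approx}, the function $u(x,y)=\eta^{-1}(\widehat R_{\eta,\Delta t}(x)-\widehat R_{0,\Delta t}(y))$ solves $\Delta t^{-1}(\mathrm{Id}-T^{\eta,\Delta t})u=\eta^{-1}(\Pi_{\nu_{\eta,\Delta t}}R(x)-\Pi_{\nu_{0,\Delta t}}R(y))$. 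Iterating this identity along the trajectory and telescoping gives
\[
\widehat{\Psi}_{\eta, N}^{\Delta t, \mathrm{sticky}}-\alpha_{R,\eta,\Delta t}=\frac{\widehat R_{\eta,\Delta t}(X_0^{\eta,\Delta t})-\widehat R_{0,\Delta t}(Y_0^{0,\Delta t})-\widehat R_{\eta,\Delta t}(X_N^{\eta,\Delta t})+\widehat R_{0,\Delta t}(Y_N^{0,\Delta t})}{\eta N\Delta t}+M_N,
\]
with $M_N$ a normalized sum of martingale increments, and the asymptotic variance is read off from the CLT of Proposition~\ref{prop:sticky_clt} as $\sigma^2_{\mathrm{sticky},R,\eta,\Delta t}\le 2\eta^{-2}\int(\widehat R_{\eta,\Delta t}(x)-\widehat R_{0,\Delta t}(y))(\Pi_{\nu_{\eta,\Delta t}}R(x)-\Pi_{\nu_{0,\Delta t}}R(y))\,d\pi_{\mathrm{sticky},\eta,\Delta t}$ after discarding a non-positive term.

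\textbf{Bias.} Taking expectations kills $M_N$, so I would bound $\mathbb{E}|\widehat R_{\eta,\Delta t}(X_k^{\eta,\Delta t})-\widehat R_{0,\Delta t}(Y_k^{0,\Delta t})|$ for $k\in\{0,N\}$ by splitting on $\{X_k^{\eta,\Delta t}=Y_k^{0,\Delta t}\}$ and its complement. On the coincidence event, Lemma~\ref{lm:disc_poisson_sol_approx} gives $|\widehat R_{\eta,\Delta t}-\widehat R_{0,\Delta t}|\le C_n(\eta+\Delta t^{2n})\widetilde V_c$, and the uniform-in-$k$ moment bound $\sup_k\mathbb{E}[\widetilde V_c(X_k^{\eta,\Delta t})]<\infty$ (from~\eqref{eq:marginal_lyapunov_ineq} and $\mu_{\mathrm{init},\eta}(V_c)\le C$) closes it; on the complement, $|\widehat R_{\eta,\Delta t}(X_k)-\widehat R_{0,\Delta t}(Y_k)|\le C\,V_c(X_k,Y_k)$ by the finite $\widetilde V_c$-norms of the Poisson solutions (from~\eqref{eq:disc_inverse_bounds}), and the off-diagonal mass bound closes it. This yields $|\mathbb{E}[\widehat{\Psi}_{\eta, N}^{\Delta t, \mathrm{sticky}}]-\alpha_{R,\eta,\Delta t}|\le C/(N\Delta t)$ (modulo a lower-order term originating from the $\Delta t^{2n}$ in Lemma~\ref{lm:disc_poisson_sol_approx}); adding $|\alpha_{R,\eta,\Delta t}-\alpha_{R,\eta}|=\mathrm{O}(\Delta t)$, which follows from the linear-response expansion~\eqref{eq:disc_lin_resp} (recall $\mathfrak f_2=\mathfrak f$), gives~\eqref{eq:sticky_bias}.

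\textbf{Variance.} I would apply Cauchy--Schwarz to the formula above and bound the two integrals $I_1=\int(\widehat R_{\eta,\Delta t}(x)-\widehat R_{0,\Delta t}(y))^2\,d\pi_{\mathrm{sticky},\eta,\Delta t}$ and $I_2=\int(\Pi_{\nu_{\eta,\Delta t}}R(x)-\Pi_{\nu_{0,\Delta t}}R(y))^2\,d\pi_{\mathrm{sticky},\eta,\Delta t}$, splitting on $\{x=y\}$ versus $\{x\neq y\}$ exactly as in the proof of Theorem~\ref{thm:sync_bias_var}. On the diagonal, Lemma~\ref{lm:disc_poisson_sol_approx} and the moment bounds~\eqref{eq:disc_moment_bounds} give a contribution $\mathrm{O}((\eta+\Delta t^{2n})^2)=\mathrm{O}(\eta^2+\Delta t^{4n})$ to $I_1$ (after, if needed, shrinking $c$ so that a squared Lyapunov function still has finite moments) and a contribution $\eta^2\alpha_{R,\eta,\Delta t}^2\,\pi_{\mathrm{sticky},\eta,\Delta t}(\{x=y\})=\mathrm{O}(\eta^2)$ to $I_2$ since $\alpha_{R,\eta,\Delta t}$ is uniformly bounded; off the diagonal, bounding $(R(x)-R(y))^2$ via Lemma~\ref{lm:pseudo_lip} for $I_2$ and using the finite $\widetilde V_c$-norms of $\widehat R_{\eta,\Delta t},\widehat R_{0,\Delta t}$ for $I_1$, both contributions are dominated by $C\int V_c\,\mathbf{1}_{\{x\neq y\}}\,d\pi_{\mathrm{sticky},\eta,\Delta t}\le C\eta$. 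Hence $I_1\le C(\eta+\Delta t^{4n})$ and $I_2\le C\eta$, so that
\[
\sigma^2_{\mathrm{sticky},R,\eta,\Delta t}\le\frac{2}{\eta^2}\sqrt{I_1 I_2}\le\frac{C}{\eta}\sqrt{1+\frac{\Delta t^{4n}}{\eta}}\le\frac{C}{\eta}\left(1+\frac{\Delta t^{4n}}{\eta}\right),
\]
which is~\eqref{eq:sticky_var}.

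\textbf{Main obstacle.} The hard part is the off-diagonal mass estimate used in both the bias and variance steps, namely that $\int V_c\,\mathbf{1}_{\{x\neq y\}}\,d\mu$ stays $\mathrm{O}(\eta)$ under the action of $T^{\eta,\Delta t}$ and under $\pi_{\mathrm{sticky},\eta,\Delta t}$; everything above is bookkeeping once this is available. I would prove it by building a semimetric $\rho$ on $\mathbb{R}^d\times\mathbb{R}^d$ tailored to the discrete sticky coupling --- combining a concave Eberle-type distance function (to turn the reflection step off the coalesced set into a contraction, using the one-step drift control~\eqref{eq:disc_drift_control}), the Lyapunov weight $V_c$ (to handle large separations), and the coalescence mechanism on $\{x=y\}$ --- such that $\rho$ dominates $V_c\,\mathbf{1}_{\{x\neq y\}}$, and then proving a one-step estimate of the form $T^{\eta,\Delta t}\rho\le(1-c_0\Delta t)\rho+C_0\eta\,\Delta t$, the $\mathrm{O}(\eta\,\Delta t)$ source term reflecting the fact that the coalescence probability $p_{\Delta t,\beta}$ in~\eqref{eq:meeting_prob} degrades by $\mathrm{O}(\eta)$ because the two drifts differ by $\eta F$. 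Iterating propagates~\eqref{eq:intial_hypo_W_n_bound} to all $k$, and letting $k\to\infty$ bounds the stationary off-diagonal mass. Keeping the $\eta$-dependence of this source term linear and uniform in $\Delta t$, and in particular checking that $\rho$ really controls the jump quantity $V_c\,\mathbf{1}_{\{x\neq y\}}$ rather than only a distance vanishing continuously at the diagonal, is the delicate point --- essentially the content of the sticky-coupling properties that I would establish before the proof proper.
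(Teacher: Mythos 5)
Your variance argument is essentially the paper's: it rests on the same three ingredients (Lemma~\ref{lm:disc_poisson_sol_approx} on the diagonal, the $\widetilde V_c$-norm bound of Lemma~\ref{lm:W_n_bound} off the diagonal, and the stationary off-diagonal mass estimate of Proposition~\ref{prop:discrete_Wn_norm_bound}), and your Cauchy--Schwarz variant of the asymptotic-variance formula yields \eqref{eq:sticky_var} just as well. The bias argument, however, does not establish \eqref{eq:sticky_bias}. First, by running the telescoping identity with the \emph{discrete} Poisson solutions $\widehat R_{\eta,\Delta t},\widehat R_{0,\Delta t}$, your boundary terms are controlled on the diagonal only through Lemma~\ref{lm:disc_poisson_sol_approx}, which gives $|\widehat R_{\eta,\Delta t}-\widehat R_{0,\Delta t}|\le C_n(\eta+\Delta t^{2n})\widetilde V_c$; after division by $\eta$ this contaminates the finite-time term with a factor $1+\Delta t^{2n}/\eta$, i.e.\ a contribution $\Delta t^{2n}/(\eta N\Delta t)$ that is not uniform in $\eta$ --- and, unlike \eqref{eq:sticky_var}, the bound \eqref{eq:sticky_bias} tolerates no such term. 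Second, the step $|\alpha_{R,\eta,\Delta t}-\alpha_{R,\eta}|=\mathrm{O}(\Delta t)$ does not follow from Lemma~\ref{lm:disc_lin_resp} together with Lemma~\ref{lm:lin_resp}: comparing the two expansions gives $\alpha_{R,\eta,\Delta t}-\alpha_{R,\eta}=\Delta t\int R\mathfrak f_3\,d\nu_0+\Delta t^2\mathcal A_{3,\Delta t,\eta}R+\eta\left(\mathcal A_{1,\Delta t,\eta}R-\mathscr R_{\eta,R}\right)$, and the last bracket is only known to be $\mathrm{O}(1)$, so you get $\mathrm{O}(\Delta t+\eta)$; the leftover $\mathrm{O}(\eta)$ is not of the form $K(1/(N\Delta t)+\Delta t)$. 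The paper avoids both issues by a different decomposition: it Taylor-expands the \emph{continuous-time} Poisson solutions $\widetilde R_\eta,\widetilde R_0$ of Proposition~\ref{prop:exist_poisson_eq} along the discrete coupled chain, so the comparison is made directly with $\alpha_{R,\eta}$ (no detour through $\alpha_{R,\eta,\Delta t}$), the boundary terms are handled with Lemma~\ref{lm:poisson_sol_approx} (a clean $\mathrm{O}(\eta)$, no $\Delta t^{2n}$ penalty), and the whole difficulty is concentrated in showing that the one-step consistency error $\mathscr E_k^{\eta,\Delta t}$ is $\mathrm{O}(\eta\Delta t^2)$ uniformly in $k$ --- the extra factor $\eta$, extracted from the coupling via Lemma~\ref{lm:W_n_bound}, Proposition~\ref{prop:discrete_Wn_norm_bound} and Lemma~\ref{lm:poisson_sol_approx}, is exactly what makes the division by $\eta$ harmless and produces the $\mathrm{O}(\Delta t)$ discretization bias. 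Your proposal contains no analogue of this gain-of-$\eta$ estimate, which is the genuinely hard part of the bias proof.

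Concerning your ``main obstacle'': the finite-time and stationary off-diagonal mass bounds you need are precisely Proposition~\ref{prop:discrete_Wn_norm_bound}, which is established before the theorem and can simply be invoked. Your proposed substitute, a one-step inequality $T^{\eta,\Delta t}\rho\le(1-c_0\Delta t)\rho+C_0\eta\,\Delta t$ for a semimetric dominating $V_c\mathbf 1_{\{x\neq y\}}$, cannot hold as stated: starting from a point on the diagonal, the probability of failing to recoalesce in one step equals the total variation distance between two Gaussians with common variance $2\Delta t/\beta$ and mean shift $\eta\Delta t\,F(x)$ (see \eqref{eq:meeting_prob}), which is of order $\eta\sqrt{\Delta t}$, so the source term is at least $\mathrm{O}(\eta\sqrt{\Delta t})$, and iterating against the rate $(1-c_0\Delta t)$ would only give $\mathrm{O}(\eta/\sqrt{\Delta t})$. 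The uniform-in-$\Delta t$ $\mathrm{O}(\eta)$ bound relies on the two-scale behaviour (fast recoalescence from $\mathrm{O}(\sqrt{\Delta t})$ separations) captured by the dominating one-dimensional chain of Proposition~\ref{prop:discrete_Wn_norm_bound} and the results it cites, not by a single Lyapunov-type contraction.
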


The proof of this result can be read in Section 4.5. Theorem~\ref{thm:sticky_bias_var} reveals that the performance of \(\widehat{\Psi}_{\eta, t}^{\mathrm{sticky}}\) improves on that of the standard estimator \(\widehat{\Phi}_{\eta, t}\) by reducing the bias and variance by a factor \(1/\eta\) compared to \eqref{eq:std_bias_bounds} and \eqref{eq:std_variance_bounds}, up to an order \(\Delta t^{4n}\) term in the variance bound appearing for technical reasons related to Lemma~\ref{lm:disc_poisson_sol_approx}. The sticky-coupled estimator is unable to achieve the uniformly bounded variance of the synchronously coupled estimator~\(\widehat{\Psi}_{\eta, t}^{\mathrm{sync}}\). It can however be used in a more general setting where the contractivity condition \eqref{eq:contractive_at_inf} need not hold everywhere.  

\subsection{Properties of Discrete-Time Sticky Coupling}
We gather in this section various results useful to prove Proposition~\ref{prop:sticky_clt} and Theorem~\ref{thm:sticky_bias_var}. To start, as the following proposition shows, the sticky coupled process is geometrically ergodic.
\begin{proposition}\label{prop:discrete_sticky_invariant_measure}
	Fix \(\eta_\star > 0\), suppose that \(b\) and \(F\) satisfy Assumption~\ref{ass:drift}, and let 
	\begin{equation}\label{eq:time_step_cond}
		\Delta t^* = \min\left\{\frac{1}{m}, \frac{m}{2\left(L_b + \eta_\star L_F\right)^2}\right\}.
	\end{equation} 
	Then, for \(\Delta t \in \left(0, \Delta t^*\right)\) and \(\eta \in \left[-\eta_\star, \eta_\star\right]\), the Markov chain \(\left\{X_k^{\eta, \Delta t}, Y_k^{0, \Delta t}\right\}_{k \in \mathbb{N}}\) admits a unique invariant probability measure \(\mu_{\eta, \Delta t}\) and is geometrically ergodic with respect to this measure. Specifically, there exist \(C, c > 0\) and \(\gamma_{\Delta t} \in (0, 1)\) such that, for any measurable function \(\phi : \mathbb{R}^d \times \mathbb{R}^d \to \mathbb{R}\) with \(\left\|\phi \right\|_{V_c} < \infty\), 
	\begin{equation}\label{eq:ref_sticky_geometric_ergodicity}
		\forall n \geq 0, \qquad \left\|\left(T^{\eta, \Delta}\right)^{n}\phi - \mu_{\eta, \Delta t}\left(\phi\right)\right\|_{V_c}  \leq C \gamma^{n}_{\Delta t} \left\|\phi - \mu_{\eta, \Delta t}\left(\phi\right)\right\|_{V_c},
	\end{equation}
	with \(V_c(x,y)\) defined in \eqref{eq:coupled_lyapunov_func}.
\end{proposition}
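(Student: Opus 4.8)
The plan is to establish \eqref{eq:ref_sticky_geometric_ergodicity} by constructing a single Lyapunov-type function for the coupled kernel $T^{\eta,\Delta t}$ that contracts geometrically, and then to invoke a Harris-type theorem (in the $V_c$-weighted setting, e.g.\ the version in \cite{HairerMattinglyScheutzow} or \cite{Durmus_etal}) to deduce existence and uniqueness of $\mu_{\eta,\Delta t}$ together with the quantitative convergence. The building blocks are: (i) a drift (Lyapunov) inequality for $V_c$ on $\mathbb{R}^d\times\mathbb{R}^d$, and (ii) a contraction estimate for a semimetric that measures the distance between the two components, including the ``sticky'' gain coming from the maximal coupling step.

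First I would record the marginal Lyapunov inequality: since each component of the coupled chain is itself an Euler--Maruyama chain for the respective SDE (the first with drift $b+\eta F$, the second with drift $b$), the bound \eqref{eq:marginal_lyapunov_ineq} applies to each of $\widetilde V_c(X_k^{\eta,\Delta t})$ and $\widetilde V_c(Y_k^{0,\Delta t})$ uniformly in $\eta\in[-\eta_\star,\eta_\star]$ and $\Delta t\in(0,\Delta t^\star)$, hence $T^{\eta,\Delta t}V_c \le \gamma^{\Delta t}V_c + 2\Delta t\,A$ with $V_c = \widetilde V_c\oplus\widetilde V_c$. Note that when the two components are forced together one has $\widetilde V_c(Y_{k+1}^{0,\Delta t}) = \widetilde V_c(X_{k+1}^{\eta,\Delta t})$, so the Lyapunov bound is not destroyed by the sticky step; I would check that the reflected-noise update (which only changes the noise by an orthogonal reflection leaving the Gaussian law invariant) also preserves the marginal drift estimate.

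Next I would establish the contraction. On the ``level sets'' $\{V_c \le \rho\}$ for $\rho$ large, I claim a minorization-type / Wasserstein contraction for the one-step kernel: using the control \eqref{eq:disc_drift_control} on the discretized drift difference via $\tau_{\eta_\star,\Delta t}$, the reflection-coupling analysis of \cite{Durmus_etal} (following \cite{Eberle}) shows that, for a suitably chosen concave distance function $f$, one has $\mathbb{E}\big[f(|X_{k+1}^{\eta,\Delta t}-Y_{k+1}^{0,\Delta t}|)\big] \le (1-c_1\Delta t)\,f(|X_k^{\eta,\Delta t}-Y_k^{0,\Delta t}|)$ on the relevant region, where the contraction factor is uniform in $\eta$ and $\Delta t$; the forced-meeting step only helps, contributing an extra nonnegative decrement of order $\Delta t$ times the total-variation overlap $1-d_{\mathrm{TV}}(P^{\eta,\Delta t}(x,\cdot),P^{0,\Delta t}(y,\cdot))$. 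Combining this contraction in $f$ with the Lyapunov inequality for $V_c$ in the standard way (the weighted semimetric $f(|x-y|)(1+\varepsilon V_c(x,y))$, or equivalently an application of the Harris theorem with the pair ``small set $+$ Lyapunov function''), yields a $\gamma_{\Delta t}\in(0,1)$ and $C>0$ such that \eqref{eq:ref_sticky_geometric_ergodicity} holds; uniqueness of $\mu_{\eta,\Delta t}$ is then immediate, and its existence follows by completeness of the space of probability measures with finite $V_c$-moment under the corresponding weighted metric.

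The main obstacle I anticipate is making the reflection-coupling contraction estimate genuinely uniform in the time step $\Delta t$ and in $\eta$, and correctly bookkeeping the three regimes of Figure~\ref{fig:sticky_coupling_schema} (collision forcing the components together, reflection causing separation, reflection causing contraction) inside a single concave distance function $f$ — in particular choosing $f$ (with $f$ affine near $0$ and concave, cf.\ \cite{Eberle,EberleZimmer}) so that the expected one-step decrement absorbs the ``bad'' separation events. A secondary technical point is ensuring that the constant $C$ and rate $\gamma_{\Delta t}$ can be taken uniform over $\Delta t\in(0,\Delta t^\star)$ and $\eta\in[-\eta_\star,\eta_\star]$, which is where the explicit form of $\tau_{\eta_\star,\Delta t}$ in \eqref{eq:discrete_contraction} and the time-step restriction \eqref{eq:time_step_cond} are used; all of this is essentially contained in \cite{Durmus_etal}, so the proof should amount to checking that their hypotheses hold in our setting under Assumption~\ref{ass:drift}.
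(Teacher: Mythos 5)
There is a genuine gap in your contraction step. The Lyapunov part is fine and is exactly the paper's first step: since \(T^{\eta,\Delta t}\) couples \(P^{\eta,\Delta t}\) and \(P^{0,\Delta t}\), the marginal bound \eqref{eq:marginal_lyapunov_ineq} gives \(T^{\eta,\Delta t}V_c \leq \gamma^{\Delta t}V_c + 2\Delta t A\). But the quantity you then propose to contract, \(f\bigl(|X_k^{\eta,\Delta t}-Y_k^{0,\Delta t}|\bigr)\), is the distance between the two components of a \emph{single} copy of the coupled chain. That quantity measures proximity to the diagonal (it is what underlies Proposition~\ref{prop:discrete_Wn_norm_bound}), not convergence to equilibrium of \(\left(X_k^{\eta,\Delta t},Y_k^{0,\Delta t}\right)\) as a Markov chain on \(\mathbb{R}^d\times\mathbb{R}^d\). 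To get geometric ergodicity in the \(V_c\)-norm you need either a minorization condition for \(T^{\eta,\Delta t}\) on sublevel sets of \(V_c\), or a contraction estimate between \emph{two copies} of the coupled chain started from arbitrary points of \(\mathbb{R}^{2d}\) (which would require constructing a coupling of the sticky couplings); your weighted object \(f(|x-y|)\left(1+\varepsilon V_c(x,y)\right)\) is a function of one state, not a semimetric between two states, so the Harris/weighted-Wasserstein machinery cannot be fed with it. Moreover the claimed one-step bound \(\mathbb{E}\left[f\bigl(|X_{k+1}^{\eta,\Delta t}-Y_{k+1}^{0,\Delta t}|\bigr)\right]\leq (1-c_1\Delta t)\,f\bigl(|X_k^{\eta,\Delta t}-Y_k^{0,\Delta t}|\bigr)\) is false as stated: on the diagonal the right-hand side vanishes while the chain separates with positive probability because the two components have drifts differing by \(\eta F\); \cite{Durmus_etal} only provide a dominating one-dimensional chain and control of its stationary mass away from zero, not a strict contraction of \(f(|X-Y|)\) to zero.

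The missing idea, which is the paper's route, is that the sticky step itself furnishes the minorization: from any \((x,\widetilde{x})\) in a compact set, with probability bounded below the one-step kernel forces the two components together, and the law of the common landing point has a density bounded below on a compact piece of the diagonal; hence the uniform probability measure \(\xi_K\) on \(\mathcal{C}_K\) minorizes \(T^{\eta,\Delta t}\) uniformly on sublevel sets of \(V_c\), and Harris's theorem \cite{HairerMattingly} combined with the Lyapunov bound gives the conclusion. Note that one cannot instead argue that sublevel sets are small via a ``Gaussian transition density bounded below on compacts'' argument, because \(T^{\eta,\Delta t}\) is degenerate on \(\mathbb{R}^{2d}\) (both components are driven by the same Gaussian draw), which is precisely why the diagonal minorization is needed. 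Finally, the uniformity in \(\Delta t\) you aim for is neither required by the statement (only some \(\gamma_{\Delta t}\in(0,1)\) is claimed) nor delivered by your sketch; the paper explicitly remarks that its minorization, and hence its rate, is not uniform in \(\Delta t\).
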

\begin{proof}
	The proposition follows from Harris's ergodic theorem as soon as we can find a suitable Lyapunov function and minorization measure (see \cite[Theorem 1.2]{HairerMattingly}). 
	
	Since \(T^{\eta, \Delta t}\) is a coupling of \(P^{\eta, \Delta t}\) and \(P^{0, \Delta t}\), it is clear that 
	\[T^{\eta, \Delta t}V_c(x,y) = P^{\eta, \Delta t}\widetilde{V}_c(x) + P^{0, \Delta t}\widetilde{V}_c(y).\]
	Therefore, due to \eqref{eq:marginal_lyapunov_ineq},
	\[T^{\eta, \Delta t} V_c \leq \gamma^{\Delta t}V_c + 2\Delta t A,\]
	giving us a Lyapunov function. 
	
	We define the reflection matrix \(\mathfrak{R}(x, \widetilde{x}) = \mathrm{Id} - 2\mathbf{e}(x, \widetilde{x})\mathbf{e}(x, \widetilde{x})^T\). Observe that for any two sets \(A, B \subset \mathbb{R}^d\), we have that \[\left(A\times B\right) \cap \left\{\left. (x,y) \in \mathbb{R}^d\times\mathbb{R}^d \right| x = y\right\} = \left(A\cap B\right)\times \left(A\cap B\right).\]
	Consequently, for any two measurable sets \(A, B \subset \mathbb{R}^d\),
	\begin{equation*}
		\begin{aligned}
			&T^{\eta, \Delta t}\left(\left(x, \widetilde{x}\right), A \times B \right)\\ 
			& = 
			\int_{\mathbb{R}^d} \mathbf{1}_{A\times B}\left(x + \Delta t \left(b(x) + \eta F(x)\right) + \sqrt{\frac{2\Delta t}{\beta}}z, x + \Delta t \left(b(x) + \eta F(x)\right) + \sqrt{\frac{2\Delta t}{\beta}}z \right) p_{\Delta t, \beta}\left(x, \widetilde{x}, z\right)\varphi_d(z)\,dz\\
			&+ \int_{\mathbb{R}^d} \mathbf{1}_{A\times B}\left(x + \Delta t \left(b(x) + \eta F(x)\right) + \sqrt{\frac{2\Delta t}{\beta}}z, \widetilde{x} + \Delta t b(\widetilde{x}) + \sqrt{\frac{2\Delta t}{\beta}}\mathfrak{R}\left(x, \widetilde{x}\right)z\right) \left(1 - p_{\Delta t, \beta}\left(x, \widetilde{x}, z\right)\right) \varphi_d(z)\,dz\\
			&\geq \int_{\mathbb{R}^d} \mathbf{1}_{A\cap B}\left(x + \Delta t \left(b(x) + \eta F(x)\right) + \sqrt{\frac{2\Delta t}{\beta}}z\right) p_{\Delta t, \beta}\left(x, \widetilde{x}, z\right) \varphi_d(z)\,dz \\ 
			&= \int_{\mathbb{R}^d} \mathbf{1}_{A \cap B}\left(x + \Delta t \left(b(x) + \eta F(x)\right) + \sqrt{\frac{2\Delta t}{\beta}}z\right) \min\left\{\varphi_d\left(z + \sqrt{\frac{\beta}{2\Delta t}}\mathbf{E}(x, \widetilde{x})\right), \varphi_d(z)\right\} dz.\\
		\end{aligned}
	\end{equation*}
	With the change of variable \(z' = x + \Delta t \left(b(x) + \eta F(x)\right) + \sqrt{\frac{2\Delta t}{\beta}}z\) the above line becomes:
	\begin{equation*}
		\begin{aligned}
			&\int_{A\cap B}\!\!\! \min\left\{\frac{\exp\left(-\frac{\beta}{4\Delta t}\left|z' + \widetilde{x}- 2x +\Delta t\left(b(\widetilde{x}) - 2b(x)\right) - 2\eta \Delta tF(x) \right|^2\right)}{\left(4\pi\Delta t/\beta\right)^{d/2}}, \frac{\exp\left(-\frac{\beta}{4\Delta t}\left|z' - x -\Delta t\left(b(x) + \eta F(x)\right) \right|^2\right)}{\left(4\pi\Delta t/\beta\right)^{d/2}}\right\} dz'\\
			&\geq\exp{\left(-2\beta \Delta t^\star\eta_\star^2\left\|F\right\|_\infty^2\right)}\int_{A\cap B}\!\!\! \min\left\{\frac{\exp\left(-\frac{2\beta}{4\Delta t}\left|z + \widetilde{x}- 2x +\Delta t\left(b(\widetilde{x}) - 2b(x)\right) \right|^2\right)}{\left(2\pi\right)^{d/2}\left(2\Delta t/\beta\right)^{d/2}}, \frac{\exp\left(-\frac{2\beta}{4\Delta t}\left|z - x -\Delta tb(x) \right|^2\right)}{\left(2\pi\right)^{d/2}\left(2\Delta t/\beta\right)^{d/2}}\right\} dz\\
			&\geq \left(\frac{2\Delta t^\star}{\beta}\right)^{-d/2}\exp{\left(-2\beta \Delta t^\star\eta_\star^2\left\|F\right\|_\infty^2\right)}\\
			&\qquad \times\int_{A\cap B}\!\!\! \mathbf{1}_{\left\{|z| \leq K\right\}}
			\min\left\{\varphi_d\left(\sqrt{2}\frac{z + \widetilde{x}- 2x +\Delta t\left(b(\widetilde{x}) - 2b(x)\right)}{\sqrt{2\Delta t /\beta}}\right), \varphi_d\left(\sqrt{2}\frac{z- x -\Delta t b(x)}{\sqrt{2\Delta t /\beta}}\right)\right\}dz,
		\end{aligned}
	\end{equation*}
	where for the second inequality we used the fact that \(t \mapsto \exp(-t)\) is a decreasing function on \(\mathbb{R}_+\) and the inequality \(\left(a+b\right)^2 \leq 2a^2 + 2b^2\) for any \(a, b \in \mathbb{R}\). 
	
	For \(K > 0\), denote by \(\overline{B}_K := \left\{\left. (x,y) \in \mathbb{R}^d \times \mathbb{R}^d \,\right| \, \max\left\{|x|, |y|\right\} \leq K \right\}\) the closed ball of radius~\(K\) on \(\mathbb{R}^d \times \mathbb{R}^d\) for the max product distance.  Let \(\mathcal{C}_K\) be the diagonal intersected with \(\overline{B}_K\), namely \(\mathcal{C}_K := \left\{\left. (x,y) \in \overline{B}_K \right| x = y\right\}\), and \(\xi_K\) the uniform probability measure on \(\mathcal{C}_K\). 
	The function 
	\[\left(x, \widetilde{x}, z\right) \mapsto \min\left\{\varphi_d\left(\sqrt{2}\frac{z + \widetilde{x}- 2x +\Delta t\left(b(\widetilde{x}) - 2b(x)\right)}{\sqrt{2\Delta t /\beta}}\right), \varphi_d\left(\sqrt{2}\frac{z + \widetilde{x}- x -\Delta t b(x)}{\sqrt{2\Delta t /\beta}}\right)\right\}\]
	is strictly positive on the compact set~\(\overline{B}_K\times \left\{z\in \mathbb{R}^d \left| |z| \leq K \right.\right\}\) and thereby bounded below by a strictly positive constant \(\rho_{K, \Delta t}\). Consequently, the integral is lower bounded by a strictly positive constant \(\rho_{K, \Delta t}\) times the Lebesgue measure of \(\left(A \cap B\right) \cap \left\{\left. z\in \mathbb{R}^d \right| |z| \leq K\right\}\) which is equal to \(\rho_{K, \Delta t} V_d(K) \xi_{K}\left(A \times B\right)\), where \(V_d(K) = \frac{\pi^{d/2}}{\Gamma\left(\frac{n}{2} + 1\right)}K^d\) is the Lebesgue measure of a \(d\)-dimensional ball of radius \(K\). The monotone class theorem (see for example \cite{RogersWilliams}) then implies that this lower bounded holds for any Borel set \(S \subset \mathbb{R}^d \times \mathbb{R}^d\):
	\[\inf_{(x,y)\in \overline{B}_K} T^{\eta, \Delta t}\left(\left(x,y\right), S\right) \geq \rho_{K, \Delta t}V_d(K) \xi_{K}\left(S\right). \]
	Observe that any sublevel set of \(V_c\) is contained in \(\overline{B}_K\) for~\(K\) large enough, i.e, for any \(K' > 0\), it holds \(\left\{\left. (x, y) \in \mathbb{R}^d \times \mathbb{R}^d \,\right|\, V_c(x,y) \leq K' \right\} \subset \overline{B}_K\) for a large enough \(K > 0\). The previous inequality implies that for \(K\) large enough \(\xi_K\) is a suitable minorization measure allowing us to apply Harris's theorem to deduce the claim of the proposition.
\end{proof}
\begin{remark}
In our proof, the contraction rate \(\gamma_{\Delta t}\) depends on the time step, because our minorization condition is not uniform in \(\Delta t\). We believe that one might be able to derive a uniform in \(\Delta t\) contraction rate, i.e. \(\gamma_{\Delta t} = \gamma^{\Delta t}\) for some \(\gamma \in \left(0, 1\right)\),
perhaps by analyzing the iterated kernel \(\left(T^{\eta, \Delta t}\right)^{\left\lfloor t/ \Delta t\right\rfloor}\) as in \cite{BouRabeeHairer,deBortoli,DurmusEnfroyStoltz} for example. A key technical difficulty is that, as \(\Delta t\) goes to zero, the process jumps rapidly onto and off of the diagonal. Indeed, in the continuous-time limit we expect the process to spend a positive amount of time on the diagonal but never spend any interval of time on the diagonal. 
\end{remark}

The central limit theorem for the estimator \(\widehat{\Psi}_{\eta, N}^{\Delta t, \mathrm{sticky}}\) follows from the geometric ergodicity of \(T^{\eta, \Delta t}\) and a central limit theorem for Markov chains.
\begin{proof}[Proof of Proposition \ref{prop:sticky_clt}]
	Let \(\Delta t^\star > 0\) be given by \eqref{eq:time_step_cond}. Then Proposition~\ref{prop:discrete_sticky_invariant_measure} ensurse that \(\mu_{\eta, \Delta t}\) is the unique invariant probability measure and that \(\left\|\mu_{\mathrm{init}}\left(T^{\eta, \Delta t}\right)^n - \mu_{\eta, \Delta t}\right\|_{\mathrm{TV}}\) converges to zero. The convergence in total variation is due to the fact that \(V_c\)--norm dominates the total variation norm. Therefore, by \cite[Proposition 5.2.14]{Douc_book}, the ergodicity of the dynamics with respect to \(\mu_{\eta, \Delta t}\) and the fact that \(\mu_{\mathrm{init}}\left(T^{\eta, \Delta t}\right)^n \) converges to \(\mu_{\eta, \Delta t}\) in total variation imply that the estimator \(\widehat{\Psi}_{\eta, N}^{\Delta t, \mathrm{sticky}}\) almost surely converges to
	\[\frac{1}{\eta} \int_{\mathbb{R}^d \times \mathbb{R}^d}\!\! \left(R(x) - R(y)\right)\mu_{\eta, \Delta t}\left(dx\, dy\right) = \frac{1}{\eta}\int_{\mathbb{R}^d} R \, d\nu_{\eta, \Delta t} - \frac{1}{\eta}\int_{\mathbb{R}^d} R \, d\nu_{0, \Delta t} = \alpha_{R, \eta, \Delta t}.\]
	The above equality holds because \(\mu_{\eta, \Delta t}\) is a coupling of \(\nu_{\eta, \Delta t}\) and \(\nu_{0, \Delta t}\) since \(T^{\eta, \Delta t}\) is a Markov coupling of the kernels \(P^{\eta, \Delta t}\) and \(P^{0, \Delta t}\), which admit \(\nu_{\eta, \Delta t}\) and \(\nu_{0, \Delta t}\) respectively as invariant probability measures. 
	
	Denote by \(\Pi_{\mu_{\eta, \Delta t}}\) the projection operator onto the space of functions with mean zero with respect to~\(\mu_{\eta, \Delta t}\), i.e. for \(\phi \in L^1\left(\mu_{\eta, \Delta t}\right)\),
	\[\Pi_{\mu_{\eta, \Delta t}} \phi = \phi - \int_{\mathbb{R}^d\times \mathbb{R}^d}\!\! \phi\, d\mu_{\eta, \Delta t}.\]
	Furthermore, denote by \(u(x,y) = R(x) - R(y)\).
	By \cite[Theorem 21.2.5 and Proposition 21.1.3]{Douc_book}, the estimator \(\widehat{\Psi}_{\eta, N}^{\Delta t, \mathrm{sticky}}\) satisfies a central limit theorem with asymptotic variance
	\begin{equation}\label{eq:clt_asymp_variance}
		\sigma^2_{\mathrm{sticky}, R, \eta, \Delta t} = \frac{1}{\eta^2}\mathbb{E}_{\mu_{\eta, \Delta t}}\left[\left(\widehat{u}_{\eta, \Delta t}\left(X_1^{\eta, \Delta t}, Y_1^{0, \Delta t}\right) - T^{\eta, \Delta t}\widehat{u}_{\eta, \Delta t}\left(X_0^{\eta, \Delta t}, Y_0^{0, \Delta t}\right)\right)^2\right],
	\end{equation}
	as soon as the discrete Poisson equation
	\begin{equation}\label{eq:sticky_disc_poisson_eq}
		\left(\frac{\mathrm{Id} - T^{\eta, \Delta t}}{\Delta t}\right)\widehat{u}_{\eta, \Delta t} = \Pi_{\mu_{\eta, \Delta t}}u
	\end{equation}
	admits a solution in \(L^2\left(\mu_{\eta, \Delta t}\right)\). Let \(V_c\) be the Lyapunov function defined in \eqref{eq:coupled_lyapunov_func} with \(c\) from Proposition~\ref{prop:discrete_sticky_invariant_measure}. By Proposition~\ref{prop:discrete_sticky_invariant_measure}, \(\mathrm{Id} - T^{\eta, \Delta t}\) is invertible on \(\Pi_{\mu_{\eta, \Delta t}}B_{V_c}^{\infty}\), since 
	\begin{equation}\label{eq:disc_poisson}
		\left\|\left(\mathrm{Id} - T^{\eta, \Delta t}\right)^{-1}\right\|_{\mathcal{B}\left(\Pi_{\mu_{\eta, \Delta t}}B_{V_c}^\infty\right)} \leq \sum_{k \geq 0} \left\|\left(T^{\eta, \Delta t}\right)^{k}\right\|_{\mathcal{B}\left(\Pi_{\mu_{\eta, \Delta t}}B_{V_c}^\infty\right)} \leq C \sum_{k \geq 0} \gamma^{k}_{\Delta t} = \frac{C}{1 - \gamma_{\Delta t}}.
	\end{equation}
	Since \(u \in \mathscr{S} \oplus \mathscr{S} \subset B_{V_c}^\infty\) for any \(c > 0\), it also holds that \(\widehat{u}_{\eta, \Delta t} \in B_{V_c}^\infty\). Upon replacing \(c\) with \(c/2\), it can be assumed that \(B_{V_c}^\infty \subset L^2\left(\mu_{\eta, \Delta t}\right)\), since \(\mu_{\eta, \Delta t}\) is a coupling of \(\nu_{\eta, \Delta t}\) and \(\nu_{0, \Delta t}\) and 
	\[\mu_{\eta, \Delta t}\left(V_c\right) = \nu_{\eta, \Delta t}\left(\widetilde{V}_c\right) + \nu_{0, \Delta t}\left(\widetilde{V}_c\right) < \infty. \]
	As a consequence, \(\widehat{u}_{\eta, \Delta t} \in L^2\left(\mu_{\eta, \Delta t}\right)\). Applying \cite[Theorem 21.2.5]{Douc_book} then gives the desired result for \(\mu_{\mathrm{init}, \eta} = \mu_{\eta, \Delta t}\). Applying \cite[Proposition 21.1.3]{Douc_book} lets us extend this result to any initial condition satisfying out hypotheses since \(\mu_{\mathrm{init}}\left(T^{\eta, \Delta t}\right)^n \) converges to \(\mu_{\eta, \Delta t}\) in total variation.   
\end{proof}
\begin{remark}
	We can in fact write the solution \(\widehat{u}_{\eta, \Delta t}\) to the Poisson equation \eqref{eq:sticky_disc_poisson_eq} as the difference of the solutions of the Poisson equations corresponding to the marginal processes, namely
	\begin{equation}\label{eq:sol_sticky_poisson_eq}
		\widehat{u}_{\eta, \Delta t}\left(x, y\right) = \widehat{R}_{\eta, \Delta t}(x) -\widehat{R}_{0, \Delta t}(y),
	\end{equation}
	with 
	\[\widehat{R}_{\eta, \Delta t} := \left(\frac{\mathrm{Id} - P^{\eta, \Delta t}}{\Delta t}\right)^{-1}\Pi_{\nu_{\eta, \Delta t}}R  \quad\text{and}\quad \widehat{R}_{0, \Delta t} := \left(\frac{\mathrm{Id} - P^{0, \Delta t}}{\Delta t}\right)^{-1}\Pi_{\nu_{0, \Delta t}}R.\] 
	We verify this equality by applying \(\left(\mathrm{Id} - T^{\eta, \Delta t}\right)/\Delta t\) to this proposed solution: 
	\begin{equation*}
		\begin{aligned}
			\left(\frac{\mathrm{Id} - T^{\eta, \Delta t}}{\Delta t}\right)\widehat{u}_{\eta, \Delta t}(x,y) &= \left(\frac{\mathrm{Id} - T^{\eta, \Delta t}}{\Delta t}\right)\widehat{R}_{\eta, \Delta t}(x) - \left(\frac{\mathrm{Id} - T^{\eta, \Delta t}}{\Delta t}\right)\widehat{R}_{0, \Delta t}(y)\\
			&= \left(\frac{\mathrm{Id} - P^{\eta, \Delta t}}{\Delta t}\right)\widehat{R}_{\eta, \Delta t}(x) - \left(\frac{\mathrm{Id} - P^{0, \Delta t}}{\Delta t}\right)\widehat{R}_{0, \Delta t}(y) \\
			&= \Pi_{\nu_{\eta, \Delta t}}R(x) - \Pi_{\nu_{0, \Delta t}}R(y) = \Pi_{\mu_{\eta, \Delta t}}\left(R\oplus\left(-R\right)\right)(x,y).
		\end{aligned}
	\end{equation*}
	The second equality is due to the fact that \(T^{\eta, \Delta t}\) is a coupling of the two Markov kernels \(P^{\eta, \Delta t}\) and \(P^{0, \Delta t}\), while the fourth one is due to the fact that \(\mu_{\eta, \Delta t}\) is a coupling of \(\nu_{\eta, \Delta t}\) and \(\nu_{0, \Delta t}\). We can then conclude that~\(\widehat{u}_{\eta, \Delta t}\) is indeed the unique solution by uniqueness of solutions to the discrete Poisson equation \eqref{eq:sticky_disc_poisson_eq} in~\(\Pi_{\mu_{\eta, \Delta t}}B^{\infty}_{V_c}\). 
\end{remark}

We can control the amount of time the sticky coupled process spends off the diagonal using the following proposition which is a straightforward consequence of the results of \cite[Section 2]{Durmus_etal}.
\begin{proposition}\label{prop:discrete_Wn_norm_bound}
	Fix \(\eta_\star > 0\), assume that \(b\) and \(F\) satisfy Assumption~\ref{ass:drift} and let \(\Delta t^\star > 0\) be given by \eqref{eq:time_step_cond}. Let \(\mu_{\mathrm{init}}\) be some probability measure on \(\mathbb{R}^d\times \mathbb{R}^d\) such that \(\left(X_0^{\eta, \Delta t}, Y_0^{0, \Delta t}\right) \sim \mu_{\mathrm{init}}\). Then, there exists \(C > 0\) such that for any measurable non-negative function \(f:\mathbb{R}^d\times \mathbb{R}^d \to \mathbb{R}_+\) of the form \(f(x, y) = \widetilde{f}(x) + \widetilde{f}(y)\) with \(\widetilde{f}: \mathbb{R}^d \to \mathbb{R}_+\) a non-negative measurable function, and for any \(\eta \in \left[-\eta_\star, \eta_\star\right]\), \(\Delta t \in \left(0, \Delta t^* \right)\) and \(k \in \mathbb{N}\),
	\begin{equation}\label{eq:finitetime_discrete_weightedtv_bound}
		\begin{aligned}
			\mathbb{E}_{\mu_{\mathrm{init}}}\left[\mathbf{1}_{\left\{X_k^{\eta, \Delta t}\neq Y_k^{0, \Delta t}\right\}}f\left(X_k^{\eta, \Delta t}, Y_k^{0, \Delta t}\right)\right] \leq  \left[C\eta + \mu_{\mathrm{init}}\left(\mathbf{1}_{\left\{x\neq y\right\}}\right)\right]\mathbb{E}_{\mu_{\mathrm{init}}}\left[f\left(X_k^{\eta, \Delta t}, Y_k^{0, \Delta t}\right)\right].
		\end{aligned}
	\end{equation}
	Moreover, for the invariant probability measure, it holds
	\begin{equation}\label{eq:stationary_discrete_weightedtv_bound}
		\int_{\mathbb{R}^d \times \mathbb{R}^d} f\left(x,y\right)\mathbf{1}_{\left\{x\neq y\right\}} \mu_{\eta, \Delta t}\left(dx\,dy\right) \leq C\eta\left[\nu_{\eta, \Delta t}\left(\widetilde{f}\right) + \nu_{0,\Delta t}\left(\widetilde{f}\right)\right].
	\end{equation}
\end{proposition}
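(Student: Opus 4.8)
The statement is a discrete-time, $\eta$-weighted version of the off-diagonal occupation bounds of \cite[Section~2]{Durmus_etal}, and I would obtain it by combining two facts about the sticky-coupled kernel $T^{\eta,\Delta t}$ of \eqref{eq:disc_time_sticky}. The first is a \emph{uniform detachment estimate}: starting from a diagonal point $(x,x)$, the probability that the two components split in one step equals
\[
1-\int_{\mathbb{R}^d}p_{\Delta t,\beta}(x,x,z)\,\varphi_d(z)\,dz
= d_{\mathrm{TV}}\!\left(P^{\eta,\Delta t}(x,\cdot),P^{0,\Delta t}(x,\cdot)\right),
\]
and since these two kernels are Gaussians with common covariance $\tfrac{2\Delta t}{\beta}\mathrm{Id}$ whose means differ by $\eta\,\Delta t\,F(x)$ (recall $\mathbf{E}(x,x)=-\eta\Delta t\,F(x)$), the elementary bound on the total variation distance between two Gaussians together with \eqref{eq:F_bounded} gives $d_{\mathrm{TV}}(P^{\eta,\Delta t}(x,\cdot),P^{0,\Delta t}(x,\cdot))\le C\eta$ uniformly in $x\in\mathbb{R}^d$ and $\Delta t\in(0,\Delta t^\star)$. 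The second fact is the geometric ergodicity of $T^{\eta,\Delta t}$ from Proposition~\ref{prop:discrete_sticky_invariant_measure}: since the minorization set used in its proof is a compact subset of the diagonal, the conditional probability of still being off the diagonal $m$ steps after a detachment decays geometrically in $m$ (uniformly in $\eta$), so each detachment contributes only a bounded number of off-diagonal steps.

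For the finite-time bound \eqref{eq:finitetime_discrete_weightedtv_bound} I would decompose $\{X_k^{\eta,\Delta t}\ne Y_k^{\eta,\Delta t}\}$ according to the last visit to the diagonal, i.e.\ with $\tau_k:=\max\{j\in\{0,\dots,k\}:X_j^{\eta,\Delta t}=Y_j^{\eta,\Delta t}\}$ (and $\tau_k:=-1$ if the chain is off the diagonal on all of $\{0,\dots,k\}$),
\[
\mathbf{1}_{\{X_k^{\eta,\Delta t}\ne Y_k^{\eta,\Delta t}\}}
=\mathbf{1}_{\{\tau_k=-1\}}+\sum_{j=0}^{k-1}\mathbf{1}_{\{\tau_k=j\}}.
\]
On $\{\tau_k=j\}$ the chain is on the diagonal at step $j$, detaches at step $j+1$ (conditional probability $\le C\eta$ by the detachment estimate), and then stays off the diagonal until step $k$; the Markov property, the fact that $T^{\eta,\Delta t}$ sends $\widetilde g\oplus\widetilde h$ to $(P^{\eta,\Delta t}\widetilde g)\oplus(P^{0,\Delta t}\widetilde h)$ so that the split structure of $f=\widetilde f\oplus\widetilde f$ is preserved, and the geometric decay above let one sum these contributions over $j$ into a convergent series bounded by $C\eta\,\mathbb{E}_{\mu_{\mathrm{init}}}[f(X_k^{\eta,\Delta t},Y_k^{\eta,\Delta t})]$. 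The term $\mathbf{1}_{\{\tau_k=-1\}}$, which requires the chain to have started off the diagonal, is absorbed into the remaining contribution $\mu_{\mathrm{init}}(\mathbf{1}_{\{x\ne y\}})\,\mathbb{E}_{\mu_{\mathrm{init}}}[f(X_k^{\eta,\Delta t},Y_k^{\eta,\Delta t})]$ of the statement, again using the geometric decay of off-diagonal excursions.

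For the stationary bound \eqref{eq:stationary_discrete_weightedtv_bound} I would run the same balance argument directly on the invariance identity $\int\mathbf{1}_{\{x\ne y\}}f\,d\mu_{\eta,\Delta t}=\int T^{\eta,\Delta t}\!\big[\mathbf{1}_{\{x\ne y\}}f\big]\,d\mu_{\eta,\Delta t}$: splitting the right-hand side over the diagonal and its complement yields a factor $\le C\eta$ from the detachment estimate on the diagonal part, while on the complement the iterated kernels $(P^{\eta,\Delta t})^{m}\widetilde f$ and $(P^{0,\Delta t})^{m}\widetilde f$ appear, converging to $\nu_{\eta,\Delta t}(\widetilde f)$ and $\nu_{0,\Delta t}(\widetilde f)$ and summed (by the geometric return to the diagonal) against a convergent geometric series; a preliminary reduction to bounded $\widetilde f$ by monotone convergence legitimizes these limits. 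Equivalently, one applies \eqref{eq:finitetime_discrete_weightedtv_bound} with $\mu_{\mathrm{init}}=\mu_{\eta,\Delta t}$, lets $k\to\infty$, uses $\mathbb{P}_{\mu_{\eta,\Delta t}}(X_j^{\eta,\Delta t}\ne Y_j^{\eta,\Delta t}\text{ for all }j\le k)\to0$ to drop the $\mu_{\mathrm{init}}(\mathbf{1}_{\{x\ne y\}})$ term, and notes $\mathbb{E}_{\mu_{\eta,\Delta t}}[f]=\nu_{\eta,\Delta t}(\widetilde f)+\nu_{0,\Delta t}(\widetilde f)$ since $\mu_{\eta,\Delta t}$ couples $\nu_{\eta,\Delta t}$ and $\nu_{0,\Delta t}$.

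\emph{Main obstacle.} The delicate point is the middle step: the detachment event depends on the auxiliary uniform variable and on the Gaussian increment, which also drive $(X_{j+1}^{\eta,\Delta t},Y_{j+1}^{\eta,\Delta t})$, so one must decorrelate ``detaching at step $j+1$'' from the value of the $f$-weight at step $k$ and bound $\mathbb{E}[\,f(X_k^{\eta,\Delta t},Y_k^{\eta,\Delta t})\mid\text{detached at }j+1]$ uniformly in $j$ by a constant times $\mathbb{E}[f(X_k^{\eta,\Delta t},Y_k^{\eta,\Delta t})]$. This is not a one-step estimate and genuinely relies on mixing of the marginal chains; it is precisely the computation carried out in \cite[Section~2]{Durmus_etal}, which I would reproduce in our notation, the only adaptation being that here $b$ is contractive merely outside a compact set — a point already handled in Proposition~\ref{prop:discrete_sticky_invariant_measure}.
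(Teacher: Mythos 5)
Your plan breaks down at the step you treat as routine: the claim that, after a detachment, ``the conditional probability of still being off the diagonal \(m\) steps later decays geometrically in \(m\) (uniformly in \(\eta\)), so each detachment contributes only a bounded number of off-diagonal steps.'' First, the only geometric rate available from Proposition~\ref{prop:discrete_sticky_invariant_measure} is \(\gamma_{\Delta t}\), which the paper explicitly notes is \emph{not} uniform in \(\Delta t\); moreover geometric ergodicity towards \(\mu_{\eta,\Delta t}\) does not bound the tail of the return time to the diagonal, and invoking the off-diagonal mass of \(\mu_{\eta,\Delta t}\) would be circular, since that mass is exactly what \eqref{eq:stationary_discrete_weightedtv_bound} asserts. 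Second, the claim is quantitatively false uniformly in \(\Delta t\): your own detachment estimate gives a one-step splitting probability of order \(\eta\sqrt{\Delta t}\) (the Gaussian means differ by \(\eta\Delta t\|F\|_\infty\) while the standard deviation is \(\sqrt{2\Delta t/\beta}\)), so to produce an off-diagonal occupation of order \(\eta\) the mean excursion length must be of order \(\Delta t^{-1/2}\) steps; excursions are emphatically not \(\mathrm{O}(1)\) steps, and any argument resting on a \(\Delta t\)-dependent geometric rate yields at best a constant \(C_{\Delta t}\), whereas \eqref{eq:finitetime_discrete_weightedtv_bound}--\eqref{eq:stationary_discrete_weightedtv_bound} require a single \(C\) valid for all \(\Delta t\in(0,\Delta t^\star)\). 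That uniformity is the whole content of the proposition, and your sketch does not supply it.

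The decorrelation issue you single out as the ``main obstacle'' is also misplaced: \cite[Section 2]{Durmus_etal} controls weighted total-variation distances between the \emph{marginal} laws through a scalar dominating chain and does not contain the conditional-expectation bound \(\mathbb{E}[f(X_k^{\eta,\Delta t},Y_k^{0,\Delta t})\mid\text{detached at }j+1]\lesssim\mathbb{E}[f(X_k^{\eta,\Delta t},Y_k^{0,\Delta t})]\) your excursion decomposition needs. The paper sidesteps it entirely with a one-step symmetry identity: up to a null set, \(\{X_k^{\eta,\Delta t}\neq Y_k^{0,\Delta t}\}\) is the event that the \(k\)-th Gaussian was reflected, and since reflecting a standard Gaussian does not change its law, one gets the exact factorization \(\mathbb{E}_{\mu_{\mathrm{init}}}[\mathbf{1}_{\{X_k^{\eta,\Delta t}\neq Y_k^{0,\Delta t}\}}f(X_k^{\eta,\Delta t},Y_k^{0,\Delta t})]=\mathbb{P}_{\mu_{\mathrm{init}}}(X_k^{\eta,\Delta t}\neq Y_k^{0,\Delta t})\,\mathbb{E}_{\mu_{\mathrm{init}}}[f(X_k^{\eta,\Delta t},Y_k^{0,\Delta t})]\) (this is where the additive structure \(f=\widetilde f\oplus\widetilde f\) is used). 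The remaining task is purely scalar: \(|X_k^{\eta,\Delta t}-Y_k^{0,\Delta t}|\) is dominated by the stochastically monotone chain \(W_k^{\eta,\Delta t}\) of \cite{Durmus_etal}, \(\mathbb{P}(W_k^{\eta,\Delta t}>0)\le\delta_0(Q^{\eta,\Delta t})^k((0,\infty))+\mu_{\mathrm{init}}(\mathbf{1}_{\{x\neq y\}})\), Lemma~\ref{lm:W_from_zero} gives \(\delta_0(Q^{\eta,\Delta t})^k((0,\infty))\le 2\rho_{\eta,\Delta t}((0,\infty))\), and the \(\Delta t\)- and \(\eta\)-uniform estimate \(\rho_{\eta,\Delta t}((0,\infty))\le C\eta\) of \cite[Theorem 11]{Durmus_etal} provides precisely the uniform constant missing from your argument. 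Finally, for \eqref{eq:stationary_discrete_weightedtv_bound} your shortcut of taking \(\mu_{\mathrm{init}}=\mu_{\eta,\Delta t}\) in \eqref{eq:finitetime_discrete_weightedtv_bound} and ``dropping the \(\mu_{\mathrm{init}}(\mathbf{1}_{\{x\neq y\}})\) term as \(k\to\infty\)'' cannot work as written, since that term does not depend on \(k\); one must instead pass to the limit in the intermediate inequality involving \(\mathbb{P}_{\mu_{\eta,\Delta t}}(W_k^{\eta,\Delta t}>0)\), which converges to \(\rho_{\eta,\Delta t}((0,\infty))\le C\eta\) by geometric ergodicity of \(Q^{\eta,\Delta t}\).
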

\begin{proof}
	Following \cite{Durmus_etal}, we prove \eqref{eq:finitetime_discrete_weightedtv_bound} by constructing a Markov chain \(\left(W_k^{\eta, \Delta t}\right)_{k \in \mathbb{N}}\) on \(\mathbb{R}_+\) such that, almost surely for all \(k \in \mathbb{N}\), it holds \(\left|X_k^{\eta, \Delta t} - Y_k^{0, \Delta t}\right| \leq W_k^{\eta, \Delta t}\). Then the event \(\left\{X_k^{\eta, \Delta t} \neq Y_k^{0, \Delta t}\right\}\) is a subset of \(\left\{W_k^{\eta , \Delta t} > 0\right\}\) modulo null sets. 
	
	To this end, we use \cite[Proposition 6]{Durmus_etal} to write
	\begin{equation}
		\left|X_{k+1}^{\eta, \Delta t} - Y_{k+1}^{0, \Delta t}\right| \leq \mathscr{G}_{\Delta t}\left(\left|X_k^{\eta, \Delta t} - Y_k^{0, \Delta t}\right|, \mathcal{G}_{k+1}, U_{k+1}\right),
	\end{equation}
	where \(\left(\mathcal{G}_k\right)_{k \geq 1}\) is defined by
	\begin{equation}
		\mathcal{G}_k = \left\langle \mathbf{e}_{k-1}, G_k \right\rangle. 
	\end{equation} 
	and \(\mathscr{G}_{\Delta t}\) by
	\begin{equation}
		\mathscr{G}_{\Delta t}\left(w, g, u\right) = \left\{
		\begin{aligned}
			&\tau_{0, \Delta t}(w) + \eta \left\|F\right\|_\infty \Delta t - 2\sqrt{\frac{2\Delta t}{\beta}}g && \text{ if } u \geq \widebar{p}_{\Delta t, \beta}\left(\tau_{0, \Delta t}(w) + \eta\left\|F\right\|_\infty \Delta t, g\right),\\
			&0  && \text{ otherwise},
		\end{aligned}\right.
	\end{equation}
	for \(w \in \left[0, \infty\right)\), \(g \in \mathbb{R}\), and \(u \in \left[0, 1\right]\) with \(\tau_{0, \Delta t}\) defined in \eqref{eq:discrete_contraction}
	\begin{equation}
		\bar{p}_{\Delta t, \beta}\left(a,g\right) = \min\left\{1, \frac{\varphi_1\left(a\sqrt{\frac{\beta}{2\Delta t}} - g\right)}{\varphi_1\left(g\right)}\right\},
	\end{equation}
	where \(\varphi_1\) is the density of a one-dimensional standard Gaussian distribution. 
	Furthermore, for any \(g\in \mathbb{R}\) and \(u \in \left[0, 1\right]\), the function \(w \mapsto \mathscr{G}_{\Delta t}\left(w, g, u\right)\) is non-decreasing, 
	
	We construct the Markov chain \(\left(W_k^{\eta, \Delta t}\right)\) as in \cite{Durmus_etal} by setting \(W_0^{\eta, \Delta t} = \left|X_0^{\eta, \Delta t} - Y_0^{0, \Delta t}\right|\) and, for \(k \geq 0\),
	\begin{equation}\label{eq:bounding_chain}
		W_{k+1}^{\eta, \Delta t} = \mathscr{G}_{\Delta t}\left(W_k^{\eta, \Delta t}, \mathcal{G}_{k+1}, U_{k+1}\right).
	\end{equation}
	We denote the Markov kernel of this chain as \(Q^{\eta, \Delta t}\) and its stationary measure as \(\rho_{\eta, \Delta t}\); the fact that \(w \mapsto \mathscr{G}_{\Delta t}\left(w, g, u\right)\) is non-decreasing ensures that this kernel is stochastically monotone and that we can bound \(\left|X_{k}^{\eta, \Delta t} - Y_{k}^{0, \Delta t}\right|\) by \(W_k^{\eta, \Delta t}\) for each \(k \geq 0\). 
	
	We can write the left-hand side of \eqref{eq:finitetime_discrete_weightedtv_bound} as 
	\begin{equation}\label{eq:finite_disc_weightedtv_bound_2}
		\begin{aligned}
		&\mathbb{E}_{\mu_{\mathrm{init}}}\left[\mathbf{1}_{\left\{X_k^{\eta, \Delta t} \neq Y_k^{0, \Delta t}\right\}}f\left(X_k^{\eta, \Delta t},Y_k^{0, \Delta t}\right)\right] \\
		& \qquad = \mathbb{P}_{\mu_{\mathrm{init}}}\left(X_k^{\eta, \Delta t} \neq Y_k^{0, \Delta t}\right)\mathbb{E}_{\mu_{\mathrm{init}}}\left[\widetilde{f}\left(X_k^{\eta, \Delta t}\right) + \widetilde{f}\left(Y_k^{0,\Delta t}\right)\left| X_k^{\eta, \Delta t} \neq Y_k^{0, \Delta t}\right. \right]\\
		& \qquad = \mathbb{P}_{\mu_{\mathrm{init}}}\left(X_k^{\eta, \Delta t} \neq Y_k^{0, \Delta t}\right)\left( \mathbb{E}_{\mu_{\mathrm{init}}}\left[\widetilde{f}\left(X_k^{\eta, \Delta t}\right)\right] +  \mathbb{E}_{\mu_{\mathrm{init}}}\left[\widetilde{f}\left(Y_k^{0, \Delta t}\right)\right]\right).
		\end{aligned}
	\end{equation}
	The second equality follows from the fact that the event \(\left\{X_k^{\eta, \Delta t} \neq Y_k^{0, \Delta t}\right\}\) is equivalent up to a negligible set to the event that the noise was reflectively coupled at the \(k\)-step. As a consequence, letting 
	\[A = Y_{k-1}^{0, \Delta t} + \Delta t b\left(Y_{k-1}^{0, \Delta t}\right) + \sqrt{\frac{2\Delta t}{\beta}}\left[\mathrm{Id} - 2\mathbf{e}_{k-1}\mathbf{e}_{k-1}^T\right]G_k, \]
	we have
	\begin{equation*}
		\begin{aligned}
			\mathbb{E}_{\mu_{\mathrm{init}}}\left[\widetilde{f}\left(X_k^{\eta, \Delta t}\right) + \widetilde{f}\left(Y_k^{0,\Delta t}\right)\left| X_k^{\eta, \Delta t} \neq Y_k^{0, \Delta t}\right. \right] 
			&= \mathbb{E}_{\mu_{\mathrm{init}}}\left[\widetilde{f}\left(X_k^{\eta, \Delta t}\right) + \widetilde{f}\left(A\right)\right]\\
			&=\mathbb{E}_{\mu_{\mathrm{init}}}\left[\widetilde{f}\left(X_k^{\eta, \Delta t}\right)\right] + \mathbb{E}_{\mu_{\mathrm{init}}}\left[\widetilde{f}\left(A\right)\right]\\
			&=\mathbb{E}_{\mu_{\mathrm{init}}}\left[\widetilde{f}\left(X_k^{\eta, \Delta t}\right)\right] + \mathbb{E}_{\mu_{\mathrm{init}}}\left[\widetilde{f}\left(Y_k^{0, \Delta t}\right)\right],
		\end{aligned}
	\end{equation*}
	where the third equality follows from the fact that \(A\) and \(Y_k^{0, \Delta t}\) have the same law because reflecting a standard normal random variable does not change its law.
	
	We then use \(\left\{W_k^{\eta, \Delta t}\right\}_{k \in \mathbb{N}}\) to bound the right-hand side of \eqref{eq:finite_disc_weightedtv_bound_2} (and thereby left-hand side of \eqref{eq:finitetime_discrete_weightedtv_bound}) as
	\begin{equation}\label{eq:finite_disc_weightedtv_bound_3}
		\begin{aligned}
			\mathbb{E}_{\mu_{\mathrm{init}}}\left[\mathbf{1}_{\left\{X_k^{\eta, \Delta t} \neq Y_k^{0, \Delta t}\right\}}f\left(X_k^{\eta, \Delta t}, Y_k^{0, \Delta t}\right)\right] \leq \mathbb{P}_{\mu_{\mathrm{init}}}\left(W_k^{\eta, \Delta t} > 0 \right) \mathbb{E}_{\mu_{\mathrm{init}}}\left[f\left(X_k^{\eta, \Delta t}, Y_k^{0, \Delta t}\right)\right]&\\
			\leq \left(\delta_0 \left(Q^{\eta, \Delta t}\right)^k \left((0, \infty)\right) + \mu_{\mathrm{init}}\left(\mathbf{1}_{\{x \neq y\}}\right)\right)\mathbb{E}_{\mu_{\mathrm{init}}}\left[f\left(X_k^{\eta, \Delta t}, Y_k^{0, \Delta t}\right)\right],&
		\end{aligned}
	\end{equation}
	where the first the inequality is due to the fact that \(\left\{X_k^{\eta, \Delta t} \neq Y_k^{0, \Delta t}\right\} \subset \left\{W_k^{\eta, \Delta t} > 0\right\}\). The second inequality is due to 
	\begin{equation*}
		\begin{aligned}
			\mathbb{P}_{\mu_{\mathrm{init}}}\left(W_k^{\eta, \Delta t} > 0 \right) &= \mathbb{P}_{\mu_{\mathrm{init}}}\left(W_k^{\eta, \Delta t} > 0 \left| W_0^{\eta, \Delta t} = 0\right. \right)\mathbb{P}_{\mu_{\mathrm{init}}}\left(W_0^{\eta, \Delta t} = 0\right) \\
			&\qquad + \mathbb{P}_{\mu_{\mathrm{init}}}\left(W_k^{\eta, \Delta t} > 0 \left| W_0^{\eta, \Delta t} > 0\right. \right)\mathbb{P}_{\mu_{\mathrm{init}}}\left(W_0^{\eta, \Delta t} > 0\right)\\
			&\leq \mathbb{P}_{\mu_{\mathrm{init}}}\left(W_k^{\eta, \Delta t} > 0 \left| W_0^{\eta, \Delta t} = 0\right. \right) +\mathbb{P}_{\mu_{\mathrm{init}}}\left(W_0^{\eta, \Delta t} > 0\right)\\
			&= \delta_0 \left(Q^{\eta, \Delta t}\right)^k \left((0, \infty)\right) + \mu_{\mathrm{init}}\left(\mathbf{1}_{\{x \neq y\}}\right).
		\end{aligned}
	\end{equation*}
	We show below in Lemma~\ref{lm:W_from_zero} below that the difference in total variation norm between \(\delta_0 \left(Q^{\eta, \Delta t}\right)^k \) and~\(\rho_{\eta, \Delta t}\) is bounded by \(\rho_{\eta, \Delta t}\left((0, \infty)\right)\) uniformly in \(k\). Thus \(\delta_0 \left(Q^{\eta, \Delta t}\right)^k \left((0, \infty)\right) \leq 2 \rho_{\eta, \Delta t}\left((0, \infty)\right)\).
	By \cite[Theorem~11]{Durmus_etal}, we can bound \(\rho_{\eta, \Delta t}\left((0, \infty)\right)\) by \(C\eta\) where the authors give an explicit constant \(C\) that is independent of \(\Delta t\) and \(\eta\). Inserting this bound into the right hand side of the inequality \eqref{eq:finite_disc_weightedtv_bound_3} leads to~\eqref{eq:finitetime_discrete_weightedtv_bound}.
	
	If we choose as our initial probability measure, the invariant measure for the coupled process \(\mu_{\eta, \Delta t}\), the first inequality in \eqref{eq:finite_disc_weightedtv_bound_3}, namely
	\[\mathbb{E}_{\mu_{\mathrm{init}}}\left[\mathbf{1}_{\left\{X_k^{\eta, \Delta t} \neq Y_k^{0, \Delta t}\right\}}f\left(X_k^{\eta, \Delta t}, Y_k^{0, \Delta t}\right)\right] \leq \mathbb{P}_{\mu_{\mathrm{init}}}\left(W_k^{\eta, \Delta t} > 0 \right) \mathbb{E}_{\mu_{\mathrm{init}}}\left[f\left(X_k^{\eta, \Delta t}, Y_k^{0, \Delta t}\right)\right],\]
	becomes
	\[\int_{\mathbb{R}^d \times \mathbb{R}^d} f\left(x,y\right)\mathbf{1}_{\left\{x\neq y\right\}} \mu_{\eta, \Delta t}\left(dx\,dy\right) \leq \mathbb{P}_{\mu_{\eta, \Delta t}}\left(W_k^{\eta, \Delta t} > 0 \right) \int_{\mathbb{R}^d \times \mathbb{R}^d} f\left(x,y\right) \mu_{\eta, \Delta t}\left(dx\,dy\right). \]
	Since \(Q^{\eta, \Delta t}\) is geometrically ergodic \cite[Proposition 9]{Durmus_etal}, we can let \(k\) tend to infinity. The probability \(\mathbb{P}_{\mu_{\eta, \Delta t}}\left(W_k^{\eta, \Delta t} > 0 \right)\) converges to \(\rho_{\eta, \Delta t}\left((0, \infty)\right)\). Applying \cite[Theorem 11]{Durmus_etal} again, we obtain
	\[\int_{\mathbb{R}^d \times \mathbb{R}^d} f\left(x,y\right)\mathbf{1}_{\left\{x\neq y\right\}} \mu_{\eta, \Delta t}\left(dx\,dy\right) \leq C\eta \int_{\mathbb{R}^d \times \mathbb{R}^d} f\left(x,y\right) \mu_{\eta, \Delta t}\left(dx\,dy\right),\]
	from which \eqref{eq:stationary_discrete_weightedtv_bound} follows since \(\mu_{\eta, \Delta t}\) is a coupling of \(\nu_{\eta, \Delta t}\) and \(\nu_{0, \Delta t}\). 
\end{proof}
We end this section with proof of the estimate for \(\left\|\delta_0 \left(Q^{\eta, \Delta t}\right)^k - \rho_{\eta, \Delta t}\right\|_{\mathrm{TV}}\) we used above.
\begin{lemma}\label{lm:W_from_zero}
	Fix \(\eta_\star > 0 \) and \(\Delta t^\star > 0\) as in \eqref{eq:time_step_cond} and suppose that the assumptions of Proposition~\ref{prop:discrete_Wn_norm_bound} hold. Let \(\left\{W^{\eta, \Delta t}_k\right\}_{k \in \mathbb{N}}\) be the bounding Markov chain defined in \eqref{eq:bounding_chain} with \(Q^{\eta, \Delta t}\) and \(\rho_{\eta, \Delta t}\) its Markov transition kernel and invariant probability measure, respectively.
	Then, for any \(\eta \in \left[-\eta_\star, \eta_\star\right]\), \(\Delta t \in \left(0, \Delta t^* \right)\) and~\(k \in \mathbb{N}\),
	\begin{equation}\label{eq:W_from_zero}
		\left\|\delta_0 \left(Q^{\eta, \Delta t}\right)^k - \rho_{\eta, \Delta t}\right\|_{\mathrm{TV}} \leq \rho_{\eta, \Delta t}\left((0, \infty)\right). 
	\end{equation}
\end{lemma}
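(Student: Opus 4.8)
The plan is to compare the chain started from $\delta_0$ with a stationary copy by means of a monotone coupling, and then to apply the coupling inequality. First I would let $\left(\mathcal{G}_{k+1}, U_{k+1}\right)_{k \geq 0}$ be a single i.i.d. driving sequence as in \eqref{eq:bounding_chain}, and run two copies of the bounding chain with this \emph{common} noise: $\left(W_k^{0}\right)_{k \geq 0}$ started from $W_0^{0} = 0$, and $\left(W_k^{\star}\right)_{k \geq 0}$ started from $W_0^{\star} \sim \rho_{\eta, \Delta t}$, both evolving through $W_{k+1}^{\bullet} = \mathscr{G}_{\Delta t}\left(W_k^{\bullet}, \mathcal{G}_{k+1}, U_{k+1}\right)$. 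Then $W_k^{0}$ has law $\delta_0\left(Q^{\eta, \Delta t}\right)^k$ and, since $\rho_{\eta, \Delta t}$ is $Q^{\eta, \Delta t}$-invariant, $W_k^{\star}$ has law $\rho_{\eta, \Delta t}$ for every $k$.

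Next I would exploit the fact, already recorded in the proof of Proposition~\ref{prop:discrete_Wn_norm_bound}, that $w \mapsto \mathscr{G}_{\Delta t}(w, g, u)$ is non-decreasing: since $W_0^{0} = 0 \leq W_0^{\star}$, an induction on $k$ gives $W_k^{0} \leq W_k^{\star}$ for all $k \geq 0$. In particular, whenever $W_k^{\star} = 0$ one also has $W_k^{0} = 0$, so the two chains have met; and once they agree they agree forever, because they share the same noise and the same update map. Writing $T = \inf\left\{k \geq 0 : W_k^{0} = W_k^{\star}\right\}$ for the coupling time, the sandwich $0 \leq W_k^{0} \leq W_k^{\star}$ gives $\left\{T > k\right\} \subseteq \left\{W_k^{\star} > 0\right\}$, while $\left\{W_k^{0} \neq W_k^{\star}\right\} = \left\{T > k\right\}$ since the two chains remain equal once they have met.

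Finally I would invoke the coupling inequality to close the estimate:
\[
\left\|\delta_0 \left(Q^{\eta, \Delta t}\right)^k - \rho_{\eta, \Delta t}\right\|_{\mathrm{TV}} \leq \mathbb{P}\left(W_k^{0} \neq W_k^{\star}\right) = \mathbb{P}(T > k) \leq \mathbb{P}\left(W_k^{\star} > 0\right) = \rho_{\eta, \Delta t}\left((0, \infty)\right),
\]
the last equality being stationarity of $\left(W_k^{\star}\right)$; this is precisely \eqref{eq:W_from_zero}. The whole argument is essentially mechanical, and I do not anticipate a real obstacle: the only points deserving attention are that the monotone coupling is admissible (immediate from the monotonicity of $\mathscr{G}_{\Delta t}$ and the fact that both copies are deterministic functions of the initial state and the common noise) and the inclusion $\left\{T > k\right\} \subseteq \left\{W_k^{\star} > 0\right\}$, which is forced by the sandwich above — the instant the stationary copy hits $0$, both copies sit at $0$.
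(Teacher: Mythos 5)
Your proof is correct, and it follows the same overall strategy as the paper: synchronously couple (with common noise $\left(\mathcal{G}_{k+1}, U_{k+1}\right)$) a copy of the bounding chain started at $0$ with a copy started from $\rho_{\eta, \Delta t}$, use the monotonicity of $w \mapsto \mathscr{G}_{\Delta t}(w,g,u)$ to get the sandwich $0 \leq W_k^{0} \leq W_k^{\star}$, and conclude with the coupling inequality. The one genuine difference is the final step. The paper bounds $\mathbb{P}\left(W_k^{0} \neq W_k^{\star}\right)$ by $\mathbb{P}\left(\min_{0\leq i \leq k} W_i^{\star} > 0\right)$ (the stationary copy never having hit zero) and then invokes the explicit estimate of \cite[Lemma 35]{Durmus_etal}, which expresses that hitting probability as an integral against $\rho_{\eta,\Delta t}$ of a Gaussian-CDF term bounded by $1$, to arrive at $\rho_{\eta,\Delta t}\left((0,\infty)\right)$. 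You instead observe that on the event $\left\{W_k^{0} \neq W_k^{\star}\right\}$ the sandwich forces $W_k^{\star} > 0$ at time $k$, so stationarity alone gives $\mathbb{P}\left(W_k^{0} \neq W_k^{\star}\right) \leq \mathbb{P}\left(W_k^{\star} > 0\right) = \rho_{\eta,\Delta t}\left((0,\infty)\right)$; the coupling time $T$ you introduce is not even needed. This is more elementary and self-contained — it avoids the external citation entirely — at the cost of discarding the (unused here) quantitative information on the hitting time that the paper's intermediate bound carries. Either route yields \eqref{eq:W_from_zero}.
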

\begin{proof}
	To prove the desired bound, we synchronously couple two versions of the bounding process, \(\left\{W_k^{\eta, \Delta t}\right\}_{k \in \mathbb{N}}\) and \(\left\{\widecheck{W}_k^{\eta, \Delta t}\right\}_{k \in \mathbb{N}}\), by driving them with the same sequences of uniform random variables \(\left\{U_k\right\}_{k \geq 1}\) and of Gaussian random variables \(\left\{\mathcal{G}_{k}\right\}_{k\geq 1}\). We start \(W^{\eta, \Delta t}\) at zero, i.e. \(W^{\eta, \Delta t}_0 = 0\), and \(\widecheck{W}^{\eta, \Delta t}\) at its invariant measure, \(\widecheck{W}^{\eta, \Delta t}_0 \sim \rho_{\eta, \Delta t}\). 
	
	As we are synchronously coupling the two chains, if they meet at some time \(i_0 \in \mathbb{N}\), i.e. \(W_{i_0}^{\eta, \Delta t} = \widecheck{W}_{i_0}^{\eta, \Delta t} \), they remain together for all times after, i.e. \(W_\ell^{\eta, \Delta t} = \widecheck{W}_\ell^{\eta, \Delta t}\) for all \(\ell \geq i_0\). Furthermore. since \(Q^{\eta, \Delta t}\) is stochastically monotone, \(W_k^{\eta, \Delta t} \leq \widecheck{W}_k^{\eta, \Delta t}\) for all \(k\) almost surely. Consequently, if \(\widecheck{W}_i^{\eta, \Delta t} = 0 \) for some time \(i \in \mathbb{N}\) then \(W_i^{\eta, \Delta t} = \widecheck{W}_i^{\eta, \Delta t} = 0\). Thus, we can upper bound the probability that the two chains have not met by the probability that \(\widecheck{W}_k^{\eta, \Delta t}\) has not yet hit zero:
	\[\left\|\delta_0 \left(Q^{\eta, \Delta t}\right)^k - \rho_{\eta, \Delta t}\right\|_{\mathrm{TV}} \leq \mathbb{P}\left(W_k^{\eta, \Delta t} \neq \widecheck{W}^{\eta, \Delta t}_k\right) \leq \mathbb{P}\left(\min_{0\leq i \leq k} \widecheck{W}^{\eta, \Delta t}_i > 0\right).\]
	By \cite[Lemma 35]{Durmus_etal}, we can bound the right hand side by
	\[\mathbb{P}\left(\min_{0\leq i \leq k} \widecheck{W}^{\eta, \Delta t}_i > 0\right) \leq \int_{\left(0, \infty\right)}\left[1 - 2\Phi\left(-\frac{\tau_{0, \Delta t}(w) + \alpha_k}{2\beta_k}\right)\right]\rho_{\eta, \Delta t}\left(dw\right) \leq \rho_{\eta, \Delta t}\left((0, \infty)\right),\]
	where \(\Phi\) is the cumulative distribution function of the standard one-dimensional Gaussian distribution and \(\alpha_k\) and \(\beta_k\) are explicit constants given in \cite[Equation (44)]{Durmus_etal}. The last inequality is due to the fact that the integrand is bounded by \(1\). This finally gives \eqref{eq:W_from_zero}.
\end{proof}

\subsection{Proof of Theorem \ref{thm:sticky_bias_var}}
We start with an analog of inequality~\eqref{eq:pseudo_lip_cond} in Lemma~\ref{lm:pseudo_lip} that works conveniently with sticky coupling. We will use this inequality repeatedly in the proof and highlight it as a lemma for the sake of clarity.
\begin{lemma}\label{lm:W_n_bound}
	Let \(V: \mathbb{R}^d \to \left[1, \infty\right)\) be a measurable function. Then, for any \(\varphi \in B_V^\infty\),
	\begin{equation}\label{eq:W_n_bound}
		\forall x, y\in \mathbb{R}^d, \qquad \left|\varphi(x) - \varphi(y)\right| \leq \left\|\varphi\right\|_{V}\left(V(x) + V(y)\right)\mathbf{1}_{\left\{x\neq y\right\}}.
	\end{equation}
\end{lemma}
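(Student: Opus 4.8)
The plan is to dispose of the two cases $x = y$ and $x \neq y$ separately, the first being trivial and the second following immediately from the triangle inequality together with the definition of the $V$-norm.

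First I would observe that when $x = y$, both sides vanish: the left-hand side because $\varphi(x) - \varphi(y) = 0$, and the right-hand side because the indicator $\mathbf{1}_{\{x \neq y\}}$ is zero. Hence the inequality holds with equality in this case, regardless of the value of $\|\varphi\|_V$ or of $V$.

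Next, for $x \neq y$ the indicator equals $1$, so it suffices to show $|\varphi(x) - \varphi(y)| \leq \|\varphi\|_V\left(V(x) + V(y)\right)$. By the triangle inequality $|\varphi(x) - \varphi(y)| \leq |\varphi(x)| + |\varphi(y)|$, and the definition of the $V$-norm in \eqref{eq:v_norm} gives $|\varphi(x)| \leq \|\varphi\|_V V(x)$ and $|\varphi(y)| \leq \|\varphi\|_V V(y)$. Adding these two bounds yields the claim.

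There is essentially no obstacle here; the only point worth stating carefully is that the bound is made nontrivial by the presence of the indicator, which is exactly what allows it to replace the factor $|x-y|$ appearing in the pseudo-Lipschitz estimate \eqref{eq:pseudo_lip_cond} of Lemma~\ref{lm:pseudo_lip} when working with the sticky coupling, where the two marginals coincide on a set of positive measure.
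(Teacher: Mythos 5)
Your proof is correct and follows essentially the same argument as the paper: the paper simply writes the case distinction compactly by inserting the indicator into the identity \(|\varphi(x)-\varphi(y)| = |\varphi(x)-\varphi(y)|\mathbf{1}_{\{x\neq y\}}\) before applying the triangle inequality and the definition of \(\left\|\cdot\right\|_V\). No gap to report.
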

\begin{proof}
	By the triangle inequality and since \(\varphi \in B_V^\infty\), we have
	\begin{equation*}
		\begin{aligned}
			\left|\varphi(x) - \varphi(y)\right| &= \left|\varphi(x) - \varphi(y)\right|\mathbf{1}_{\left\{x\neq y\right\}} \leq \left(\left|\varphi(x)\right| + \left|\varphi(y)\right|\right)\mathbf{1}_{\left\{x \neq y\right\}} \leq \left\|\varphi\right\|_V\left(V(x) + V(y)\right)\mathbf{1}_{\left\{x\neq y\right\}},
		\end{aligned}
	\end{equation*}
	which gives the desired inequality.
\end{proof}
\subsubsection{Control of the bias}
	We adapt the proof of \cite[Proposition 5]{Plechac} to control the bias of our estimator. As in the proof of Proposition~\ref{prop:std_estimator}, denote by \(\widetilde{R}_\eta \in \mathscr{S}_\eta\) and \(\widetilde{R}_0 \in \mathscr{S}_0\) the solutions of the continuous time Poisson equations
	\begin{equation}\label{eq:cont_poisson_eq}
		-\mathcal{L}_\eta \widetilde{R}_\eta = \Pi_\eta R, \qquad -\mathcal{L}_0 \widetilde{R}_0 = \Pi_0 R.
	\end{equation}
	Since \(\widetilde{R}_\eta, \widetilde{R}_0 \in \mathscr{S}\), the functions \(\widetilde{R}_\eta, \widetilde{R}_0\) and all their derivatives belong to \(B^\infty_{\widetilde{V}_c}\) as \(\mathcal{K}_n \leq c_n \widetilde{V}_c\) for all \(n \in \mathbb{N}\).
	A Taylor expansion gives
	\begin{equation}\label{eq:bias_development_taylor}
		\begin{gathered}
			\mathbb{E}_{\mu_{\mathrm{init}, \eta}}\left[\widetilde{R}_\eta\left(X_{k+1}^{\eta, \Delta t}\right) - \widetilde{R}_0\left(Y_{k+1}^{0, \Delta t}\right)\right] = \mathbb{E}_{\mu_{\mathrm{init}, \eta}}\left[\widetilde{R}_\eta\left(X_{k}^{\eta, \Delta t}\right) - \widetilde{R}_0\left(Y_{k}^{0, \Delta t}\right)\right] \\
			+ \Delta t \, \mathbb{E}_{\mu_{\mathrm{init}, \eta}}\left[\mathcal{L}_\eta\widetilde{R}_\eta\left(X_k^{\eta, \Delta t}\right) - \mathcal{L}_0 \widetilde{R}_0\left(Y_k^{0, \Delta t}\right)\right] + \mathscr{E}_k^{\eta, \Delta t}.
		\end{gathered}
	\end{equation}
	We show below that the error term \(\mathscr{E}_k^{\eta, \Delta t}\) is of order at most \(\eta \Delta t^2\) uniformly in \(k\). Admitting this fact for the moment, we sum \eqref{eq:bias_development_taylor} for \(k = 0, 1, \dots, N-1\) and divide by \(\frac{1}{\eta N\Delta t}\). After rearranging, we then obtain
	\begin{equation*}
		\begin{aligned}
			&\mathbb{E}_{\mu_{\mathrm{init}, \eta}}\left[\frac{1}{\eta N}\sum_{k = 0}^{N-1} \left(\mathcal{L}_{\eta}\widetilde{R}_\eta\left(X_k^{\eta, \Delta t}\right) - \mathcal{L}_0\widetilde{R}_0\left(Y_k^{0, \Delta t}\right)\right)\right]\\
			&= \frac{1}{\eta N \Delta t}\mathbb{E}_{\mu_{\mathrm{init}, \eta}}\left[\widetilde{R}_{\eta}\left(X_{N}^{\eta, \Delta t}\right) - \widetilde{R}_{0}\left(Y_N^{0, \Delta t}\right) - \left(\widetilde{R}_{\eta}\left(X_0^{\eta, \Delta t}\right) - \widetilde{R}_0\left(Y_0^{0, \Delta t}\right)\right)\right] + \mathrm{O}\left(\Delta t\right).
		\end{aligned}
	\end{equation*}
	Using the fact that \(\widetilde{R}_\eta\) and \(\widetilde{R}_0\) are solutions to the Poisson equations \eqref{eq:cont_poisson_eq}, we can rewrite the above equality as
	\[\mathbb{E}_{\mu_{\mathrm{init}, \eta}}\left[\widehat{\Psi}_{\eta, N}^{\Delta t, \mathrm{sticky}}\right] - \alpha_{R, \eta} = \frac{1}{\eta \Delta t N}\mathbb{E}_{\mu_{\mathrm{init}, \eta}}\left[\widetilde{R}_{\eta}\left(X_{0}^{\eta, \Delta t}\right) - \widetilde{R}_{0}\left(Y_0^{0, \Delta t}\right) - \left(\widetilde{R}_{\eta}\left(X_N^{\eta, \Delta t}\right) - \widetilde{R}_0\left(Y_N^{0, \Delta t}\right)\right)\right] + \mathrm{O}\left(\Delta t\right).\]
	Taking absolute values gives
	\begin{equation}\label{eq:bias_bound}
		\begin{aligned}
			&\left|\mathbb{E}_{\mu_{\mathrm{init}, \eta}}\left[\widehat{\Psi}_{\eta, N}^{\Delta t, \mathrm{sticky}}\right] - \alpha_{R, \eta}\right|\\
			&\leq \frac{1}{\Delta t N}\mathbb{E}_{\mu_{\mathrm{init}, \eta}}\left[\left|\frac{\widetilde{R}_{\eta}\left(X_{0}^{\eta, \Delta t}\right) - \widetilde{R}_{0}\left(Y_0^{0, \Delta t}\right)}{\eta}\right|+ \left|\frac{\widetilde{R}_{\eta}\left(X_N^{\eta, \Delta t}\right) - \widetilde{R}_0\left(Y_N^{0, \Delta t}\right)}{\eta}\right|\right] + \mathrm{O}\left(\Delta t\right).
		\end{aligned}
	\end{equation}
	For the first term in the expectation on the right hand side we have
	\[\mathbb{E}_{\mu_{\mathrm{init}, \eta}}\left[\left|\frac{\widetilde{R}_\eta\left(X_0^{\eta, \Delta t}\right) -\widetilde{R}_0\left(Y_0^{0, \Delta t}\right)}{\eta}\right|\right] \leq \mathbb{E}_{\mu_{\mathrm{init}, \eta}}\left[\left|\frac{\widetilde{R}_\eta\left(X_0^{\eta, \Delta t}\right) -\widetilde{R}_0\left(X_0^{\eta, \Delta t}\right)}{\eta}\right| + \left|\frac{\widetilde{R}_0\left(X_0^{\eta, \Delta t}\right) -\widetilde{R}_0\left(Y_0^{0, \Delta t}\right)}{\eta}\right|\right].\]
	The function \(\left|\widetilde{R}_\eta\left(X_0^{\eta, \Delta t}\right) -\widetilde{R}_0\left(X_0^{\eta, \Delta t}\right)\right|\) in the first term on the right hand side is bounded by \(C\eta \mathcal{K}_n\) for \(n \in \mathbb{N}\) large enough by Lemma~\ref{lm:poisson_sol_approx}; as a result the first term is uniformly bounded in \(\eta\) due to the assumptions on \(\mu_{\mathrm{init}, \eta}\). Using \eqref{eq:W_n_bound} in Lemma~\ref{lm:W_n_bound}, we can bound the second term as 
	\[\mathbb{E}_{\mu_{\mathrm{init}, \eta}}\left[\left|\frac{\widetilde{R}_0\left(X_0^{\eta, \Delta t}\right) -\widetilde{R}_0\left(Y_0^{0, \Delta t}\right)}{\eta}\right|\right] \leq \frac{C}{\eta}\mathbb{E}_{\mu_{\mathrm{init}, \eta}}\left[\mathbf{1}_{\left\{X_0^{\eta, \Delta t} \neq Y_0^{0, \Delta t}\right\}}\left(\mathcal{K}_n\left(X_0^{\eta, \Delta t}\right) + \mathcal{K}_n\left(Y_0^{0, \Delta t}\right)\right)\right],\]
	with the right hand side uniformly bounded in \(\eta\) due to the assumption \eqref{eq:intial_hypo_W_n_bound} on \(\mu_{\mathrm{init}, \eta}\). For the second term in the expectation in \eqref{eq:bias_bound}, the argument is similar; we have 
	\[\mathbb{E}_{\mu_{\mathrm{init}, \eta}}\left[\left|\frac{\widetilde{R}_\eta \left(X_N^{\eta, \Delta t}\right) - \widetilde{R}_0\left(Y_N^{\eta, \Delta t}\right)}{\eta}\right|\right] \leq \mathbb{E}_{\mu_{\mathrm{init}, \eta}}\left[\left|\frac{\widetilde{R}_\eta\left(X_N^{\eta, \Delta t}\right) - \widetilde{R}_0\left(X_N^{\eta, \Delta t }\right)}{\eta}\right| + \left|\frac{\widetilde{R}_0\left(X_N^{\eta, \Delta t}\right) - \widetilde{R}_0\left(Y_N^{0, \Delta t}\right)}{\eta}\right|\right].\]
	The first term is bounded in the same way as before using Lemma~\ref{lm:poisson_sol_approx} and \eqref{eq:disc_semigroup_estimates}. For the second, we use~\eqref{eq:W_n_bound} in Lemma~\ref{lm:W_n_bound} to get
	\[\mathbb{E}_{\mu_{\mathrm{init}, \eta}}\left[\left|\frac{\widetilde{R}_0\left(X_N^{\eta, \Delta t}\right) - \widetilde{R}_0\left(Y_N^{0, \Delta t}\right)}{\eta}\right|\right] \leq \frac{C}{\eta}\mathbb{E}_{\mu_{\mathrm{init}, \eta}}\left[\mathbf{1}_{\left\{X_N^{\eta, \Delta t} \neq Y_N^{0, \Delta t}\right\}}\left(\mathcal{K}_n\left(X_N^{\eta, \Delta t}\right) + \mathcal{K}_n\left(Y_N^{0, \Delta t}\right)\right)\right].\]
	Proposition \ref{prop:discrete_Wn_norm_bound} implies that the expectation on the right hand side is of order \(\eta\), so the entire right hand side is uniformly bounded in \(\eta\). Therefore putting these controls together, we get that there exists a constant \(K_1 > 0\) such that
	\[\left|\mathbb{E}_{\mu_{\mathrm{init}, \eta}}\left[\widehat{\Psi}_{\eta, N}^{\Delta t, \mathrm{sticky}}\right] - \alpha_{R, \eta}\right| \leq K_1 \left(\frac{1}{\Delta t N} + \Delta t\right),\]
	proving the desired result for the bias.

	\paragraph{Controlling the error term in \eqref{eq:bias_development_taylor}.} To conclude the proof of the estimate on the bias, it remains to show that the error term \(\mathscr{E}^{\eta, \Delta t}_k\) is uniformly bounded by \(C\eta\Delta t^2\). To lighten the notation in what follows we write \(b^{k, \eta} := b\left(X_k^{\eta, \Delta t}\right)\), \(b^{k, 0} := b\left(Y_k^{0, \Delta t}\right)\), and \(F^{k, \eta} := F\left(X_k^{\eta, \Delta t}\right)\) and for the \(i\)-th component of these vector-valued functions we write \(b^{k, \eta}_i\), \(b^{k, 0}_i\), and \(F^{k,\eta}_i\). We further denote by \(\Xi^{\eta, k} := \left(b\left(X_k^{\eta, \Delta t}\right) + \eta F\left(X_k^{\eta, \Delta t}\right)\right)\Delta t + \sqrt{\frac{2\Delta t}{\beta}}G_{k+1}\), which can be viewed as the function \(\Xi^{\eta}(x, g) = \left(b(x) + \eta F(x)\right)\Delta t + \sqrt{\frac{2\Delta t}{\beta}}g\) evaluated at \(x = X_k^{\eta, \Delta t}\) and \(g = G_{k+1}\); in particular, \(\Xi^{0, k} :=  \Xi^{0}\left(X_k^{\eta, \Delta t}, G_{k+1}\right) = b\left(X_k^{\eta, \Delta t}\right)\Delta t + \sqrt{\frac{2\Delta t}{\beta}}G_{k+1}\) still depends on \(\eta\) through the argument of the function \(X_k^{\eta, \Delta t}\). This definition is motivated by the fact that \(X_{k+1}^{\eta, \Delta t} = X_k^{\eta, \Delta t} + \Xi^{\eta, k}\) and in particular \(\Xi^{0, k}\) is the update of \(X_k^{\eta, \Delta t}\) without the perturbation. We similarly define the update of the second marginal \(Y_k^{0, \Delta t}\) as \(\widetilde{\Xi}^{0, k} := \Xi\left(Y_k^{0, \Delta t}, \widetilde{G}_{k+1}\right) =  b\left(Y_k^{0, \Delta t}\right)\Delta t + \sqrt{\frac{2\Delta t}{\beta}}\widetilde{G}_{k+1}\), with 
	\[\widetilde{G}_{k+1} = \left\{
	\begin{aligned}
		&\sqrt{\frac{\beta}{2\Delta t}}\left[X_{k+1}^{\eta, \Delta t} - Y_k^{0, \Delta t} -\Delta t b\left(Y_k^{0, \Delta t}\right)\right] && \text{ if } U_{k+1} \leq p_{\Delta t, \beta}\left(X_k^{\eta, \Delta t}, Y_k^{0, \Delta t}, G_{k+1}\right),\\
		&\left[\mathrm{Id} - 2\mathbf{e}_k\mathbf{e}_k^T\right]G_{k+1} && \text{ otherwise. }
	\end{aligned}
	\right. \] Note that the random variable \(\widetilde{G}_{k+1}\) has the law of a standard \(d\)-dimensional Gaussian distribution. The error term is then given by
	\begin{equation}\label{eq:bias_error_terms}
		\begin{gathered}
			\mathscr{E}_k^{\eta, \Delta t} = \frac{\Delta t^2}{2} \mathbb{E}_{\mu_{\mathrm{init}, \eta}}\left(\nabla^2 \widetilde{R}_\eta\left(X_k^{\eta, \Delta t}\right)\left[b^{k, \eta} + \eta F^{k, \eta}, b^{k, \eta} + \eta F^{k, \eta}\right] - \nabla^2\widetilde{R}_0\left(Y_k^{0, \Delta t}\right)\left[b^{k, 0}, b^{k, 0}\right]\right)\\
			+ \frac{1}{6}\mathbb{E}_{\mu_{\mathrm{init}, \eta}}\Bigg(\int_0^1 \left(1-\theta\right)^2 \nabla^3\widetilde{R}_{\eta}\left(X_k^{\eta, \Delta t} + \theta\Xi^{\eta, k} \right)\left[\Xi^{\eta, k}, \Xi^{\eta, k}, \Xi^{\eta, k}\right]d\theta\\
			\qquad \qquad \qquad- \int_0^1 \left(1- \theta\right)^2 \nabla^3 \widetilde{R}_0 \left(Y_k^{0, \Delta t} + \theta\widetilde{\Xi}^{0, k}\right)\left[\widetilde{\Xi}^{0, \Delta t}, \widetilde{\Xi}^{0, \Delta t}, \widetilde{\Xi}^{0, \Delta t}\right]d\theta\Bigg),
		\end{gathered}
	\end{equation}
	where, for any \(x \in \mathbb{R}^d\), we view \(\nabla^2\widetilde{R}_\eta(x)\left[\cdot, \cdot\right]\) and  \(\nabla^2\widetilde{R}_0(x)\left[\cdot, \cdot\right]\) as bilinear forms on \(\mathbb{R}^d\) and \(\nabla^3\widetilde{R}_\eta(x)\left[\cdot, \cdot, \cdot\right]\) and \(\nabla^3\widetilde{R}_0(x)\left[\cdot, \cdot, \cdot\right]\) as trilinear forms on \(\mathbb{R}^d\). 
	The general strategy of our argument to bound \(\mathscr{E}^{\eta, \Delta t}_k\) is expanding the terms of \eqref{eq:bias_error_terms} and controlling the resulting differences of terms individually. In particular we frequently use \eqref{eq:W_n_bound} in Lemma~\ref{lm:W_n_bound} to reduce bounding various differences to controlling terms of the from 
	\begin{equation}
		(\dots) \underset{\text{by \eqref{eq:W_n_bound}}}{\leq} \mathbb{E}_{\mu_{\mathrm{init}, \eta}}\left[\mathbf{1}_{\left\{X_k^{\eta, \Delta t} \neq Y_k^{0, \Delta t}\right\}}\left(\widetilde{f}(X_k^{\eta, \Delta t}) + \widetilde{f}(Y_k^{0, \Delta t})\right)\right].
	\end{equation}
	We can then control the right hand side using \eqref{eq:finitetime_discrete_weightedtv_bound} in Proposition~\ref{prop:discrete_Wn_norm_bound}:
	\begin{equation}
		\begin{aligned}
			&\mathbb{E}_{\mu_{\mathrm{init}, \eta}}\left[\mathbf{1}_{\left\{X_k^{\eta, \Delta t} \neq Y_k^{0, \Delta t}\right\}}\left(\widetilde{f}(X_k^{\eta, \Delta t}) + \widetilde{f}(Y_k^{0, \Delta t})\right)\right]\\
			&\qquad\qquad\qquad\leq \left(C\eta + \mu_{\mathrm{init}, \eta}\left(\mathbf{1}_{\left\{x\neq y\right\}}\right)\right)\left(\mathbb{E}_{\mu_{\mathrm{init}, \eta}}\left[\widetilde{f}(X_k^{\eta, \Delta t})\right] + \mathbb{E}_{\mu_{\mathrm{init}, \eta}}\left[\widetilde{f}(Y_k^{0, \Delta t})\right]\right).
		\end{aligned}
	\end{equation}
	The prefactor is then of order \(\eta\) due to the hypotheses on \(\left(\mu_{\mathrm{init}, \eta}\right)_{\eta \in \left[-\eta_\star, \eta_\star\right]}\) and the two expectations on the right hand side are uniformly bounded in \(k\) due to the moment growth bounds \eqref{eq:disc_semigroup_estimates} and the hypotheses on the initial probability measures. 
	
	For the first term in \eqref{eq:bias_error_terms}, expanding the bilinear forms gives
	\begin{equation}\label{eq:bias_error_term1}
		\begin{aligned}
			&\mathbb{E}_{\mu_{\mathrm{init}, \eta}}\left(\nabla^2 \widetilde{R}_\eta\left(X_k^{\eta, \Delta t}\right)\left[b^{\eta, k} + \eta F^{\eta, k}, b^{\eta, k} + \eta F^{\eta, k}\right] - \nabla^2\widetilde{R}_0\left(Y_k^{0, \Delta t}\right)\left[b^{0, k}, b^{0, k}\right]\right) \\
			&\qquad= \eta^2 \mathbb{E}_{\mu_{\mathrm{init}, \eta}}\left(\nabla^2\widetilde{R}_\eta\left(X_k^{\eta, \Delta t}\right)\left[F^{\eta, k}, F^{\eta, k}\right]\right) + 2\eta \mathbb{E}_{\mu_{\mathrm{init}, \eta}}\left(\nabla^2 \widetilde{R}_\eta\left(X_k^{\eta, \Delta t}\right)\left[b^{\eta, k}, F^{\eta, k}\right]\right) \\
			&\qquad\qquad +\mathbb{E}_{\mu_{\text{init}, \eta}}\left(\nabla^2\widetilde{R}_\eta\left(X_k^{\eta, \Delta t}\right)\left[b^{\eta, k}, b^{\eta, k}\right] - \nabla^2\widetilde{R}_0\left(Y_k^{0, \Delta t}\right)\left[b^{0,k}, b^{0,k}\right]\right)
		\end{aligned}
	\end{equation}
	Since \(x \mapsto \nabla^2 \widetilde{R}_\eta(x)\left[F(x), F(x)\right]\) and \(x \mapsto \nabla^2\widetilde{R}_\eta(x)\left[b(x), F(x)\right]\) are in \(\mathscr{S}\) as the products and sums of functions which are in \(\mathscr{S}\) by Assumption~\ref{ass:ref_measure} and Proposition~\ref{prop:exist_poisson_eq}, what is inside the first two expectation is bounded by \(C\mathcal{K}_n\left(X_n^{\eta, \Delta t}\right)\) for some \(n \in \mathbb{N}\). Consequently, the assumptions on the initial measure and the moment growth bound \eqref{eq:disc_semigroup_estimates} ensure that the first term is of order \(\eta^2\) and the second of order \(\eta\) uniformly in~\(k\). To show that the third term is also of order \(\eta\), we decompose it into three terms:
	\begin{equation}\label{eq:bias_error_third_of_first_term}
		\begin{aligned}
			\mathbb{E}_{\mu_{\text{init}, \eta}}&\left(\nabla^2\widetilde{R}_\eta\left(X_k^{\eta, \Delta t}\right)\left[b^{\eta, k}, b^{\eta, k}\right] - \nabla^2\widetilde{R}_0\left(Y_k^{0, \Delta t}\right)\left[b^{0,k}, b^{0,k}\right]\right)\\
			= & \,\, \mathbb{E}_{\mu_{\text{init}, \eta}}\left(\nabla^2\widetilde{R}_\eta\left(X_k^{\eta, \Delta t}\right) \left[b^{\eta, k}, b^{\eta, k}\right] - \nabla^2\widetilde{R}_{\eta}\left(X_k^{\eta, \Delta t}\right)\left[b^{0, k}, b^{0, k}\right]\right)\\
			&+\mathbb{E}_{\mu_{\text{init}, \eta}}\left(\nabla^2\widetilde{R}_{\eta}\left(X_k^{\eta, \Delta t}\right)\left[b^{0, k}, b^{0,k}\right] - \nabla^2\widetilde{R}_0\left(X_k^{\eta, \Delta t}\right)\left[b^{0, k}, b^{0,k}\right]\right)\\
			&+\mathbb{E}_{\mu_{\text{init}, \eta}}\left(\nabla^2\widetilde{R}_0\left(X_k^{\eta, \Delta t}\right)\left[b^{0,k}, b^{0,k}\right] - \nabla^{2}\widetilde{R}_0\left(Y_k^{0, \Delta t}\right)\left[b^{0, k}, b^{0, k}\right]\right)\\
			=: &\,\, (\mathrm{I}) + (\mathrm{II}) + (\mathrm{III}).
		\end{aligned}
	\end{equation}
	Expanding the first term and using the triangle inequality we get
	\begin{equation*}
		\begin{aligned}
			\left|(\mathrm{I})\right| &\leq \mathbb{E}_{\mu_{\text{init}, \eta}}\left|\nabla^2\widetilde{R}_\eta\left(X_k^{\eta, \Delta t}\right)\left[b^{\eta, k}, b^{\eta, k}\right] - \nabla^2\widetilde{R}_\eta\left(X_k^{\eta, \Delta t}\right)\left[b^{\eta, k}, b^{0, k}\right]\right|\\
			&\quad + \mathbb{E}_{\mu_{\text{init}, \eta}}\left|\nabla^2\widetilde{R}_\eta\left(X_k^{\eta, \Delta t}\right)\left[b^{\eta, k}, b^{0, k}\right] - \nabla^2\widetilde{R}_\eta\left(X_k^{\eta, \Delta t}\right)\left[b^{0, k}, b^{0, k}\right]\right|\\
			&\leq \mathbb{E}_{\mu_{\mathrm{init}, \eta}}\left[\left\|\nabla^2\widetilde{R}_{\eta}\left(X_k^{\eta, \Delta t}\right)\right\|\left|b^{\eta, k}\right|\left|b^{\eta, k} - b^{0, k}\right| + \left\|\nabla^2\widetilde{R}_{\eta}\left(X_k^{\eta, \Delta t}\right)\right\|\left|b^{0, k}\right|\left|b^{\eta, k} - b^{0, k}\right|\right]\\
			&=\mathbb{E}_{\mu_{\mathrm{init}, \eta}}\left[\left\|\nabla^2\widetilde{R}_{\eta}\left(X_k^{\eta, \Delta t}\right)\right\|\left(\left|b^{\eta, k}\right| + \left|b^{0, k}\right|\right)\left|b^{\eta, k} - b^{0, k}\right|\right].
		\end{aligned}
	\end{equation*}
	Since \(\widetilde{R}_\eta\) and \(b\) are in \(\mathscr{S}\), we can bound \(\left\|\nabla^2 \widetilde{R}_\eta(\cdot)\right\|\) and \(|b|\) by \(C\mathcal{K}_m\) for \(m\) large enough. This bound and~\eqref{eq:W_n_bound} in Lemma \ref{lm:W_n_bound} with \(n\) large enough then gives
	\begin{equation*}
		\begin{aligned}
			\left|(\mathrm{I})\right| &\leq C\mathbb{E}_{\mu_{\mathrm{init}, \eta}}\left[\mathcal{K}_{m}\left(X_k^{\eta, \Delta t}\right)\left(\mathcal{K}_m\left(X_k^{\eta, \Delta t}\right) + \mathcal{K}_m\left(Y_k^{0, \Delta t}\right)\right)\left|b^{\eta, k} - b^{0, k}\right|\right]\\
			&\leq C\mathbb{E}_{\mu_{\mathrm{init}, \eta}}\left[\left(\mathcal{K}_{m'}\left(X_k^{\eta, \Delta t}\right) + \mathcal{K}_{m'}\left(Y_k^{0, \Delta t}\right)\right)\left|b^{\eta, k} - b^{0, k}\right|\right]\\
			&\leq C \mathbb{E}_{\mu_{\mathrm{init}, \eta}}\left[\left(\mathcal{K}_{m'}\left(X_k^{\eta, \Delta t}\right) + \mathcal{K}_{m'}\left(Y_k^{0, \Delta t}\right)\right)\mathbf{1}_{\left\{X_k^{\eta, \Delta t} \neq Y_k^{0, \Delta t}\right\}}\left(\mathcal{K}_n\left(X_k^{\eta, \Delta t}\right) + \mathcal{K}_n\left(Y_k^{0, \Delta t}\right)\right)\right]\\
			&\leq C \mathbb{E}_{\mu_{\mathrm{init}, \eta}}\left[\mathbf{1}_{\left\{X_k^{\eta, \Delta t} \neq Y_k^{0, \Delta t}\right\}}\left(\mathcal{K}_{n'}\left(X_k^{\eta, \Delta t}\right) + \mathcal{K}_{n'}\left(Y_k^{0, \Delta t}\right)\right)\right],
		\end{aligned}
	\end{equation*}
	where the second and fourth inequality follow from choosing \(m' \geq m\) and \(n' \geq n + m'\) large enough. The right hand side is order \(\eta\) by Proposition~\ref{prop:discrete_Wn_norm_bound}, the moment growth bounds \eqref{eq:disc_semigroup_estimates}, and the hypotheses on the initial probability measures.
	For the second term in \eqref{eq:bias_error_term1} we obtain using Lemma~\ref{lm:poisson_sol_approx} 
	\begin{equation*}
		\begin{aligned}
		\left|(\mathrm{II})\right| &\leq \mathbb{E}_{\mu_{\mathrm{init}, \eta}} \left|\left(\nabla^2\widetilde{R}_{\eta}\left(X_k^{\eta, \Delta t}\right) - \nabla^2\widetilde{R}_0\left(X_k^{0, \Delta t}\right)\right)\left[b^{0, k},b^{0, k} \right]\right|\\
		&\leq C\mathbb{E}_{\mu_{\mathrm{init}, \eta}}\left[\max_{1\leq i, j\leq d}\left|\partial_{x_i x_j}^2\left(\widetilde{R}_\eta - \widetilde{R}_0\right)\left(X_k^{\eta, \Delta t}\right)\right| \left|b^{0, k}\right|^2\right]\\
		&\leq C\eta \mathbb{E}_{\mu_{\mathrm{init}, \eta}}\left[\max_{1\leq i, j\leq d}\left|\partial_{x_i x_j}^2\widecheck{R}_\eta\left(X_k^{\eta, \Delta t}\right)\right| \left|b^{0, k}\right|^2\right]\\
		&\leq C\eta \mathbb{E}_{\mu_{\mathrm{init}, \eta}}\left[\mathcal{K}_{n}\left(X_k^{\eta, \Delta t}\right)\mathcal{K}_{n}\left(Y_k^{0, \Delta t}\right)\right]\\
		&\leq C\eta \mathbb{E}_{\mu_{\mathrm{init}, \eta}}\left[\mathcal{K}_{2n}\left(X_k^{\eta, \Delta t}\right)\right]^{1/2}\mathbb{E}_{\mu_{\mathrm{init}, \eta}}\left[\mathcal{K}_{2n}\left(Y_k^{0, \Delta t}\right)\right]^{1/2},
		\end{aligned}
	\end{equation*}
	where \(\widecheck{R}_\eta \in \mathscr{S}\) is the function from Lemma~\ref{lm:poisson_sol_approx}. The fourth inequality is due to the fact that second derivatives of \(\widecheck{R}_\eta\) can be bounded  by \(C\mathcal{K}_n\) for \(n\) large enough and the fifth follows from the Cauchy--Schwarz inequality. The two expectations in the final line are uniformly bounded by the moment growth bounds \eqref{eq:disc_semigroup_estimates} and the hypotheses in the initial measure. For the last term in \eqref{eq:bias_error_third_of_first_term}, observe that \(z \mapsto \nabla^2\widetilde{R}_0(z)\) is in \(\mathscr{S}\) so as before we can apply \eqref{eq:W_n_bound} in Lemma~\ref{lm:W_n_bound} to get
	\[\begin{aligned}
		\left|(\mathrm{III})\right| &\leq C\mathbb{E}_{\mu_{\mathrm{init}, \eta}}\left[\left|b^{0,k}\right|^2\mathbf{1}_{\left\{X_k^{\eta, \Delta t} \neq Y_k^{0, \Delta t}\right\}}\left(\mathcal{K}_n\left(X_k^{\eta, \Delta t}\right) + \mathcal{K}_n\left(Y_k^{0, \Delta t}\right)\right)\right]\\
		&\leq C\mathbb{E}_{\mu_{\mathrm{init}, \eta}}\left[\mathcal{K}_{m}\left(Y_k^{0, \Delta t}\right)\mathbf{1}_{\left\{X_k^{\eta, \Delta t} \neq Y_k^{0, \Delta t}\right\}}\left(\mathcal{K}_n\left(X_k^{\eta, \Delta t}\right) + \mathcal{K}_n\left(Y_k^{0, \Delta t}\right)\right)\right]\\
		&\leq C\mathbb{E}_{\mu_{\mathrm{init}, \eta}}\left[\mathbf{1}_{\left\{X_k^{\eta, \Delta t} \neq Y_k^{0, \Delta t}\right\}}\left(\mathcal{K}_{n'}\left(X_k^{\eta, \Delta t}\right) + \mathcal{K}_{n'}\left(Y_k^{0, \Delta t}\right)\right)\right],
	\end{aligned}\]
	where the second inequality follows from the fact that \(b \in \mathscr{S}\) and the third inequality from choosing \(n' \in \mathbb{N}\) large enough and resorting to a Cauchy--Schwarz inequality. 
	The last line is of order \(\eta\) by Proposition~\ref{prop:discrete_Wn_norm_bound}, the moment growth bounds \eqref{eq:disc_semigroup_estimates}, and the hypotheses on the initial probability measures.
	
	To bound the second expectation in \eqref{eq:bias_error_terms}, we write it as 
	\begin{equation*}
		\begin{aligned}
			&\mathbb{E}_{\mu_{\mathrm{init}, \eta}}\left[\int_0^1\left(1-\theta\right)^2\left\{\nabla^3\widetilde{R}_\eta \left(X_k^{\eta, \Delta t} + \theta\Xi^{\eta, k}\right) -\nabla^3\widetilde{R}_0 \left(X_k^{\eta, \Delta t} + \theta\Xi^{\eta, k}\right)\right\}\left[\Xi^{\eta,k}, \Xi^{\eta,k}, \Xi^{\eta, k}\right]d\theta \right]\\
			&+\mathbb{E}_{\mu_{\mathrm{init}, \eta}}\left[\int_0^1 \left(1 - \theta\right)^2\left\{\nabla^3\widetilde{R}_0 \left(X_k^{\eta, \Delta t} + \theta\Xi^{\eta, k}\right)\left[\Xi^{\eta,k}, \Xi^{\eta,k}, \Xi^{\eta, k}\right] - \nabla^3\widetilde{R}_0\left(X^{\eta, \Delta t} + \theta\Xi^{\eta, k}\right)\left[\Xi^{0, k}, \Xi^{0,k}, \Xi^{0,k}\right]\right\}d\theta\right]\\
			&+\mathbb{E}_{\mu_{\mathrm{init}, \eta}}\left[\int_0^1 \left(1 - \theta\right)^2\left(\nabla^3\widetilde{R}_0 \left(X_k^{\eta, \Delta t} + \theta\Xi^{\eta, k}\right) - \nabla^3\widetilde{R}_0\left(X_k^{\eta, \Delta t} + \theta\Xi^{0, k}\right)\right)\left[\Xi^{0, k}, \Xi^{0,k}, \Xi^{0,k}\right]d\theta\right]\\
			&+\mathbb{E}_{\mu_{\mathrm{init}, \eta}}\left[\int_0^1 \left(1 - \theta\right)^2\left\{\nabla^3\widetilde{R}_0 \left(X_k^{\eta, \Delta t} + \theta\Xi^{0, k}\right)\left[\Xi^{0,k}, \Xi^{0,k}, \Xi^{0, k}\right] - \nabla^3\widetilde{R}_0\left(Y_k^{0, \Delta t} + \theta\widetilde{\Xi}^{0, k}\right)\left[\widetilde{\Xi}^{0, k}, \widetilde{\Xi}^{0,k}, \widetilde{\Xi}^{0,k}\right]\right\}d\theta\right]\\
			& =: (\mathrm{I}) + (\mathrm{II}) + (\mathrm{III}) + (\mathrm{IV}).
		\end{aligned}
	\end{equation*}
	For the first term, we use the triangle inequality to write
	\begin{equation*}
		\begin{aligned}
			&\left|(\mathrm{I})\right| \leq \sum_{1\leq i, j, \ell\leq d}\left|\mathbb{E}_{\mu_{\mathrm{init}, \eta}}\left[\int_0^1 \left(1 - \theta\right)^2 \partial_{x_i x_j x_\ell}^3\left(\widetilde{R}_\eta - \widetilde{R}_0\right)\left(X_k^{\eta,\Delta t} + \theta\Xi^{\eta, k}\right)\Xi_i^{\eta,k}\Xi_j^{\eta,k}\Xi_\ell^{\eta,k} d\theta\right]\right|\\
			&\leq \Delta t^3\!\!\!\sum_{1\leq i, j, \ell\leq d}\left|\mathbb{E}_{\mu_{\mathrm{init}, \eta}}\left[\int_0^1 \left(1 - \theta\right)^2 \partial_{x_i x_j x_\ell}^3\left(\widetilde{R}_\eta - \widetilde{R}_0\right)\left(X_k^{\eta,\Delta t} + \theta\Xi^{\eta, k}\right)\left(b_i^{\eta, k}\!\! + \eta F_i^{\eta, k}\right)\left(b_j^{\eta, k}\!\! + \eta F_j^{\eta, k}\right)\left(b_\ell^{\eta, k}\!\! + \eta F_\ell^{\eta, k}\right)d\theta\right]\right|\\
			&+3\Delta t^{5/2}\sqrt{\frac{2}{\beta}}\sum_{1\leq i, j, \ell\leq d}\left|\mathbb{E}_{\mu_{\mathrm{init}, \eta}}\left[\int_0^1 \left(1 - \theta\right)^2 \partial_{x_i x_j x_\ell}^3\left(\widetilde{R}_\eta - \widetilde{R}_0\right)\left(X_k^{\eta,\Delta t} + \theta\Xi^{\eta, k}\right)\left(b_i^{\eta, k}\!\! + \eta F_i^{\eta, k}\right)\left(b_j^{\eta, k}\!\! + \eta F_j^{\eta, k}\right)G_{k+1,\ell}\,d\theta\right]\right|\\
			&+3\Delta t^{2}\left(\frac{2}{\beta}\right)\sum_{1\leq i, j, \ell\leq d}\left|\mathbb{E}_{\mu_{\mathrm{init}, \eta}}\left[\int_0^1 \left(1 - \theta\right)^2 \partial_{x_i x_j x_\ell}^3\left(\widetilde{R}_\eta - \widetilde{R}_0\right)\left(X_k^{\eta,\Delta t} + \theta\Xi^{\eta, k}\right)\left(b_i^{\eta, k}\!\! + \eta F_i^{\eta, k}\right)G_{k+1,j}G_{k+1,\ell}\,d\theta\right]\right|\\
			&+\Delta t^{3/2}\left(\frac{2}{\beta}\right)^{3/2}\!\!\!\!\!\!\!\sum_{1\leq i, j, \ell\leq d}\left|\mathbb{E}_{\mu_{\mathrm{init}, \eta}}\left[\int_0^1 \left(1 - \theta\right)^2 \partial_{x_i x_j x_\ell}^3\left(\widetilde{R}_\eta - \widetilde{R}_0\right)\left(X_k^{\eta,\Delta t} + \theta\Xi^{\eta, k}\right)G_{k+1, i}G_{k+1, j}G_{k+1, \ell}\,d\theta\right]\right|.\\
		\end{aligned}
	\end{equation*}
	Using Lemma~\ref{lm:poisson_sol_approx} to control the difference between solutions to the Poisson equation, we bound \(\left|\partial_{x_i x_j x_\ell}^3\left(\widetilde{R}_\eta - \widetilde{R}_0\right)\right|\) by \(C\eta\mathcal{K}_n\) for an \(n\) large enough. Using the Cauchy--Schwarz inequality to separate away the Gaussian random variables (see how we do it for the fourth sum below) and the fact that \(b, F \in \mathscr{S}\), we bound the first three sums by \(C\eta \Delta t^2 \mathbb{E}_{\mu_{\mathrm{init}, \eta}}\left[\mathcal{K}_n\left(X_k^{\eta, \Delta t}\right)\right]\) for an \(n\) large enough. 
	For the final term, we exchange the expectation operator and the integral to obtain the following expectation in the integrand of a generic term in the sum
	\[\mathbb{E}_{\mu_{\mathrm{init}, \eta}}\left[\partial_{x_i x_j x_\ell}^3\left(\widetilde{R}_\eta - \widetilde{R}_0\right)\left(X_k^{\eta,\Delta t} + \theta\Delta t\left(b^{\eta,k}\! + \eta F^{\eta, k}\right) + \theta \sqrt{\frac{2\Delta t}{\beta}}G_{k+1}\right)G_{k+1,i}G_{k+1,j}G_{k+1,\ell}\right].\]
	Performing another first order Taylor expansion gives 
	\begin{align*}
		&\mathbb{E}_{\mu_{\mathrm{init}, \eta}}\left[\partial_{x_i x_j x_\ell}^3\left(\widetilde{R}_\eta - \widetilde{R}_0\right)\left(X_k^{\eta,\Delta t} + \theta\Delta t\left(b^{\eta,k}\!\! + \eta F^{\eta, k}\right) + \theta \sqrt{\frac{2\Delta t}{\beta}}G^{k+1}\right)G_{k+1,i}G_{k+1,j}G_{k+1,\ell}\right] \\
		&= \mathbb{E}_{\mu_{\mathrm{init}, \eta}}\left[\partial_{x_i x_j x_\ell}^3\left(\widetilde{R}_\eta - \widetilde{R}_0\right)\left(X_k^{\eta,\Delta t} + \theta\Delta t\left(b^{\eta,k}\!\! + \eta F^{\eta, k}\right)\right)G_{k+1,i}G_{k+1,j}G_{k+1,\ell}\right]\\
		&\,\, + \theta \sqrt{\frac{2\Delta t}{\beta}}\!\!\sum_{q = 1}^d \!\! \mathbb{E}_{\mu_{\mathrm{init}, \eta}}\!\left[\!\int_0^1\!\!\!\partial_{x_i x_j x_\ell x_q}^4\!\!\left(\widetilde{R}_\eta - \widetilde{R}_0\right)\!\!\left(\! X_k^{\eta,\Delta t}\!\! + \theta\Delta t\left(b^{\eta,k}\!\! + \eta F^{\eta, k} \!\right) + \theta\vartheta \sqrt{\frac{2\Delta t}{\beta}}G_{k+1}\right)\!\! G_{k+1,i}G_{k+1,j}G_{k+1,\ell} G_{k+1,q}d\vartheta\right].
	\end{align*}
	The first expectation vanishes since \(G_{k+1}\) is independent of \(X_k^{\eta, \Delta t}\). Using Lemma~\ref{lm:poisson_sol_approx} to bound \(\left|\partial_{x_i x_j x_\ell x_q}^4\left(\widetilde{R}_\eta - \widetilde{R}_0\right)\right|\) for any \(q\) by \(C\eta\mathcal{K}_n\) for an \(n\) large enough, we control the second expectation as:
	\begin{equation*}
	\begin{aligned}
		&\left|\theta \sqrt{\frac{2\Delta t}{\beta}}\!\sum_{q = 1}^d \! \mathbb{E}_{\mu_{\mathrm{init}, \eta}}\left[\int_0^1\!\! \partial_{x_i x_j x_\ell x_q}^4 \!\left(\widetilde{R}_\eta - \widetilde{R}_0\right)\!\!\left(\!\! X_k^{\eta,\Delta t}\!\! + \theta\Delta t\left(b^{\eta,k}\!\! + \eta F^{\eta, k}\right) + \theta\vartheta \sqrt{\frac{2\Delta t}{\beta}}G_{k+1}\!\!\right)G_{k+1,i}G_{k+1,j}G_{k+1,\ell} G_{k+1,q}d\vartheta\right]\right|\\
		& \leq C\eta\theta \sqrt{\frac{2\Delta t}{\beta}}\!\sum_{q = 1}^d\! \mathbb{E}_{\mu_{\mathrm{init}, \eta}}\left[\int_0^1\mathcal{K}_n\left(\! X_k^{\eta,\Delta t}\!\! + \theta\Delta t\left(b^{\eta,k}\!\! + \eta F^{\eta, k}\right) + \theta\vartheta \sqrt{\frac{2\Delta t}{\beta}}G_{k+1}\!\right)\left|G_{k+1,i}G_{k+1,j}G_{k+1,\ell} G_{k+1,q}\right|d\vartheta\right]\\
		&= C\eta\theta \sqrt{\frac{2\Delta t}{\beta}}\!\sum_{q = 1}^d\int_0^1\mathbb{E}_{\mu_{\mathrm{init}, \eta}}\left[\mathcal{K}_n\left(\! X_k^{\eta,\Delta t}\!\! + \theta\Delta t\left(b^{\eta,k}\!\! + \eta F^{\eta, k}\right) + \theta\vartheta \sqrt{\frac{2\Delta t}{\beta}}G^{k+1} \!\right)\left|G_{k+1,i}G_{k+1,j}G_{k+1,\ell} G_{k+1,q}\right|\right]d\vartheta\\
		&\leq C\eta\theta \sqrt{\frac{2\Delta t}{\beta}}\!\int_0^1\!\!\!\mathbb{E}_{\mu_{\mathrm{init}, \eta}}\!\! \left[\!\mathcal{K}_n\left(\!\! X_k^{\eta,\Delta t}\!\! + \theta\Delta t\left(b^{\eta,k}\!\! + \eta F^{\eta, k}\right) + \theta\vartheta \sqrt{\frac{2\Delta t}{\beta}}G_{k+1}\!\right)^2\!\! \right]^{1/2}\!\!\!\!\!\!\!\! d\vartheta \!\!\sum_{q = 1}^d \!\! \mathbb{E}_{\mu_{\mathrm{init}, \eta}}\! \left[\left|G_{k+1,i}G_{k+1,j}G_{k+1,\ell} G_{k+1,q}\right|^2\right]^{1/2}\\
		&\leq C\eta\theta \sqrt{\frac{2\Delta t}{\beta}} \int_0^1\mathbb{E}_{\mu_{\mathrm{init}, \eta}}\left[\mathcal{K}_n\left(X_k^{\eta,\Delta t}\!\! + \theta\Delta t\left(b^{\eta,k}\!\! + \eta F^{\eta, k}\right) + \theta\vartheta \sqrt{\frac{2\Delta t}{\beta}}G_{k+1}\right)^2\right]^{1/2}\!\!\!\!\!\! d\vartheta\\
		&\leq C\eta\theta \sqrt{\frac{2\Delta t}{\beta}} \int_0^1\mathbb{E}_{\mu_{\mathrm{init}, \eta}}\left[\mathcal{K}_n\left(X_k^{\eta,\Delta t}\!\! + \theta\Delta t\left(b^{\eta,k}\!\! + \eta F^{\eta, k}\right) + \theta\vartheta \sqrt{\frac{2\Delta t}{\beta}}G_{k+1}\right)^2\right]d\vartheta\\
		&\leq C\eta\theta \sqrt{\frac{2\Delta t}{\beta}} \int_0^1\mathbb{E}_{\mu_{\mathrm{init}, \eta}}\left[\mathcal{K}_{2n}\left(X_k^{\eta,\Delta t}\!\! + \theta\Delta t\left(b^{\eta,k}\!\! + \eta F^{\eta, k}\right) + \theta\vartheta \sqrt{\frac{2\Delta t}{\beta}}G_{k+1}\right)\right]d\vartheta\\
		&\leq C\eta\theta \sqrt{\frac{2\Delta t}{\beta}} \left(\mathbb{E}_{\mu_{\mathrm{init}, \eta}}\left[\mathcal{K}_{2n}\left(X_k^{\eta,\Delta t}\!\! + \theta\Delta t\left(b^{\eta,k}\!\! + \eta F^{\eta, k}\right)\right)\right] + \int_0^1\mathbb{E}_{\mu_{\mathrm{init}, \eta}}\left[\mathcal{K}_{2n}\left(\theta\vartheta \sqrt{\frac{2\Delta t}{\beta}}G_{k+1}\right)\right]d\vartheta\right),
	\end{aligned}
	\end{equation*}
	where we used the Cauchy--Schwarz inequality for the second inequality and the fact that \(\mathcal{K}_n \geq 1\) and that \(x \geq \sqrt{x}\) when \(x \geq 1\) for the fourth one. The last inequality is due to the fact that \(\mathcal{K}_m(x+y) \leq C\left(\mathcal{K}_m(x) + \mathcal{K}_m(y)\right)\) for any \(x, y\in \mathbb{R}^d\). Since \(b, F \in \mathscr{S}\), we can bound the last line by 
	\[C\eta \theta \sqrt{\frac{2\Delta t}{\beta}}\mathbb{E}_{\mu_{\mathrm{init}, \eta}}\left[\mathcal{K}_{n'}\left(X_k^{\eta, \Delta t}\right)\right],\]
	with \(n' \geq 2n\) sufficiently large. Putting this bound together with previously obtained bounds, we get for~\(n\) large enough that 
	\[\left|(\mathrm{I})\right| \leq C\eta \Delta t^2 \mathbb{E}_{\mu_{\mathrm{init}, \eta}}\left[\mathcal{K}_n\left(X_k^{\eta, \Delta t}\right)\right]. \]
	We can then bound the expectation using moment growth bound \eqref{eq:disc_semigroup_estimates} and the hypotheses on the initial measures.
	
	Expanding the trilinear form in the integrand of the term \((\mathrm{II})\) gives:
	\begin{equation*}
		\begin{aligned}
			&\nabla^3\widetilde{R}_0 \left(X_k^{\eta, \Delta t} + \theta\Xi^{\eta, k}\right)\left[\Xi^{\eta,k}, \Xi^{\eta,k}, \Xi^{\eta, k}\right] - \nabla^3\widetilde{R}_0\left(X^{\eta, \Delta t} + \theta\Xi^{\eta, k}\right)\left[\Xi^{0, k}, \Xi^{0,k}, \Xi^{0,k}\right] \\
			&\quad =3\eta \Delta t \nabla^3 \widetilde{R}_0\left(X_k^{\eta, \Delta t} + \theta\Xi^{\eta, k}\right)\left[F^{\eta, k}, b^{\eta, k}\Delta t + \sqrt{\frac{2\Delta t}{\beta}}G_{k+1}, b^{\eta, k}\Delta t + \sqrt{\frac{2\Delta t}{\beta}}G_{k+1}\right]\\
			&\quad + 3\eta^2 \Delta t^2 \nabla^3 \widetilde{R}_0 \left(X_k^{\eta, \Delta t} + \theta \Xi^{\eta, k}\right)\left[F^{\eta, k}, F^{\eta, k}, b^{\eta, k} \Delta t + \sqrt{\frac{2\Delta t}{\beta}}G_{k+1}\right]\\
			&\quad + \eta^3 \Delta t^3 \nabla^3\widetilde{R}_0\left(X_k^{\eta, \Delta t} + \theta \Xi^{\eta, k}\right) \left[F^{\eta, k}, F^{\eta, k}, F^{\eta, k}\right]\\
			&=3\eta \Delta t^2 \nabla^3 \widetilde{R}_0\left(X_k^{\eta, \Delta t} + \theta\Xi^{\eta, k}\right)\left[F^{\eta, k}, b^{\eta, k}\sqrt{\Delta t} + \sqrt{\frac{2}{\beta}}G_{k+1}, b^{\eta, k}\sqrt{\Delta t} + \sqrt{\frac{2}{\beta}}G_{k+1}\right]\\
			&\quad + 3\eta^2 \Delta t^2 \nabla^3 \widetilde{R}_0 \left(X_k^{\eta, \Delta t} + \theta \Xi^{\eta, k}\right)\left[F^{\eta, k}, F^{\eta, k}, b^{\eta, k}\Delta t + \sqrt{\frac{2\Delta t}{\beta}}G_{k+1}\right]\\
			&\quad + \eta^3 \Delta t^3 \nabla^3\widetilde{R}_0\left(X_k^{\eta, \Delta t} + \theta \Xi^{\eta, k}\right) \left[F^{\eta, k}, F^{\eta, k}, F^{\eta, k}\right].
		\end{aligned}
	\end{equation*}
	Each the above terms involves functions in \(\mathscr{S}\) and Gaussian random variables, whose expectations can be analytically estimated. Following a manipulation analogous to what we did for term \((\mathrm{I})\) above to take care of the Gaussian random variable in the argument of \(\nabla^3 \widetilde{R}_0\), we can conclude that for \(n \in \mathbb{N}\) large enough
	\begin{equation*}
		\left|(\mathrm{II})\right| \leq C\eta \Delta t^2 \mathbb{E}_{\mu_{\mathrm{init}, \eta}}\left[\mathcal{K}_n\left(X_k^{\eta, \Delta t}\right)\right].
	\end{equation*}
	We can control the quantity on the right hand side of the inequality using the assumptions on the initial condition and the moment growth bound \eqref{eq:disc_semigroup_estimates}.
	
	For the  term \((\mathrm{III})\), we exchange the integral and expectation to obtain in the integrand the following expectation:
	\[\mathbb{E}_{\mu_{\mathrm{init}, \eta}}\left(\left[\nabla^3\widetilde{R}_0 \left(X_k^{\eta, \Delta t} + \theta\Xi^{\eta, k}\right) - \nabla^3\widetilde{R}_0\left(X_k^{\eta, \Delta t} + \theta\Xi^{0, k}\right)\right]\left[\Xi^{0, k}, \Xi^{0,k}, \Xi^{0,k}\right]\right).\]
	Expanding the trilinear form gives
	\begin{align*}
		&\mathbb{E}_{\mu_{\mathrm{init}, \eta}}\left(\left[\nabla^3\widetilde{R}_0 \left(X_k^{\eta, \Delta t} + \theta\Xi^{\eta, k}\right) - \nabla^3\widetilde{R}_0\left(X_k^{\eta, \Delta t} + \theta\Xi^{0, k}\right)\right]\left[\Xi^{0, k}, \Xi^{0,k}, \Xi^{0,k}\right]\right)\\
		&\quad =\Delta t^3 \mathbb{E}_{\mu_{\mathrm{init}, \eta}}\left(\left[\nabla^3\widetilde{R}_0 \left(X_k^{\eta, \Delta t} + \theta\Xi^{\eta, k}\right) - \nabla^3\widetilde{R}_0\left(X_k^{\eta, \Delta t} + \theta\Xi^{0, k}\right)\right]\left[b^{\eta, k}, b^{\eta, k}, b^{\eta, k}\right]\right)\\
		&\qquad+ 3\Delta t^{5/2}\sqrt{\frac{2}{\beta}} \mathbb{E}_{\mu_{\mathrm{init}, \eta}}\left[\left(\nabla^3\widetilde{R}_0 \left(X_k^{\eta, \Delta t} + \theta\Xi^{\eta, k}\right) - \nabla^3\widetilde{R}_0\left(X_k^{\eta, \Delta t} + \theta\Xi^{0, k}\right)\right)\left[b^{\eta, k}, b^{\eta, k}, G_{k+1}\right]\right]\\
		&\qquad + 3\left(\frac{2}{\beta}\right)\Delta t^2 \mathbb{E}_{\mu_{\mathrm{init}, \eta}}\left(\left[\nabla^3\widetilde{R}_0 \left(X_k^{\eta, \Delta t} + \theta\Xi^{\eta, k}\right) - \nabla^3\widetilde{R}_0\left(X_k^{\eta, \Delta t} + \theta\Xi^{0, k}\right)\right]\left[b^{\eta, k}, G_{k+1}, G_{k+1}\right]\right)\\
		&\qquad+ \Delta t^{3/2}\left(\frac{2}{\beta}\right)^{3/2}\mathbb{E}_{\mu_{\mathrm{init}, \eta}}\left(\left[\nabla^3\widetilde{R}_0 \left(X_k^{\eta, \Delta t} + \theta\Xi^{\eta, k}\right) - \nabla^3\widetilde{R}_0\left(X_k^{\eta, \Delta t} + \theta\Xi^{0, k}\right)\right]\left[G_{k+1}, G_{k+1}, G_{k+1}\right]\right).
	\end{align*}
	Since \(\widetilde{R}_0 \in \mathscr{S}\), we apply \eqref{eq:pseudo_lip_cond} in Lemma~\ref{lm:pseudo_lip} to obtain, for any \(\theta \in [0,1]\),
	\[\begin{aligned}
		&\max_{1\leq i, j, \ell\leq d}\left|\partial_{x_i x_j x_\ell}^3\left[\widetilde{R}_0 \left(X_k^{\eta, \Delta t} + \theta\Xi^{\eta, k}\right) - \widetilde{R}_0\left(X_k^{\eta, \Delta t} + \theta\Xi^{0, k}\right)\right]\right|\\
		 &\qquad\qquad\leq C\eta \Delta t \left\|F\right\|_\infty\left(\mathcal{K}_n\left(X_k^{\eta, \Delta t}\! + \!\theta\Xi^{\eta, k}\right) + \mathcal{K}_n\left(X_k^{\eta, \Delta t}\! + \!\theta\Xi^{0, k}\right)\right).
	\end{aligned}\]
	Consequently, the fact that the functions under consideration belong to \(\mathscr{S}\), the moment growth bound~\eqref{eq:disc_semigroup_estimates}, and the hypotheses on the initial measures then imply that each of the expectations in the expansion of the trilinear form, and hence \((\mathrm{III})\), is of order \(\eta \Delta t^{5/2}\) uniformly in \(k\).
	
	For the last term \((\mathrm{IV})\), we again exchange the expectation and integral to obtain in the integrand the following expectation:
	\[\mathbb{E}_{\mu_{\mathrm{init}, \eta}}\left(\nabla^3\widetilde{R}_0 \left(X_k^{\eta, \Delta t} + \theta\Xi^{0, k}\right)\left[\Xi^{0,k}, \Xi^{0,k}, \Xi^{0, k}\right] - \nabla^3\widetilde{R}_0\left(Y_k^{0, \Delta t} + \theta\widetilde{\Xi}^{0, k}\right)\left[\widetilde{\Xi}^{0, k}, \widetilde{\Xi}^{0,k}, \widetilde{\Xi}^{0,k}\right]\right).\]
	Expanding the trilinear forms in the integrand then gives
	\[\begin{aligned}
		&\mathbb{E}_{\mu_{\mathrm{init}, \eta}}\left(\nabla^3\widetilde{R}_0\left(X_k^{\eta, \Delta t} + \theta \Xi^{0, k}\right)\left[\Xi^{0, k}, \Xi^{0, k}, \Xi^{0, k}\right] - \nabla^3 \widetilde{R}_0\left(Y_k^{0, \Delta t} + \theta \widetilde{\Xi}^{0, k}\right)\left[\widetilde{\Xi}^{0, k}, \widetilde{\Xi}^{0, k}, \widetilde{\Xi}^{0, k}\right]\right)\\
		&= \Delta t^3 \mathbb{E}_{\mu_{\mathrm{init}, \eta}}\left(\nabla^3\widetilde{R}_0\left(X_k^{\eta, \Delta t} + \theta \Xi^{0, k}\right)\left[b^{\eta, k}, b^{\eta, k}, b^{\eta, k}\right] - \nabla^3 \widetilde{R}_0\left(Y_k^{0, \Delta t} + \theta \widetilde{\Xi}^{0, k}\right)\left[b^{0, k}, b^{0, k}, b^{0, k}\right]\right)\\
		&\quad + 3\sqrt{\frac{2}{\beta}}\Delta t^{5/2}\mathbb{E}_{\mu_{\mathrm{init}, \eta}}\left(\nabla^3 \widetilde{R}_0\left(X_k^{\eta, \Delta t} + \theta \Xi^{0, k}\right)\left[b^{\eta, k}, b^{\eta, k}, G_{k+1}\right] - \nabla^3\widetilde{R}_0\left(Y_k^{0, \Delta t} + \theta \widetilde{\Xi}^{0, k}\right)\left[b^{0, k}, b^{0,k}, \widetilde{G}_{k+1}\right]\right)\\
		&\quad + 3\left(\frac{2}{\beta}\right) \Delta t^2\mathbb{E}_{\mu_{\mathrm{init}, \eta}}\left(\nabla^3 \widetilde{R}_0\left(X_k^{\eta, \Delta t} + \theta \Xi^{0, k}\right)\left[b^{\eta, k}, G_{k+1}, G_{k+1}\right] - \nabla^3\widetilde{R}_0\left(Y_k^{0, \Delta t} + \theta \widetilde{\Xi}^{0, k}\right)\left[b^{0, k}, \widetilde{G}_{k+1}, \widetilde{G}_{k+1}\right]\right)\\
		&\quad + \left(\frac{2}{\beta}\right)^{3/2} \Delta t^{3/2}\mathbb{E}_{\mu_{\mathrm{init}, \eta}}\left(\nabla^3 \widetilde{R}_0\left(X_k^{\eta, \Delta t} + \theta \Xi^{0, k}\right)\left[G_{k+1}, G_{k+1}, G_{k+1}\right] - \nabla^3\widetilde{R}_0\left(Y_k^{0, \Delta t} + \theta \widetilde{\Xi}^{0, k}\right)\left[\widetilde{G}_{k+1}, \widetilde{G}_{k+1}, \widetilde{G}_{k+1}\right]\right).
	\end{aligned}\]
	The first three expectations are bounded via the same argument. Let \(G\) be a standard \(d\)-dimensional Gaussian random variable and consider the functions 
	\[x \mapsto \mathbb{E}\left[\nabla^3\widetilde{R}_0\left(x + \theta\Delta tb(x) + \theta\sqrt{\frac{2\Delta t}{\beta}}G\right)\left[b(x), b(x), b(x)\right]\right],\]
	\[x \mapsto \mathbb{E}\left[\nabla^3\widetilde{R}_0\left(x + \theta\Delta tb(x) + \theta\sqrt{\frac{2\Delta t}{\beta}}G\right)\left[G, b(x), b(x)\right]\right],\]
	and 
	\[x \mapsto \mathbb{E}\left[\nabla^3\widetilde{R}_0\left(x + \theta\Delta tb(x) + \theta\sqrt{\frac{2\Delta t}{\beta}}G\right)\left[b(x), G, G\right]\right],\]
	where the expectations are taken with respect to only \(G\). Each of these functions belongs to \(B_n^\infty \subset B^\infty_{V_c}\) uniformly in \(\theta \in [0, 1]\) for \(n\) large enough. Since the marginals of \(\left(G^{k+1}, \widetilde{G}^{k+1}\right)\) conditional on \(\left(X_k^{\eta, \Delta t}, Y_k^{0, \Delta t}\right)\) are standard \(d\)-dimensional Gaussians, \eqref{eq:W_n_bound} in Lemma~\ref{lm:pseudo_lip} and Proposition~\ref{prop:discrete_Wn_norm_bound} plus the moment growth bounds \eqref{eq:disc_semigroup_estimates} and the hypotheses on the initial measures imply that the first three expectations are of order \(\eta\). 
	For the last expectation, a first-order Taylor expansion gives
	\begin{align*}
		&\mathbb{E}_{\mu_{\mathrm{init}, \eta}}\left[\nabla^3 \widetilde{R}_0\left(X_k^{\eta, \Delta t} + \theta \Xi^{0, k}\right)\left[G_{k+1}, G_{k+1}, G_{k+1}\right] - \nabla^3\widetilde{R}_0\left(Y_k^{0, \Delta t} + \theta \widetilde{\Xi}^{0, k}\right)\left[\widetilde{G}_{k+1}, \widetilde{G}_{k+1}, \widetilde{G}_{k+1}\right]\right]\\
		&= \mathbb{E}_{\mu_{\mathrm{init}, \eta}}\left[\sum_{1\leq i, j, \ell\leq d} \partial_{x_i x_j x_\ell}^3\widetilde{R}_0\left(X_k^{\eta, \Delta t} + \theta\Delta tb^{\eta, k}\right)G_{k+1,i} G_{k+1,j} G_{k+1,\ell} \right]\\
		&\quad- \mathbb{E}_{\mu_{\mathrm{init}, \eta}}\left[\sum_{1\leq i, j, \ell\leq d} \partial_{x_i x_j x_\ell}^3\widetilde{R}_0\left(Y_k^{0, \Delta t} + \theta\Delta tb^{0, k}\right)\widetilde{G}_{k+1,i} \widetilde{G}_{k+1,j} \widetilde{G}_{k+1,\ell} \right]\\
		&\quad + \sqrt{\frac{2\Delta t}{\beta}}\int_0^1\mathbb{E}_{\mu_{\mathrm{init}, \eta}}\left[\sum_{1\leq i, j, \ell, q\leq d} \partial_{x_i x_j x_\ell x_q}^4\widetilde{R}_0 \left(X_k^{\eta, \Delta t} + \theta\Delta t b^{\eta, k} + \theta\vartheta\sqrt{\frac{2\Delta t}{\beta}}G_{k+1} \right)G_{k+1,i}G_{k+1,j} G_{k+1,\ell} G_{k+1,q}\right]d\vartheta\\
		&\quad - \sqrt{\frac{2\Delta t}{\beta}}\int_0^1\mathbb{E}_{\mu_{\mathrm{init}, \eta}}\left[\sum_{1\leq i, j, \ell, q\leq d} \partial_{x_i x_j x_\ell x_q}^4\widetilde{R}_0\left(Y^{0, \Delta t}_k + \theta\Delta tb^{0, k} + \theta \vartheta\sqrt{\frac{2\Delta t}{\beta}}\widetilde{G}_{k+1}\right)\widetilde{G}_{k+1,i}\widetilde{G}_{k+1,j} \widetilde{G}_{k+1,\ell} \widetilde{G}_{k+1,q}\right]d\vartheta.
	\end{align*}
	The first two terms are equal to zero since the marginals of \(\left(G^{k+1}, \widetilde{G}^{k+1}\right)\) conditional on \(\left(X_k^{\eta, \Delta t}, Y_k^{0, \Delta t}\right)\) are standard \(d\)-dimensional Gaussians. Similarly as what has been done to control the fourth term of~\((\mathrm{I})\), an application of \eqref{eq:W_n_bound} in Lemma~\ref{lm:W_n_bound} and Proposition~\ref{prop:discrete_Wn_norm_bound} imply that the difference of the last two terms is of order \(\eta\sqrt{\Delta t}\). Consequently, \((\mathrm{IV})\) is of order \(\eta\Delta t^2\).
				
	Together this control of the two terms in \eqref{eq:bias_error_terms} shows that the error term \(\mathscr{E}^{\eta, \Delta t}_k\) is of order \(\eta \Delta t^2\) uniformly in \(k\).
\subsubsection{Control of the Variance}
Using \eqref{eq:clt_asymp_variance} and \eqref{eq:sol_sticky_poisson_eq}, we have that the asymptotic variance is given by
\begin{equation*}
	\begin{aligned}
		&\sigma_{\mathrm{sticky}, R,\eta,\Delta t}^2 = \frac{1}{\eta^2}\mathbb{E}_{\mu_{\eta, \Delta t}}\left(\left\{\widehat{R}_{\eta, \Delta t}\left(X_1^{\eta, \Delta t}\right) - \widehat{R}_{0, \Delta t}\left(Y_1^{0, \Delta t}\right) - T^{\eta, \Delta t}\left[\widehat{R}_{\eta, \Delta t}\left(X_0^{\eta, \Delta t}\right) - \widehat{R}_{0, \Delta t}\left(Y_0^{0, \Delta t}\right)\right]\right\}^2\right)\\
		&\leq \frac{2}{\eta^2}\mathbb{E}_{\mu_{\eta, \Delta t}}\left(\left\{\widehat{R}_{\eta, \Delta t}\left(X_1^{\eta, \Delta t}\right) - \widehat{R}_{0, \Delta t}\left(Y_1^{0, \Delta t}\right)\right\}^2\right) + \frac{2}{\eta^2}\mathbb{E}_{\mu_{\eta, \Delta t}}\left(\left\{T^{\eta, \Delta t}\left[\widehat{R}_{\eta, \Delta t}\left(X_0^{\eta, \Delta t}\right) - \widehat{R}_{0, \Delta t}\left(Y_0^{0, \Delta t}\right)\right]\right\}^2\right)\\
		&\leq \frac{2}{\eta^2}\mathbb{E}_{\mu_{\eta, \Delta t}}\left(\left\{\widehat{R}_{\eta, \Delta t}\left(X_1^{\eta, \Delta t}\right)-\widehat{R}_{0, \Delta t}\left(Y_1^{0, \Delta t}\right)\right\}^2\right) + \frac{2}{\eta^2}\mathbb{E}_{\mu_{\eta, \Delta t}}\left(T^{\eta, \Delta t}\left(\left\{\widehat{R}_{\eta, \Delta t}\left(X_0^{\eta, \Delta t}\right) - \widehat{R}_{0, \Delta t}\left(Y_0^{0, \Delta t}\right)\right\}^2\right)\right)\\
		&= \frac{4}{\eta^2}\mathbb{E}_{\mu_{\eta, \Delta t}}\left(\left\{\widehat{R}_{\eta, \Delta t}\left(X_0^{\eta, \Delta t}\right)-\widehat{R}_{0, \Delta t}\left(Y_0^{0, \Delta t}\right)\right\}^2\right)\\
		&= \frac{4}{\eta^2}\int_{\mathbb{R}^d \times \mathbb{R}^d}\left(\widehat{R}_{\eta, \Delta t}(x) - \widehat{R}_{0, \Delta t}(y)\right)^2 \mu_{\eta, \Delta t}\left(dx\, dy\right)\\
		&\leq \frac{8}{\eta^2}\int_{\mathbb{R}^d \times \mathbb{R}^d}\left(\widehat{R}_{\eta, \Delta t}(x) - \widehat{R}_{0, \Delta t}(x)\right)^2 \mu_{\eta, \Delta t}\left(dx\, dy\right) + \frac{8}{\eta^2}\int_{\mathbb{R}^d \times \mathbb{R}^d}\left(\widehat{R}_{0, \Delta t}(x) - \widehat{R}_{0, \Delta t}(y)\right)^2 \mu_{\eta, \Delta t}\left(dx\, dy\right).
	\end{aligned}
\end{equation*}
The second inequality is due to Jensen's inequality and the subsequent equality is due to stationarity. Lemma~\ref{lm:disc_poisson_sol_approx} implies that the integrand of the first integral in the last line is of order \(\eta^2 + \Delta t^{4n}\) for any \(n \in \mathbb{N}\). Since \(R \in \mathscr{S} \subset B_{\widetilde{V}_c}^\infty\) and \(\Delta t^{-1}\left(\mathrm{Id} - P^{0, \Delta t}\right)\) has a bounded inverse on \(\Pi_0 B_{\widetilde{V}_c}^\infty\), one has \(\widehat{R}_{0, \Delta t} \in B_{\widetilde{V}_c}^\infty\). Consequently, we can use \eqref{eq:W_n_bound} in Lemma~\ref{lm:W_n_bound} to control the second integral as 
\begin{equation*}
	\int_{\mathbb{R}^d \times \mathbb{R}^d}\left(\widehat{R}_{0, \Delta t}(x) - \widehat{R}_{0, \Delta t}(y)\right)^2 \mu_{\eta, \Delta t}\left(dx\, dy\right) \leq C\int_{\mathbb{R}^d \times \mathbb{R}^d}\mathbf{1}_{\left\{x \neq y\right\}}\left(\widetilde{V}_c\left(x\right) + \widetilde{V}_c\left(y\right)\right) \mu_{\eta, \Delta t}\left(dx\, dy\right).
\end{equation*}
Proposition~\ref{prop:discrete_Wn_norm_bound} lets us control the latter right hand side by \(C\eta\). Therefore, putting these bounds together leads to \eqref{eq:sticky_var}.
\begin{flushright}
	\qedsymbol
\end{flushright}
\section{Numerical Results}\label{sec:numerics}
We present in this section some of the results of our numerical investigations of the various coupling strategies. In our numerical experiments, we restrict ourselves to  drifts of the form \(b = -\nabla V\). We are interested in the distribution of coupling distances for each coupling method and their performance on a few representative observables. In Section 5.1, we briefly describe the numerical scheme used to simulate our examples. In Section 5.2, we present simulation results for two simple two-dimensional examples. In Section 5.3, we present simulation results from a more involved example---a cluster of Lennard--Jones particles.

\subsection{Numerical Schemes}
For the discrete-time perturbed process and the discrete-time synchronously coupled process we consider Euler--Maruyama discretizations of \eqref{eq:sde_model}:
\begin{equation}
	X^{\eta, \Delta t}_{k+1} = X_k^{\eta, \Delta t} + \Delta t \left(b\left(X_k^{\eta, \Delta t}\right) + \eta F\left(X_k^{\eta, \Delta t}\right)\right) + \sqrt{\frac{2\Delta t}{\beta}}G_{k+1}, 
\end{equation}
and of \eqref{eq:coupled_dynamics}:
\begin{equation}\label{eq:disc_sync_dynamics}
	\begin{aligned}
		X^{\eta, \Delta t}_{k+1} &= X_k^{\eta, \Delta t} + \Delta t \left(b\left(X_k^{\eta, \Delta t}\right) + \eta F\left(X_k^{\eta, \Delta t}\right)\right) + \sqrt{\frac{2\Delta t}{\beta}}G_{k+1},\\
		Y^{0, \Delta t}_{k+1} &= Y_k^{0, \Delta t} + \Delta t b\left(Y_k^{0, \Delta t}\right) + \sqrt{\frac{2\Delta t}{\beta}}G_{k+1}.
	\end{aligned}
\end{equation}
Here, as in Section~\ref{subsec:disc_dynamics}, \(\left(G_k\right)_{k\geq 1}\) is an i.i.d. sequence of standard \(d\)-dimensional Gaussian random variables. Importantly, in \eqref{eq:disc_sync_dynamics}, the same Gaussian noise is driving the two marginals. For the sticky coupled process, we directly simulate the discrete-time sticky coupled dynamics presented in Section~\ref{subsec:sticky_coupling}. 

The discretization of the standard NEMD estimator \eqref{eq:naive_estimator} is
\begin{equation}
	\widehat{\Phi}_{\eta, N}^{\Delta t} = \frac{1}{\eta N}\sum_{n=1}^NR\left(X_n^{\eta, \Delta t}\right),
\end{equation}
and the discretization of the synchronous coupling based estimator is
\begin{equation} \label{eqn:disc_time_coupled_estim}
	\widehat{\Psi}_{\eta, N}^{\Delta t, \mathrm{sync}} = \frac{1}{\eta N}\sum_{n =1}^N \left[R\left(\hat{X}_{n}^{\eta, \Delta t}\right) - R\left(\hat{Y}_{n}^{0, \Delta t}\right)\right], 
\end{equation}
where \(\left(\hat{X}_{n}^{\eta, \Delta t}, \hat{Y}_{n}^{0, \Delta t}\right)_{n \in \mathbb{N}}\) is evolving according to \eqref{eq:disc_sync_dynamics}.

The coupling distance, i.e. the Euclidean distance between the two coupled trajectories, is a proxy for the performance of the coupling methods as a simple calculation shows that, for any response function~\(R\) with bounded first derivatives,
\begin{equation}
	\mathrm{Var}\left(\Psi_{\eta, N}^{\Delta t}\right) \leq \frac{\left\|\nabla R\right\|_{\infty}^2}{\eta^2} \mathbb{E}\left[\frac{1}{ N}\sum^N_{n = 1} \left|\hat{X}_{n}^{\eta, \Delta t}- \hat{Y}_{n}^{0, \Delta t}\right|^2\right].
\end{equation}

\subsection{Two Dimensional Toy examples}
\subsubsection{Harmonic Potential}\label{subsec:numerics_toy_convex}
We first consider a strongly convex potential
\begin{equation}\label{eqn:quadpot}
	U(x) = \frac{\left|x\right|^2}{2},
\end{equation}
perturbed by a linear shearing force
\begin{equation}\label{eqn:lin_forcing}
	F_1(x) = \left[
	\begin{matrix*}
		x_2\\
		0
	\end{matrix*}
	\right].
\end{equation}
To ensure that (\ref{eq:F_bounded}) is satisfied we could multiply \(F_1\) by \(\chi \in C_{\mathrm{c}}^\infty\left(\mathbb{R}^d\right)\) such that \(\chi \equiv 1\) on \(\left\{x : \|x \| \leq B\right\}\) and \(\chi \equiv 0\) on \(\left\{x : \|x \| \geq B + \varepsilon\right\}\) for some \(\varepsilon > 0\) and very large \(B > 0\). This satisfies the boundedness assumption without having a practical effect on the simulation. As a response function, we consider \(R_{\mathrm{cov}}\left(x_1, x_2\right) = x_1 x_2\), i.e. our observable is the covariance between first and second components. For the situation at hand, the process \eqref{eq:sde_model} is a Gaussian process, so that we can explicitly compute the covariance between first and second components and thereby the linear response \(\alpha_{R_{\mathrm{cov}}}\). More precisely,
\[dX_t^\eta = 
-\begin{bmatrix*}
	1 & -\eta \\
	0 & 1
\end{bmatrix*} 
X_t^\eta dt + \sqrt{\frac{2}{\beta}}dW_t,\]
is an Ornstein--Uhlenbeck process with stationary distribution \(\mathcal{N}\left(0, \Sigma\right)\), where \(\Sigma\) satisfies
\[\begin{bmatrix*}
	1 & -\eta \\
	0 & 1
\end{bmatrix*} \Sigma + \Sigma \begin{bmatrix*}
	1 & 0 \\
	-\eta & 1
\end{bmatrix*}  = \frac{2}{\beta}\mathrm{Id} . \]
A simple calculation shows that
\begin{equation}\label{eqn:Sigma}
	\Sigma = \frac{1}{2\beta}\begin{bmatrix*}
		2 + \eta^2 & \eta \\
		\eta & 2
	\end{bmatrix*},
\end{equation}
and therefore \(\alpha_{R_{\mathrm{cov}}} = \left(2\beta\right)^{-1}\).

To study numerically the behavior of the variance as \(\eta \to 0\), for each \(\eta \in \left\{0.1, 0.05, 0.025, 0.01, 0.005\right\}\) we perform 500 realizations of the synchronously coupled process and of the sticky coupled process and compute the empirical variance of \(\hat{\Psi}_{\eta, N}^{\Delta t}\) over the realizations. All realizations were run at inverse temperature \(\beta = 1\) and with time step \(\Delta t = 0.005\). Each realization was "burned-in" with the equilbrium dynamics for \(N_{\mathrm{Burn}} = T_{\mathrm{Burn}}/\Delta t\) steps with  \(T_{\mathrm{Burn}} = 10^5\) and then trajectory was simulated up to \(N = T/\Delta t\) steps with \(T = 10^6\). 

\begin{figure}[h]
	\centering
	\includegraphics[width = 0.49\textwidth]{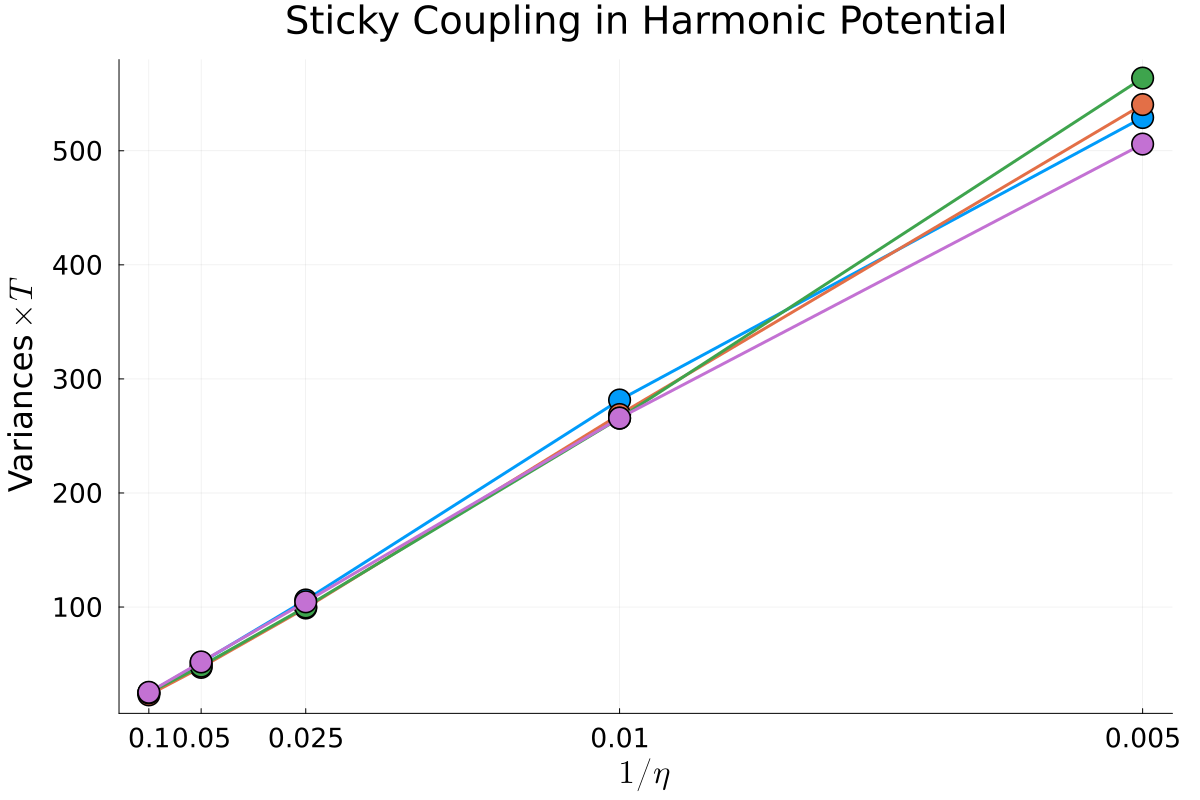}
	\includegraphics[width = 0.49\textwidth]{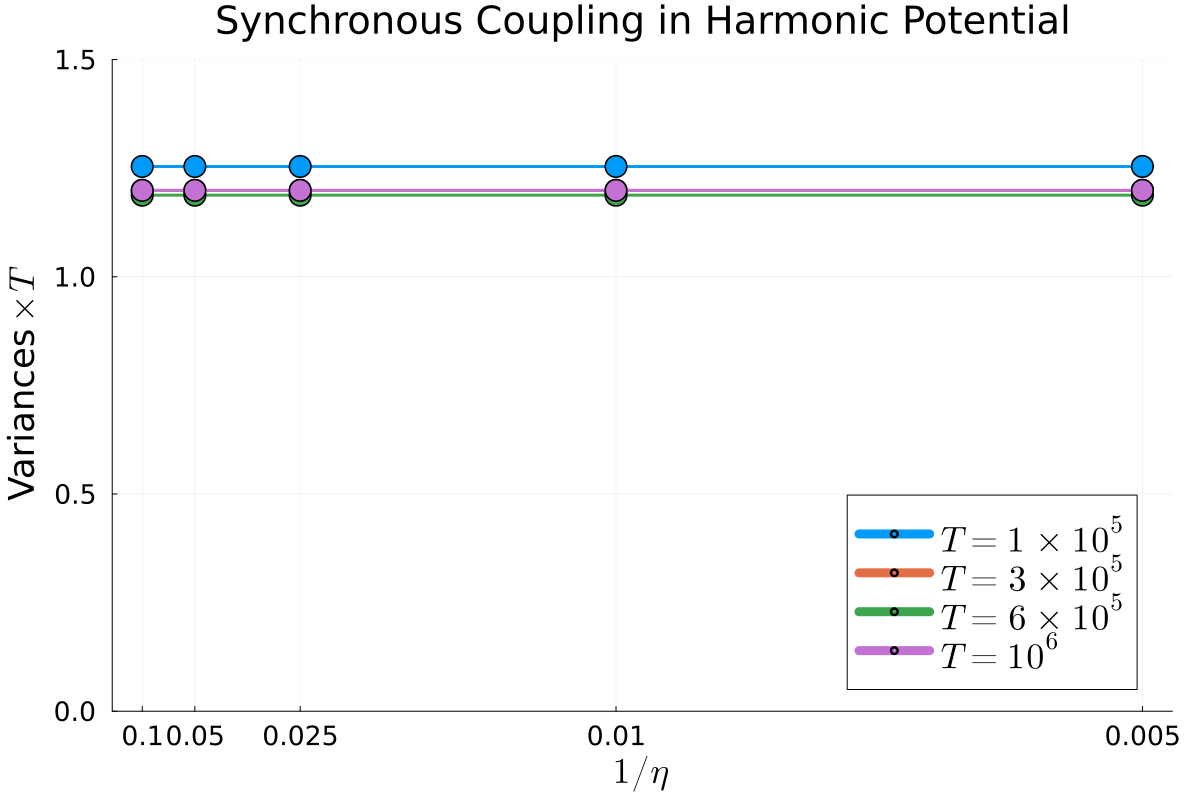}
	\caption{Variance of the coupled estimators for the two-dimensional strongly convex potential \eqref{eqn:quadpot}. We highlight the different scales of the \(y\)-axis in the two plots.}
	\label{fig:convex_coupling_corr}
\end{figure}
For the strongly convex potential \eqref{eqn:quadpot}, we see in Figure~\ref{fig:convex_coupling_corr} that the estimator based on synchronous coupling remains bounded as \(\eta \to 0\) while the estimator based on sticky coupling grows like \(1/\eta\) as predicted by Theorems~\ref{thm:sync_bias_var} and \ref{thm:sticky_bias_var}. The difference in variance between the two coupling methods shows itself when looking at how fast the estimators converge to the analytic value of \(\alpha_{R_{\mathrm{cov}}}\) in Figure~\ref{fig:convex_coupling_corr_convergence}. As expected, the synchronously coupled estimator's rate of convergence does not appear to worsen with smaller \(\eta\). In comparison, we can see that the sticky coupled estimator needs more time to converge as~\(\eta\) gets smaller.

\begin{figure}[h]
	\centering
	\includegraphics[width = 0.49\textwidth]{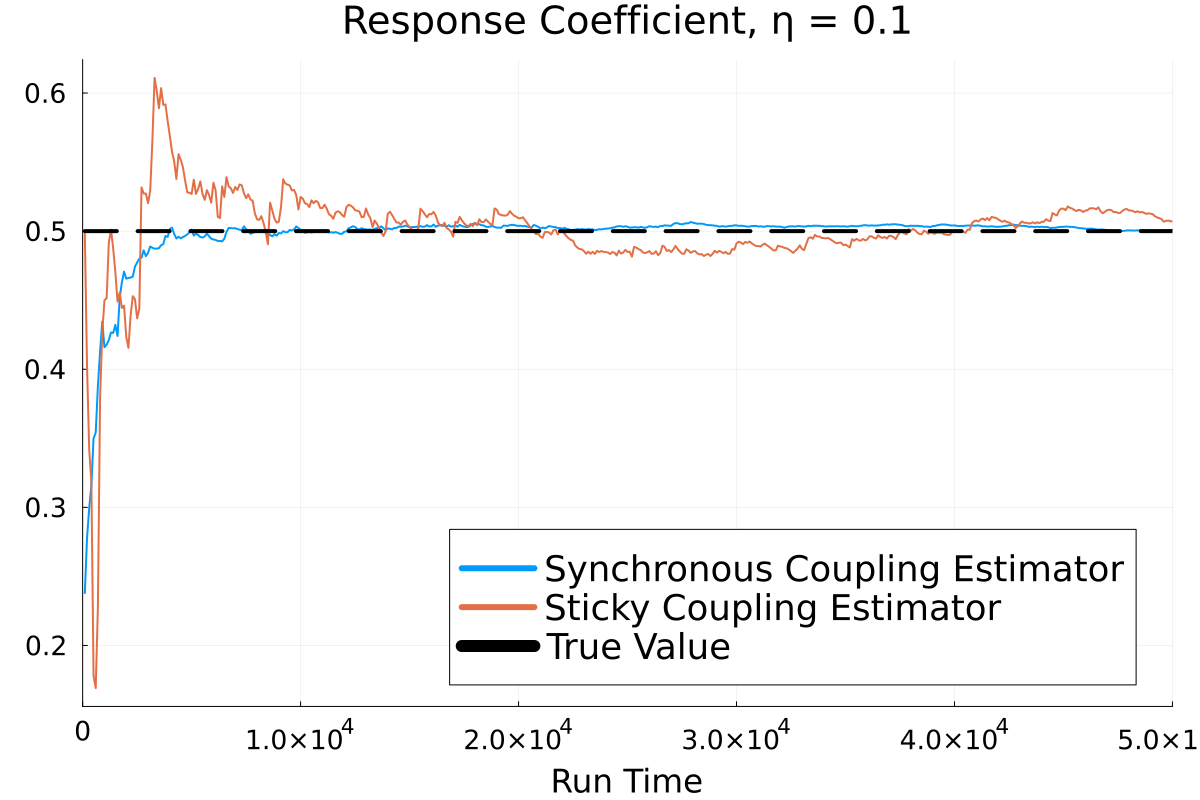}
	\includegraphics[width = 0.49\textwidth]{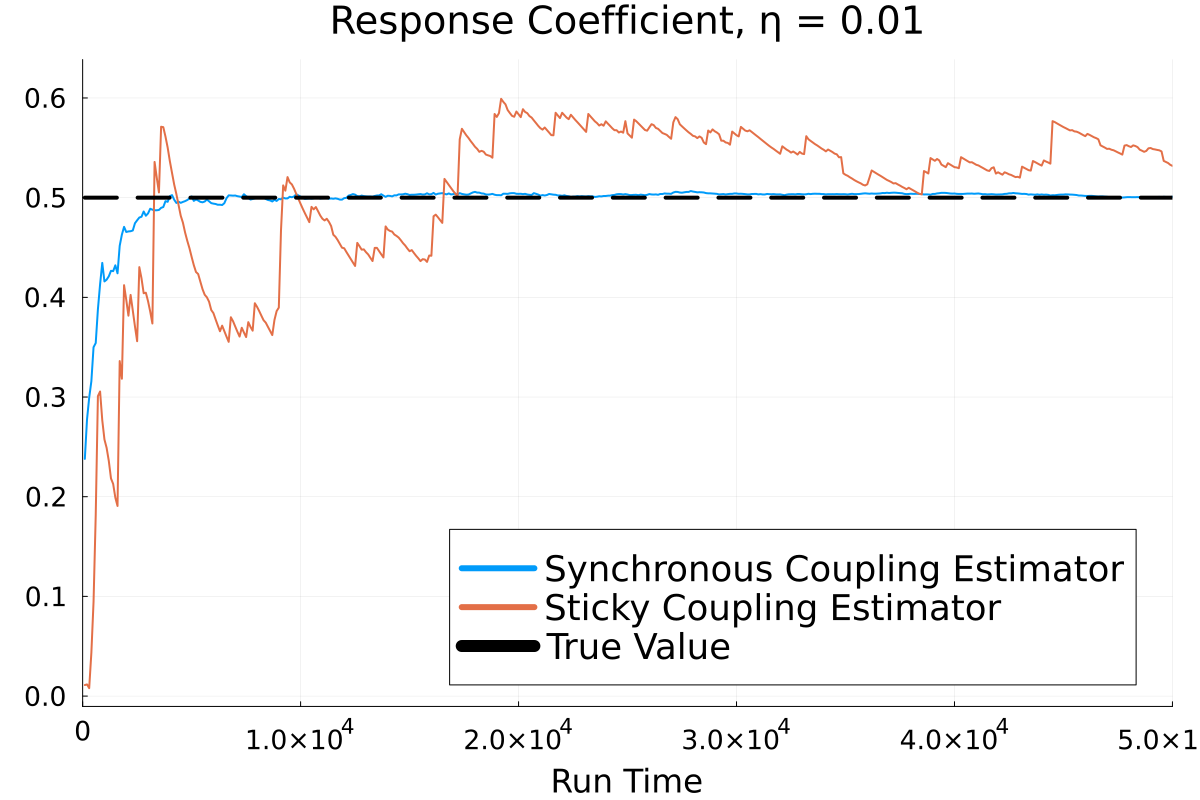}
	\caption{Convergence of coupled estimators to true value of \(\alpha_{R_{\mathrm{cov}}}\).}
	\label{fig:convex_coupling_corr_convergence}
\end{figure}

\subsubsection{Non-Convex Potential}
For our second toy example, we consider a non-convex potential that behaves like the product of cosine functions inside \(\left[-L, L\right]^2\) and like a quadratic function outside:
\begin{equation}
	U(x) = 
	\begin{cases}
		\left(1 - \cos\left(\frac{2\pi x_1}{L}\right)\right)\left(1 - \cos\left(\frac{2\pi x_2}{L}\right)\right), & \left\|x\right\|_\infty < L,\\ 
		\displaystyle \frac{1}{2}\left[\max\left\{0, \left|x_1\right| - L\right\}^2 + \max\left\{0, \left|x_2\right| - L\right\}^2\right], & \left\|x\right\|_\infty \geq L.
	\end{cases}
\end{equation}
For the non-gradient forcing, we consider a sinusoidal shear forcing
\begin{equation}\label{eqn:sin_forcing}
	F_2(x) = \left[
	\begin{matrix*}
		\sin{\left(x_2\right)}\\
		0
	\end{matrix*}
	\right].
\end{equation}
We use the same observable as in the convex case, \(R_{\mathrm{cov}}(x_1, x_2) = x_1x_2\). Since the potential is even in each component, \(R_{\mathrm{cov}}\in \mathscr{S}_0\) in this case as well. We perform 500 realizations of the synchronously coupled process and of the sticky coupled process and compute the empirical variance of \(\widehat{\Psi}_{\eta, N}^{\Delta t}\) with same time step, inverse temperature, burn-in time, and run time as the convex case of Section~\ref{subsec:numerics_toy_convex}.  

In the nonconvex case, we see in Figure~\ref{fig:nonconvex_coupling_dist} that for both coupling methods the variance of the coupling distance grows like \(\frac{1}{\eta}\). Outside the case of a strongly convex potential, the variance of the synchronously coupled estimator is no longer bounded as \(\eta \to 0\). 

\begin{figure}[h]
	\centering
	\includegraphics[width = 0.49\textwidth]{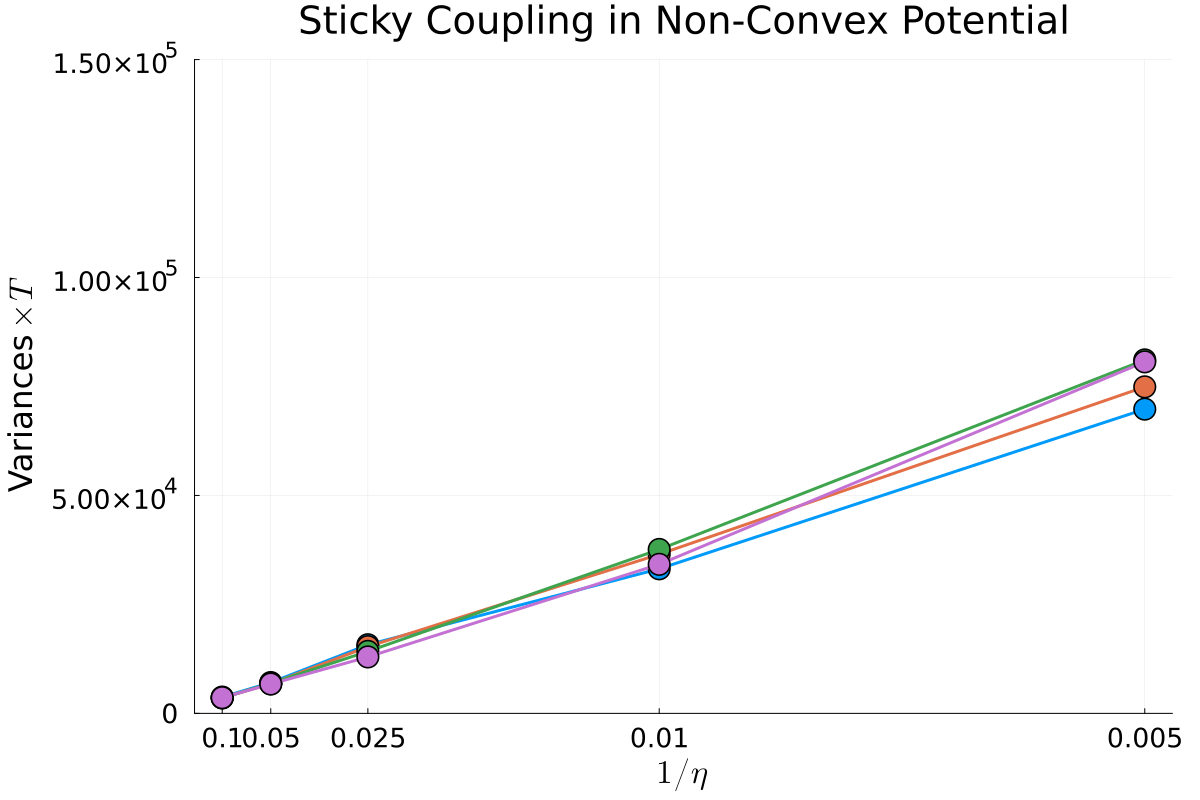}
	\includegraphics[width = 0.49\textwidth]{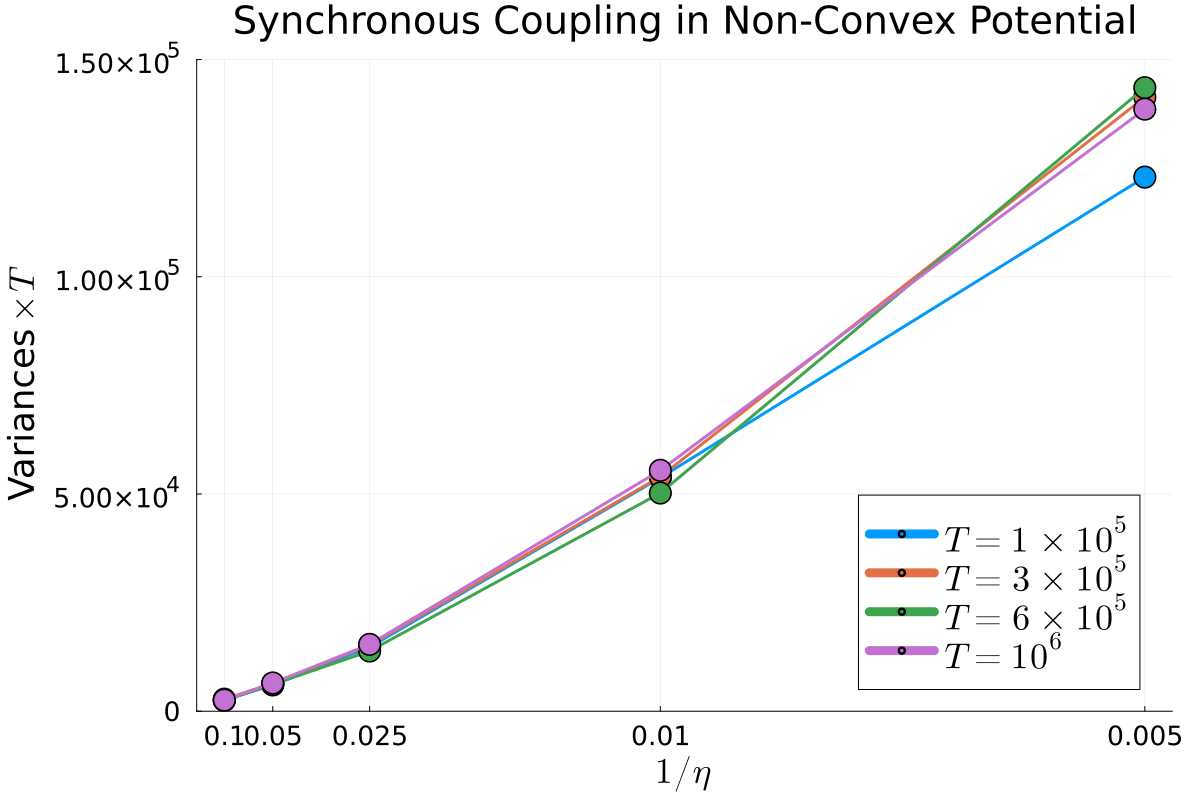}
	\caption{Variance of the coupled estimators in the two-dimensional potential which is only strongly convex outside of a compact set.}
	\label{fig:nonconvex_coupling_dist}
\end{figure}

Despite the fact that the bound on the variance in Theorem~\ref{thm:sync_bias_var} fails, the variance of the synchronously coupling based estimator is still comparable to that of the sticky coupling based estimator. This is emblematic of something we observed in our numerical investigations---in low dimensions synchronous coupling typically remains competitive with sticky coupling. We believe that this is an effect of the synchronous coupling's performance not being too badly harmed by non-convexity in low dimensions while the reflection coupling part of sticky coupling tends to increase the variance.

\subsection{Lennard-Jones Clusters}
A Lennard--Jones cluster is a collection of atoms which interact only in a pairwise manner through the potential
\[v(r) = 4\epsilon\left[\left(\frac{\sigma}{r}\right)^{12} - \left(\frac{\sigma}{r}\right)^{6}\right], \]
where \(\sigma > 0\) is some reference distance and \(\epsilon > 0\) a reference energy. The model is commonly used as a simplified model for molecular interactions \cite{FrenkelSmit,Tuckerman}. For our simulations, we restrict ourselves to the case of particles in two dimensions. The potential is highly non-convex. To ensure that (\ref{eq:contractive_at_inf}) is satisfied, we add a quadratic potential that confines the system in a square box \(\left[-L, L\right]^2\). For 
\(x = \left(x_1^1, x_2^1, x_1^2, x_2^2, \dots, x_1^N, x_2^N\right)^T \in \mathbb{R}^{2N}\), where \(N\) is the number of particles in the cluster, the overall potential of the system is
\begin{equation}\label{eq:lj_pot}
	U(x) = \sum_{0\leq i < j \leq N} v\left(\left|x^i - x^j\right|\right) + \frac{\alpha}{2}\sum_{i = 1}^N \left[\max\left\{0, \left|x^i_1\right| - L\right\}^2 + \max\left\{0, \left|x^i_2\right| - L\right\}^2 \right],
\end{equation}
where \(x_0 \equiv 0\) is an anchor point and \(\alpha \geq 0\) is the strength of the confining potential. For all the simulations \(\alpha = 1\), \(L = 5\), \(\epsilon = 1\), and \(\sigma = 2^{-1/6}\). For the non-gradient forcing we consider a shear force~\(F\) acting only in the \(x_1\) direction, given componentwise by \(F_{2i - 1}(x) = \sin\left(\frac{x_2^i\pi}{L}\right)\) and \(F_{2i}(x) = 0\). We ran simulations of \(N = 18\) particles at several temperatures up to time \(T = 2\times 10^5\) with time step \(\Delta t = 10^{-4}\).
As a response, we measure the mobility 
\[R_{\mathrm{mobility}}(x) = F(x)^T\nabla U(x) = \sum_{i=1}^N \sin\left(\frac{x_2^i\pi}{L}\right)\partial_{x_1^i}U(x),\]
and the tilt of the cluster
\[R_{\mathrm{tilt}}(x) = \sum_{i=1}^N \tanh\left(\frac{x_1^i}{\varepsilon}\right)\tanh\left(\frac{x_2^i}{\varepsilon}\right),\] 
which can be seen as a regularization of \(R(x) = \sum_{i=1}^N\mathrm{sign}\left(x_1^i\right)\mathrm{sign}\left(x_2^i\right)\) with \(\varepsilon > 0\) the regularization parameter. In all our simulations, we choose \(\varepsilon = 1/5\).
We ran our simulations for \(\beta \in \left\{0.5, 1, 2, 4\right\}\) and \(\eta \in \left\{0.0025, 0.005, 0.01, 0.025, 0.05, 0.1, 0.25, 0.5\right\}\).

For each of the temperatures, we check that the observed linear response for the synchronously and sticky coupled systems match that of the standard NEMD system, i.e \(X^\eta\) by itself. We plot a few examples in Figures~\ref{fig:mobility_lin_resp}~and~\ref{fig:tilt_lin_resp} showing the linear response for the two observables, together with a linear fit giving an approximation of the transport coefficient \(\alpha_R\).

\begin{figure}[H]
	\begin{subfigure}{0.5\linewidth}
		\centering
		\includegraphics[width = 0.95\linewidth]{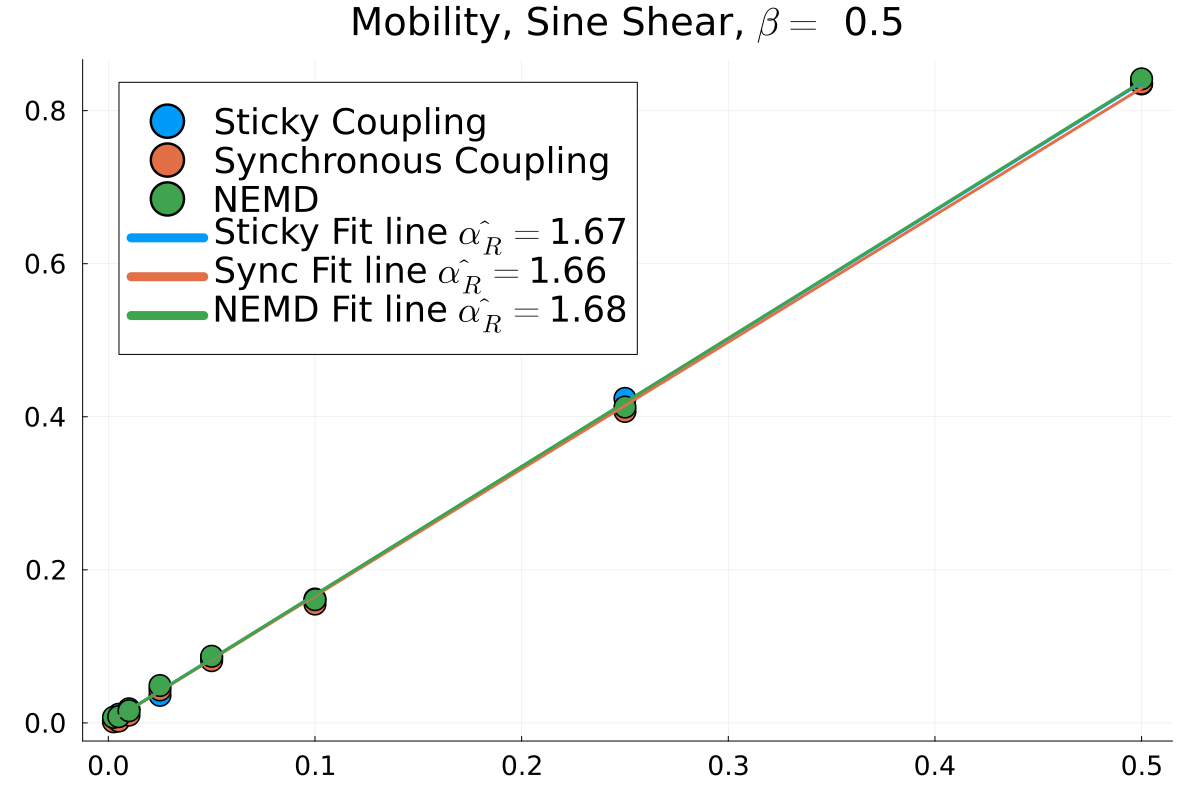}
	\end{subfigure}
	\begin{subfigure}{0.5\linewidth}
		\centering
		\includegraphics[width = 0.95\linewidth]{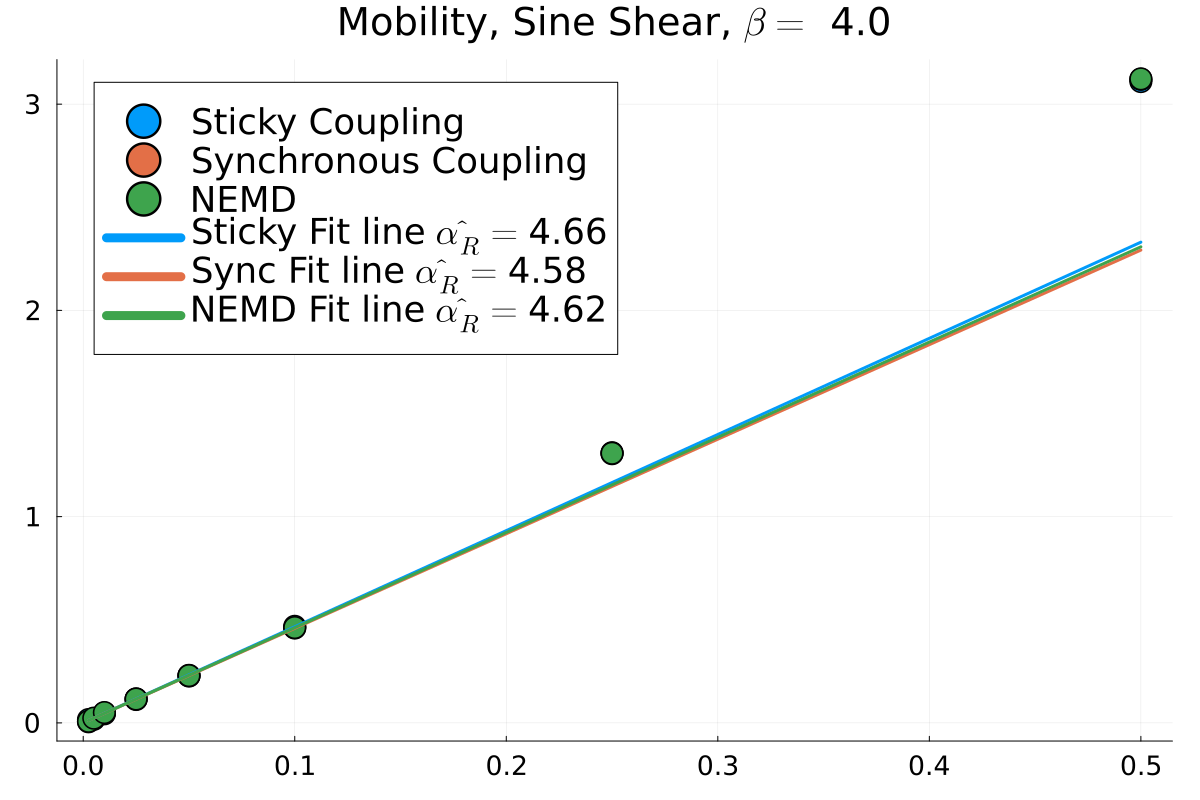}
	\end{subfigure}
		\caption{Observed mobility response with respect to \(\eta\) of the coupled and standard NEMD systems.}\label{fig:mobility_lin_resp}
\end{figure}

\begin{figure}[H]
	\begin{subfigure}{0.5\linewidth}
		\centering
		\includegraphics[width = 0.95\linewidth]{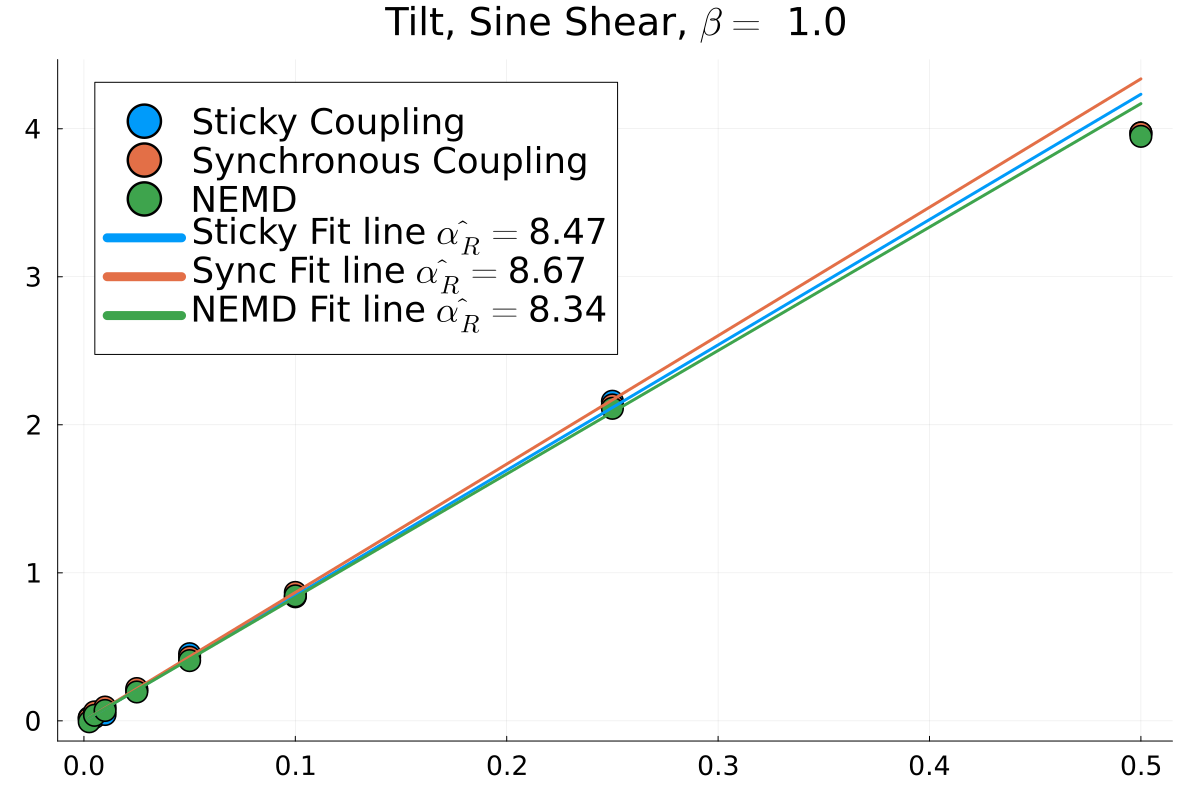}
	\end{subfigure}
	\begin{subfigure}{0.5\linewidth}
		\centering
		\includegraphics[width = 0.95\linewidth]{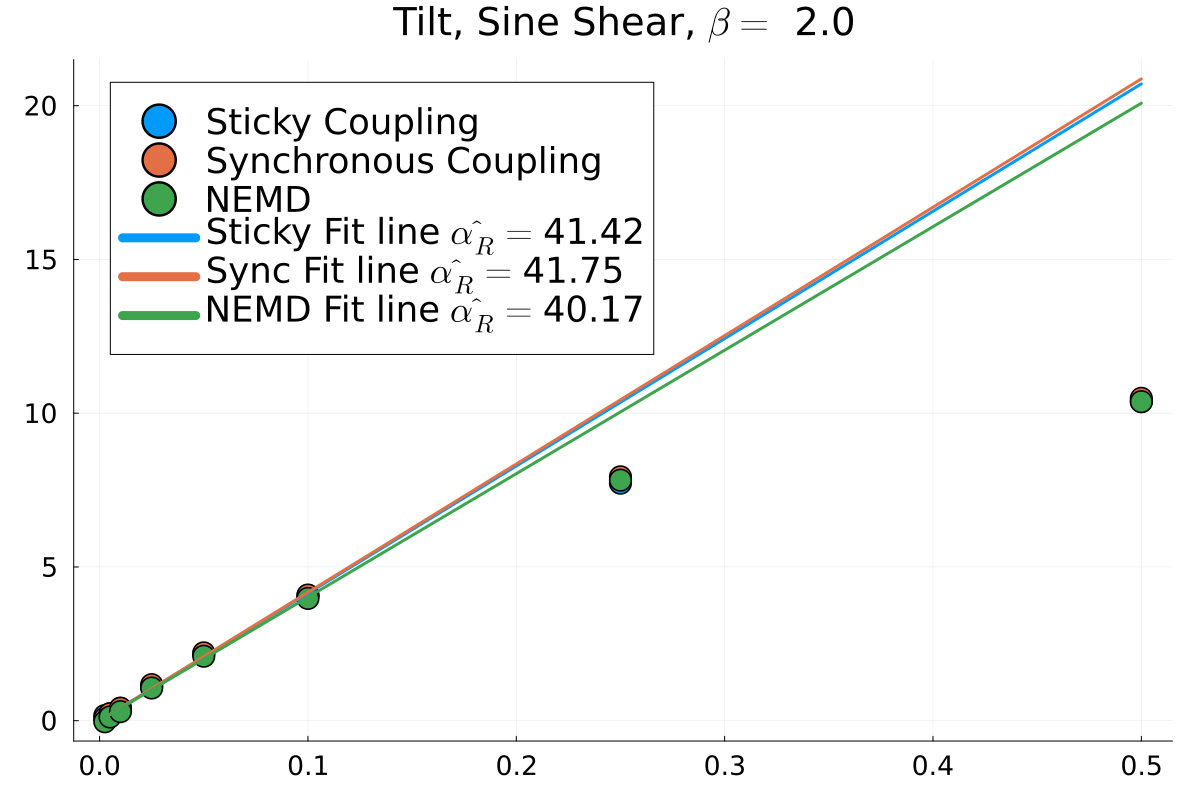}
	\end{subfigure}
	\caption{Observed tilt response with respect to \(\eta\) of the coupled and standard NEMD systems.}\label{fig:tilt_lin_resp}
\end{figure}

In Figure~\ref{fig:mobility_var}, we plot the variance of the summands of the estimators \(\widehat{\Psi}_{\eta, N}^{\Delta t}\) and \(\widehat{\Phi}_{\eta, N}^{\Delta t,}\) for high temperatures \(\beta = 0.5\) and \(1\). In this high temperature regime, the variance of NEMD trajectories and the synchronously coupled trajectories are roughly constant as \(\eta\) goes to zero, the synchronously coupled trajectories having roughly doubled the variance of the NEMD trajectories. Consequently, the variance of the estimators diverge like \(1/\eta^2\) due to the division by \(\eta\). Since trajectories have separated and there is not enough contractivity to bring them back together, the inclusion of the reference dynamics in synchronous coupling based estimator only increases the variance of the estimator.
\begin{figure}[H]
	\begin{subfigure}{0.5\linewidth}
		\includegraphics[width = 0.95\linewidth]{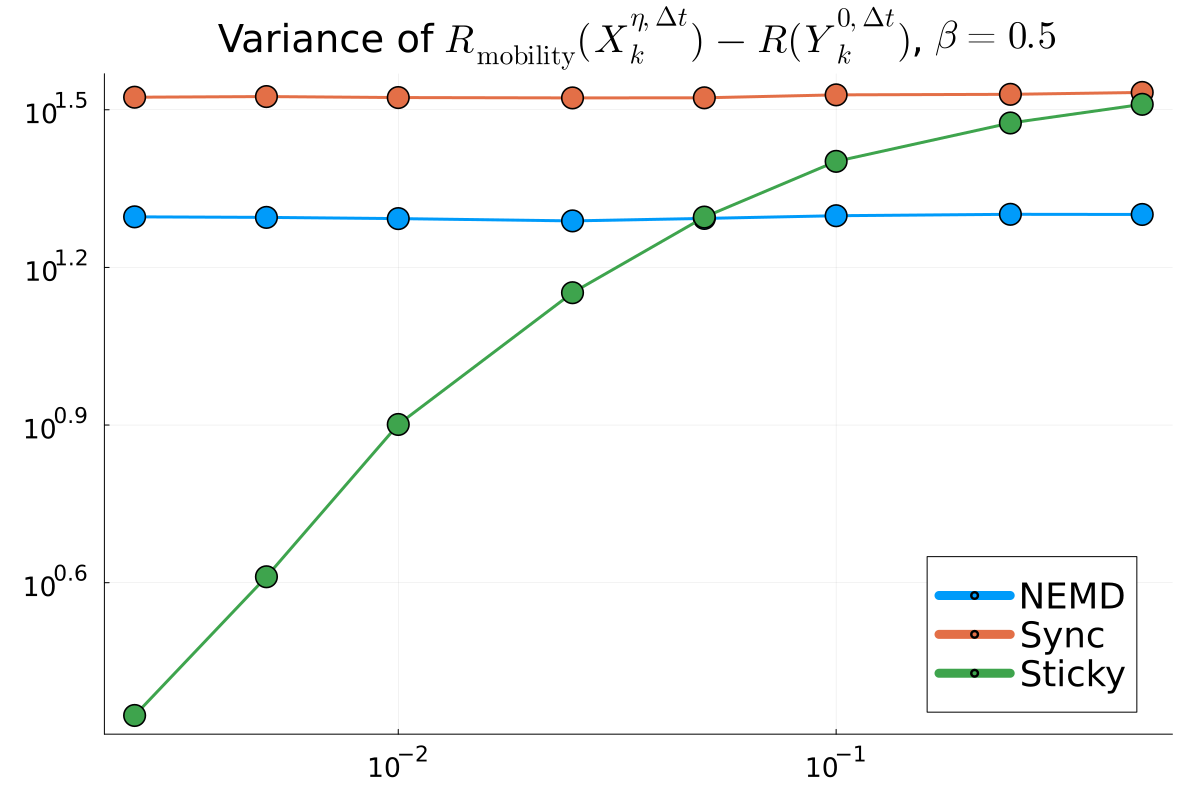}
	\end{subfigure}
	\begin{subfigure}{0.5\linewidth}
		\includegraphics[width = 0.95\linewidth]{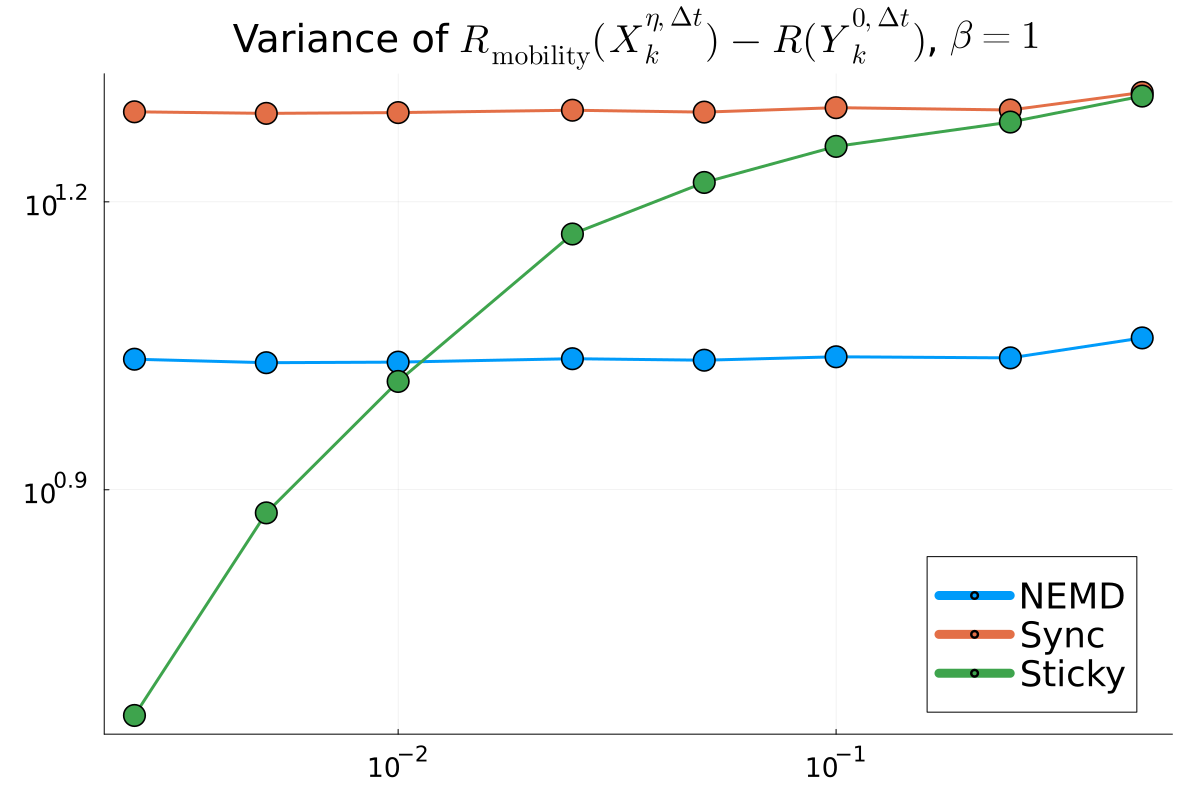}
	\end{subfigure}
	\caption{Empirically observed variance of the summands of each of the estimators with respect to \(\eta\).}\label{fig:mobility_var}
\end{figure}
In contrast, the variance of the summands of the sticky coupling based estimator is roughly proportional to \(\eta\). This is where we get \(\eta\) improvement in the variance of sticky coupling based estimator. We see the effect of this improved asymptotic variance in Figure~\ref{fig:mobility_convergence}. The fluctuations of the synchronous coupling based estimators are more violent than those of the sticky coupling based estimator as \(\eta\) gets smaller.
\begin{figure}[H]
	\includegraphics[width = \linewidth]{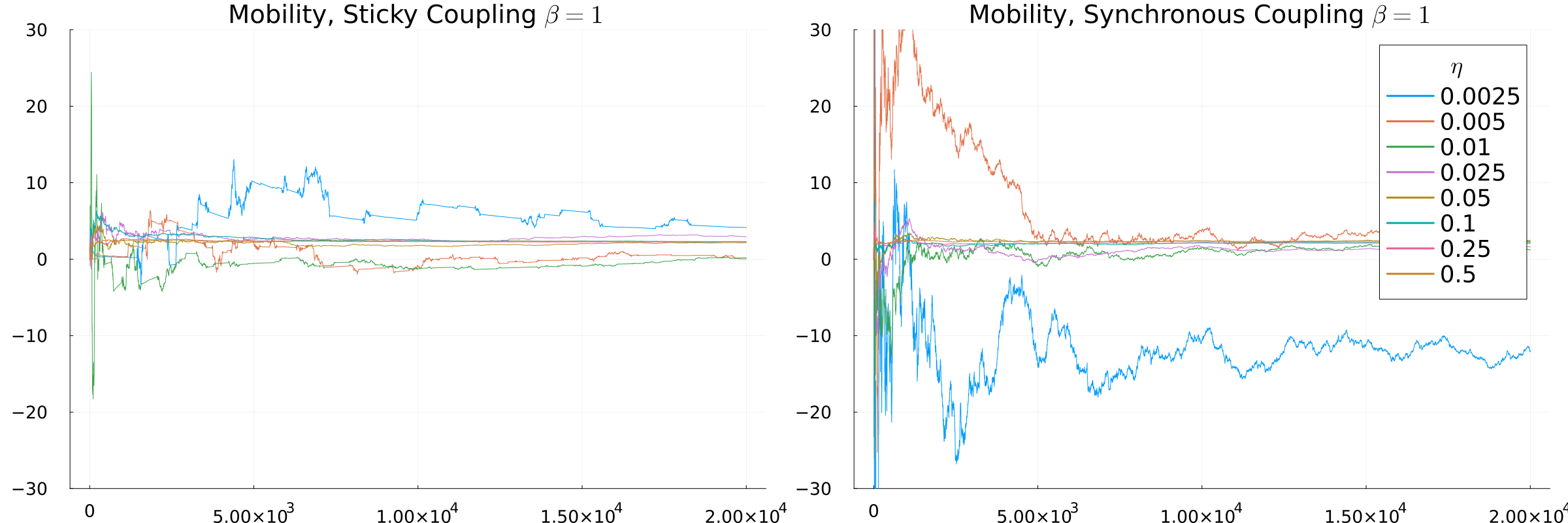}
	\caption{Convergence of the coupling based estimators for mobility response and \(\beta = 1\).}\label{fig:mobility_convergence}
\end{figure}
Similar results are obtained for the tilt response functions, see Figure~\ref{fig:tilt_var}.
\begin{figure}[H]
	\begin{subfigure}{0.5\linewidth}
		\includegraphics[width = 0.95\linewidth]{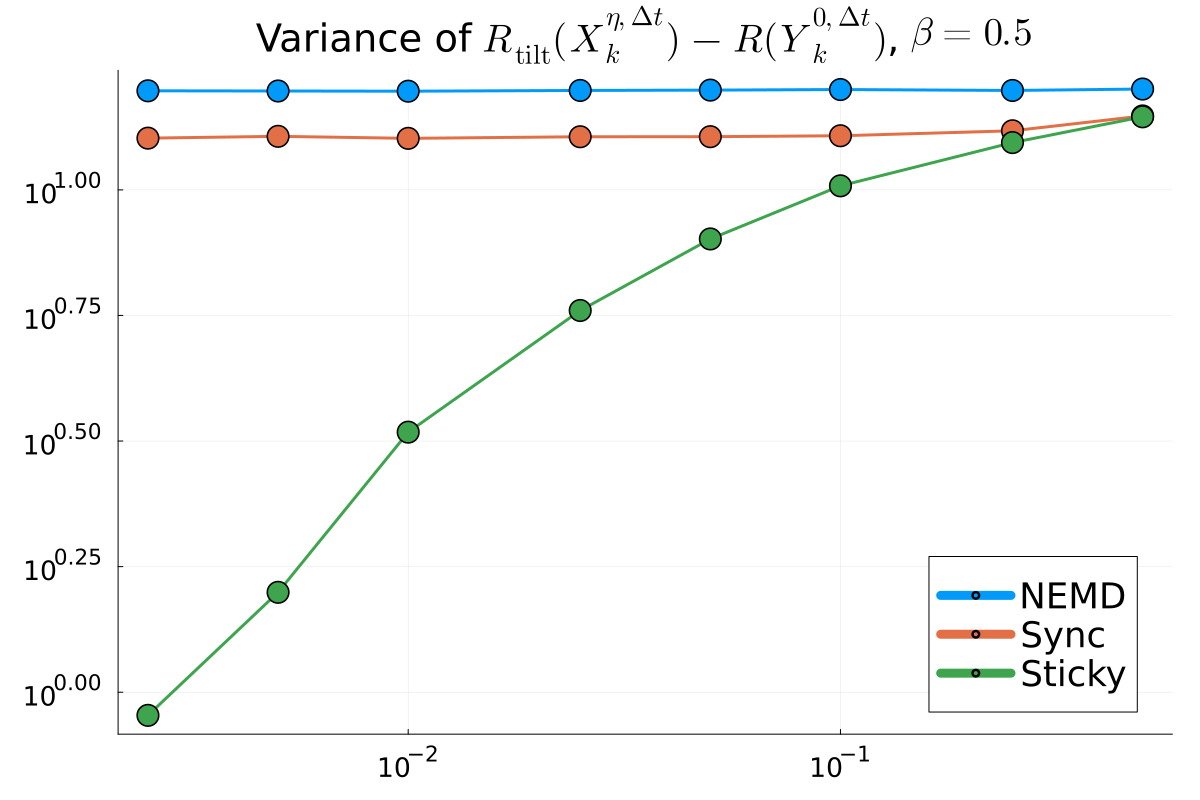}
	\end{subfigure}
	\begin{subfigure}{0.5\linewidth}
		\includegraphics[width = 0.95\linewidth]{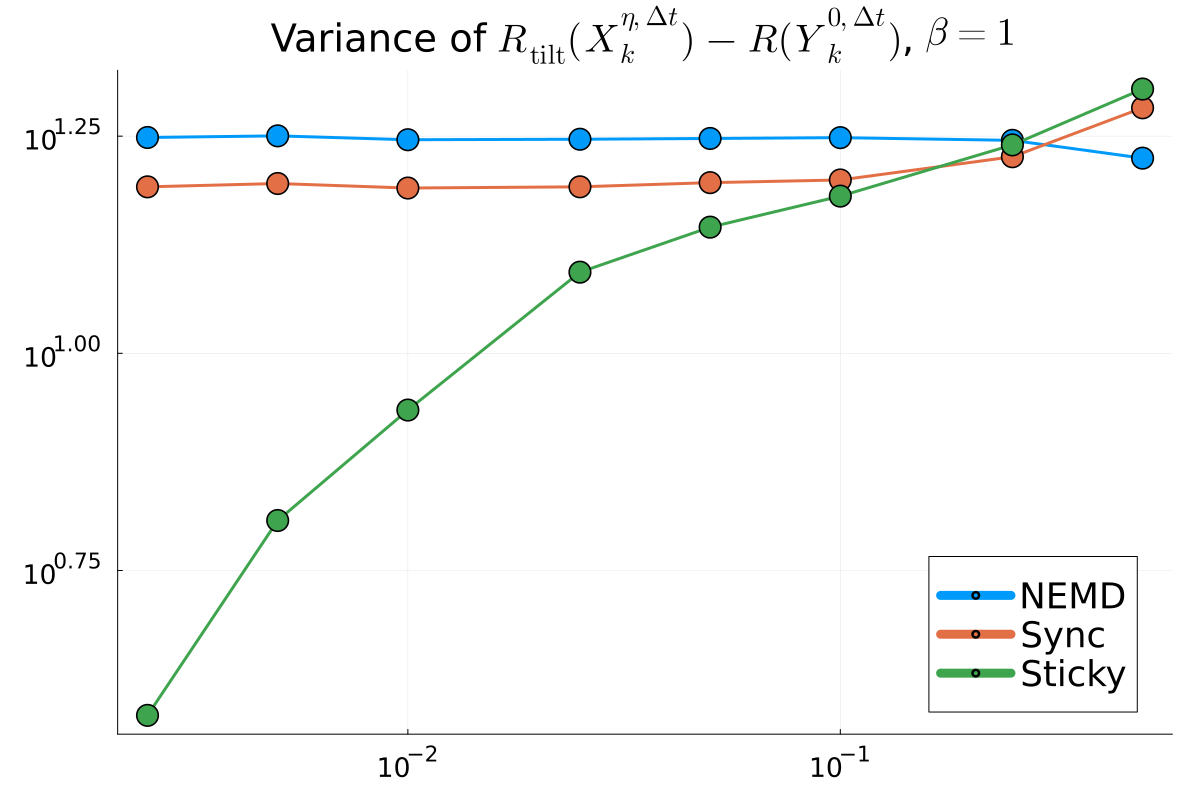}
	\end{subfigure}
	\caption{Empirically observed variance of the summands of each of the estimators with respect to \(\eta\).}\label{fig:tilt_var}
\end{figure}
Notice in Figure~\ref{fig:mobility_var} that for \(\beta = 0.5\) the decay in the variance occurs earlier and faster compared to the one for \(\beta = 1\). We expect that for lower temperatures the decay in variance occurs later. We did not observe a decay in variance for sticky coupling when \(\beta \in \left\{2, 4\right\}\) in the range of values of \(\eta\) we considered, see Figure~\ref{fig:mobility_var_lowtemp}. In this regime, the variance for the sticky coupling based estimator was roughly the same as that of the synchronous coupling based estimator, which was roughly twice the variance of the NEMD estimator.
\begin{figure}[H]
	\begin{subfigure}{0.5\linewidth}
		\includegraphics[width = 0.95\linewidth]{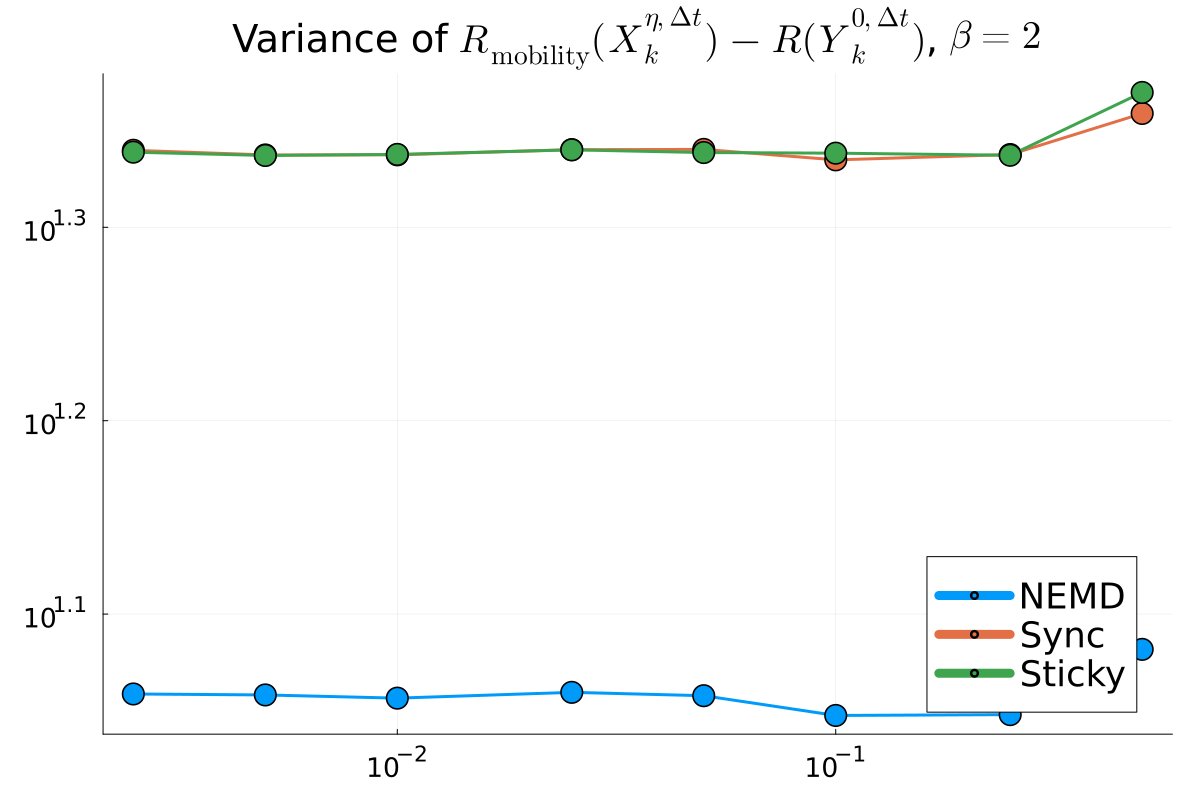}
	\end{subfigure}
	\begin{subfigure}{0.5\linewidth}
		\includegraphics[width = 0.95\linewidth]{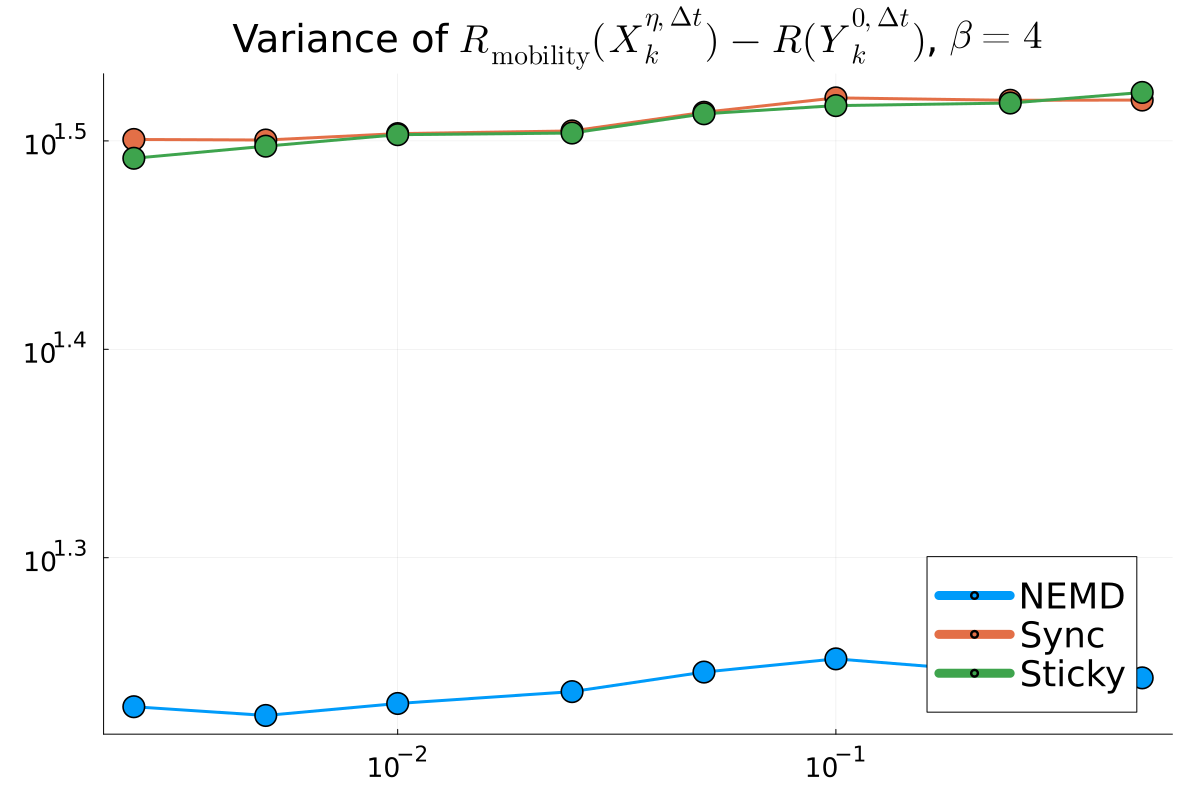}
	\end{subfigure}
	\caption{Empirically observed variance of the summands of each of the estimators with respect to \(\eta\).}\label{fig:mobility_var_lowtemp}
\end{figure}
\section{Perspectives}
We conclude this work with a brief discussion of possible extensions of the coupling methods present in this article. To improve numerical implementation, it may be advantageous to consider a hybrid coupling that mixes synchronous and sticky coupling. Such couplings have already been used in theoretical works such as \cite{EberleGuillinZimmer}, in which the two processes are synchronously coupled at large distances when the deterministic dynamics is contractive and sticky/reflectively coupled otherwise. Algorithms that use different coupling methods at long and short distances have also been suggested in \cite{LiWang}. When the deterministic part of the dynamics is contractive, synchronous coupling is clearly the best choice. In a numerical implementation of this hybrid coupling strategy, one could test at each integration step if the current position is contractive, i.e. for the current configuration \((x, y) \in \mathbb{R}^d \times \mathbb{R}^d\) we test if
\[\left\langle x - y, b(x) + \eta F(x) - b(y)\right\rangle < 0.\]
The force field has to be evaluated at each step, so if the values of \(b(x) + \eta F(x)\) and \(b(y)\) are stored in running memory, which is often done in efficient code, this test does not add extra force field evaluations. Force field evaluations represent the vast majority of the computational effort and this contractivity test only entails cheap vector addition and a scalar product.

In this work, we did not carefully study the behavior of our coupling and the resulting bounds as the dimension increases. However it seems likely that the version of sticky coupling we have presented would suffer from a curse of dimension---the overlap between two Gaussians at the typical distances in large dimensions would be exponentially small and thus so would the meeting probability. At least for a particle system such as a Lennard--Jones cluster presented in Section~\ref{sec:numerics}, coupling the whole cluster does not appear to be a good idea as a meeting event in this case is every single particle being forced to meeting its homologue in one time step. For a large number of particles, this is an exponentially rare event. One could instead use a component wise coupling as in \cite{Schuh}. If we do not need to distinguish between particle as in the case of Lennard--Jones cluster, one could also couple each particle with the nearest particle in the other cluster. Works on propagation of chaos suggest that these two types of coupling should have better scaling properties with dimension.

Another natural extension of the current work would be to the case of kinetic Langevin dynamics, that is to say hypoelliptic diffusions. The sticky coupling presented in the current work uses in an essential way the fact that our noise is elliptic, i.e. in all directions. In a recent article \cite{ChakMonmarche}, the authors present a coupling for unadjusted hybrid Monte Carlo that mixes synchronous coupling at long distances, reflection coupling at intermediate distances, and one-shot coupling at close distances that brings the trajectories together and causes an exponential contraction in expectation of the coupling distance, see also \cite{SchuhWhalley}. The hypotheses of the authors include common discretizations of underdamped Langevin dynamics. In the elliptic case \cite{EberleZimmer, Durmus_etal}, ensuring that the trajectories come back together exponentially fast seems to be the essential ingredient in making sticky coupling work and it seems reasonable to believe that the same would be true in the hypoelliptic case. 

\nocite{book}

\appendix
\section{Proof of Proposition~\ref{prop:exist_poisson_eq}}\label{sec:Kopec_ext}
For \(\varphi \in \mathscr{S}_\eta\), let the function \(u : \mathbb{R}_+ \times \mathbb{R}^d \to \mathbb{R}\) be given by
\begin{equation}
	u(t,x) = P_t^\eta \varphi(x) = \mathbb{E}_x\left[\varphi\left(X_t^\eta\right)\right].   
\end{equation}
Standard results, see for example \cite[Chapter 1]{Cerrai}, show that \(u\) is the unique solution of the Kolmogorov equation associated to \eqref{eq:sde_model}:
\begin{equation}\label{eq:kbe}
	\begin{aligned}
		&\partial_t u(t,x) = \mathcal{L}_\eta u(t,x), \qquad &&x\in \mathbb{R}^d, \, t > 0,\\
		&u(0, x) = \varphi(x), \qquad &&x\in \mathbb{R}^d.
	\end{aligned}
\end{equation}
For \(m, \ell \in \mathbb{N}\), we define the space \(C_\ell^m\) of \(m\)-times continuously differentiable functions whose first \(m\) derivatives are in \(B_\ell^\infty\):
\[C_\ell^m := \left\{\varphi \in C^m\left(\mathbb{R}^d\right) \left| \partial^k \varphi \in B_\ell^\infty, \, \forall |k| \leq m \right.\right\},\]
and equip the space with the norm
\[\left\|\varphi\right\|_{m, \ell} := \sup_{|k| \leq m} \left\|\partial^k \varphi\right\|_{\mathcal{K}_\ell}.\]
In this appendix we prove the following proposition.
\begin{proposition}\label{prop:kolmo_eq}
	Let \(\eta \in \mathbb{R}\). Suppose that Assumptions~\ref{ass:drift}~and~\ref{ass:ref_measure} hold true. 	Then, for any \(m, \ell_m \in \mathbb{N}\), there exist a non-negative integer \(s \geq \ell_m\) and strictly positive constants~\(C_m, \lambda_m > 0\) such that for any initial condition \(\varphi \in C_{\ell_m}^m \left(\mathbb{R}^d\right)\cap \mathscr{S}_\eta\) the solution \(u\) of 
	\begin{equation*}
		\begin{aligned}
			&\partial_t u(t,x) = \mathcal{L}_\eta u(t,x), \qquad &&x\in \mathbb{R}^d, \, t > 0,\\
			&u(0, x) = \varphi(x), \qquad &&x\in \mathbb{R}^d,
		\end{aligned}
	\end{equation*}
	satisfies, for any \(k \in \mathbb{N}^d\) such that \(|k| = m\), 
	\begin{equation}\label{eq:bd_kbe}
		\forall t > 0, \quad \forall x \in \mathbb{R}^d, \qquad\qquad \left|\partial^k u(t,x)\right| \leq C_m \left\|\varphi\right\|_{m, \ell_m}\mathcal{K}_s(x) \, \mathrm{e}^{-\lambda_m t}.
	\end{equation}
\end{proposition}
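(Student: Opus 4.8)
The plan is to separate the \emph{decay in time}, which is provided by the geometric ergodicity \eqref{eq:geo_ergodicity} together with the normalisation $\varphi\in\mathscr{S}_\eta$ (so $\nu_\eta(\varphi)=0$), from the \emph{gain of regularity}, which is provided by the uniform ellipticity of $\mathcal{L}_\eta$ and a localised (interior) parabolic regularity estimate with polynomial weights. It is worth stressing that one should \emph{not} try to get the decay of $\partial^k u$ by differentiating the stochastic flow $x\mapsto X_t^{\eta,x}$: the contractivity assumption \eqref{eq:contractive_at_inf} is a finite-difference condition and does not force the symmetrised Jacobian of $b+\eta F$ to be negative definite at infinity (for instance $b(x)=-mx+h(x)$ with $h$ bounded, Lipschitz and rapidly oscillating still satisfies \eqref{eq:contractive_at_inf}), so the first variation process need not contract. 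The decay has to come from the noise, and the clean way to extract it is the semigroup splitting $P_t^\eta=P_1^\eta P_{t-1}^\eta$.

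For $m=0$ the statement is immediate: since $\varphi$ has $\nu_\eta$-mean zero, $u(t,x)=P_t^\eta\varphi(x)-\nu_\eta(\varphi)$, and \eqref{eq:geo_ergodicity} with $n=\ell_0$ gives $|u(t,x)|\le 2C_{\ell_0}\|\varphi\|_{\mathcal{K}_{\ell_0}}\mathcal{K}_{\ell_0}(x)\mathrm{e}^{-\lambda_{\ell_0}t}$, i.e. \eqref{eq:bd_kbe} with $s=\ell_0$. Now fix $m\ge 1$. For the short-time window $t\in(0,1]$ I would prove the \emph{non-decaying} bound $|\partial^k u(t,x)|\le C\|\varphi\|_{m,\ell_m}\mathcal{K}_{s_1}(x)$, uniformly in $t\in(0,1]$, by differentiating the probabilistic representation $u(t,x)=\mathbb{E}[\varphi(X_t^{\eta,x})]$ under the expectation: by Faà di Bruno, $\partial^k u(t,x)$ is a finite sum of terms $\mathbb{E}[\partial^\alpha\varphi(X_t^{\eta,x})\,Q(\{\partial_x^\gamma X_t^{\eta,x}\})]$ with $|\alpha|\le m$ and $Q$ a polynomial in the space-derivatives of the flow of order $\le m$; bounding $|\partial^\alpha\varphi(X_t^{\eta,x})|\le\|\varphi\|_{m,\ell_m}\mathcal{K}_{\ell_m}(X_t^{\eta,x})$, applying Hölder, and using that the space-derivatives of the flow solve linear SDEs whose coefficients are derivatives of $b+\eta F$ — hence have at most polynomial growth since $b,F\in\mathscr{S}$ — together with the moment controls on $X_t^{\eta,x}$ (which need only the Lipschitz bound on $b$ and are uniform on $[0,1]$) yields the claim with $s_1$ finite. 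Since $\mathcal{K}_{s_1}(x)\le\mathrm{e}^{\lambda}\mathcal{K}_{s_1}(x)\mathrm{e}^{-\lambda t}$ for $t\le1$, this is \eqref{eq:bd_kbe} on $(0,1]$.

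For $t\ge 1$ I would write $u(t,\cdot)=P_1^\eta g$ with $g:=P_{t-1}^\eta\varphi$. Since $P_{t-1}^\eta$ preserves the $\nu_\eta$-mean, \eqref{eq:geo_ergodicity} with $n=\ell_m$ gives $\|g\|_{\mathcal{K}_{\ell_m}}\le C\|\varphi\|_{\mathcal{K}_{\ell_m}}\mathrm{e}^{-\lambda(t-1)}\le C\|\varphi\|_{m,\ell_m}\mathrm{e}^{-\lambda(t-1)}$, so it remains to establish, for some $s_2\in\mathbb{N}$, the \emph{short-time smoothing estimate}
\[
\forall\,|k|\le m,\qquad \bigl\|\partial^k P_1^\eta g\bigr\|_{\mathcal{K}_{s_2}}\le C\,\|g\|_{\mathcal{K}_{\ell_m}},
\]
valid for every measurable $g$ with finite right-hand side (no induction on $m$ is needed, all orders are treated at once). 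To prove it, set $w(\tau,y)=P_\tau^\eta g(y)$, which solves the Kolmogorov equation \eqref{eq:kbe}. On a ball $B(x,1)$ the semigroup estimates (the analogue of \eqref{eq:semigroup_estimates}) give $\|w\|_{L^\infty(B(x,1)\times(0,1])}\le C\mathcal{K}_{\ell_m}(x)\|g\|_{\mathcal{K}_{\ell_m}}$; on the cylinder $B(x,1)\times(0,1]$, $w$ solves a uniformly parabolic equation (ellipticity constant $2/\beta$) whose coefficients $b+\eta F$ and all their derivatives are bounded on $B(x,1)$ by $C\mathcal{K}_{n_0}(x)$ for some $n_0$ depending on the polynomial growth of $b$ and $F$; interior parabolic $L^p$ and Schauder estimates then bound $\|w(1,\cdot)\|_{C^m(B(x,1/2))}$ by a polynomial in these coefficient norms times $\|w\|_{L^\infty}$, hence by $C\mathcal{K}_{s_2}(x)\|g\|_{\mathcal{K}_{\ell_m}}$. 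Combining with the $t\le1$ estimate and taking $s=\max(s_1,s_2)$, $\lambda_m=\min(\lambda,\lambda_{\ell_0},1)$ gives \eqref{eq:bd_kbe}.

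\textbf{Main obstacle.} The technical heart is the \emph{weighted} interior parabolic regularity estimate: one must localise carefully and keep precise track of how the polynomial growth of $b$, $F$ and their derivatives (guaranteed by $b,F\in\mathscr{S}$) propagates through the Schauder/$L^p$ bootstrap into the final exponent $s$, uniformly in the base point $x$ and in the datum $g$. A secondary point, handled by standard arguments (see e.g. \cite{Cerrai}), is to justify that $u\in C^\infty((0,\infty)\times\mathbb{R}^d)$ and that $u(t,\cdot)\in C^m$ up to $t=0$ with the required polynomial control, so that differentiating under the expectation and differentiating the equation are licit.
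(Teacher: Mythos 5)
Your proposal is correct and shares the paper's global architecture: a non-decaying polynomial bound on \(\partial^k u\) for \(t\in(0,1]\) obtained by differentiating the stochastic representation and controlling the variation processes of the flow (this is the paper's Lemma~\ref{lm:poly_growth_u}, proved there by Gr\"onwall using \(\nabla(b+\eta F)\cdot(h,h)\le\lambda_{|\eta|}|h|^2\)), the splitting \(u(t,\cdot)=P_1^\eta\left(P_{t-1}^\eta\varphi\right)\) for \(t\ge 1\), and the geometric ergodicity \eqref{eq:geo_ergodicity} applied to \(P_{t-1}^\eta\varphi\) to produce the exponential decay; your warning that decay should not be sought from the first variation process is consistent with the paper, which uses the flow-derivative bounds only for growth estimates on bounded time windows. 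Where you genuinely diverge is the key smoothing estimate at time one: the paper's Lemma~\ref{lm:u_short_time_deriv_estimates} proves \(\left|\partial^k P_t^\eta g(x)\right|\le C\|g\|_{0,\ell_0}\mathcal{K}_{s}(x)t^{-|k|/2}\) probabilistically, via Bismut--Elworthy--Li formulae of order \(|k|\) fed by the moment bounds on the flow derivatives, whereas you obtain the analogous bound analytically, by localising the Kolmogorov equation on cylinders \(B(x,1)\times(0,1]\), using the semigroup estimate for the weighted \(L^\infty\) bound, and invoking interior parabolic \(L^p\)/Schauder estimates with polynomially growing coefficients. Both routes are viable. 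The probabilistic route stays inside the stochastic-flow toolbox already assembled, gives explicit \(t^{-|k|/2}\) rates, and avoids quantifying how unbounded coefficients enter regularity constants, at the price of combinatorially heavier higher-order BEL identities (the paper only sketches the case \(m>2\)); your PDE route handles all derivative orders through one standard bootstrap and needs no BEL machinery, but the burden you yourself flag is real: one must track, uniformly in the base point \(x\), how the polynomial growth of \(b\), \(F\) and their derivatives propagates through the Schauder constants into the final exponent \(s\), and justify that \(u\) is a classical solution so that the interior estimates apply — work roughly comparable in length to the paper's appendix.
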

\noindent Proposition~\ref{prop:exist_poisson_eq} then follows as a corollary by integrating with respect to time, i.e.
\begin{equation}\label{eq:poisson_sol}
	\widetilde{\varphi}_\eta(x) := \int_0^\infty u(t,x) \, dt.
\end{equation}
The convergence of this integral and the fact that \(\widetilde{\varphi}_\eta \in \mathscr{S}_\eta\) follow from the bound \eqref{eq:bd_kbe}. 

The proof of Proposition~\ref{prop:kolmo_eq} is an extension of the proofs of Proposition~2.7 and Lemma~2.6 from~\cite{Kopec}, relying on the fact that Assumption~\ref{ass:drift} implies that, for any \(\eta \in \mathbb{R}\),
\begin{equation}\label{eq:jac_drift_spectral_bound}
	\forall x,h \in \mathbb{R}^d \qquad \nabla \left(b(x) + \eta F(x)\right)\cdot \left(h, h\right) \leq \lambda_{|\eta|} \left|h\right|^2,
\end{equation}
with \(\lambda_{|\eta|} = L_b + |\eta|L_F\) since \(\left\|\nabla b\right\| \leq L_b\) and \(\left\|\nabla F\right\| \leq L_F\). This bound is in fact uniform in \(\eta\) in the sense that for any \(\eta_{\star} > 0\) the above inequality holds for all \(\eta \in \left[-\eta_{\star}, \eta_{\star}\right]\) with constant \(\lambda_{\eta_{\star}} = L_b + \eta_{\star}L_F\). Thanks to this uniformity, we can in fact make the estimate \eqref{eq:bd_kbe} uniform in \(\eta \in \left[-\eta_\star, \eta_\star\right]\). We however do not do this as we do not need uniform estimates. 

For the entirety of this section, Assumptions~\ref{ass:drift}~and~\ref{ass:ref_measure} are assumed to hold. Furthermore for the rest of the this section, \(\eta \in \mathbb{R}\) is fixed. From herein, we use stochastic flow notation, i.e. for \(x \in \mathbb{R}^d\), we write \(\left(X_x^\eta(t)\right)_{t\geq 0}\) for the solution of \eqref{eq:sde_model} with \(X_x^\eta(0) = x\). For a \(k\)-linear form \(A\) on \(\mathbb{R}^d\) evaluated at~\(h_1, \dots, h_k \in \mathbb{R}^d\) we write \(A\cdot \left(h_1, \dots, h_k\right)\). We often identify \(1\)- and \(2\)-forms with vectors and matrices and use standard matrix/vector multiplication rules. We do this in particular for the 1- and 2-forms induced by fixing the first \(k-1\) or \(k-2\) arguments of \(k\)-linear form. In this case we use the notation \(A\cdot(h_1, \dots, h_{k-1}, \cdot)\) and  \(A\cdot(h_1, \dots, h_{k-2}, \cdot, \cdot)\). 

To prove Proposition~\ref{prop:kolmo_eq}, we first prove two lemmas: one on the polynomial growth of \(u\) and its derivatives (compare to \cite[Lemma A.2]{Kopec}); and one providing estimates on the derivatives of~\(u\) up to time \(t = 1\) involving only the norm of the initial condition and importantly not its derivatives (compare to \cite[Lemma A.6]{Kopec}). Finally, we combine these two lemmas with the geometric ergodicity \eqref{eq:geo_ergodicity} of the dynamics to prove the proposition.

\begin{lemma}\label{lm:poly_growth_u}
	For any \(m, \ell_m \in \mathbb{N}\), there exist \(s_m \in \mathbb{N}\), \(C_m > 0\) and \(\gamma_m \in \mathbb{R}\) such that for any \(\psi \in  C_{\ell_m}^m\left(\mathbb{R}^d\right)\), the solution \(u\) of \eqref{eq:kbe} with~\(\psi\) as its initial condition satisfies for any \(k \in \mathbb{N}^d\) with \(|k| = m\)
	\begin{equation}\label{eq:poly_growth_u}
		\forall t > 0, \, x\in\mathbb{R}^d \qquad \left|\partial^k u(t, x)\right| \leq C_m \left\|\psi\right\|_{m, \ell_m} \mathcal{K}_{s_m}(x) \, \mathrm{e}^{\gamma_m t}.
	\end{equation}
\end{lemma}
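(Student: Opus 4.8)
The plan is to prove the polynomial-growth estimate \eqref{eq:poly_growth_u} by induction on the order of differentiation $m$, differentiating the flow map $x \mapsto X_x^\eta(t)$ and using Gr\"onwall-type estimates together with the one-sided Lipschitz bound \eqref{eq:jac_drift_spectral_bound}. First I would set up notation: write $D_x^k$ for the $k$-th derivative of the stochastic flow, so that by differentiating the SDE~\eqref{eq:sde_model} under the expectation $u(t,x) = \mathbb{E}[\psi(X_x^\eta(t))]$ one obtains $\partial^k u(t,x)$ as a sum of terms of the form $\mathbb{E}\big[ \nabla^{j}\psi(X_x^\eta(t)) \cdot (D_x^{k_1}X_x^\eta(t), \dots, D_x^{k_j}X_x^\eta(t)) \big]$ with $k_1 + \dots + k_j = k$ (Fa\`a di Bruno). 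Since $\psi \in C^m_{\ell_m}$, each factor $\nabla^j \psi$ is bounded by $\|\psi\|_{m,\ell_m}\,\mathcal{K}_{\ell_m}(X_x^\eta(t))$, so the task reduces to controlling moments of the flow and of its derivatives.

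The base case $m=0$ is just $|u(t,x)| = |\mathbb{E}[\psi(X_x^\eta(t))]| \le \|\psi\|_{0,\ell_0}\,\mathbb{E}[\mathcal{K}_{\ell_0}(X_x^\eta(t))]$, and the moment bound follows from the semigroup estimate \eqref{eq:semigroup_estimates} (with a crude exponential-in-$t$ bound sufficing here since we only need \emph{some} $\gamma_m$, not a negative rate). The case $m=1$ is the heart of the argument: the first variation $J_t := D_x X_x^\eta(t)$ solves the linear matrix ODE $dJ_t = \nabla(b + \eta F)(X_x^\eta(t))\, J_t\, dt$ (no noise, since the noise is additive), so $\frac{d}{dt}|J_t h|^2 = 2\langle J_t h, \nabla(b+\eta F)(X_x^\eta(t)) J_t h\rangle \le 2\lambda_{|\eta|}|J_t h|^2$ by \eqref{eq:jac_drift_spectral_bound}, giving $|J_t| \le e^{\lambda_{|\eta|} t}$ deterministically. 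Then $\partial^k u = \mathbb{E}[\nabla\psi(X_x^\eta(t)) \cdot J_t e_k]$ is bounded by $\|\psi\|_{1,\ell_1} e^{\lambda_{|\eta|}t}\,\mathbb{E}[\mathcal{K}_{\ell_1}(X_x^\eta(t))]$, which is of the desired form. For $m \ge 2$, higher variations $D_x^k X_x^\eta(t)$ solve inhomogeneous linear ODEs whose forcing terms are polynomial in the lower-order variations and in $\nabla^j(b+\eta F)(X_x^\eta(t))$ for $2 \le j \le m$; since $b, F \in \mathscr{S}$, these higher derivatives grow at most polynomially in $X_x^\eta(t)$, so Duhamel's formula plus the induction hypothesis (controlling moments of $|D_x^{k'}X_x^\eta(t)|$ against $\mathcal{K}_{s}(x)e^{\gamma t}$ for $|k'|<m$) and H\"older's inequality (to split the polynomial weight from the variation moments) close the induction, at the cost of increasing the polynomial degree $s_m$ and the exponent $\gamma_m$ at each step.

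The main obstacle I anticipate is bookkeeping: keeping track of how the polynomial weight $s_m$ and the growth rate $\gamma_m$ accumulate through the induction, and making sure that all the moment bounds $\mathbb{E}[|D_x^{k'}X_x^\eta(t)|^p \mathcal{K}_n(X_x^\eta(t))^q]$ needed in the H\"older splitting are in fact finite and of the claimed form. This requires knowing uniform-in-time polynomial moment bounds for the flow — available from \eqref{eq:semigroup_estimates} — and arguing that the linear ODEs for the variations have moments controlled by Gr\"onwall applied pathwise, then taking expectations. A secondary technical point is justifying the differentiation under the expectation sign and the identification of $\partial^k u$ with the flow-derivative expression; this is standard given the smoothness of $b, F$ and the polynomial-growth control, and can be cited from \cite{Cerrai} or \cite{Kopec}, so I would not belabor it. The final estimate \eqref{eq:poly_growth_u} then follows with $s_m := \max$ over the polynomial degrees produced and $\gamma_m$ the accumulated rate, completing the lemma; the sharpening to a \emph{negative} rate $-\lambda_m$ in \eqref{eq:bd_kbe} is deferred to the subsequent lemma and the combination with geometric ergodicity \eqref{eq:geo_ergodicity}, as outlined in the text.
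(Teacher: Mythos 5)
Your proposal is correct and follows essentially the same route as the paper: expressing $\partial^k u$ through derivatives of the stochastic flow, bounding the first variation pathwise via the one-sided bound \eqref{eq:jac_drift_spectral_bound} and Gr\"onwall, and handling higher variations by induction using the polynomial growth of $\nabla^j(b+\eta F)$ (since $b,F\in\mathscr{S}$), the moment estimates \eqref{eq:semigroup_estimates}, and Cauchy--Schwarz/H\"older to separate the weights. The only cosmetic differences are your explicit appeal to Fa\`a di Bruno and Duhamel where the paper differentiates $|\xi_x^h(t)|^2$ directly and applies Gr\"onwall, which amounts to the same estimates.
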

\begin{proof}
	We start with the case \(m = 0\). Let \(s_0 = \ell_0\). By \eqref{eq:semigroup_estimates}, since \(u(t,x) = \mathbb{E}\left[\psi\left(X_x^\eta(t)\right)\right]\), there exists \(S_{\ell_0} \in \mathbb{R}_+\) such that
	\[\left|u(t,x)\right| \leq \left\|\psi \right\|_{\mathcal{K}_{\ell_0}} \mathbb{E}\left[\mathcal{K}_{\ell_0}\left(X_x^\eta(t)\right)\right] \leq S_{\ell_0} \left\|\psi\right\|_{0,\ell_0} \mathcal{K}_{\ell_0}(x) = S_{\ell_0}\left\|\psi\right\|_{0, \ell_0}\mathcal{K}_{s_m}.\]
	Next we show the result for \(m = 1\). For all \(x, h \in \mathbb{R}^d\) and \(t \geq 0\), we have
	\begin{equation}\label{eq:gradient_kbe_sol}
		\nabla u(t,x) \cdot h = \mathbb{E}\left[\nabla \psi\left(X_x^\eta\left(t\right)\right) \cdot \delta_x^h(t)\right],
	\end{equation}
	where \(\delta_x^h(t) \in \mathbb{R}^d\) is the process defined by
	\begin{equation}\label{eq:flow_first_var}
		\delta_x^h(t) = \nabla_x X_x^\eta(t)h,
	\end{equation}
	with \(\nabla_x X_x^\eta(t) \in \mathbb{R}^{d\times d}\) the derivative process satisfying 
	\begin{equation}\label{eq:derivative_flow}
		d \nabla_x X_x^\eta(t) = \left[\nabla b\left(X_x^\eta(t)\right) + \eta \nabla F\left(X_x^\eta(t)\right)\right]\nabla_x X_x^\eta(t) dt, \qquad \nabla_x X_x^\eta(0) = \mathrm{Id}.
	\end{equation}
	Equations \eqref{eq:gradient_kbe_sol} and \eqref{eq:derivative_flow} are derived by formally differentiating the solution \(\left(X_x^\eta(t)\right)_{t \geq 0}\) to \eqref{eq:sde_model} with respect to its initial condition, which is indeed justified since the coefficients of the SDE are in \(\mathscr{S}\), see for example \cite[Chapter 3]{Kunita}. These equations imply that
	\[\begin{aligned}
		\frac{d}{dt} \left|\delta_x^h\right|^2 &= 2\delta_x^h(t) \cdot \frac{d}{dt}\delta_x^h(t) = \delta_x^h(t) \cdot \left[\left(\nabla b\left(X_x^\eta(t)\right) + \eta \nabla F\left(X_x^\eta(t)\right)\right)\nabla_x X_x^\eta(t)h \right] \\
		&= \left[\nabla b\left(X_x^\eta(t)\right) + \eta \nabla F\left(X_x^\eta(t)\right)\right]\cdot\left(\delta_x^h(t), \delta_x^h(t)\right).
	\end{aligned}\] 
	Then, applying \eqref{eq:jac_drift_spectral_bound}, we obtain
	\[\frac{d}{dt}\left|\delta_x^h(t)\right|^2 \leq 2\lambda_{|\eta|} \left|\delta_x^h(t)\right|^2.\]
	Gr\"onwall's lemma then implies that 
	\begin{equation}\label{eq:first_var_bd}
		\left|\delta_x^h(t)\right|^2 \leq \mathrm{e}^{2\lambda_{|\eta|}t}|h|^2.
	\end{equation}
	Applying this bound and the Cauchy--Schwarz inequality, we obtain
	\[\begin{aligned}
		\left|\nabla u(t,x) \cdot h\right| &\leq \mathbb{E}\left[\left|\nabla \psi\left(X_x^\eta(t)\right)\right|^2\right]^{1/2} \mathbb{E}\left[\left|\delta_x^h\right|^2\right]^{1/2}\\
		&\leq \left\|\psi\right\|_{1, \ell_1} \mathbb{E}\left[\mathcal{K}_{\ell_1}\left(X_x^\eta(t)\right)^2\right]^{1/2}\mathbb{E}\left[\left|\delta_x^h(t)\right|^2\right]^{1/2}\\
		&\leq 2\left\|\psi\right\|_{1, \ell_1} \mathbb{E}\left[\mathcal{K}_{2\ell_1}\left(X_x^\eta(t)\right)\right]^{1/2}\mathbb{E}\left[\left|\delta_x^h(t)\right|^2\right]^{1/2}.
	\end{aligned}\]
	Then, by the moment growth bounds \eqref{eq:semigroup_estimates}, there exists a constant \(S_{2\ell_1} \in \mathbb{R}_+\) such that
	\[\begin{aligned}
		\left|\nabla u(t,x)\cdot h \right| &\leq 2 S_{2\ell_1}\left\|\psi\right\|_{1, \ell_1} \mathcal{K}_{2\ell_1}(x)^{1/2}\mathbb{E}\left[\left|\delta_x^h(t)\right|^2\right]^{1/2}\\
		&\leq 2 S_{2\ell_1}\left\|\psi\right\|_{1, \ell_1} \mathcal{K}_{2\ell_1}(x)|h|\mathrm{e}^{\lambda_{|\eta|}t}.
	\end{aligned}\]
	Specifically, for each \(i \in \left\{1, \dots, d\right\}\),
	\(\left|\partial_{x_i}u(t,x)\right| \leq 2 S_{2\ell_1}\left\|\psi\right\|_{1, \ell_1} \mathcal{K}_{2\ell_1}(x) \, \mathrm{e}^{\lambda_{|\eta|}t}.\)
	
	We now show the result for \(m = 2\). Let \(\widetilde{s}_2 \geq \ell_2\) be large enough such that \(b_i, F_i \in C_{\widetilde{s}_2}^2\left(\mathbb{R}^d\right)\) for all \(i \in \left\{1, \dots, d\right\}\). For \(x, h \in \mathbb{R}^d\) and \(t \geq 0\), we have
	\[\nabla^2 u(t,x) \cdot (h,h) = \mathbb{E}\left[\nabla^2 \psi\left(X_x^\eta(t)\right)\cdot \left(\delta_x^h(t), \delta_x^h(t)\right) + \nabla \psi\left(X_x^\eta(t)\right)\cdot \xi_x^h(t)\right],\]
	where \(\xi_x^h(t) \in \mathbb{R}^d\) is a process defined by 
	\begin{equation}\label{eq:flow_second_var}
		\xi_x^h(t) = \nabla^2X_x^\eta(t)\cdot (h,h, \cdot), \qquad \xi_x^h(0) = 0.
	\end{equation}
	Taking a gradient in \eqref{eq:derivative_flow} and applying the resulting equation for \(\nabla^2X_x^\eta(t)\), we obtain 
	\[\frac{d}{dt}\xi_x^h(t) = \left[\nabla^2b\left(X_x^\eta(t)\right) + \eta \nabla^2F\left(X_x^\eta(t)\right)\right]\cdot \left(\delta_x^h(t), \delta_x^h(t), \cdot\right) + \left[\nabla b\left(X_x^\eta(t)\right) + \eta \nabla\left(X_x^\eta(t)\right)\right]\xi_x^h(t).\]
	Consequently applying \eqref{eq:jac_drift_spectral_bound} and \eqref{eq:first_var_bd} gives
	\[\begin{aligned}
		\frac{d}{dt}\left|\xi_x^h(t)\right|^2 &= 2\xi_x^h(t) \cdot \left(\left[\nabla^2 b\left(X_x^\eta(t)\right) + \eta\nabla^2F\left(X_x^\eta(t)\right)\right]\cdot\left(\delta_x^h(t), \delta_x^h(t), \cdot \right) + \left[\nabla b\left(X_x^\eta(t)\right) + \eta\nabla F\left(X_x^\eta(t)\right)\right]\xi_x^h(t) \right)\\
		&= 2 \left[\nabla^2 b\left(X_x^\eta(t)\right) + \eta\nabla^2F\left(X_x^\eta(t)\right)\right]\cdot\left(\delta_x^h(t), \delta_x^h(t), \xi_x^h(t) \right) + 2 \left[\nabla b\left(X_x^\eta(t)\right) + \eta\nabla F\left(X_x^\eta(t)\right)\right]\cdot\left(\xi_x^h(t), \xi_x^h(t)\right)\\
		&\leq 2\left(\sum_{i=1}^d \left[\|b_i\|_{2, \widetilde{s}_2} + \eta \|F_i\|_{2, \widetilde{s}_2}\right]\right)\left|\delta_x^h(t)\right|^2\left|\xi_x^h(t)\right|\mathcal{K}_{\widetilde{s}_2}\left(X_x^\eta(t)\right) + 2\lambda_{|\eta|}\left|\xi_x^h(t)\right|^2\\
		&\leq \left(\sum_{i=1}^d \left[\|b_i\|_{2, \widetilde{s}_2} + \eta \|F_i\|_{2, \widetilde{s}_2}\right]\right)^2\left|\delta_x^h(t)\right|^4\mathcal{K}_{\widetilde{s}_2}\left(X_x^\eta(t)\right)^2 + \left(1 +2\lambda_{|\eta|}\right)\left|\xi_x^h(t)\right|^2\\
		&\leq 2\left(\sum_{i=1}^d \left[\|b_i\|_{2, \widetilde{s}_2} + \eta \|F_i\|_{2, \widetilde{s}_2}\right]\right)^2\mathcal{K}_{2\widetilde{s}_2}\left(X_x^\eta(t)\right)|h|^4\mathrm{e}^{4\lambda_{|\eta|}t} + \left(1 +2\lambda_{|\eta|}\right)\left|\xi_x^h(t)\right|^2.
	\end{aligned}\]
	Taking expectation and applying Gr\"onwall's lemma with the fact that \(\xi_x^h(0) = 0\) and the moment growth bounds \eqref{eq:semigroup_estimates}, there exists \(S_{2\widetilde{s}_2} \in \mathbb{R}_+\) such that 
	\begin{equation}\label{eq:second_var_bd}
		\begin{aligned}
			\mathbb{E}\left[\left|\xi_x^h(t)\right|^2\right] &\leq 2S_{2\widetilde{s}_2}\left(\sum_{i=1}^d \left[\|b_i\|_{2, \widetilde{s}_2} + \eta \|F_i\|_{2, \widetilde{s}_2}\right]\right)^2\mathcal{K}_{2\widetilde{s}_2}(x)|h|^4\frac{\mathrm{e}^{\left(1+6\lambda_{|\eta|}\right)t}}{4\lambda_{|\eta|}}\\
			&\leq \widetilde{C}_2\mathcal{K}_{2\widetilde{s}_2}(x)|h|^4\mathrm{e}^{\widetilde{\gamma}_2 t}
		\end{aligned}
	\end{equation}
	for some constants \(\widetilde{\gamma}_2, \widetilde{C}_2 > 0\) that do not depend on \(t\), \(x\), or \(h\). Using the latter inequality, the bound \eqref{eq:first_var_bd} we derived earlier for \(\delta_x^h(t)\), and the moment growth bounds \eqref{eq:semigroup_estimates} along with the Cauchy--Schwarz inequality, we obtain
	\[\begin{aligned}
		\left|\nabla^2 u(t,x) \cdot (h, h)\right| &\leq \|\psi\|_{2, \ell_2}\mathbb{E}\left[\mathcal{K}_{\ell_2}\left(X_x^\eta(t)\right) \left|\delta_x^h(t)\right|^2\right] + \mathbb{E}\left[\left|\nabla \psi\left(X_x^\eta(t)\right)\right|^2\right]^{1/2}\mathbb{E}\left[\left|\xi_x^h(t)\right|^2\right]^{1/2}\\
		&\leq \|\psi\|_{2, \ell_2}\mathbb{E}\left[\mathcal{K}_{\ell_2}\left(X_x^\eta(t)\right)^2\right]^{1/2} \mathbb{E}\left[\left|\delta_x^h(t)\right|^4\right]^{1/2} + d\|\psi\|_{2, \ell_2}\mathbb{E}\left[\mathcal{K}_{\ell_2}\left(X_x^\eta\right)^2\right]^{1/2}\mathbb{E}\left[\left|\xi_x^h(t)\right|^2\right]^{1/2}\\
		&\leq \|\psi\|_{2, \ell_2}\mathbb{E}\left[\mathcal{K}_{\widetilde{s}_m}\left(X_x^\eta(t)\right)^2\right]^{1/2} \mathbb{E}\left[\left|\delta_x^h(t)\right|^4\right]^{1/2} + d\|\psi\|_{2, \ell_2}\mathbb{E}\left[\mathcal{K}_{\widetilde{s}_m}\left(X_x^\eta\right)^2\right]^{1/2}\mathbb{E}\left[\left|\xi_x^h(t)\right|^2\right]^{1/2}\\
		&\leq C_2\|\psi\|_{2, \ell_2}\mathcal{K}_{4\widetilde{s}_m}(x)|h|^2\mathrm{e}^{\gamma_2 t},
	\end{aligned}\]
	for some constants \(\gamma_2, C_2 > 0\) that do not depend on \(t\), \(x\), or \(h\), which proves the result for \(m = 2\). 
	
	The argument for \(m > 2\) follows in a similar spirit by induction. Using the same methods as above, we can derive bounds on \(\frac{d}{dt} \mathbb{E}\left[\left|\nabla^m_x X_x^\eta(t) \cdot \left(h, \dots, h, \cdot\right)\right|^2\right]\) in terms of \(\mathbb{E}\left[\left|\nabla^m_x X_x^\eta(t) \cdot \left(h, \dots, h, \cdot\right)\right|^2\right]\) and terms involving lower order derivatives of the solution flow which we know how to control from the previous steps. This permits us to apply Gr\"onwall's lemma to eventually conclude that there exists constants \(\gamma_m, C_m > 0\) and integer \(\widetilde{s}_m \geq \ell_m\) that are independent of \(t\), \(x\), and \(h\) such that 
	\[\mathbb{E}\left[\left|\nabla^m_x X_x^\eta(t) \cdot \left(h, \dots, h, \cdot\right)\right|^2\right] \leq C_m \mathcal{K}_{\widetilde{s}_m}(x)|h|^{2m}\mathrm{e}^{\gamma_m t}.\]
	This control then permits us to bound \(\left|\nabla^m u(t,x) \cdot \left(h, \dots, h\right)\right|\) as we had above and therefore provides bounds on \(\partial^k u\) for \(|k| = m\). 
	\end{proof}
	
	We next apply Bismut--Elworthy--Li type formulae and bounds on the derivatives of the stochastic flow \(\left(X_x^\eta(t)\right)_{t\geq 0}\) derived in the previous proof to derive estimates the derivatives of \(u\) that do not depend on the derivatives of the initial condition.
	\begin{lemma}\label{lm:u_short_time_deriv_estimates}
		Let \(\ell_0 \in \mathbb{N}\). For any \(m \in \mathbb{N}\), there exist constants \(C_m > 0\) and \(s_m \in \mathbb{N}\) with \(s_m \geq \ell_0\) such that, the solution \(u\) of \eqref{eq:kbe} with~\(\psi \in \mathscr{S}_\eta\cap C_{\ell_0}^0\left(\mathbb{R}^d\right)\) as its initial condition satisfies for any \(k \in \mathbb{N}^d\) with \(|k| = m\)
		\begin{equation}
			\forall t \in (0, 1], \, \forall x \in \mathbb{R}^d, \qquad \left|\partial^k u(t,x)\right| \leq C_m \|\psi\|_{0, \ell_0}\mathcal{K}_{s_m}(x) t^{-k/2}.
		\end{equation}
	\end{lemma}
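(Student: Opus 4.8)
The plan is to combine a Bismut--Elworthy--Li (BEL) integration-by-parts formula with the semigroup property, gaining one derivative at a time. Since the noise in \eqref{eq:sde_model} is additive and nondegenerate, with diffusion matrix $\sqrt{2/\beta}\,\mathrm{Id}$, one has the classical Bismut formula $\nabla (P_t^\eta \psi)(x)\cdot h = \tfrac1t\,\mathbb{E}\bigl[\psi(X_x^\eta(t))\int_0^t \sqrt{\beta/2}\,\delta_x^h(s)^{\!\top}dW_s\bigr]$, valid for $\psi \in \mathscr{S}_\eta \cap C^0_{\ell_0}$ (the formula extending from the bounded case by a standard localization argument using the moment bounds \eqref{eq:semigroup_estimates}). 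Applying Cauchy--Schwarz, the It\^o isometry, the bound $|\delta_x^h(s)|^2 \le \mathrm{e}^{2\lambda_{|\eta|}s}|h|^2$ from \eqref{eq:first_var_bd}, and \eqref{eq:semigroup_estimates} to control $\mathbb{E}[\mathcal{K}_{\ell_0}(X_x^\eta(t))^2]$, one obtains for $t \in (0,1]$ the case $m=1$: $|\nabla(P_t^\eta\psi)(x)| \le C\,\|\psi\|_{0,\ell_0}\,\mathcal{K}_{\ell_0}(x)\,t^{-1/2}$, using $\int_0^t \mathrm{e}^{2\lambda_{|\eta|}s}\,ds \le \mathrm{e}^{2\lambda_{|\eta|}}t$ and $\mathcal{K}_{2\ell_0}^{1/2} \le \mathcal{K}_{\ell_0}$.

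The general case follows by splitting $[0,t]$ into $m$ subintervals of length $t/m$ and writing $u(t,\cdot) = (P_{t/m}^\eta)^m \psi$. The key auxiliary statement to prove is a \emph{one-derivative gain} estimate: if $f \in \mathscr{S}_\eta \cap C^j_\ell(\mathbb{R}^d)$, then for $s \in (0,1]$ and every multi-index $|\alpha| = j+1$ one has $|\partial^\alpha (P_s^\eta f)(x)| \le C\,\|f\|_{j,\ell}\,\mathcal{K}_{\ell'}(x)\,s^{-1/2}$, while $|\partial^\alpha(P^\eta_s f)(x)| \le C\,\|f\|_{j,\ell}\,\mathcal{K}_{\ell'}(x)$ for $|\alpha|\le j$ (this last bound being exactly Lemma~\ref{lm:poly_growth_u} applied to $P^\eta_s f$, since $\mathrm{e}^{\gamma_j s}\le\mathrm{e}^{\gamma_j}$ for $s\le1$). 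Granting this, set $v_0 = \psi$, $v_{i+1} = P_{t/m}^\eta v_i$; an induction on $i$ gives $\sup_{|\alpha|\le i}\|\partial^\alpha v_i\|_{\mathcal{K}_{s^{(i)}}} \le C\,(t/m)^{-i/2}\,\|\psi\|_{0,\ell_0}$, and at $i=m$ this is the claimed bound with $s_m := s^{(m)}$ and $C_m$ absorbing the harmless factor $m^{m/2}$.

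To prove the one-derivative gain estimate, I would first differentiate $\partial^{\alpha'}(P_s^\eta f)(x)$, $|\alpha'| = j$, through the flow via the Fa\`a di Bruno formula: it is a finite sum of terms $\mathbb{E}\bigl[\nabla^r f(X_x^\eta(s))\cdot(\text{products of }\nabla_x^{j_1}X_x^\eta(s),\dots,\nabla_x^{j_r}X_x^\eta(s))\bigr]$ with $j_1+\dots+j_r = j$, $r\le j$, where the flow-derivative processes solve linear equations obtained by differentiating \eqref{eq:derivative_flow} and are $L^p$-bounded, with polynomial growth in $x$ and a factor $\mathrm{e}^{\gamma s}\le\mathrm{e}^{\gamma}$ by the same Gr\"onwall arguments as in the proof of Lemma~\ref{lm:poly_growth_u}. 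Applying $\partial_{x_i}$: in the terms with $r<j$ the map $y\mapsto\nabla^r f(y)$ is $C^1$, so one differentiates classically, producing $\nabla^{r+1}f$ ($r+1\le j$) and one extra flow derivative, all controlled with no singularity in $s$; in the single term with $r=j$, where only $\nabla^j f$ (merely $C^0$ and polynomially bounded) appears, one uses the weighted Bismut identity $\partial_{x_i}\mathbb{E}[g(X_x^\eta(s))Z_x] = \mathbb{E}[g(X_x^\eta(s))\,\partial_{x_i}Z_x] + \tfrac1s\,\mathbb{E}[g(X_x^\eta(s))\,Z_x\int_0^s\sqrt{\beta/2}\,\delta_x^{e_i}(r)^{\!\top}dW_r]$, with $Z_x$ the product of flow-derivative factors. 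A Cauchy--Schwarz split of the last term together with the It\^o isometry, $\int_0^s|\delta_x^{e_i}(r)|^2\,dr \le \mathrm{e}^{2\lambda_{|\eta|}}s$, the moment bounds \eqref{eq:semigroup_estimates}, and $\|\nabla^j f\|_{\mathcal{K}_\ell}\le\|f\|_{j,\ell}$ yields precisely the factor $s^{-1/2}$ and a polynomially growing coefficient $\mathcal{K}_{\ell'}(x)$.

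The main obstacle is the bookkeeping: verifying that each application of the Bismut formula contributes \emph{exactly one} factor $s^{-1/2}$ and no worse singularity, that all flow-derivative moments behave like $\mathrm{e}^{\gamma s}$ near $s=0$ (hence are $\le C$ for $s\le1$), and that the polynomial-growth index $s_m$ stays finite through the Fa\`a di Bruno expansion; one also has to rigorously justify the weighted Bismut identity for polynomially growing, unbounded integrands, which is done by truncation and the uniform moment bounds \eqref{eq:semigroup_estimates}. This lemma, together with Lemma~\ref{lm:poly_growth_u} and the geometric ergodicity \eqref{eq:geo_ergodicity}, then yields Proposition~\ref{prop:kolmo_eq} and hence Proposition~\ref{prop:exist_poisson_eq}.
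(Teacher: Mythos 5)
Your case \(m=1\) is exactly the paper's argument: the first-order Bismut--Elworthy--Li formula, Cauchy--Schwarz, the It\^o isometry, the Gr\"onwall bound \eqref{eq:first_var_bd} on \(\delta_x^h\), and the moment estimates \eqref{eq:semigroup_estimates}. For \(m\geq 2\) you genuinely depart from the paper, which instead uses the second-order Elworthy--Li formula together with a conditioning argument at time \(t/2\) (the Markov property turns the iterated stochastic integral into \(\nabla u\left(t/2, X_x^\eta(t/2)\right)\cdot \delta_x^h(t/2)\), into which the \(m=1\) bound is fed), and sketches the induction through higher-order formulas of the same type; your alternative is the semigroup splitting \(u(t,\cdot) = \left(P_{t/m}^\eta\right)^m\psi\) with a one-derivative-gain estimate per step. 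That architecture, including the \(m^{m/2}\) constants and the bookkeeping of polynomial indices, is in principle viable.

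However, the identity on which your one-derivative gain rests is not correct as stated. Writing \(Z_x\) for the product of flow-derivative factors and \(u_r = \sqrt{\beta/2}\,\delta_x^{e_i}(r)\), you claim
\begin{equation*}
\partial_{x_i}\mathbb{E}\bigl[g\left(X_x^\eta(s)\right)Z_x\bigr] \;=\; \mathbb{E}\bigl[g\left(X_x^\eta(s)\right)\partial_{x_i}Z_x\bigr] \;+\; \frac{1}{s}\,\mathbb{E}\Bigl[g\left(X_x^\eta(s)\right)Z_x\int_0^s u_r^{\top}\,dW_r\Bigr].
\end{equation*}
The left-hand side equals \(\mathbb{E}\bigl[\nabla g\left(X_x^\eta(s)\right)\cdot\delta_x^{e_i}(s)\,Z_x\bigr] + \mathbb{E}\bigl[g\left(X_x^\eta(s)\right)\partial_{x_i}Z_x\bigr]\), and the integration by parts that removes the derivative from \(g\) (Malliavin duality, or a Girsanov argument) acts on the whole product \(g\left(X_x^\eta(s)\right)Z_x\). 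Since the flow derivatives solve \eqref{eq:derivative_flow} and its differentiated versions, they depend on the Brownian path through \(\nabla b\left(X_x^\eta(r)\right)\), \(\nabla^2 b\left(X_x^\eta(r)\right)\), etc., so \(Z_x\) is a nondeterministic Wiener functional and the duality produces an additional term \(-\frac{1}{s}\mathbb{E}\bigl[g\left(X_x^\eta(s)\right)\int_0^s D_r Z_x\cdot u_r\,dr\bigr]\) involving the Malliavin derivative of \(Z_x\); your formula holds only in degenerate situations (e.g. affine drift, where the flow derivatives are deterministic). As written, the crucial step of your one-derivative-gain lemma therefore fails. The gap is repairable rather than fatal: \(D_r Z_x\) obeys Gr\"onwall bounds of the same kind as in the proof of Lemma~\ref{lm:poly_growth_u}, and the prefactor \(\frac{1}{s}\int_0^s\cdots dr\) removes the singularity, so the corrected identity still yields the \(s^{-1/2}\) gain --- but then your proof requires genuine Malliavin-calculus machinery that you have not set up. Alternatively, you can avoid differentiating a random weight against the noise altogether by splitting the time interval and conditioning at the midpoint, which is precisely what the second-order (and higher-order) Elworthy--Li formulas used in the paper accomplish.
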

	\begin{proof}
		We prove the result for the first two derivatives and then discuss how the result for higher-order derivatives follows from induction and analogous arguments. 
		
		For \(m = 1\), the Bismut--Elworthy--Li formula \cite[Theorem 2.1]{ElworthyLi} gives
		\begin{equation}\label{eq:1st_order_bel_formula}
			\nabla u(t,x)\cdot h = \sqrt{\frac{\beta}{2}}\frac{1}{t}\mathbb{E}\left[\psi\left(X_x^\eta(t)\right)\int_0^t\delta_x^h(s)\cdot dW_s\right],
		\end{equation}
		where \(\delta_x^h\) is the process defined in \eqref{eq:flow_first_var}. 
		Using the Cauchy--Schwarz inequality and It\^o's isometry, we obtain
		\[\begin{aligned}
			\left|\nabla u(t,x) \cdot h\right| &\leq \sqrt{\frac{\beta}{2}} \frac{1}{t} \mathbb{E}\left[\psi\left(X_x^\eta(t)\right)^2\right]^{1/2}\mathbb{E}\left[\left(\int_0^t \delta_x^h(s)\cdot dW_s\right)^2\right]^{1/2}\\
			&= \sqrt{\frac{\beta}{2}} \frac{1}{t} \mathbb{E}\left[\psi\left(X_x^\eta(t)\right)^2\right]^{1/2}\mathbb{E}\left[\int_0^t \left|\delta_x^h(s)\right|^2 ds\right]^{1/2}\\
			&= \sqrt{\frac{\beta}{2}} t^{-1/2}   \mathbb{E}\left[\psi\left(X_x^\eta(t)\right)^2\right]^{1/2}\mathbb{E}\left[\frac{1}{t}\int_0^t \left|\delta_x^h(s)\right|^2 ds\right]^{1/2}\\
			&\leq \sqrt{\frac{\beta}{2}}\|\psi\|_{0, \ell_0} t^{-1/2}   \mathbb{E}\left[\mathcal{K}_{\ell_0}\left(X_x^\eta(t)\right)^2\right]^{1/2}\mathbb{E}\left[\frac{1}{t}\int_0^t \left|\delta_x^h(s)\right|^2 ds\right]^{1/2}
		\end{aligned}\]
		The first expectation is bounded using the moment growth bounds \eqref{eq:semigroup_estimates} and the second is uniformly bounded for \(t \in \left(0, 1\right]\) due to the bound \eqref{eq:first_var_bd} for \(\left|\delta_x^h(t)\right|^2\). Thus, for some constant \(C_1 > 0\),
		\begin{equation}\label{eq:1st_short_time_deriv_estimates}
			\forall t \in \left(0, 1\right], \quad \left|\nabla u(t,x) \cdot h\right| \leq C_1\|\psi\|_{0, \ell_0} \mathcal{K}_{\ell_0}(x) t^{-1/2}|h|.
		\end{equation}
		
		For \(m = 2\), the Bismuth-Elworthy-Li formula at second order (see \cite[Theorem 2.3]{ElworthyLi}) gives 
		\begin{equation}\label{eq:2nd_order_bel_formula}
			\begin{aligned}
				\nabla^2 u(t,x)\cdot (h,h) &=  \frac{4}{t^2} \mathbb{E}\left[\psi\left(X_x^\eta(t)\right) \sqrt{\frac{\beta}{2}}\int_{t/2}^t \delta_x^h(s)\cdot dW_s  \sqrt{\frac{\beta}{2}}\int_0^{t/2}\delta_x^h\left(s\right)\cdot dW_s\right] \\
				&\qquad + \frac{2}{t}\mathbb{E}\left[\int_0^{t/2}\nabla u\left(t-s, X_x^\eta(s)\right)\cdot \xi_x^h\left(s\right) ds\right],
			\end{aligned}
		\end{equation}
		where \(\xi_x^h\) is the process defined in \eqref{eq:flow_second_var}. Note that in our case the second term in \cite[Theorem 2.3]{ElworthyLi} vanishes as we have additive noise. Let \(\left(\mathcal{F}_t\right)_{t\geq 0}\) be the filtration generated by \(\left(X_x^\eta(t), \nabla_x X_x^\eta(t)\right)_{t\geq 0}\). Since \(\left(X_x^\eta(t), \nabla_xX_x^\eta(t)\right)_{t\geq 0}\) is a Markov process with respect to \(\left(\mathcal{F}_t\right)_{t\geq 0}\), conditioning on \(\mathcal{F}_{t/2}\) and using the tower property gives for the first expectation in the above equality 
		\begin{equation*}
			\begin{aligned}
				&\mathbb{E}\left[\psi\left(X_x^\eta(t)\right)\int_{t/2}^t \delta_x^h(s)\cdot dW_s \int_0^{t/2}\delta_x^h(s)\cdot dW_s\right] = \mathbb{E}\left[\mathbb{E}\left[\left. \psi\left(X_x^\eta(t)\right) \int_{t/2}^t \delta_x^h(s)\cdot dW_s \int_0^{t/2}\delta_x(s)\cdot dW_s \right| \mathcal{F}_{t/2}\right]\right]\\
				&\qquad = \mathbb{E}\left[\int_0^{t/2}\delta_x(s)\cdot dW_s\mathbb{E}\left[\left. \psi\left(X_x^\eta(t)\right) \int_{t/2}^t \delta_x^h(s)\cdot dW_s  \right| \mathcal{F}_{t/2}\right]\right]\\
				&\qquad = \mathbb{E}\left[\int_0^{t/2}\delta_x^h(s)\cdot dW_s\mathbb{E}\left[\left. \psi\left(X_x^\eta(t)\right) \int_0^{t/2} \delta_{X_x^\eta\left(t/2\right)}^{\delta_x^h(t/2)}(s)\cdot dW_{s + t/2}  \right| X_x^\eta\left(t/2\right), \nabla_x X_x^\eta\left(t/2\right)\right]\right]
			\end{aligned}
		\end{equation*}
		By \eqref{eq:1st_order_bel_formula},
		\[\mathbb{E}\left[\left. \psi\left(X_x^\eta(t)\right) \int_0^{t/2} \delta_{X_x^\eta\left(t/2\right)}^{\delta_x^h\left(t/2\right)}(s)\cdot dW_{s + t/2}  \right| X_x^\eta\left(t/2\right), \nabla_x X_x^\eta\left(t/2\right)\right] = \sqrt{\frac{2}{\beta}} \frac{t}{2}\nabla u\left(t/2, X_x^\eta\left(t/2\right)\right) \cdot \delta_x^h\left(t/2\right).\]
		Consequently, equation \eqref{eq:2nd_order_bel_formula} becomes
		\[\begin{aligned}
			\nabla_x^2 u(t,x)\cdot(h, h) = \frac{2}{t}\Bigg(\sqrt{\frac{\beta}{2}}\mathbb{E}\left[\nabla u\left(t/2, X_x^\eta\left(t/2\right)\right) \cdot \delta_x^h\left(t/2\right) \int_0^{t/2}\delta_x(s)\cdot dW_s \right]&\\
			\qquad + \mathbb{E}\left[\int_0^{t/2}\nabla u\left(t-s, X_x^\eta(s)\right)\cdot \xi_x^h(s)ds\right]\Bigg).& 
		\end{aligned}\]
		Using the Cauchy-Schwarz inequality and It\^o's isometry, we have 
		\[\begin{aligned}
			\left|\nabla^2 u(t,x)\cdot (h,h)\right| &\leq  \frac{2}{t}\Bigg(
			\sqrt{\frac{\beta}{2}}\mathbb{E}\left[\left|\nabla u\left(t/2, X_x^\eta\left(t/2\right)\right)\right|^4\right]^{1/4} \mathbb{E}\left[\left|\delta_x^h\left(t/2\right)\right|^4\right]^{1/4} \mathbb{E}\left[\left(\int_0^{t/2}\delta_x^h\left(s\right) \cdot dW_s\right)^2\right]^{1/2} \\
			&\qquad + \int_0^{t/2}\mathbb{E}\left[\left|\nabla u\left(t-s, X_x^\eta(s)\right)\right|^2\right]^{1/2} \mathbb{E}\left[\left|\xi_x^h(s)\right|^2\right]^{1/2} ds\Bigg)\\
			&=  \frac{2}{t}\Bigg(
			\sqrt{\frac{\beta}{2}}\mathbb{E}\left[\left|\nabla u\left(t/2, X_x^\eta\left(t/2\right)\right)\right|^4\right]^{1/4} \mathbb{E}\left[\left|\delta_x^h\left(t/2\right)\right|^4\right]^{1/4} \mathbb{E}\left[\int_0^{t/2}\left|\delta_x^h(s) \right|^2 ds\right]^{1/2} \\
			&\qquad + \int_0^{t/2}\mathbb{E}\left[\left|\nabla u\left(t-s, X_x^\eta(s)\right)\right|^2\right]^{1/2} \mathbb{E}\left[\left|\xi_x^h(s)\right|^2\right]^{1/2} ds\Bigg).
		\end{aligned}\]
		Using the bounds \eqref{eq:first_var_bd} and \eqref{eq:second_var_bd} on \(\left|\delta_x^h(t)\right|^2\) and \(\mathbb{E}\left[\left|\xi_x^h(t)\right|^2\right]\) respectively, the above inequality becomes, for \(t \in \left(0, 1\right]\):
		\[\left|\nabla^2 u(t,x)\cdot (h,h)\right| \leq C \mathcal{K}_s(x)  \frac{1}{t} |h|^2\left(t^{1/2}\mathbb{E}\left[\left|\nabla u\left(t/2, X_x^\eta\left(t/2\right)\right)\right|^4\right]^{1/4} + \int_0^{t/2}\mathbb{E}\left[\left|\nabla u\left(t-s, X_x^\eta(s)\right)\right|^2\right]^{1/2} ds\right),\]
		for some integer \(s \geq \ell_0\) and constant \(C > 0\).
		Then using the bound \eqref{eq:1st_short_time_deriv_estimates}, we can control the two terms involving \(\left|\nabla u (\cdot, \cdot)\right|\) in the above inequality to obtain
		\begin{equation}
			\left|\nabla^2 u(t,x)\cdot (h,h)\right| \leq C_2 \|\psi\|_{0, \ell_0}\mathcal{K}_{s_2}(x)t^{-1}|h|^2,
		\end{equation}
		for some integer \(s_2 \geq \ell_0\) and constant \(C_2 > 0\). 
		
		For \(m > 2\), following the proof \cite[Theorem 2.3]{ElworthyLi}, one can derive higher order Bismut--Elworthy--Li type formulae. Then with these formulae one can bound \(\left|\nabla^m u(t,x) \cdot \left(h, \dots, h\right)\right|\) in terms of \(\left|\nabla^q u(t,x) \cdot \left(h, \dots, h\right)\right|\) for \(q < m\) and \(\mathbb{E}\left[\left|\nabla^p_x X_x^\eta(t)\cdot(h, \dots, h)\right|^2\right]\) for \(p \leq m\). Using the bounds on \(\mathbb{E}\left[\left|\nabla^p_x X_x^\eta(t)\cdot(h, \dots, h)\right|^2\right]\) derived in the proof of Lemma~\ref{lm:poly_growth_u}, we can then conclude that bounds on \(\left|\nabla^q u(t,x) \cdot \left(h, \dots, h\right)\right|\) for \(q < m\) imply the bound on \(\left|\nabla^m u(t,x) \cdot \left(h, \dots, h\right)\right|\), thus completing the induction. 
 	\end{proof}
	
	We can now prove the proposition. 
	\begin{proof}[Proof of Proposition~\ref{prop:kolmo_eq}]
		Let \(\ell_0 \in \mathbb{N}\) be such that \(\varphi \in C_{\ell_0}^0\left(\mathbb{R}^d\right)\). For all \(t \geq 1\), we have \[u(t,x) = P^\eta_1\left(P^\eta_{t-1}\varphi\right)(x).\]
		For \(t\geq 1\), we define functions \(v_t(s, x) = P_s^\eta\left(P_{t-1}^\eta\varphi\right)(x)\) parameterized by \(t\) so that \(v_t(1, x) = u(t,x)\). The function \(v_t\) is the solution to the Kolmogorov equation \eqref{eq:kbe} with initial condition \(P^{\eta}_{t-1}\varphi\). By the moment growth bounds \eqref{eq:semigroup_estimates} and the fact that \(\left(P_t^\eta\right)_{t\geq 0}\) preserves continuity, it holds \(P_{t-1}^\eta \varphi \in C_{\ell_0}^0\left(\mathbb{R}^d\right)\). As a result, we can apply Lemma~\ref{lm:u_short_time_deriv_estimates} at time \(t = 1\) to obtain that, for any \(m \in \mathbb{N}\), there exist constants \(C > 0\) and \(s \in \mathbb{N}\) that depend only on \(m\), \(\ell_0\) and \(b, F, \eta\) such that, for any \(k\in \mathbb{N}^d\) with \(|k| = m\),
		\[\left|\partial^k u(t, x)\right| = \left|\partial^k v_t(1, x)\right| \leq C \left\|P_{t-1}^\eta\varphi\right\|_{0,\ell_0}\mathcal{K}_{s}(x).\]
		Since \(\left(P^\eta_t\right)_{t\geq 0}\) is geometrically ergodic with respect to the \(\mathcal{K}_{\ell_0}\)-norm (recall \eqref{eq:geo_ergodicity}) and \(\varphi\) has average zero with respect to \(\nu_\eta\), we have
		\[\left\|P_{t-1}^\eta\varphi\right\|_{0, \ell_0} \leq C \|\varphi\|_{0, \ell_0}\mathrm{e}^{-\lambda t},\]
		for some \(C, \lambda > 0\). Thus with the same \(\lambda\) but possibly a different \(C\), we have, for \(t \geq 1\),
		\[\left|\partial^k u(t,x)\right| \leq C\|\varphi\|_{0, \ell_0}K_s(x)\mathrm{e}^{-\lambda t}. \]
		For \(t < 1\), the bound \eqref{eq:poly_growth_u} implies the same sort of bound upon replacing \(\|\varphi\|_{0, \ell_0}\) with \(\|\varphi\|_{m, \ell_m}\), and increasing \(C\) and \(s\). Combining these two bounds gives \eqref{eq:bd_kbe}.
	\end{proof}

\section{Ergodicity of the Synchronously Coupled Dynamics}\label{sec:sync_contractive}
We denote by \(\mathcal{W}^1\) the 1-Wasserstein or Kantorovich distance:
\begin{equation}
	\mathcal{W}^1\left(\nu, \mu\right) = \inf_{\pi \in \Pi\left(\nu, \mu\right)} \int_{\mathbb{R}^{2d} \times \mathbb{R}^{2d}} \left|x - y\right| \pi\left(dx\, dy\right),
\end{equation}
where \(\Pi\left(\nu, \mu\right)\) is the set of couplings of \(\nu\) and \(\mu\).
We construct a two stage coupling of two synchronously coupled solutions to \eqref{eq:coupled_dynamics}, \(\left(Z_t^\eta\right)_{t\geq 0} := \left(X_t^\eta, Y_t^0\right)_{t\geq 0}\) and \(\left(\widecheck{Z}_t^\eta\right)_{t\geq 0} := \left(\widecheck{X}_t^\eta, \widecheck{Y}_t^0\right)_{t\geq 0}\). We denote by \(\tau_x := \inf\left\{t \geq 0 : \, X_t^\eta = \widecheck{X}_t^\eta\right\}\) the meeting time of the first components of \(Z^\eta\) and \(\widetilde{Z}^\eta\). The process \(Z^\eta\) follows
\begin{equation}\label{eq:sync_z}
	\begin{aligned}
		dX_t^\eta &= \left(b\left(X_t^\eta\right) + \eta F\left(X_t^\eta\right)\right)dt + \sqrt{\frac{2}{\beta}}dW_t,\\
		dY_t^0 &= b\left(Y_t^0\right)dt + \sqrt{\frac{2}{\beta}}dW_t.
	\end{aligned}
\end{equation}
For \(t < \tau_x\), the process \(\widecheck{Z}^\eta\) follows
\begin{equation}\label{eq:sync_z_til1}
	\begin{aligned}
		d\widecheck{X}_t^\eta &= \left(b\left(\widecheck{X}_t^\eta\right) + \eta F\left(\widecheck{X}_t^\eta\right)\right)dt + \sqrt{\frac{2}{\beta}}\left(\mathrm{Id} - 2e_te_t^T\right)dW_t,\\
		d\widecheck{Y}_t^0 &= b\left(\widecheck{Y}_t^0\right)dt + \sqrt{\frac{2}{\beta}}\left(\mathrm{Id} - 2e_te_t^T\right)dW_t,
	\end{aligned}
\end{equation}
where \(e_t\) is the unit vector
\[e_t := \frac{X_t^\eta - \widecheck{X}_t^\eta}{\left|X_t^\eta - \widecheck{X}_t^\eta\right|};\]
while for \(t \geq \tau_x\),
\begin{equation}\label{eq:sync_z_til2}
	\begin{aligned}
		\widecheck{X}_t^\eta &= X_t^\eta\\
		d\widecheck{Y}_t^0 &= b\left(\widecheck{Y}_t^0\right)dt + \sqrt{\frac{2}{\beta}}dW_t.
	\end{aligned}
\end{equation}
Denote by \(\left(T_t^\eta\right)_{t\geq 0}\) the semigroup of the synchronously coupled dynamics \eqref{eq:sync_z}.

\begin{proposition}
	Consider two probability measures \(\mu\) and \(\widecheck{\mu}\) on \(\mathbb{R}^{2d}\) with finite second moments and \(\left(Z_t^\eta, \widecheck{Z}_t^\eta\right)_{t\geq 0}\) satisfying \eqref{eq:sync_z}--\eqref{eq:sync_z_til2} with initial conditions \(\left(Z_0^\eta, \widecheck{Z}_0^\eta\right)\) such that \(Z_0^\eta \sim \mu\) and \(\widecheck{Z}_0^\eta \sim \widecheck{\mu}\). Assume that Assumption~\ref{ass:drift} holds with \(M = 0\). Then, there exist constants \(C(\mu, \widecheck{\mu}) \in \mathbb{R}_+\) and \(\gamma > 0\) such that 
	\begin{equation}\label{eq:L1_contraction}
		\mathbb{E}\left[\left|Z_t^\eta - \widecheck{Z}_t^\eta\right|\right] \leq C(\mu, \nu)\, \mathrm{e}^{-\gamma t},
	\end{equation}
	where \(C(\mu, \widecheck{\mu})\) only depends on the marginals of the initial condition in a way that is made explicit in the proof, see \eqref{eq:sync_contraction_prefactor}.
	As a consequence,
	\begin{equation}\label{eq:W1_contraction}
		\mathcal{W}^1\left(\mu T^\eta_t, \widecheck{\mu} T^\eta_t\right) \leq C(\mu, \widecheck{\mu})\, \mathrm{e}^{-\gamma t}.
	\end{equation}
\end{proposition}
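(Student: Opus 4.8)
The plan is to estimate $\mathbb{E}\bigl[|Z_t^\eta - \widecheck Z_t^\eta|\bigr]$ directly from the two-stage construction \eqref{eq:sync_z}--\eqref{eq:sync_z_til2}, treating the two components separately and splitting the time axis at the meeting time $\tau_x$ of the first components. Writing $r_t := |X_t^\eta - \widecheck X_t^\eta|$ and $s_t := |Y_t^0 - \widecheck Y_t^0|$, one has $|Z_t^\eta - \widecheck Z_t^\eta| \le r_t + s_t$ and $r_t = r_{t\wedge\tau_x}$ since $\widecheck X_t^\eta = X_t^\eta$ for $t\ge\tau_x$, so it suffices to bound $\mathbb{E}[r_t]$ and $\mathbb{E}[s_t]$ by exponentially decaying quantities with prefactors controlled by the second moments of $\mu$ and $\widecheck\mu$.

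First I would handle the first components. Since $F$ is Lipschitz and bounded and $b$ satisfies \eqref{eq:contractive_at_inf} with $M=0$, the common drift $b + \eta F$ of $X^\eta$ and $\widecheck X^\eta$ is globally Lipschitz and contractive at infinity, and \eqref{eq:sync_z_til1} realizes a reflection coupling of two copies of the associated diffusion. Itô's formula gives, on $[0,\tau_x)$, $\mathrm{d} r_t \le -\kappa(r_t)\,\mathrm{d} t + 2\sqrt{2/\beta}\,\mathrm{d} B_t$ for a one-dimensional Brownian motion $B$ and a continuous $\kappa$ that is affine with strictly positive slope away from the origin (using $\langle x-\widecheck x,\, b(x)-b(\widecheck x)\rangle \le -m|x-\widecheck x|^2$ and $|F(x)-F(\widecheck x)|\le\min\{L_F|x-\widecheck x|,\,2\|F\|_\infty\}$). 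From here I would invoke the reflection-coupling estimates of \cite{Eberle}: there is a concave increasing $f$ with $f(0)=0$, comparable to the identity on bounded sets, and a rate $c_1>0$ such that $\bigl(\mathrm{e}^{c_1 t}f(r_{t\wedge\tau_x})\bigr)_{t\ge0}$ is a supermartingale; combining the resulting bound on $\mathbb{E}[f(r_t)]$ with the uniform-in-time second-moment bound on $r_{t\wedge\tau_x}$ coming from the Lyapunov structure gives $\mathbb{E}[r_t]\le C\,\mathrm{e}^{-c_1' t}\,\mathbb{E}[1+r_0^2]$ for some $c_1'\in(0,c_1)$. A pathwise comparison of $r$ with the one-dimensional diffusion with drift $-\kappa$ and the same additive noise --- the continuous-time counterpart of the domination used for Proposition~\ref{prop:discrete_Wn_norm_bound}, see \cite{Durmus_etal} --- then shows $\tau_x<\infty$ almost surely and $\mathbb{P}(\tau_x>t)\le C\,\mathbb{E}[1+r_0^2]\,\mathrm{e}^{-c_2 t}$ for a suitable $c_2>0$.

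Next I would control the second components. On $[0,\tau_x)$ the difference $Y^0-\widecheck Y^0$ has the drift $b(Y^0)-b(\widecheck Y^0)$, contractive with rate $m$ because $M=0$, but is forced by the reflected noise $2\sqrt{2/\beta}\,e_t\,\mathrm{d} B_t$, whose strength is bounded independently of the distances; Itô's formula therefore gives $\frac{\mathrm{d}}{\mathrm{d} t}\mathbb{E}[s_{t\wedge\tau_x}^2]\le -2m\,\mathbb{E}[s_{t\wedge\tau_x}^2]+8/\beta$, so $\sup_{t\ge0}\mathbb{E}[s_{t\wedge\tau_x}^2]\le D:=\mathbb{E}[s_0^2]+4/(m\beta)$ and, by Fatou, $\mathbb{E}[s_{\tau_x}^2]\le D$. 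For $t<\tau_x$, Cauchy--Schwarz gives $\mathbb{E}[s_t\mathbf 1_{t<\tau_x}]\le\sqrt D\,\mathbb{P}(\tau_x>t)^{1/2}$. For $t\ge\tau_x$ the coupling is synchronous in both components, so $s$ solves the noise-free contractive ODE, giving $s_t\le\mathrm{e}^{-m(t-\tau_x)}s_{\tau_x}$ as well as $s_t\le s_{\tau_x}$; splitting $\{t\ge\tau_x\}$ according to whether $\tau_x\le t/2$ and combining these two bounds with $\mathbb{E}[s_{\tau_x}^2]\le D$ yields $\mathbb{E}[s_t\mathbf 1_{t\ge\tau_x}]\le\sqrt D\,\mathrm{e}^{-mt/2}+\sqrt D\,\mathbb{P}(\tau_x>t/2)^{1/2}$. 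Collecting the three contributions with $\gamma:=\min\{c_1',\,m/2,\,c_2/4\}$ and
\begin{equation}\label{eq:sync_contraction_prefactor}
	C(\mu,\widecheck\mu)=C_0\left(1+\int_{\mathbb{R}^{2d}}|z|^2\,\mu(\mathrm{d} z)+\int_{\mathbb{R}^{2d}}|z|^2\,\widecheck\mu(\mathrm{d} z)\right),
\end{equation}
where $C_0$ depends only on $m,\beta,L_b,L_F,\|F\|_\infty,\eta$, establishes \eqref{eq:L1_contraction}; since $|z|^2=|x|^2+|y|^2$, this prefactor depends on the initial law only through the marginals of $\mu$ and $\widecheck\mu$. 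To pass to \eqref{eq:W1_contraction} one checks that $(Z_t^\eta,\widecheck Z_t^\eta)$ is a coupling of $\mu T_t^\eta$ and $\widecheck\mu T_t^\eta$: reflecting a Brownian motion yields a Brownian motion, so on each of $[0,\tau_x)$ and $[\tau_x,\infty)$ the pair $\widecheck Z^\eta$ is driven by a genuine Brownian motion and hence solves \eqref{eq:sync_z}; concatenating at the stopping time $\tau_x$ and using the strong Markov property gives $\mathrm{Law}(\widecheck Z_t^\eta)=\widecheck\mu T_t^\eta$ and, likewise, $\mathrm{Law}(Z_t^\eta)=\mu T_t^\eta$, so $\mathcal{W}^1(\mu T_t^\eta,\widecheck\mu T_t^\eta)\le\mathbb{E}[|Z_t^\eta-\widecheck Z_t^\eta|]$.

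The main obstacle is the first-component analysis: one needs, simultaneously, the exponential $L^1$-contraction of $r_{t\wedge\tau_x}$ and the almost-sure finiteness and an exponential tail of the meeting time $\tau_x$, with constants controlled by the initial distance only through (at most) its second moment --- which forces $c_2$ to be taken small relative to both $m$ and the principal Dirichlet eigenvalue of the comparison diffusion, and is where the contractivity-at-infinity hypothesis and Eberle's concave-distortion construction are indispensable. Some further care is needed for the well-posedness of the reflection-coupled system \eqref{eq:sync_z_til1} (the diffusion coefficient is Lipschitz only off the diagonal) and for the identification of the law of $\widecheck Z^\eta$, whose driving noise switches at $\tau_x$. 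Once the tail of $\tau_x$ is available the second-component estimates are routine, the reflected noise feeding $Y^0-\widecheck Y^0$ having bounded strength and shutting off at $\tau_x$ after which the bare drift contracts; the contraction \eqref{eq:W1_contraction} in turn yields existence and uniqueness of the invariant probability measure $\mu_{\mathrm{sync},\eta}$ by a Banach fixed-point argument on probability measures with finite second moment.
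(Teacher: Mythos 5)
Your proposal is correct and follows essentially the same architecture as the paper's proof: a reflection coupling of the first components with Eberle's concave distance function yielding exponential $L^1$-contraction and an exponential tail for the meeting time $\tau_x$, followed by a split of the second-component estimate at $\tau_x$ (Cauchy--Schwarz with second-moment bounds before meeting, synchronous contraction at rate $m$ after). The only differences are executional — the paper reads the tail of $\tau_x$ and the $L^1$ bound directly off the same supermartingale estimate together with the global equivalence $\tfrac{\phi(R_0)}{2}r\leq f(r)\leq r$ (so first moments of the initial distance suffice), whereas you interpolate with second moments and invoke a separate hitting-time argument — and these do not change the substance of the argument.
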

\begin{proof}
	We proceed in two steps: first we follow the strategy of \cite{Eberle} to show exponential contractivity in~\(L^1\) for the \(x\)-components, namely
	\[\mathbb{E}\left[\left|X_t^\eta - \widecheck{X}_t^\eta\right|\right] \leq C \mathrm{e}^{-ct}\mathbb{E}\left[\left|X_0^\eta - \widecheck{X}_t^\eta \right|\right]\]
	for some constants \(C, c > 0\). We also prove that the coupling time of the first components \(\tau_x\) has an exponential tail, namely \(\mathbb{P}\left(\tau_x > t\right) \leq C \mathrm{e}^{-ct}\) for some constants \(C, c > 0\). 
	Second, we verify that the strong contractivity of the drift ensures that the \(y\)-components are exponentially contractive when \(t > \tau_x\). Combining these two results gives the desired contractivity for \(\mathbb{E}\left|Z_t^\eta - \widecheck{Z}_t^\eta\right|\).
	
	Applying It\^o's formula we obtain
	\[d\left|X_t^\eta - \widecheck{X}_t^\eta\right|^2 = \left[2\left\langle X_t^\eta - \widecheck{X}_t^\eta, b\left(X_t^\eta\right) + \eta F\left(X_t^\eta\right) - b\left(\widecheck{X}_t^\eta\right) - \eta F\left(\widecheck{X}_t^\eta\right)\right\rangle + \frac{8}{\beta}\right]dt +  \sqrt{\frac{32}{\beta}} \left|X_t^\eta - \widecheck{X}_t^\eta\right|e^T_tdW_t.\]
	We recall that the It\^o differential of a continuous strictly positive real-valued semi-martingale \(\left(\rho_t\right)_{t\geq 0}\) is given by 
	\[d\left(\sqrt{\rho_t}\right) = \frac{1}{2\sqrt{\rho_t}}d\rho_t - \frac{1}{8}\frac{d\langle \rho\rangle_t}{\rho^{3/2}_t},\]
	where \(\left(\langle \rho \rangle_t\right)_{t\geq 0}\) is the quadratic variation of \(\left(\rho\right)_{t\geq}\). Applying this fact to \(\left|X_t^\eta - \widecheck{X}_t^\eta\right|^2\) for \(t < \tau_x\) (so that \(\left|X_t^\eta - \widecheck{X}_t^\eta\right|^2 > 0\)) we have,
	\[d\left|X_t^\eta - \widecheck{X}_t^\eta\right| = \frac{\left\langle X_t^\eta - \widecheck{X}_t^\eta, b\left(X_t^\eta\right) + \eta F\left(X_t^\eta\right) - b\left(\widecheck{X}_t^\eta\right) - \eta F\left(\widecheck{X}_t^\eta\right) \right\rangle}{\left|X_t^\eta - \widecheck{X}_t^\eta\right|}dt + \sqrt{\frac{8}{\beta}}e_t^T dW_t.\]
	Since \(b\) is strongly contractive everywhere, we can bound the finite variation part as follows
	\[\begin{aligned}
		&\frac{\left\langle X_t^\eta - \widecheck{X}_t^\eta, b\left(X_t^\eta\right) + \eta F\left(X_t^\eta\right) - b\left(\widecheck{X}_t^\eta\right) - \eta F\left(\widecheck{X}_t^\eta\right) \right\rangle}{\left|X_t^\eta - \widecheck{X}_t^\eta\right|} \\
		&\qquad\qquad= \frac{\left\langle X_t^\eta - \widecheck{X}_t^\eta, b\left(X_t^\eta\right) - b\left(\widecheck{X}_t^\eta\right) \right\rangle}{\left|X_t^\eta - \widecheck{X}_t^\eta\right|} + \eta\frac{\left\langle X_t^\eta - \widecheck{X}_t^\eta, F\left(X_t^\eta\right) - F\left(\widecheck{X}_t^\eta\right) \right\rangle}{\left|X_t^\eta - \widecheck{X}_t^\eta\right|}\\
		&\qquad\qquad\leq -m\left|X_t^\eta - \widecheck{X}_t^\eta\right| + 2\eta\left\|F\right\|_\infty.
	\end{aligned}\] 
	Thus, by the comparison theorem for SDEs (see \cite[Chapter 6]{IkedaWatanabe}), we have almost surely that \(\left|X_t^\eta - \widecheck{X}_t^\eta\right| \leq r_t\) for all \(t \geq 0\), where \(\left(r_t\right)_{t\geq 0}\) is a one-dimensional diffusion satisfying, for \(t \in \left[0, \tau_r\right]\),
	\[dr_t = \left(-m r_t + 2\eta \left\|F\right\|_\infty\right)dt + \sqrt{\frac{8}{\beta}}e_t^TdW_t, \qquad r_0 = \left|X_0^\eta - \widecheck{X}_0\right|,\]
	and \(r_t \equiv 0\) for \(t \geq \tau_r\), where \(\tau_r\) is the first hitting time of zero for the process \(\left(r_t\right)_{t\geq 0}\), namely \(\tau_r = \inf\left\{t \geq 0\left| r_t = 0 \right.\right\}\). Note that, \(\tau_r \geq \tau_x\) almost surely.
	
	To show that \(\tau_r\) (and thereby \(\tau_x\)) has an exponential tail and that the \(x\)-components are exponentially contractive in \(L^1\), we follow the strategy in \cite{Eberle, EberleZimmer} of constructing a function \(f\) such that \(d_f(x,y) = f\left(|x - y|\right)\) is a distance on \(\mathbb{R}^d\) equivalent to the standard Euclidean distance, with the property that \(\left(\mathrm{e}^{ct}f(r_t)\right)_{t\in \left[0, \tau_r\right]}\) is a supermartingale. We define constants \(R_0, R_1 \in \left(0, \infty\right)\) by
	\begin{align}
		R_0 &= \inf\Big\{R \geq 0 \, \Big| \left(-m r + 2\eta\left\|F\right\|_\infty\right) \leq 0, \quad \forall r \geq R \Big\} = \frac{2\eta \|F\|_\infty}{m},\\
		R_1 &= \inf\Big\{R \geq R_0 \, \Big| \, R\left(R - R_0\right)\left(-mr + 2\eta\|F\|_\infty\right)/r \leq -4, \quad \forall r \geq R \Big\} = \frac{2\eta \|F\|_\infty}{m} + \frac{2}{\sqrt{m}}.
	\end{align}
	We then define a concave strictly increasing function \(f:\mathbb{R}_+ \to \mathbb{R}_+\) by
	\begin{equation}
		f(r) = \int_0^r \phi(s)g(s) \, ds,
	\end{equation}
	where 
	\begin{equation}
		\begin{aligned}
			\phi(r) &= \exp\left(-\frac{\beta}{4}\int_0^r\max\left\{0, -ms + 2\eta \|F\|_\infty\right\}ds\right),\\
			\Phi(r) &= \int_0^r \phi(s) \, ds,\\
			g(r) &= 1 - \frac{1}{4}\int_0^{\min\left\{r, R_1\right\}} \frac{\Phi(s)}{\phi(s)} \, ds \Bigg/ \int_0^{R_1} \frac{\Phi(s)}{\phi(s)} \, ds - \frac{1}{4}\int_0^{\min\left\{r, R_1\right\}} \frac{ds}{\phi(s)} \Bigg/ \int_0^{R_1}\frac{ds}{\phi(s)}.
		\end{aligned}
	\end{equation}
	Observe that \(g\) and \(\phi\) are both strictly positive and decreasing functions, so \(f\) is concave and strictly increasing. Furthermore \(g(0) = \phi(0) = 1\), for \(r \geq R_0\) the function \(\phi\) is constant, and for \(r \geq R_1\) the function \(g \equiv \frac{1}{2}\). Consequently, 
	\begin{equation}\label{eq:dist_f_equivalence}
		\frac{\phi\left(R_0\right)}{2} r \leq \frac{\Phi(r)}{2} \leq f(r) \leq \Phi(r) \leq r,
	\end{equation}
	so that \(d_f\) and the standard Euclidean distance are equivalent. Additionally, \(d_f\left(X_t^\eta, \widecheck{X}_t^\eta\right) \leq f\!\left(r_t\right)\) and \(f\left(r_t\right) = 0\) implies that \(X_t^\eta = \widecheck{X}_t^\eta\).
	 
	Since \(f\) is twice differentiable, by It\^o's formula we have
	\begin{equation}\label{eq:ito_diff_f}
		df\!\left(r_t\right) = \left[f'\left(r_t\right)\left(-mr_t + 2\eta \|F\|_\infty\right) + \frac{4}{\beta}f''\left(r_t\right)\right]dt + \sqrt{\frac{8}{\beta}}f'\left(r_t\right)e_t^T dW_t.
	\end{equation}
	For \(r \in \left(0, R_1\right)\),
	\[
	\begin{aligned}
		f'(r)\left(-mr + 2\eta \|F\|_\infty\right) + \frac{4}{\beta}f''(r) &= \phi(r)g(r) \left(-mr + 2\eta\|F\|_\infty\right)+ \frac{4}{\beta}\left(\phi'(r)g(r) + \phi(r)g'(r)\right)\\
		&= \phi(r)g(r) \left(-mr + 2\eta\|F\|_\infty\right) -  \phi(r)g(r) \min\left\{0, -mr + 2\eta\|F\|_\infty\right\}\\
		&\quad -\frac{1}{\beta} \phi(r)\left[\frac{\Phi(r)}{\phi(r)}\Bigg/\int_0^{R_1} \frac{\Phi(s)}{\phi(s)}ds + \frac{1}{\phi(r)}\Bigg/ \int_0^{R_1}\frac{ds}{\phi(s)} \right]\\
		&\leq - \left(\beta\int_0^{R_1} \frac{\Phi(s)}{\phi(s)}ds\right)^{-1}\Phi(r) - \left(\beta\int_0^{R_1}\frac{ds}{\phi(s)}\right)^{-1}\\
		&\leq - \left(\beta\int_0^{R_1} \frac{\Phi(s)}{\phi(s)}ds\right)^{-1}f\left(r\right) - \left(\beta\int_0^{R_1}\frac{ds}{\phi(s)}\right)^{-1}
	\end{aligned}
	\]
	For \(r \geq R_1\), \(f\) is affine with \(f' \equiv \phi\left(R_0\right)/2\) and thus \(f'' \equiv 0\). Furthermore, by our definition of \(R_1\), 
	\[-mr + 2\eta \|F\|_\infty \leq -\frac{4r}{R_1\left(R_1 - R_0\right)}.\]
	Thus we can bound the drift in \eqref{eq:ito_diff_f} when \(r \geq R_1\) by
	\begin{equation}\label{eq:far_contraction_1}
		\frac{\phi\left(R_0\right)}{2}\left(-mr + 2\eta \|F\|_\infty\right)  \leq -2\frac{\phi\left(R_0\right)}{R_1 - R_0}\frac{r}{R_1} \leq -2\frac{\phi\left(R_0\right)}{R_1 - R_0}\frac{\Phi(r)}{\Phi\left(R_1\right)},
	\end{equation}
	where the second inequality follow from the fact that \(\Phi(r)/r \leq \Phi\left(R_1\right)/R_1\) since \(\Phi\) is concave, i.e. concavity implies that
	\[\frac{\Phi(r)}{r} = \frac{\Phi(r)-\Phi(0)}{r} \leq \frac{\Phi\left(R_1\right) - \Phi(0)}{R_1} = \frac{\Phi\left(R_1\right)}{R_1}.\]
	This above inequality also implies
	\[\int_{R_0}^{R_1}\frac{\Phi(s)}{\phi(s)}ds = \frac{1}{\phi\left(R_0\right)}\int_{R_0}^{R_1}\Phi(s)ds \geq \frac{\Phi\left(R_1\right)}{\phi\left(R_0\right)}\frac{1}{R_1}\int_{R_0}^{R_1}sds = \frac{\Phi\left(R_1\right)}{\phi\left(R_0\right)} \frac{R_1^2 - R_0^2}{2R_1} \geq \frac{\Phi\left(R_1\right)}{\phi\left(R_0\right)} \frac{R_1 - R_0}{2}.\]
	Plugging this inequality into the right hand side of of \eqref{eq:far_contraction_1}, we obtain
	\begin{equation*}
		\begin{aligned}
			\frac{\phi\left(R_0\right)}{2}\left(-mr + 2\eta \|F\|_\infty\right)  &\leq  -2\frac{\phi\left(R_0\right)}{R_1 - R_0}\frac{\Phi(r)}{\Phi\left(R_1\right)} = -2\Phi(r)\left[(R_1 - R_0)\frac{\Phi\left(R_1\right)}{\phi\left(R_0\right)}\right]^{-1}\\ 
			&= -2\Phi(r)\left[2\frac{R_1 - R_0}{2}\frac{\Phi\left(R_1\right)}{\phi\left(R_0\right)}\right]^{-1} \leq -2\Phi(r)\left(2\int_{R_0}^{R_1}\frac{\Phi(s)}{\phi(s)}ds\right)^{-1}\\
			&\leq - 2\Phi(r)\left(2\int_0^{R_1}\frac{\Phi(s)}{\phi(s)}ds\right)^{-1}\leq -\left(\Phi(R_1) + f(r)\right)\left(2\int_0^{R_1}\frac{\Phi(s)}{\phi(s)}ds\right)^{-1},
		\end{aligned}
	\end{equation*}
	Defining the constants
	\begin{equation}\label{eq:ref_coupling_constants}
		\begin{aligned}
		c &= \min\left\{\frac{1}{\beta}, \frac{1}{2}\right\}\left(\int_0^{R_1}\frac{\Phi(s)}{\phi(s)}ds\right)^{-1},\\
		\epsilon &= \min\left\{\Phi\left(R_1\right)\left(2\int_0^{R_1}\frac{\Phi(s)}{\phi(s)}ds\right)^{-1}, \left(\beta\int_0^{R_1}\frac{ds}{\phi(s)}\right)^{-1}\right\},
		\end{aligned}
	\end{equation}
	the above bounds on the drift imply that 
	\[df\!\left(r_t\right)\leq -\left(\epsilon + cf\!\left(r_t\right)\right)dt + \sqrt{\frac{8}{\beta}}f'\left(r_t\right)e_t^TdW_t.\]
	Therefore, 
	\[d\left(\mathrm{e}^{ct}f\!\left(r_t\right)\right) \leq -\epsilon \mathrm{e}^{ct}dt + \mathrm{e}^{ct}\sqrt{\frac{8}{\beta}}f'\left(r_t\right)e_t^TdW_t.\]
	Integrating this inequality and taking expectations gives
	\begin{equation}\label{eq:f_dist_bound}
		\mathbb{E}\left[\mathrm{e}^{c\left(t\wedge \tau_r\right)}f\!\left(r_{t\wedge \tau_r}\right)\right] \leq \mathbb{E}\left[f\!\left(r_0\right)\right] - \frac{\epsilon}{c}\mathbb{E}\left[\mathrm{e}^{c\left(t\wedge \tau_r\right)} - 1\right] \leq \mathbb{E}\left[f\!\left(r_0\right)\right].
	\end{equation}
	For any \(t \geq 0\), separating between the event \(t < \tau_r\) and the event \(t \geq \tau_r\) gives
	\begin{equation*}
		\mathbb{E}\left[\mathrm{e}^{ct}f\!\left(r_t\right)\right] = \mathbb{E}\left[\mathrm{e}^{ct}f\!\left(r_t\right)\mathbf{1}_{\left\{\tau_r < t\right\}}\right] + \mathbb{E}\left[\mathrm{e}^{ct}f\!\left(r_t\right)\mathbf{1}_{\left\{\tau_r \geq t\right\}}\right].
	\end{equation*}
	The first term vanishes by construction (since \(f \!\left(r_t\right) = f(0) = 0\) for \(t \geq \tau_r\)) while the second term is bounded using \eqref{eq:f_dist_bound}. Thus, using \eqref{eq:dist_f_equivalence},
	\begin{equation*}
		\mathbb{E}\left[f\!\left(r_t\right)\right] \leq \mathrm{e}^{-ct}\mathbb{E}\left[f\!\left(r_0\right)\right] \leq \mathrm{e}^{-ct}\mathbb{E}\left[\left|X_0^\eta - \widecheck{X}_0^\eta\right|\right].
	\end{equation*}
	Consequently, using again \eqref{eq:dist_f_equivalence}, we obtain for the distance between the \(x\) marginals that
	\begin{equation}\label{eq:x_marginal_bd}
		\mathbb{E}\left[\left|X_t^\eta - \widecheck{X}_t^\eta\right|\right] \leq \frac{2}{\phi\left(R_0\right)}\mathrm{e}^{-ct}\mathbb{E}\left[\left|X_0^\eta - \widecheck{X}_0^\eta\right|\right].
	\end{equation}
	Observe that, since \(\mathrm{e}^{ct}f\left(r_t\right) \geq 0\), the intermediate inequality in \eqref{eq:f_dist_bound} implies that
	\[\mathbb{E}\left[\mathrm{e}^{c\left(t \wedge \tau_r\right)}\right] \leq \frac{c}{\epsilon}\mathbb{E}\left[f\left(r_0\right)\right] + 1 \leq  \frac{c}{\epsilon}\mathbb{E}\left[\left|X_0^\eta - \widecheck{X}_0^\eta\right|\right] + 1.\]
	We can take the limit of the left hand side as \(t\) goes to infinity to obtain, by monotone convergence,
	\[\mathbb{E}\left[\mathrm{e}^{c\tau_r}\right] \leq  \frac{c}{\epsilon}\mathbb{E}\left[\left|X_0^\eta - \widecheck{X}_0^\eta\right|\right] + 1.\]
	This inequality and Markov's inequality imply that 
	\begin{equation}\label{eq:x_meeting_time_bd}
		\mathbb{P}\left(\tau_x > t\right) \leq \mathbb{P}\left(\tau_r > t\right) \leq \mathrm{e}^{-ct}\mathbb{E}\left[\mathrm{e}^{c\tau_r}\right] \leq  \mathrm{e}^{-ct}\left(\frac{c}{\epsilon}\mathbb{E}\left[\left|X_0^\eta - \widecheck{X}_0^\eta\right|\right] + 1\right).
	\end{equation}
	
	Moving on to the \(y\)-components, \eqref{eq:sync_coupling_dist} from Lemma~\ref{lm:sync_coupling_dist} implies that for \(\left(Y_{\tau_x +t}^0, \widecheck{Y}_{\tau_x + t}^0\right)_{t \geq 0}\)
	\[\left|Y_{\tau_x + t}^0 - \widecheck{Y}_{\tau_x + t}^0\right| \leq \mathrm{e}^{-mt}\left|Y_{\tau_x}^0 - \widecheck{Y}_{\tau_x}^0\right|.\]
	Splitting up the expectation of the distance between the \(y\)-components into the expectation in the event \(\left\{\tau_x > t\right\}\) and the event \(\left\{\tau_x \leq t\right\}\), and using the above inequality and a Cauchy-Schwarz inequality, we obtain
	\begin{equation}\label{eq:y_dist_bd}
		\begin{aligned}
			\mathbb{E}\left[\left|Y_t^0 - \widecheck{Y}_t^0\right|\right] &= \mathbb{E}\left[\left|Y_t^0 - \widecheck{Y}_t^0\right|\mathbf{1}_{\left\{\tau_x > t\right\}}\right] + \mathbb{E}\left[\left|Y_t^0 - \widecheck{Y}_t^0\right|\mathbf{1}_{\left\{\tau_x \leq t\right\}}\right]\\
			&\leq \mathbb{E}\left[\left|Y_t^0 - \widecheck{Y}_t^0\right|^2\right]^{1/2}\mathbb{E}\left[\mathbf{1}_{\left\{\tau_x > t\right\}}\right]^{1/2} + \mathbb{E}\left[\left|Y_t^0 - \widecheck{Y}_t^0\right|\mathbf{1}_{\left\{\tau_x \leq t\right\}}\right]\\
			&= \mathbb{E}\left[\left|Y_t^0 - \widecheck{Y}_t^0\right|^2\right]^{1/2}\mathbb{P}\left(\tau_x > t\right)^{1/2} + \mathbb{E}\left[\left|Y_t^0 - \widecheck{Y}_t^0\right|\mathbf{1}_{\left\{\tau_x \leq t\right\}}\right]\\
			&\leq \mathbb{E}\left[\left|Y_t^0 - \widecheck{Y}_t^0\right|^2\right]^{1/2}\mathbb{P}\left(\tau_x > t\right)^{1/2} + \mathbb{E}\left[\mathrm{e}^{-m(t - \tau_x)}\left|Y_{\tau_x}^0 - \widecheck{Y}_{\tau_x}^0\right|\mathbf{1}_{\left\{\tau_x \leq t\right\}}\right].\\
		\end{aligned}
	\end{equation}
	The first term is controlled using the moment growth bounds \eqref{eq:semigroup_estimates} and the inequality \eqref{eq:x_meeting_time_bd},
	\begin{equation*}
		\begin{aligned}
			\mathbb{E}\left[\left|Y_t^0 - \widecheck{Y}_t^0\right|^2\right]^{1/2}\mathbb{P}\left(\tau_x > t\right)^{1/2} &\leq 	\mathbb{E}\left[\sqrt{\mathcal{K}_2\left(Y_t^0\right)} + \sqrt{\mathcal{K}_2\left(\widecheck{Y}_t^0\right)}\right]\mathbb{P}\left(\tau_x > t\right)^{1/2}\\
			&\leq \sqrt{S_2} \left(\mathbb{E}\left[ \mathcal{K}_2\left(Y_0^0\right)\right]^{1/2} + \mathbb{E}\left[ \mathcal{K}_2\left(\widecheck{Y}_0^0\right)\right]^{1/2}\right)\mathrm{e}^{-ct/2}\sqrt{\frac{c}{\epsilon}\mathbb{E}\left[\left|X_0^\eta - \widecheck{X}_0^\eta\right|\right] + 1},
		\end{aligned}
	\end{equation*}
	where \(S_1 > 0\) is the constant from \eqref{eq:semigroup_estimates}.
	For the second term in \eqref{eq:y_dist_bd}, we use the Cauchy--Schwarz inequality to obtain
	\begin{equation*}
		\begin{aligned}
			\mathbb{E}\left[\mathrm{e}^{-m(t - \tau_x)}\left|Y_{\tau_x}^0 - \widecheck{Y}_{\tau_x}^0\right|\mathbf{1}_{\left\{\tau_x \leq t\right\}}\right] &\leq \mathbb{E}\left[\mathrm{e}^{-2m(t - \tau_x)}\mathbf{1}_{\left\{\tau_x \leq t\right\}}\right]^{1/2}\mathbb{E}\left[\left|Y_{\tau_x}^0 - \widecheck{Y}_{\tau_x}^0\right|^2\right]^{1/2}\\
			&\leq \mathbb{E}\left[\mathrm{e}^{-2m(t - \tau_x)}\mathbf{1}_{\left\{\tau_x \leq t\right\}}\right]^{1/2}\left(\mathbb{E}\left[\left|Y_{\tau_x}^0\right|^2\right]^{1/2} + \mathbb{E}\left[\left|\widecheck{Y}_{\tau_x}^0\right|^2\right]^{1/2}\right)\\
			&\leq \mathbb{E}\left[\mathrm{e}^{-2m(t - \tau_x)}\mathbf{1}_{\left\{\tau_x \leq t\right\}}\right]^{1/2}\left(\mathbb{E}\left[\mathcal{K}_2\left(Y_{\tau_x}^0\right)^2\right]^{1/2} + \mathbb{E}\left[\mathcal{K}_2\left(\widecheck{Y}_{\tau_x}^0\right)^2\right]^{1/2}\right)\\
			&\leq \mathbb{E}\left[\mathrm{e}^{-2m(t - \tau_x)}\mathbf{1}_{\left\{\tau_x \leq t\right\}}\right]^{1/2}\sqrt{S_2}\left(\mathbb{E}\left[\mathcal{K}_2\left(Y_0^0\right)\right]^{1/2} + \mathbb{E}\left[\mathcal{K}_2\left(\widecheck{Y}_0^0\right)\right]^{1/2}\right),
		\end{aligned}
	\end{equation*}
	where \(S_2 > 0\) is the constant from \eqref{eq:semigroup_estimates}.
	To compute the first expectation in the last line, we use the fact, which follows from the layer cake representation of an integral (see for example \cite[Section 1.13]{LiebLoss}), that, for a non-negative random variable \(S \geq 0\) and \(C^1\) function \(h\),
	\[\mathbb{E}\left[h\left(\max\{S, t\}\right)\right] = \int_0^\infty \mathbf{1}_{\left\{s \leq t\right\}} h'\left(s\right) \mathbb{P}\left(S > s\right)ds + h(0).\]
	Thus, in view of \eqref{eq:x_meeting_time_bd},
	\[\begin{aligned}
		\mathbb{E}\left[\mathrm{e}^{-2m(t - \tau_x)}\mathbf{1}_{\left\{\tau_x \leq t\right\}}\right] &\leq \mathbf{E}\left[\mathrm{e}^{-2\left(t - \max\left\{\tau_x, t\right\}\right)}\right]= 2m\int_0^t \mathrm{e}^{-2m\left(t - s\right)}\mathbb{P}\left(\tau_x > s\right)ds  + \mathrm{e}^{-2mt}\\
		&\leq 2m\left(\frac{c}{\epsilon}\mathbb{E}\left[\left|X_0^\eta - \widecheck{X}_0^\eta\right|\right] + 1\right)\mathrm{e}^{-2mt}\int_0^t \mathrm{e}^{\left(2m - c\right)s}ds + \mathrm{e}^{-2mt}\\
		&= 2m\left(\frac{c}{\epsilon}\mathbb{E}\left[\left|X_0^\eta - \widecheck{X}_0^\eta\right|\right] + 1\right)\mathrm{e}^{-2mt}\frac{ \mathrm{e}^{\left(2m-c\right)t} - 1}{2m -c} + \mathrm{e}^{-2mt}\\
		&= 2m\left(\frac{c}{\epsilon}\mathbb{E}\left[\left|X_0^\eta - \widecheck{X}_0^\eta\right|\right] + 1\right)\frac{ \mathrm{e}^{-ct} - \mathrm{e}^{-2mt}}{2m -c} + \mathrm{e}^{-2mt}.
	\end{aligned} \]
	Note that \(2m - c > 0\) as \(m \geq 4c\). Indeed by the definition of \(c\) in \eqref{eq:ref_coupling_constants}, we have
	\[\frac{1}{c} \geq 2\int_0^{R_1}\frac{\Phi(s)}{\phi(s)}ds \geq 2\int_{R_0}^{R_1}\frac{\Phi(s)}{\phi(s)}ds =  2\int_{R_0}^{R_1}\frac{\phi\left(R_0\right)\left(s - R_0\right)}{\phi\left(R_0\right)}ds = 2\int_{R_0}^{R_1}\left(s - R_0\right)ds = \left(R_1 - R_0\right)^2 = \frac{4}{m}.\]
	Consequently, the distance between the \(y\)-components is bounded as follows:
	\[\begin{aligned}
			\mathbb{E}\left[\left|Y_t^0 - \widecheck{Y}_t^0\right|\right] &\leq \sqrt{S_2} \left(\mathbb{E}\left[ \mathcal{K}_2\left(Y_0^0\right)\right]^{1/2} + \mathbb{E}\left[ \mathcal{K}_2\left(\widecheck{Y}_0^0\right)\right]^{1/2}\right)\mathrm{e}^{-ct/2}\sqrt{\frac{c}{\epsilon}\mathbb{E}\left[\left|X_0^\eta - \widecheck{X}_0^\eta\right|\right] + 1} \\
			&\quad+ \sqrt{S_2}\left(\mathbb{E}\left[\mathcal{K}_2\left(Y_0^0\right)\right]^{1/2} + \mathbb{E}\left[\mathcal{K}_2\left(\widecheck{Y}_0^0\right)\right]^{1/2}\right) \sqrt{2m\left(\frac{c}{\epsilon}\mathbb{E}\left[\left|X_0^\eta - \widecheck{X}_0^\eta\right|\right] + 1\right)\frac{ \mathrm{e}^{-ct} - \mathrm{e}^{-2mt}}{2m -c} + \mathrm{e}^{-2mt}}\\
			&\leq \mathrm{e}^{-ct/2}\sqrt{S_2} \left(\mathbb{E}\left[ \mathcal{K}_2\left(Y_0^0\right)\right]^{1/2} + \mathbb{E}\left[ \mathcal{K}_2\left(\widecheck{Y}_0^0\right)\right]^{1/2}\right)\\
			&\qquad \times \left[\sqrt{\frac{c}{\epsilon}\mathbb{E}\left[\left|X_0^\eta - \widecheck{X}_0^\eta\right|\right] + 1} +\sqrt{2m\left(\frac{c}{\epsilon}\mathbb{E}\left[\left|X_0^\eta - \widecheck{X}_0^\eta\right|\right] + 1\right)\frac{ \mathrm{e}^{-ct} - \mathrm{e}^{-2mt}}{2m -c} + \mathrm{e}^{-2mt}}\right]\\
			&\leq \mathrm{e}^{-ct/2}\sqrt{S_2} \left(\mathbb{E}\left[ \mathcal{K}_2\left(Y_0^0\right)\right]^{1/2} + \mathbb{E}\left[ \mathcal{K}_2\left(\widecheck{Y}_0^0\right)\right]^{1/2}\right) \left[\sqrt{1 + 2m}\sqrt{\frac{c}{\epsilon}\mathbb{E}\left[\left|X_0^\eta - \widecheck{X}_0^\eta\right|\right] + 1} + 1\right]\\
			&\leq \mathrm{e}^{-ct/2}\sqrt{S_2} \left(\mathbb{E}\left[ \mathcal{K}_2\left(Y_0^0\right)\right]^{1/2} + \mathbb{E}\left[ \mathcal{K}_2\left(\widecheck{Y}_0^0\right)\right]^{1/2}\right) \left[\sqrt{1 + 2m}\sqrt{\frac{c}{\epsilon}\mathbb{E}\left[\mathcal{K}_1\left(X_0^\eta\right) + \mathcal{K}_1\left(\widecheck{X}_0^\eta\right)\right] + 1} + 1\right]\\
	\end{aligned}\]
	where the third inequality is due to subadditivity of the square root and the fact that \(\frac{ \mathrm{e}^{-ct} - \mathrm{e}^{-2mt}}{2m -c} \leq 1\) which follows from the convexity of \(x \mapsto \mathrm{e}^{-xt}\). 
	Recalling the bound \eqref{eq:x_marginal_bd} on the \(x\) component, we obtain 
	\begin{equation*}
		\begin{aligned}
			\mathbb{E}\left[\left|Z_t^\eta - \widecheck{Z}_t^\eta\right|\right] &\leq \mathbb{E}\left[\left|X_t^\eta - \widecheck{X}_t^\eta\right|\right] + \mathbb{E}\left[\left|Y_t^0 - \widecheck{Y}_t^0\right|\right] \leq C\left(\mu, \widetilde{\mu}\right)\,\mathrm{e}^{-ct/2},
		\end{aligned}
	\end{equation*}
	where
	\begin{equation}\label{eq:sync_contraction_prefactor}
		\begin{aligned}
		C\left(\mu, \widetilde{\mu}\right) &= \frac{2}{\phi\left(R_0\right)}\left(\mu\left(\mathcal{K}_1\oplus \mathbf{0}\right) + \widetilde{\mu}\left(\mathcal{K}_1 \oplus \mathbf{0}\right)\right)\\
		 &\quad+ \sqrt{S_2}\left(\mu\left(\mathbf{0} \oplus\mathcal{K}_2\right)^{1/2} + \widetilde{\mu}\left(\mathbf{0} \oplus\mathcal{K}_2\right)^{1/2}\right)\left(\sqrt{1+2m}\sqrt{\frac{c}{\epsilon}\left[\mu\left(\mathbf{0}\oplus\mathcal{K}_1\right) + \widetilde{\mu}\left(\mathbf{0}\oplus\mathcal{K}_1\right)\right] + 1} + 1\right)
		\end{aligned},
	\end{equation}
	thus giving the claimed bound \eqref{eq:L1_contraction}.
\end{proof}
A corollary of this proposition and its interest for us is that the synchronously coupled dynamics admits a unique ergodic invariant probability measure, which we previously denoted by \(\mu_{\eta, \mathrm{sync}}\) in Section~\ref{sec:coupling}. We finish this section with a proof of this corollary.
\begin{corollary}
	For any \(\eta \in \mathbb{R}\), the synchronously coupled dynamics \eqref{eq:sync_z} admits a unique invariant probability measure. 
\end{corollary}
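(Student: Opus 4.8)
I would deduce the corollary from the preceding proposition for uniqueness, and obtain existence separately by a compactness (Krylov--Bogoliubov) argument; throughout, Assumption~\ref{ass:drift} is in force with $M=0$, as in the proposition.

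\textbf{Existence.} The plan is to run a Krylov--Bogoliubov argument inside the set $\Pi(\nu_\eta,\nu_0)$ of couplings of $\nu_\eta$ and $\nu_0$. Since $\nu_\eta$ and $\nu_0$ are tight, $\Pi(\nu_\eta,\nu_0)$ is a weakly compact convex set of probability measures on $\mathbb{R}^{2d}$; moreover, because $(T_t^\eta)_{t\ge 0}$ is a Markov coupling of the kernels $(P_t^\eta)_{t\ge0}$ and $(P_t^0)_{t\ge0}$, which leave $\nu_\eta$ and $\nu_0$ invariant, the semigroup $(T_t^\eta)_{t\ge0}$ maps $\Pi(\nu_\eta,\nu_0)$ into itself. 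Picking any $\mu_0\in\Pi(\nu_\eta,\nu_0)$, the time averages $\mu_T := \frac1T\int_0^T \mu_0 T_t^\eta\,dt$ lie in $\Pi(\nu_\eta,\nu_0)$, hence admit a weakly convergent subsequence whose limit $\mu_{\eta,\mathrm{sync}}$ is invariant for $(T_t^\eta)_{t\ge0}$, using the Feller property of the coupled SDE (smooth coefficients, no explosion by the contractivity at infinity). I would also note that \emph{any} invariant probability measure of $(T_t^\eta)_{t\ge0}$ lies in $\Pi(\nu_\eta,\nu_0)$: projecting onto either coordinate yields an invariant measure of $(P_t^\eta)_{t\ge0}$ or $(P_t^0)_{t\ge0}$, which must be $\nu_\eta$ or $\nu_0$ by uniqueness of these marginal invariant measures. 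In particular every invariant measure of the coupled dynamics has finite moments of all orders by \eqref{eq:moment_bounds}.

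\textbf{Uniqueness.} Let $\mu_1,\mu_2$ be two invariant probability measures of \eqref{eq:sync_z}. By the previous step both belong to $\Pi(\nu_\eta,\nu_0)$, so $\mu_i(\mathcal{K}_1\oplus\mathbf{0}) = \nu_\eta(\mathcal{K}_1)<\infty$ and $\mu_i(\mathbf{0}\oplus\mathcal{K}_2) = \nu_0(\mathcal{K}_2)<\infty$; consequently the constant $C(\mu_1,\mu_2)$ given by \eqref{eq:sync_contraction_prefactor} is finite. Applying \eqref{eq:W1_contraction} from the preceding proposition and using invariance $\mu_i T_t^\eta = \mu_i$ gives
\[
\mathcal{W}^1(\mu_1,\mu_2) = \mathcal{W}^1\!\left(\mu_1 T_t^\eta, \mu_2 T_t^\eta\right) \le C(\mu_1,\mu_2)\,\mathrm{e}^{-\gamma t} \xrightarrow[t\to\infty]{} 0,
\]
hence $\mathcal{W}^1(\mu_1,\mu_2)=0$ and $\mu_1=\mu_2$. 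Since a Markov semigroup with a unique invariant probability measure is automatically ergodic (that measure being the unique extreme point of the convex set of invariant measures), this also recovers the ergodicity statement used in Section~\ref{sec:coupling}.

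\textbf{Expected main obstacle.} The only genuine subtlety is that \eqref{eq:W1_contraction} is not a Banach contraction on a fixed metric space: its prefactor $C(\mu,\widecheck\mu)$ depends on low-order moments of the two initial laws rather than on $\mathcal{W}^1(\mu,\widecheck\mu)$ alone, so it cannot be iterated to build a Cauchy sequence and thereby existence. This is precisely why existence has to be argued independently (the compactness step above), whereas for uniqueness the moment-dependent prefactor is harmless, since the two candidate invariant measures have a priori controlled moments. As an alternative route to the tightness needed for existence, one could instead invoke Lemma~\ref{lm:sync_coupling_dist}, which forces $|X_t^\eta - Y_t^0|$ to be eventually at most $\eta\|F\|_\infty/m$, and combine it with the moment bound on a single marginal from \eqref{eq:semigroup_estimates}.
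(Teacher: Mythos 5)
Your proposal is correct, and the uniqueness step is essentially the paper's: both arguments apply the $\mathcal{W}^1$-contraction \eqref{eq:W1_contraction} to two invariant measures and use that the prefactor \eqref{eq:sync_contraction_prefactor} is finite because any invariant measure of the coupled dynamics is automatically a coupling of $\nu_\eta$ and $\nu_0$ and hence has all moments (the paper makes the same observation at the start of its proof to remove the second-moment restriction). Where you genuinely diverge is existence. The paper does not use Krylov--Bogoliubov: it fixes one initial law $\mu$ with finite second moments, uses the semigroup moment estimates \eqref{eq:semigroup_estimates} to bound $C(\mu T_t^\eta,\mu T_s^\eta)$ uniformly in $t,s$, deduces from \eqref{eq:W1_contraction} that the orbit $(\mu T_t^\eta)_{t\ge 0}$ is Cauchy in $\mathcal{W}^1$, and takes the invariant measure to be its limit, invoking continuity of $\rho\mapsto\rho T_t^\eta$ in $\mathcal{W}^1$. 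Your compactness argument inside $\Pi(\nu_\eta,\nu_0)$ is a valid alternative: the coupling set is convex, weakly compact, and preserved by $(T_t^\eta)_{t\ge0}$ since the coupled kernel has marginals $P_t^\eta$ and $P_t^0$, so time averages subconverge and the Feller property (Lipschitz coefficients) makes the limit invariant. Your route is more elementary on the existence side, exploiting the marginal structure rather than completeness of the Wasserstein space and $\mathcal{W}^1$-continuity of the semigroup, and it correctly identifies why a naive Banach fixed-point iteration of \eqref{eq:W1_contraction} fails (the prefactor depends on moments, not on $\mathcal{W}^1$); what the paper's route buys in exchange is exponential $\mathcal{W}^1$-convergence of $\mu T_t^\eta$ to the invariant measure from any initial law with second moments, which is of independent use for the ergodicity statements in Section~\ref{sec:coupling}, whereas your construction yields only existence and must recover convergence afterwards from the contraction.
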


\begin{proof}
	Let \(\mu\) be a probability measure on \(\mathbb{R}^{2d}\) which admits second moments. This assumption is in fact not restrictive as any invariant measure of the synchronously coupled dynamics necessarily admits moments of all orders since it is a coupling of \(\nu_\eta\) and \(\nu_0\) and the moment bounds \eqref{eq:moment_bounds} imply that these two measures admit moments of all orders. Using the explicit expression \eqref{eq:sync_contraction_prefactor} of the prefactor in \eqref{eq:W1_contraction} and the growth estimates on the semigroups of the marginal dynamics \eqref{eq:semigroup_estimates}, it is clear that there exists a constant \(K_\mu > 0 \) that may depend on \(\mu\) such that
	\[\forall t, s \geq 0, \qquad C\left(\mu T_t^\eta, \mu T_s^\eta\right) \leq K_\mu.\]
	 
	 Fix a probability measure \(\mu\) on \(\mathbb{R}^{2d}\) with finite second moments. For any two times \(t \geq s \geq 0\), we have that 
	 \[\mathcal{W}^1\left(\mu T_t^\eta, \mu T_s^\eta\right) = \mathcal{W}^1\left(\mu T_{t-s}^\eta T_s, \mu T_s^\eta\right) \leq C\left(\mu T_{t-s}^\eta, \mu\right) \mathrm{e}^{-\gamma s} \leq K_\mu \mathrm{e}^{-\gamma s}. \]
	 Consequently the sequence \(\left(\mu T_t^\eta\right)_{t\geq 0}\) is a Cauchy sequence for the \(\mathcal{W}^1\) distance and therefore has a unique limit, denoted by \(\mu_\infty\). By the triangle inequality, we have for any \(t, s \geq 0\)
	 \[\mathcal{W}^1\left(\mu_\infty T_t^\eta,\mu_\infty\right) \leq \mathcal{W}^1\left(\mu_\infty T_t^\eta, \mu T_s^\eta T_t^\eta\right) + \mathcal{W}^1\left(\mu T_{s+t}^\eta, \mu_\infty\right). \]
	 Since the map \(\rho \mapsto \rho T_t^\eta\) is continuous with respect to the \(\mathcal{W}^1\) distance for \(t \geq 0\) fixed and \(\mu T_s\) converges to \(\mu_\infty \) for \(\mathcal{W}^1\) distance, the right hand side of the above inequality tends to zeros as \(s \to \infty\). Thus, \(\mu_\infty T_t = \mu_\infty\) for any~\(t \geq 0\), i.e \(\mu_\infty\) is an invariant probability measure with respect to~\(\left(T_t\right)_{t\geq 0}\).
	 
	 To see that \(\mu_\infty\) does not depend on the choice of initial measure, we can repeat this construction for another probability measure \(\nu\) with finite second moments to obtain another invariant probability measure \(\nu_\infty\) on \(\mathbb{R}^{2d}\). We claim that \(\nu_\infty = \mu_\infty\). Indeed, since both \(\mu_\infty\) and \(\nu_\infty\) are invariant with respect to \(\left(T_t^\eta\right)_{t\geq 0}\) we have, for any~\(t \geq 0\),
	 \[\mathcal{W}^1\left(\mu_\infty, \nu_\infty\right) = \mathcal{W}^1\left(\mu_\infty T_t^\eta, \nu_\infty T_t^\eta\right) \leq C\left(\mu_\infty, \nu_\infty\right) \, \mathrm{e}^{-\gamma t}.\]
	 Thus \(\mathcal{W}^1\left(\mu_\infty, \nu_\infty\right) = 0\) and \(\mu_\infty = \nu_\infty\). Furthermore, by the same reasoning using invariance, any probability measure \(\rho\) with finite second moments that is invariant with respect to \(\left(T_t^\eta\right)_{t\geq 0}\) is necessarily equal to \(\mu_\infty\), i.e. \(\mu_\infty\) is the unique invariant probability measure of the synchronously coupled dynamics.
\end{proof}

\section{Proofs of Lemmas \ref{lm:disc_lin_resp} and \ref{lm:disc_poisson_sol_approx}}\label{sec:discrete_lin_resp_proofs}
In this appendix, we prove Lemmas~\ref{lm:disc_lin_resp}~and~\ref{lm:disc_poisson_sol_approx} using the strategy from \cite[Section 4]{Leimkuhler} of approximating the transition kernel and the inverse discrete generator. We first present some of the tools necessary for the arguments for proving the two lemmas and then conclude with their proofs. These arguments result in some explicit correction terms and remainders involving higher powers of \(\Delta t\) and/or \(\eta\) and what we call well-behaved operators. Precisely, we call an operator \(\mathcal{D}_{\Delta t, \eta, \theta}\) (possibly depending on \(\Delta t\), \(\eta \) and a parameter \(\theta\) belonging to a compact parameter space \(\Theta\)) well behaved if its domain contains \(\mathscr{S}\), it stabilizes \(\mathscr{S}\), and  there exists \(p \in \mathbb{N}\), such that for any \(\eta_\star, \Delta t^\star > 0\) and \(n \in \mathbb{N}\), there exist \(m_n \in \mathbb{N}\) and \(C_{n, \eta_\star, \Delta t^{\star}} > 0\) such that, for any \(\varphi \in \mathscr{S} \cap C_{n}^{p}\left(\mathbb{R}^d\right)\),
\begin{equation}\label{eq:well_behaved_def}
	\forall \eta \in \left[-\eta_\star, \eta_\star\right], \quad \Delta t \in \left(0, \Delta t^*\right), \quad \theta \in \Theta, \qquad \left\|\mathcal{D}_{\Delta t, \eta, \theta}\varphi\right\|_{\mathcal{K}_{m_n}} \leq C_{n, \eta_\star, \Delta t^{\star}} \sum_{|\alpha| \leq p} \left\|\partial^\alpha \varphi\right\|_{\mathcal{K}_n}.
\end{equation}
Recall from Appendix~\ref{sec:Kopec_ext} that \(C_m^p\left(\mathbb{R}^d\right)\) is the space of \(p\)-times continuously differentiable functions that belong to \(B_m^\infty\) and that have derivatives in \(B_m^\infty\).
Similarly, we call a functional \(\mathcal{A}_{\Delta t,\eta, \theta}: \mathscr{S} \to \mathbb{R}\) (possibly depending on \(\Delta t\), \(\eta \) and a parameter \(\theta\) belonging to a compact parameter space \(\Theta\)) well behaved if it satisfies an inequality similar to the one above but with an absolute value on the left hand side. More precisely, there exists \(p \in \mathbb{N}\) such that for any \(\eta_\star, \Delta t^* > 0\) and \(n \in \mathbb{N}\), there exists a constant \(C_{n, \eta_\star, \Delta t^{\star}} > 0\) such that, for any \(\varphi \in \mathscr{S} \cap C_n^p\left(\mathbb{R}^d\right)\),
\begin{equation}\label{eq:well_behaved_functional_def}
	\forall \eta \in \left[-\eta_\star, \eta_\star\right], \quad \Delta t \in \left(0, \Delta t^*\right), \quad \theta \in \Theta, \qquad \left|\mathcal{A}_{\Delta t, \eta, \theta}\varphi\right| \leq C_{n, \eta_\star, \Delta t^{\star}} \sum_{|\alpha| \leq p} \left\|\partial^\alpha \varphi\right\|_{\mathcal{K}_n}.
\end{equation}
Denote by \(B_\eta\) and \(C_\beta\) the generators of the following semigroups: 
\begin{equation*}
	\begin{aligned}
		\left(\mathrm{e}^{tB_\eta}\varphi\right)(x) = \varphi\left(x + t\left[b(x) + \eta F(x)\right]\right), \qquad
		\left(\mathrm{e}^{tC_\beta}\varphi\right)(x) = \int_{\mathbb{R}^d} \varphi\left(x + \sqrt{\frac{2t}{\beta}}z\right) \frac{\mathrm{e}^{-\frac{|z|^2}{2}}dz}{\left(2\pi\right)^{d/2}}.
	\end{aligned}
\end{equation*}
We write \(B := B_0\) when \(\eta = 0\). 
A simple computation shows that \(B_\eta = B + \eta \widetilde{\mathcal{L}}\) with \(B = b\cdot \nabla\) and \(C_{\beta} = \frac{1}{\beta}\Delta \). The transition kernel \(P^{\eta, \Delta t}\) then can be written as 
\[P^{\eta, \Delta t} = \mathrm{e}^{\Delta t B_\eta}\mathrm{e}^{\Delta t C_\beta}.\]
Viewing the transition kernel as a function of the time step, \(t \mapsto P^{\eta, t}\), we make the following Taylor expansion:
\[P^{\eta, \Delta t} = \mathrm{Id} + \Delta t \frac{\mathrm{d}}{\mathrm{d}t}P^{\eta, t}\big|_{t =0 } + \frac{\Delta t^2}{2} \frac{\mathrm{d}^2}{\mathrm{d}t^2}P^{\eta, t}\big|_{t = 0} + \frac{\Delta t^3}{2}\int_0^1 (1- \theta) \frac{\mathrm{d}^3}{\mathrm{d}t^3}P^{\eta, t}\big|_{t = \theta \Delta t}d\theta.\] 
Computing explicitly these derivatives we get
\begin{align}
	\frac{\mathrm{d}}{\mathrm{d}t}P^{\eta, t}\big|_{t =0 } &= B_\eta + C_\beta = \mathcal{L}_0 + \eta \widetilde{\mathcal{L}},\\
	\frac{\mathrm{d}^2}{\mathrm{d}t^2}P^{\eta, t}\big|_{t = 0} &= B_\eta^2 + 2B_\eta C_\beta + C_\beta^2 = \mathcal{L}_0^2 + S + \eta D + \eta^2 \widetilde{\mathcal{L}}^2,\\
	\frac{\mathrm{d}^3}{\mathrm{d}t^3}P^{\eta, t}\big|_{t =  s} &= B_\eta^3\mathrm{e}^{sB_\eta}\mathrm{e}^{sC_\beta} + 3B_\eta^2 \mathrm{e}^{sB_\eta}C_\beta \mathrm{e}^{sC_\beta} + 3 B_\eta \mathrm{e}^{sB_\eta}C_\beta^2\mathrm{e}^{sC_\beta} + \mathrm{e}^{sB_\eta}C_\beta^3\mathrm{e}^{sC_\beta},\label{eq:P_third_deriv}
\end{align}
where \(S = BC_\beta - C_\beta B\) and \(D = B\widetilde{\mathcal{L}} + \widetilde{\mathcal{L}}(B + 2C_\beta)\). Denote by \(\mathcal{R}_{\eta, s}\) be the operator given by the right hand side of \eqref{eq:P_third_deriv}. Using the equality
\[\mathrm{e}^{sB_\eta} - \mathrm{e}^{sB} = \eta \int_0^1 \mathrm{e}^{\theta s B_\eta}\widetilde{\mathcal{L}}\mathrm{e}^{s(1-\theta)B}d\theta,\]
we can write \(\mathcal{R}_{\eta, s}\) as
\[\mathcal{R}_{\eta, s} = \mathcal{R}_{0, s} + \eta \widetilde{\mathcal{R}}_{\eta, s},\]
where \(\mathcal{R}_{0, s}\) corresponds to case when \(\eta = 0\) and \(\widetilde{\mathcal{R}}_{\eta, s}\) is some well-behaved operator. Indeed, \(B_\eta\), \(C_\beta\), \(\mathrm{e}^{tB_\eta}\), and \(\mathrm{e}^{tC_\beta}\) all stabilize \(\mathscr{S}\) and satisfy \eqref{eq:well_behaved_def} and the composition of well-behaved operators is a well-behaved operator. Consequently the transition kernel \(P^{\eta, \Delta t}\) can be written as
\[P^{\eta, \Delta t} = \mathrm{Id} + \Delta t \left(\mathcal{L}_0 + \eta \widetilde{\mathcal{L}}\right) + \frac{\Delta t^2}{2}\left(\mathcal{L}_0^2 + S + \eta D + \eta^2 \widetilde{\mathcal{L}}^2\right) + \frac{\Delta t^3}{3}\int_0^1 (1- \theta) \left(\mathcal{R}_{0, \theta\Delta t } + \eta \widetilde{\mathcal{R}}_{\eta, \theta\Delta t}\right)d\theta,\]
and the discrete generator can be written as
\begin{equation}\label{eq:disc_generator_approx}
	\frac{\mathrm{Id} - P^{\eta, \Delta t}}{\Delta t} = -\left(\mathcal{L}_0 + \eta \widetilde{\mathcal{L}}\right) - \frac{\Delta t}{2}\left(\mathcal{L}_0^2 + S + \eta D\right) + \Delta t^2 \mathcal{D}_{1, \Delta t} + \eta^2 \Delta t\mathcal{D}_{2,\Delta t} + \eta \Delta t^2\mathcal{D}_{3, \eta, \Delta t},
\end{equation}
where \(\mathcal{D}_{1}\), \(\mathcal{D}_{2, \Delta t}\), and \(\mathcal{D}_{3, \eta, \Delta t}\) some well behaved operators. We approximate the inverse of the discrete generator restricted to the range of \(\Pi_0\) with the operator from \(\mathscr{S}\) to \(\mathscr{S}_0\)
\begin{equation}\label{eq:approx_inverse}
	\begin{aligned}
	\mathcal{Q}_{\eta, \Delta t} &= -\mathcal{L}_0^{-1}\Pi_0 + \eta \mathcal{L}_0^{-1}\Pi_0 \widetilde{\mathcal{L}}\mathcal{L}_0^{-1}\Pi_0 + \frac{\Delta t}{2}\left(\Pi_0 + \mathcal{L}_0^{-1}\Pi_0 \left(S + \eta D\right)\mathcal{L}_0^{-1}\Pi_0\right)\\
	&\qquad + \frac{\eta \Delta t}{2}\mathcal{L}_0^{-1}\Pi_0\widetilde{\mathcal{L}}\left(\Pi_0 + \mathcal{L}_0^{-1}\Pi_0S\mathcal{L}_0^{-1}\Pi_0\right)\\
	&\qquad + \frac{\eta \Delta t}{2}\left(\Pi_0 + \mathcal{L}_0^{-1}\Pi_0 S\mathcal{L}_0^{-1}\right)\Pi_0\widetilde{\mathcal{L}}\mathcal{L}_0^{-1}\Pi_0,
	\end{aligned}
\end{equation}
which is constructed by formally taking the inverse of \(-\mathcal{L}_0 - \eta \widetilde{\mathcal{L}} - \frac{\Delta t}{2}\left(\mathcal{L}_0^2 + S + \eta D\right)\) using the formula \(\left(A + B\right)^{-1} = A^{-1} - A^{-1}BA^{-1} + A^{-1}BA^{-1}BA^{-1} - \dots\) and discarding terms of order \(\eta^2\), \(\Delta t^2 \) or higher. There exist well-behaved operators \(\widetilde{\mathcal{D}}_1\), \(\widetilde{\mathcal{D}}_2\), \(\widetilde{\mathcal{D}}_{3, \Delta t}\), and \(\widetilde{\mathcal{D}}_{4, \eta, \Delta t}\) such that
\begin{equation}\label{eq:approx_inverse_equality}
	\begin{aligned}
	\Pi_0\left(\frac{\mathrm{Id} - P^{\eta, \Delta t}}{\Delta t}\right)\mathcal{Q}_{\eta, \Delta t} &= \Pi_0 + \eta \Pi_0\left(\mathrm{Id} - \Pi_0\right)\widetilde{\mathcal{L}}\mathcal{L}_0^{-1}\Pi_0 + \frac{\Delta t}{2}\Pi_0\left(\mathrm{Id} - \Pi_0\right)S\mathcal{L}_0^{-1}\Pi_0 \\
	&\qquad -\frac{\eta \Delta t}{2}\Pi_0\left(\mathrm{Id} - \Pi_0 \right)S \mathcal{L}_0^{-1}\Pi_0\widetilde{\mathcal{L}}\mathcal{L}_0^{-1}\Pi_0 \\
	&\qquad+ \Delta t^2 \widetilde{\mathcal{D}}_1 + \eta^2 \widetilde{\mathcal{D}}_2 + \eta^2 \Delta t\widetilde{\mathcal{D}}_{3, \Delta t} + \eta \Delta t^2 \widetilde{\mathcal{D}}_{4, \eta, \Delta t}\\
	&= \Pi_0 + \Delta t^2 \left(\widetilde{\mathcal{D}}_1 +  \eta\widetilde{\mathcal{D}}_{4, \eta, \Delta t}\right)+ \eta^2 \left(\widetilde{\mathcal{D}}_2 + \Delta t\widetilde{\mathcal{D}}_{3, \Delta t}\right).
	\end{aligned}
\end{equation}
The last equality is due the fact that \(\Pi_0\left(\mathrm{Id} - \Pi_0\right) \equiv 0\) since the range of the operator \(\left(\mathrm{Id} - \Pi_0\right)\) is the constant functions and is thus contained in the kernel of any projection onto the space of function with mean zero with respect some probability measure. Note that we can write the approximate inverse as \(\mathcal{Q}_{\eta, \Delta t} = \mathcal{Q}_{0, \Delta t} + \eta \widetilde{\mathcal{Q}}_{\eta, \Delta t}\) where \(\mathcal{Q}_{0, \Delta t}\) is given by \eqref{eq:approx_inverse} with \(\eta = 0\) and \(\widetilde{\mathcal{Q}}_{\eta, \Delta t}\) by
\begin{equation}\label{eq:Q_til_def}
	\begin{aligned}
		\widetilde{Q}_{\eta, \Delta t} &= \eta \mathcal{L}_0^{-1}\Pi_0 \widetilde{\mathcal{L}}\mathcal{L}_0^{-1}\Pi_0 - \frac{\eta\Delta t}{2}\mathcal{L}_0^{-1}\Pi_0 D\mathcal{L}_0^{-1}\Pi_0\\
		&\qquad + \frac{\eta \Delta t}{2}\mathcal{L}_0^{-1}\Pi_0\widetilde{\mathcal{L}}\left(\Pi_0 + \mathcal{L}_0^{-1}\Pi_0S\mathcal{L}_0^{-1}\Pi_0\right)\\
		&\qquad + \frac{\eta \Delta t}{2}\left(\Pi_0 + \mathcal{L}_0^{-1}\Pi_0S \mathcal{L}_0^{-1}\right)\Pi_0\widetilde{\mathcal{L}}\mathcal{L}_0^{-1}\Pi_0,
	\end{aligned}
\end{equation} We can now prove Lemmas~\ref{lm:disc_lin_resp}~and~\ref{lm:disc_poisson_sol_approx}.

\begin{proof}[Proof of Lemma~\ref{lm:disc_lin_resp}]
	We choose \(\mathfrak{f}_1\) such that \[-\mathcal{L}_0^* \mathfrak{f}_1 = \frac{1}{2}S^*\mathbf{1},\]
	\(\mathfrak{f}_2\) such that
	\[-\mathcal{L}_0^*\mathfrak{f}_2 = \widetilde{\mathcal{L}}^*\mathbf{1},\]
	\(\mathfrak{f}_3\) such that
	\[-\mathcal{L}_0^*\mathfrak{f}_3 = \widetilde{\mathcal{L}}^*\mathfrak{f}_1 + \frac{1}{2}\left(\mathcal{L}_0^2 + S\right)^*\mathfrak{f}_2 + \frac{1}{2}D^*\mathbf{1}.\]
	The bound \eqref{eq:inverse_bounds} ensures that each of these equations has a unique solution in \(\Pi_0B_n^\infty\) for any \(n \in \mathbb{N}\) large enough.
	This choice of \(\mathfrak{f}_1\), \(\mathfrak{f}_2\), and \(\mathfrak{f}_3\) ensure that for, \(\varphi \in \mathscr{S}\),
	\begin{equation}\label{eq:approx_invariant_measure}
		\int_{\mathbb{R}^d} \left[\left(\frac{\mathrm{Id} - P^{\eta, \Delta t}}{\Delta t}\right)\varphi\right]\left(1 + \Delta t \mathfrak{f}_1 + \eta \mathfrak{f}_2 + \eta \Delta t \mathfrak{f}_3\right)d\nu_0 = \eta^2 \widetilde{\mathcal{A}}_{1, \Delta t}\varphi + \Delta t^2 \left(\widetilde{\mathcal{A}}_{2, \Delta t}\varphi + \eta \widetilde{\mathcal{A}}_{3, \Delta t, \eta}\varphi\right),
	\end{equation}
	with right hand side involving well-behaved functionals \(\widetilde{\mathcal{A}}_{1, \Delta t}\), \(\widetilde{\mathcal{A}}_{2, \Delta t}\psi\), and \(\widetilde{\mathcal{A}}_{3, \Delta t, \eta}\). 
	For the invariant measure of the discretized process, we have by definition, for any \(\varphi \in \mathscr{S}\),
	\[\int_{\mathbb{R}^d} \left(\frac{\mathrm{Id} - P^{\eta, \Delta t}}{\Delta t}\right)\varphi\, d\nu_{\eta, \Delta t} = 0. \]
	Restricting ourselves to \(\varphi \in \mathscr{S}_0\) and applying the projector \(\Pi_0\) to the above integrand, we obtain
	\[\begin{aligned}
		\int_{\mathbb{R}^d} \Pi_0\left(\frac{\mathrm{Id} - P^{\eta, \Delta t}}{\Delta t}\right)\varphi\, d\nu_{\eta, \Delta t} &= \int_{\mathbb{R}^d} \left(\frac{\mathrm{Id} - P^{\eta, \Delta t}}{\Delta t}\right)\varphi\, d\nu_{\eta, \Delta t} - \int_{\mathbb{R}^d} \left(\frac{\mathrm{Id} - P^{\eta, \Delta t}}{\Delta t}\right)\varphi\, d\nu_0\\
		&= \Delta t^{-1}\int_{\mathbb{R}^d} P^{\eta, \Delta t}\varphi \, d\nu_0.
	\end{aligned}\]
	Additionally, using \eqref{eq:approx_invariant_measure}, we obtain, for \(\varphi \in \mathscr{S}_0\),
	\[\begin{aligned}
		&\int_{\mathbb{R}^d} \left[\Pi_0\left(\frac{\mathrm{Id} - P^{\eta, \Delta t}}{\Delta t}\right)\varphi\right]\left(1 + \Delta t \mathfrak{f}_1 + \eta \mathfrak{f}_2 + \eta \Delta t \mathfrak{f}_3\right)d\nu_0\\
		&\qquad = \int_{\mathbb{R}^d} \left(\frac{\mathrm{Id} - P^{\eta, \Delta t}}{\Delta t}\right)\varphi\left(1 + \Delta t \mathfrak{f}_1 + \eta \mathfrak{f}_2 + \eta \Delta t \mathfrak{f}_3\right)d\nu_0 - \int_{\mathbb{R}^d}\left(\frac{\mathrm{Id} - P^{\eta, \Delta t}}{\Delta t}\right)\varphi \, d\nu_0\\
		&\qquad = \eta^2 \widetilde{\mathcal{A}}_{1, \Delta t}\varphi + \Delta t^2 \left(\widetilde{\mathcal{A}}_{2, \Delta t}\varphi + \eta \widetilde{\mathcal{A}}_{3, \Delta t, \eta}\varphi\right) + \Delta t^{-1}\int_{\mathbb{R}^d} P^{\eta, \Delta t}\varphi \, d\nu_0,
	\end{aligned}\]
	where the first equality follows from the fact that \(\mathfrak{f}_1, \mathfrak{f}_2, \mathfrak{f}_3 \in \mathscr{S}_0\). 
	Consequently, combining these two equalities gives, for \(\varphi \in \mathscr{S}_0\),
	\[\begin{aligned}
		\int_{\mathbb{R}^d} \Pi_0\left(\frac{\mathrm{Id} - P^{\eta, \Delta t}}{\Delta t}\right)\varphi\, d\nu_{\eta, \Delta t} &= \int_{\mathbb{R}^d} \left[\Pi_0\left(\frac{\mathrm{Id} - P^{\eta, \Delta t}}{\Delta t}\right)\varphi\right]\left(1 + \Delta t \mathfrak{f}_1 + \eta \mathfrak{f}_2 + \eta \Delta t \mathfrak{f}_3\right)d\nu_0\\
		&\qquad - \eta^2 \widetilde{\mathcal{A}}_{1, \Delta t}\varphi - \Delta t^2 \left(\widetilde{\mathcal{A}}_{2, \Delta t}\varphi + \eta \widetilde{\mathcal{A}}_{3, \Delta t, \eta}\varphi\right)
	\end{aligned}\]
	For \(\psi \in \mathscr{S}\), we substitute \(\varphi\) in the above equality with \(\mathcal{Q}_{\eta, \Delta t}\psi \in \mathscr{S}_0\):
	\begin{equation}\label{eq:integral_of_pseudoinv_equation}
		\begin{aligned}
			\int_{\mathbb{R}^d} \Pi_0\left(\frac{\mathrm{Id} - P^{\eta, \Delta t}}{\Delta t}\right)\mathcal{Q}_{\eta, \Delta t}\psi \, d\nu_{\eta, \Delta t} &= \int_{\mathbb{R}^d} \left[\Pi_0\left(\frac{\mathrm{Id} - P^{\eta, \Delta t}}{\Delta t}\right)\mathcal{Q}_{\eta, \Delta t}\psi\right]\left(1 + \Delta t \mathfrak{f}_1 + \eta \mathfrak{f}_2 + \eta \Delta t \mathfrak{f}_3\right)d\nu_0\\
			&\qquad - \eta^2 \widetilde{\mathcal{A}}_{1, \Delta t}\mathcal{Q}_{\eta, \Delta t}\psi - \Delta t^2 \left(\widetilde{\mathcal{A}}_{2, \Delta t}\mathcal{Q}_{\eta, \Delta t}\psi + \eta \widetilde{\mathcal{A}}_{3, \Delta t, \eta}\mathcal{Q}_{\eta, \Delta t}\psi\right).
		\end{aligned}
	\end{equation}
	By \eqref{eq:approx_inverse_equality}, the left hand side of this equality becomes
	\[\begin{aligned}
		\int_{\mathbb{R}^d} \Pi_0\left(\frac{\mathrm{Id} - P^{\eta, \Delta t}}{\Delta t}\right)\mathcal{Q}_{\eta, \Delta t}\psi \, d\nu_{\eta, \Delta t} &= \int_{\mathbb{R}^d}\psi \, d\nu_{\eta, \Delta t} - \int_{\mathbb{R}^d}\psi \, d\nu_0\\
		&\qquad + \int_{\mathbb{R}^d} \left(\Delta t^2 \widetilde{\mathcal{D}}_1 + \eta^2 \widetilde{\mathcal{D}}_2 + \eta^2 \Delta t\widetilde{\mathcal{D}}_{3, \Delta t} + \eta \Delta t^2 \widetilde{\mathcal{D}}_{4, \eta, \Delta t}\right)\psi \, d\nu_{\eta, \Delta t},
	\end{aligned}\]
	and the integral on the right hand side becomes
	\[\begin{aligned}
		&\int_{\mathbb{R}^d} \left[\Pi_0\left(\frac{\mathrm{Id} - P^{\eta, \Delta t}}{\Delta t}\right)\mathcal{Q}_{\eta, \Delta t}\psi\right]\left(1 + \Delta t \mathfrak{f}_1 + \eta \mathfrak{f}_2 + \eta \Delta t \mathfrak{f}_3\right)d\nu_0\\
		&\qquad = \int_{\mathbb{R}^d} \psi \left(\Delta t \mathfrak{f}_1 + \eta \mathfrak{f}_2 + \eta \Delta t \mathfrak{f}_3\right)d\nu_0\\
		&\qquad\qquad + \int_{\mathbb{R}^d}\left(\Delta t^2 \widetilde{\mathcal{D}}_1 + \eta^2 \widetilde{\mathcal{D}}_2 + \eta^2 \Delta t\widetilde{\mathcal{D}}_{3, \Delta t} + \eta \Delta t^2 \widetilde{\mathcal{D}}_{4, \eta, \Delta t}\right)\psi \left(1 + \Delta t \mathfrak{f}_1 + \eta \mathfrak{f}_2 + \eta \Delta t \mathfrak{f}_3\right)d\nu_0.
	\end{aligned}\]
	Thus \eqref{eq:integral_of_pseudoinv_equation} becomes
	\begin{equation}\label{eq:pre_lin_resp_eq}
		\begin{aligned}
			\int_{\mathbb{R}^d}\psi \, d\nu_{\eta, \Delta t} &= \int_{\mathbb{R}^d} \psi \left(1 + \Delta t \mathfrak{f}_1 + \eta \mathfrak{f}_2 + \eta \Delta t \mathfrak{f}_3\right) d\nu_0\\
			&\qquad - \int_{\mathbb{R}^d} \left(\Delta t^2 \widetilde{\mathcal{D}}_1 + \eta^2 \widetilde{\mathcal{D}}_2 + \eta^2 \Delta t\widetilde{\mathcal{D}}_{3, \Delta t} + \eta \Delta t^2 \widetilde{\mathcal{D}}_{4, \eta, \Delta t}\right)\psi \, d\nu_{\eta, \Delta t}\\
			&\qquad + \int_{\mathbb{R}^d}\left(\Delta t^2 \widetilde{\mathcal{D}}_1 + \eta^2 \widetilde{\mathcal{D}}_2 + \eta^2 \Delta t\widetilde{\mathcal{D}}_{3, \Delta t} + \eta \Delta t^2 \widetilde{\mathcal{D}}_{4, \eta, \Delta t}\right)\psi \left(1 + \Delta t \mathfrak{f}_1 + \eta \mathfrak{f}_2 + \eta \Delta t \mathfrak{f}_3\right)d\nu_0\\
			&\qquad - \eta^2 \widetilde{\mathcal{A}}_{1, \Delta t}\mathcal{Q}_{\eta, \Delta t}\psi - \Delta t^2 \left(\widetilde{\mathcal{A}}_{2, \Delta t}\mathcal{Q}_{\eta, \Delta t}\psi + \eta \widetilde{\mathcal{A}}_{3, \Delta t, \eta}\mathcal{Q}_{\eta, \Delta t}\psi\right).
		\end{aligned}
	\end{equation}
	We would like rewrite the remainder terms in the above equality in terms of well-behaved functionals and higher powers of \(\eta\) and \(\Delta t\) as claimed in the statement of the lemma. Using the fact that \(\mathcal{Q}_{\eta, \Delta t} = \mathcal{Q}_{0, \Delta t} + \eta \widetilde{\mathcal{Q}}_{\eta, \Delta t}\) and grouping terms by powers of \(\eta\) and \(\Delta t\), this is clearly possible for all the remainder terms except for \(-\Delta t \int_{\mathbb{R}^d} \widetilde{\mathcal{D}}_1 \psi \, d\nu_{\eta, \Delta t}\). Writing this term as
	\[\int_{\mathbb{R}^d} \widetilde{\mathcal{D}}_1 \psi \, d\nu_{\eta, \Delta t}  = \int_{\mathbb{R}^d} \widetilde{\mathcal{D}}_1 \psi \, d\nu_{0, \Delta t} + \left(\int_{\mathbb{R}^d} \widetilde{\mathcal{D}}_1 \psi \, d\nu_{\eta, \Delta t} - \int_{\mathbb{R}^d} \widetilde{\mathcal{D}}_1 \psi \, d\nu_{0, \Delta t}\right),\]
	we see that if the difference is of order \(\eta\) then all the remainder terms in \eqref{eq:pre_lin_resp_eq} can be written in terms of well-behaved functionals and higher powers of \(\eta\) and \(\Delta t\) in an appropriate way. Recall that \(\widetilde{V}_c(x) = \exp\left(c|x|^2\right)\) and \(V_c(x,y) = \widetilde{V}_c(x) + \widetilde{V}_c(y)\) with \(c > 0\) having the same value as in \eqref{eq:marginal_lyapunov_ineq}. Since \(\varphi \in \mathscr{S}\), all its derivatives are in \(B^\infty_{\widetilde{V}_c}\) and \(\widetilde{\mathcal{D}}_1 \varphi \in B^\infty_{\widetilde{V}_c}\). Using the fact that \(\mu_{\eta, \Delta t}\) is a coupling of \(\nu_{0, \Delta t}\) and \(\nu_{\eta, \Delta t}\) and applying \eqref{eq:W_n_bound} in Lemma~\ref{lm:W_n_bound}, we have
	\[\left|\int_{\mathbb{R}^d} \widetilde{\mathcal{D}}_1 \psi d\nu_{\eta, \Delta t} - \int_{\mathbb{R}^d} \widetilde{\mathcal{D}}_1 \psi d\nu_{0, \Delta t}\right| = \left|\int_{\mathbb{R}^d\times\mathbb{R}^d} \left(\widetilde{D}_1\psi(x) - \widetilde{D}_1\psi(y)\right)d\mu_{\eta, \Delta t}\right| \leq \left\|\widetilde{D}_1\psi\right\|_{\widetilde{V}_c}\int_{\mathbb{R}^d\times\mathbb{R}^d}\!\!\!\!\!\!\!\!\mathbf{1}_{\left\{x\neq y\right\}}V_c(x,y)d\mu_{\eta, \Delta t}. \]
	The bound \eqref{eq:stationary_discrete_weightedtv_bound} in Proposition~\ref{prop:discrete_Wn_norm_bound} lets us bound the integral on the right hand side by \(C\eta\left[\nu_{\eta, \Delta t}\left(\widetilde{V}_c\right) + \nu_{0, \Delta t}\left(\widetilde{V}_c\right)\right]\). By \eqref{eq:disc_moment_bounds}, \(\nu_{\eta, \Delta t}\left(\widetilde{V}_c\right)\) is uniformly bounded in \(\eta \in \left[-\eta_\star, \eta_\star\right]\). Consequently, the difference \(\int_{\mathbb{R}^d} \widetilde{\mathcal{D}}_1 \psi d\nu_{\eta, \Delta t} - \int_{\mathbb{R}^d} \widetilde{\mathcal{D}}_1 \psi d\nu_{0, \Delta t}\) is of order \(\eta\) and we can safely conclude that there exist well-behaved functionals \(\mathcal{A}_{1, \Delta t, \eta}\), \(\mathcal{A}_{2, \Delta t}\), and \(\mathcal{A}_{3, \Delta t, \eta}\) such that
	\begin{equation}
		\begin{aligned}
			\int_{\mathbb{R}^d}\psi \, d\nu_{\eta, \Delta t} &= \int_{\mathbb{R}^d} \psi \left(1 + \Delta t \mathfrak{f}_1 + \eta \mathfrak{f}_2 + \eta \Delta t \mathfrak{f}_3\right)d\nu_0 + \eta^2 \mathcal{A}_{1, \Delta t, \eta}\psi + \Delta t^2\left(\mathcal{A}_{2, \Delta t} + \eta \mathcal{A}_{3, \Delta t, \eta}\right)\psi,
		\end{aligned}
	\end{equation}
	which concludes the proof.
\end{proof}
\begin{remark}
	In the proof of \cite[Theorem 3.4]{Leimkuhler}, there is also a remainder term of the form
	\[\int_{\mathbb{R}^d \times \mathbb{R}^d} \mathcal{D} \varphi \, d\mu_{\gamma, \Delta t, \eta}\]
	where \(\mu_{\gamma, \Delta t, \eta}\) is the invariant measure of the discretized non-equilibrium underdamped Langevin dynamics with time step \(\Delta t > 0\), friction \(\gamma > 0\), and perturbation of size \(\eta \in \mathbb{R}\). This integral in the remainder term was not properly controlled as \(\eta\) went to zero. Writing
	\[\int_{\mathbb{R}^d \times \mathbb{R}^d} \mathcal{D} \varphi \, d\mu_{\gamma, \Delta t, \eta} = \int_{\mathbb{R}^d \times \mathbb{R}^d} \mathcal{D} \varphi \, d\mu_{\gamma, \Delta t, 0} + \eta \left(\frac{\int_{\mathbb{R}^d \times \mathbb{R}^d} \mathcal{D} \varphi \, d\mu_{\gamma, \Delta t, \eta} - \int_{\mathbb{R}^d \times \mathbb{R}^d} \mathcal{D} \varphi \, d\mu_{\gamma, \Delta t, 0}}{\eta}\right)\]
	we see that to make the proof work, one would have to show that the fraction 
	\[\frac{\int_{\mathbb{R}^d \times \mathbb{R}^d} \mathcal{D} \varphi d\mu_{\gamma, \Delta t, \eta} - \int_{\mathbb{R}^d \times \mathbb{R}^d} \mathcal{D} \varphi d\mu_{\gamma, \Delta t, 0}}{\eta}\]
	is bounded as \(\eta\) goes to zero. One could do this in the same way we did above by appealing to a bound of the form of \eqref{eq:stationary_discrete_weightedtv_bound} for some appropriate Lyaponov function \(V\). However, one would first need to prove an analogues to the results of \cite{Durmus_etal} and Proposition~\ref{prop:discrete_Wn_norm_bound} for the invariant measures of splitting schemes for hypoelliptic dynamics.
\end{remark}

\begin{proof}[Proof of Lemma~\ref{lm:disc_poisson_sol_approx}]
	Using our approximation of the inverse of \(\frac{\mathrm{Id} - P^{\eta, \Delta t}}{\Delta t}\), we write the difference of the two solutions to the discrete Poisson equation as 
	\begin{equation}
		\widehat{R}_{\eta, \Delta t} - \widehat{R}_{0, \Delta t} = \widehat{R}_{\eta, \Delta t} - \mathcal{Q}_{\eta, \Delta t}R - \left(\widehat{R}_{0, \Delta t} - \mathcal{Q}_{0, \Delta t}R \right) + \mathcal{Q}_{\eta, \Delta t}R - \mathcal{Q}_{0, \Delta t}R. 
	\end{equation}
	Recall that
	\(\mathcal{Q}_{\eta, \Delta t} - \mathcal{Q}_{0, \Delta t} = \eta\widetilde{\mathcal{Q}}_{\eta, \Delta t}\)
	with \(\widetilde{\mathcal{Q}}_{\eta, \Delta t}\) the well-behaved operator defined in \eqref{eq:Q_til_def}.
	For the first difference, we use \eqref{eq:approx_inverse_equality} to write
	\[
	\begin{aligned}
		\widehat{R}_{\eta, \Delta t} - \mathcal{Q}_{\eta, \Delta t}R &= \left(\frac{\mathrm{Id} - P^{\eta, \Delta t}}{\Delta t}\right)^{-1}\Pi_{\nu_{\eta, \Delta t}}\left(\frac{\mathrm{Id} - P^{\eta,\Delta t}}{\Delta t}\right)\left[\widehat{R}_{\eta, \Delta t} - \mathcal{Q}_{\eta, \Delta t}R\right] \\
		&= \left(\frac{\mathrm{Id} - P^{\eta, \Delta t}}{\Delta t}\right)^{-1}\Pi_{\nu_{\eta, \Delta t}} \Pi_0\left(\frac{\mathrm{Id} - P^{\eta,\Delta t}}{\Delta t}\right)\left[\widehat{R}_{\eta, \Delta t} - \mathcal{Q}_{\eta, \Delta t}R\right] \\
		&= \left(\frac{\mathrm{Id} - P^{\eta, \Delta t}}{\Delta t}\right)^{-1}\Pi_{\nu_{\eta, \Delta t}}\left[\Pi_0R - \Pi_0 R - \left(\Delta t^2 \widetilde{\mathcal{D}}_1 + \eta^2 \widetilde{\mathcal{D}}_2 + \eta^2 \Delta t\widetilde{\mathcal{D}}_{3, \Delta t} + \eta \Delta t^2 \widetilde{\mathcal{D}}_{4, \eta, \Delta t}\right)R\right]\\
		&= -\left(\frac{\mathrm{Id} - P^{\eta, \Delta t}}{\Delta t}\right)^{-1}\Pi_{\nu_{\eta, \Delta t}}\left[\Delta t^2 \widetilde{\mathcal{D}}_1 + \eta^2 \widetilde{\mathcal{D}}_2 + \eta^2 \Delta t\widetilde{\mathcal{D}}_{3, \Delta t} + \eta \Delta t^2 \widetilde{\mathcal{D}}_{4, \eta, \Delta t}\right]R\\
		&= -\Delta t^2\left(\frac{\mathrm{Id} - P^{\eta, \Delta t}}{\Delta t}\right)^{-1}\Pi_{\nu_{\eta, \Delta t}}\widetilde{\mathcal{D}}_1 R - \eta\left(\frac{\mathrm{Id} - P^{\eta, \Delta t}}{\Delta t}\right)^{-1}\Pi_{\nu_{\eta, \Delta t}}\left[\eta \widetilde{\mathcal{D}}_2 + \eta \Delta t\widetilde{\mathcal{D}}_{3, \Delta t} + \Delta t^2 \widetilde{\mathcal{D}}_{4, \eta, \Delta t}\right]R,
	\end{aligned} \]
	where for the second inequality we used the fact that \(\Pi_{\nu_{\eta, \Delta t}}\Pi_0 = \Pi_{\nu_{\eta, \Delta t}}\). For the second difference, we similarly have
	\[\widehat{R}_{0, \Delta t} - \mathcal{Q}_{0, \Delta}R = -\Delta t^2\left(\frac{\mathrm{Id} - P^{0, \Delta t}}{\Delta t}\right)^{-1}\Pi_{\nu_{0, \Delta t}}\widetilde{\mathcal{D}}_1 R. \]
	Putting this all together we get that 
	\[
	\begin{aligned}
	\widehat{R}_{\eta, \Delta t} - \widehat{R}_{0, \Delta t} &= \eta\widetilde{\mathcal{Q}}_{\eta, \Delta t}R - \eta \left(\frac{\mathrm{Id} - P^{\eta, \Delta t}}{\Delta t}\right)^{-1}\Pi_{\mu_{\eta, \Delta t}} \mathcal{A}_{\eta, \Delta t}R\\ 
	&\qquad - \Delta t^2 \left[\left(\frac{\mathrm{Id} - P^{\eta, \Delta t}}{\Delta t}\right)^{-1}\Pi_{\nu_{\eta, \Delta t}}\widetilde{\mathcal{D}}_1 R - \left(\frac{\mathrm{Id} - P^{0, \Delta t}}{\Delta t}\right)^{-1}\Pi_{\nu_{0, \Delta t}}\widetilde{\mathcal{D}}_1 R\right],
	\end{aligned}\]
	where \(\mathcal{A}_{\eta, \Delta t} := \eta \widetilde{\mathcal{D}}_2 + \eta \Delta t\widetilde{\mathcal{D}}_{3, \Delta t} + \Delta t^2 \widetilde{\mathcal{D}}_{4, \eta, \Delta t}\) is a well-behaved operator. Observe that the coefficient of the order \(\Delta t^2\) term is again the difference of two solutions of the discrete Poisson equation but this time with source term \(\widetilde{\mathcal{D}}_1R\). Since \(\widetilde{\mathcal{D}}_1R \in \mathscr{S}\), we can repeat the above line of reasoning. In fact since \(\widetilde{\mathcal{D}}_1\) stabilizes \(\mathscr{S}\), it holds \(\widetilde{\mathcal{D}}_1^nR \in \mathscr{S}\) for any \(n \in \mathbb{N}\) and we can repeat the above line of reasoning arbitrarily many times. Thus, for any \(n \in \mathbb{N}\), we have 
	\[
	\begin{aligned}
		\widehat{R}_{\eta, \Delta t} - \widehat{R}_{0, \Delta t} &= \eta\left(\sum_{k = 0}^{n-1}\left(-1\right)^k\Delta t^{2k}\widetilde{\mathcal{Q}}_{\eta, \Delta t}\widetilde{\mathcal{D}}_1^kR + \left(\frac{\mathrm{Id} - P^{\eta, \Delta t}}{\Delta t}\right)^{-1}\Pi_{\mu_{\eta, \Delta t}} \sum_{k = 0}^{n-1}\left(-1\right)^k\Delta t^{2k}\mathcal{A}_{\eta, \Delta t}\widetilde{\mathcal{D}}_1^kR\right)\\ 
		&\qquad + \left(-1\right)^n\Delta t^{2n} \left[\left(\frac{\mathrm{Id} - P^{\eta, \Delta t}}{\Delta t}\right)^{-1}\Pi_{\nu_{\eta, \Delta t}}\widetilde{\mathcal{D}}_1^n R - \left(\frac{\mathrm{Id} - P^{0, \Delta t}}{\Delta t}\right)^{-1}\Pi_{\nu_{0, \Delta t}}\widetilde{\mathcal{D}}_1^n R\right].
	\end{aligned}\]
	Everything on the right hand side is uniformly bounded in \(\eta \in \left[-\eta_\star, \eta_\star\right]\) and \(\Delta t \in \left(0, \Delta t^\star\right)\) for the norm~\(\left\|\cdot\right\|_{\widetilde{V}_c}\). Indeed, the first sum in the order \(\eta\) term is uniformly bounded  in \(\eta\) and \(\Delta t\) since \(\mathcal{A}_{1, \eta, \Delta t}\) and \(\widetilde{\mathcal{D}}_1\) are well behaved---this bound may however depend on \(R\). Secondly, since these operators stabilize \(\mathscr{S}\), the second sum is in \(\mathscr{S} \subset B_{\widetilde{V}_c}^\infty\) and remains in \(B_{\widetilde{V}_c}^\infty\) when we apply the inverse of the discrete generator since the discrete generator has bounded inverse in \(\Pi_{\mu_{\eta, \Delta t}}B_{\widetilde{V}_c}^\infty\), see \eqref{eq:disc_inverse_bounds}. For the same reason, the order~\(\Delta t^{2n}\) term also belongs to \(\Pi_{\mu_{\eta, \Delta t}}B_{\widetilde{V}_c}^\infty\). Thus, for any \(n \in \mathbb{N}\), there exists \(K_n > 0\) such that 
	\[\left\|\widehat{R}_{\eta, \Delta t} - \widehat{R}_{0, \Delta t}\right\|_{\widetilde{V}_c} \leq K_n \left(\eta + \Delta t^{2n}\right),\]
	uniformly in \(\eta \in \left[-\eta_\star, \eta_\star\right]\) and \(\Delta t \in \left(0, \Delta t^\star\right)\), giving the desired bound.
\end{proof}

\section{Equivalence of the Two Forms of Discrete Sticky Coupling}\label{sec:equivalence_of_disc_MR_coupling}
For this section we denote by \(\varphi_d\) the density of a $d$-dimensional standard normal distribution and by $\varphi_1$ a one dimensional standard normal distribution. In this section we verify that our definition of the meeting probability \eqref{eq:meeting_prob} is equivalent to that given in \cite[Section 2.2]{Durmus_etal}, namely
\[p(x, y, g) = \min\left\{1, \frac{\varphi_1\left(\sqrt{\frac{\beta}{2\Delta t}}\left|\mathbf{E}(x,y)\right| - \left\langle g, \mathbf{e}\left(x, y\right) \right\rangle\right)}{\varphi_1\left(\left\langle \mathbf{e}\left(x,y\right), g\right\rangle\right)}\right\}.\]
Indeed all we need to show is that the ratio of one-dimensional Gaussian densities above is equal to the ratio of \(d\)-dimensional Gaussian densities in \eqref{eq:meeting_prob}. The following computation affirms this:
\begin{align*}
	\frac{\varphi_d\left(\sqrt{\frac{\beta}{2\Delta t}}\mathbf{E}(x,y) + g\right)}{\varphi_d\left(g\right)} &= \frac{\exp\left(-\frac{1}{2}\left\langle \sqrt{\frac{\beta}{2\Delta t}}\mathbf{E}(x, y) + g, \sqrt{\frac{\beta}{2\Delta t}}\mathbf{E}(x, y) + g\right\rangle\right)}{\exp\left(-\frac{1}{2}\left\langle g, g \right\rangle\right)}\\
	&=  \exp\left(-\frac{1}{2}\left\langle \sqrt{\frac{\beta}{2\Delta t}}\mathbf{E}(x, y) + g, \sqrt{\frac{\beta}{2\Delta t}}\mathbf{E}(x, y) + g\right\rangle + \frac{1}{2}\left\langle g, g\right\rangle\right)\\
	&= \exp\left(-\frac{\beta}{4\Delta t} \left|\mathbf{E}(x,y)\right|^2 - \sqrt{\frac{\beta}{2\Delta t}}\left\langle \mathbf{E}\left(x,y\right), g\right\rangle \right)\\
	&= \exp\left(-\frac{\beta}{4\Delta t} \left|\mathbf{E}(x,y)\right|^2 - \sqrt{\frac{\beta}{2\Delta t}}\left\langle \mathbf{E}\left(x,y\right), g\right\rangle - \frac{1}{2}\left\langle \mathbf{e}\left(x,y\right), g\right\rangle^2 + \frac{1}{2}\left\langle \mathbf{e}\left(x,y\right), g\right\rangle^2\right)\\
	&= \exp\left(-\frac{\beta}{4\Delta t} \left|\mathbf{E}(x,y)\right|^2 - \sqrt{\frac{\beta}{2\Delta t}}\left|\mathbf{E}(x,y)\right|\left\langle \mathbf{e}\left(x,y\right), g\right\rangle - \frac{1}{2}\left\langle \mathbf{e}\left(x,y\right), g\right\rangle^2 + \frac{1}{2}\left\langle \mathbf{e}\left(x,y\right), g\right\rangle^2\right)\\
	&= \frac{\exp\left(-\frac{1}{2}\left[\sqrt{\frac{\beta}{2\Delta t}}\left|\mathbf{E}(x,y)\right| - \left\langle \mathbf{e}(x,y), g\right\rangle\right]^2\right)}{\exp\left(-\frac{1}{2}\left\langle \mathbf{e}(x,y), g\right\rangle^2\right)} = \frac{\varphi_1\left(\sqrt{\frac{\beta}{2\Delta t}}\left|\mathbf{E}(x,y)\right| - \left\langle \mathbf{e}(x,y), g\right\rangle\right)}{\varphi_1\left(\left\langle \mathbf{e}(x,y), g\right\rangle\right)}
\end{align*}
Thus the probability of the discretized sticky coupled trajectories meeting are the same for both coupling methods.

\bigskip

\paragraph{Acknowledgments.}
The authors thank Nawaf Bou--Rabee for helpful discussions and exchanges, as well as Pierre Jacob. S.D thanks Noé Blassel for helpful discussions on the code for the numerical illustrations. The work of S.D. and G.S. was funded by the European Research Council (ERC) under the European Union's Horizon 2020 research and innovation programme (project EMC2, grant agreement No 810367), and by Agence Nationale de la Recherche, under grants ANR-19-CE40-0010-01 (QuAMProcs) and ANR-21-CE40-0006 (SINEQ). The work of A.E. was funded by the Deutsche Forschungsgemeinschaft (DFG, German Research
Foundation) under Germany’s Excellence Strategy – GZ 2047/1, Project-ID 390685813.

\printbibliography

\end{document}